\documentclass[a4paper]{amsart}

\usepackage[T1]{fontenc}
\usepackage{lmodern}
\usepackage[utf8]{inputenc}

\usepackage[hidelinks]{hyperref}
\hypersetup{
colorlinks=true,
allcolors=blue
}

\usepackage[noadjust,nocompress]{cite}

\usepackage{amsmath}
\usepackage{amssymb}
\usepackage{tikz-cd}
\usepackage{enumitem}
\usepackage{todonotes}
\usepackage{stmaryrd}
\DeclareSymbolFont{extraup}{U}{zavm}{m}{n}
\DeclareMathSymbol{\varheart}{\mathalpha}{extraup}{86}
\DeclareMathSymbol{\vardiamond}{\mathalpha}{extraup}{87}

\usepackage{ dsfont }

\newcommand{\axiom}[1]{\mathsf{#1}}
\newcommand{\ZFC}{\axiom{ZFC}}

\newcommand{\ZF}{\axiom{ZF}}
\newcommand{\AC}{\axiom{AC}}
\newcommand{\HS}{\axiom{HS}}
\newcommand{\HR}{\axiom{HR}}
\newcommand{\SN}{\axiom{N}}
\newcommand{\DC}{\axiom{DC}}
\newcommand{\C}{\axiom{C}}
\newcommand{\R}{\axiom{R}}
\newcommand{\HC}{\axiom{HC}}

\newcommand{\NBG}{\axiom{NBG}}
\newcommand{\KW}{\axiom{KWP}}
\newcommand{\KWP}{\axiom{KWP}}

\newcommand{\sG}{\mathcal{G}}
\newcommand{\sF}{\mathcal{F}}
\newcommand{\sS}{\mathcal{S}}
\newcommand{\sR}{\mathcal{R}}
\newcommand{\sT}{\mathcal{T}}
\newcommand{\sH}{\mathcal{H}}
\newcommand{\sE}{\mathcal{E}}
\newcommand{\sC}{\mathcal{C}}
\newcommand{\sO}{\mathcal{O}}

\newcommand{\B}{\mathcal{B}}

\DeclareMathOperator{\sym}{sym}
\DeclareMathOperator{\fix}{fix}
\DeclareMathOperator{\Add}{Add}

\DeclareMathOperator{\Aut}{Aut}
\DeclareMathOperator{\id}{id}
\DeclareMathOperator{\dom}{dom}
\DeclareMathOperator{\Ord}{Ord}
\DeclareMathOperator{\Coll}{Coll}
\DeclareMathOperator{\ON}{Ord}
\DeclareMathOperator{\rk}{rk}
\DeclareMathOperator{\trcl}{trcl}

\DeclareMathOperator{\res}{res}
\DeclareMathOperator{\cl}{cl}

\DeclareMathOperator{\ran}{ran}
\DeclareMathOperator{\OD}{OD}
\DeclareMathOperator{\HOD}{HOD}

\DeclareMathOperator{\supp}{supp}

\theoremstyle{plain}
\newtheorem{thm}{Theorem}[section]
\newtheorem{lemma}[thm]{Lemma}
\newtheorem{prop}[thm]{Proposition}
\newtheorem{cor}[thm]{Corollary}
\newtheorem{claim}[thm]{Claim}

\newtheorem*{remark}{Remark}
\theoremstyle{definition}
\newtheorem{definition}[thm]{Definition}

\newtheorem{exmp}[thm]{Example}
\newtheorem{quest}[thm]{Question}

\author{Asaf Karagila}
\author{Jonathan Schilhan}
\email{karagila@math.huji.ac.il}
\urladdr{https://karagila.org}
\address{School of Mathematics,
   University of Leeds.
   Leeds, LS2~9JT, UK}
\email{jonathan.schilhan@univie.ac.at}
\urladdr{https://mat.univie.ac.at/~jschilhan/}
\address{University of Vienna,
Institute of Mathematics,
Kurt Gödel Research Center,
Kolingasse 14-16,
1090 Vienna,
Austria}

\date{\today}
\subjclass[2020]{Primary 03E35; Secondary 03E25, 03E40}
\keywords{}
\title{Towards a theory of symmetric extensions}

\begin{document}

\begin{abstract}
The technique of \emph{symmetric extensions} is derived from forcing and it is one of the most important tools for studying models without the Axiom of Choice. Despite being incredibly successful since the 1960s, our understanding of the technique remained fairly limited compared to the theory of forcing. Whereas forcing developed products and iterations, no serious attempts at developing any general framework for iterating symmetric extensions were presented before \cite{Karagila2019}, where only finite support iterations are treated. In this paper we develop the theory of symmetric extensions including different types of iterations, quotients, equivalents, and the structural results that can be described in this language. In particular, we give a modern exposition to some of the important theorems of Grigorieff \cite{Grigorieff1975}, study Kinna--Wagner Principles in symmetric extensions, and show that it is provable from $\ZF$ that every set lies in a symmetric extension of $\HOD$.
\end{abstract}

\maketitle
\setcounter{tocdepth}{1}
\tableofcontents
\vfil
\subsection*{Acknowledgements}
The authors were partially supported by a UKRI Future Leaders Fellowship [MR/T021705/2]. The first author was supported by UKRI Future Leaders Fellowship [MR/Y034058/1]. This research was funded in whole or in part by the Austrian Science Fund (FWF) [10.55776/ESP5711024]. For open access purposes, the authors have applied a CC BY public copyright license to any author accepted manuscript version arising from this submission. The authors would like to express special gratitude to Peter Holy for various discussions and comments regarding earlier manuscripts of this paper.
\newpage
\section{Introduction}

Iteration is a common methodology in mathematics. It allows us to solve a large problem by breaking it into many small problems, and then solve one problem at a time. Set theory is not different, and shortly after Cohen published his two papers on forcing \cite{Cohen:1963a,Cohen:1963b} the first instances of iterated forcing were discovered as well. Most famously, Solovay and Tennenbaum \cite{SolovayTennenbaum:1971} proved the consistency of the Suslin Hypothesis using iterated forcing, and the technique had been extended significantly in the decades since its conception.

Forcing is very useful for the study of models of $\ZFC$, as it preserves the Axiom of Choice,\footnote{That is, starting with a model of $\ZFC$, any forcing extension will be a model of $\ZFC$.} which is certainly a desirable feature, but it makes studying choiceless universes more difficult. Cohen, however, already noticed in his initial work that one can pass to an inner model of the extension, an \emph{intermediate model}, where the Axiom of Choice fails. This eventually gave rise to the technique of \emph{symmetric extensions} where in addition to the forcing notion we are given an automorphism group and a filter of subgroups, which then allow us to identify in advance an intermediate model of $\ZF$ and use forcing-theoretic tools to study it. This concept was studied more abstractly by Grigorieff \cite{Grigorieff1975}, where several structural results were obtained about symmetric extensions.

Iterating symmetric extensions was something that was present in the mathematical literature for decades, but until \cite{Karagila2019} this was always in ad-hoc form and never in a generalized framework. Incredible examples include the works in \cite{Sageev,Monro:KW,Morris:PhD}. 

A first attempt to design a general framework for iterating symmetric extension was published in \cite{Karagila2019} by the first author of this paper. The presentation of the framework is certainly difficult, as its use is still limited. Moreover, the framework only allows finite support iterations which makes it harder to preserve certain choice principles, such as Dependent Choice which naturally lends itself to be preserved by countably closed extensions.

In this paper we extend and rework the basic theory of symmetric extensions and the iteration framework. Indeed, we aim to streamline the definitions and notation from \cite{Karagila2019} to make the application of the method much simpler. The goal of this manuscript is to make symmetric extensions and their extensions more accessible.

We develop the theory of symmetric extensions in parallel to the theory of forcing. We study the basics of symmetric extensions, iterations and products, as well as an intermediate notion of reduced products.\footnote{Reduced products correspond to ``productive iterations'' in \cite{Karagila2019}.} These lead naturally to completions, embeddings, and quotients of symmetric systems. 

All these ideals provide us with a new and more accessible language to use and present refreshed and generalized theorems from Grigorieff's original paper. In turn, this allows us to prove that starting from a model of $\ZFC$, every symmetric extension has an associated cardinal $\kappa$, such that $\KWP^*_{\kappa^+}$ must hold in the symmetric extension (see Theorem~\ref{thm:kw}). We also this new language to prove that every set lies in a symmetric extension of $\HOD$ (see Theorem~\ref{thm:symoverHOD}).

\section{Symmetric systems and extensions}\label{sec:prelim}
In this section we will reintroduce symmetric systems and symmetric extensions using new notation and terminology which we think is more streamlined and we expect will be more conducive for future work. We follow the standard treatment of forcing, wherein a notion of forcing is a preordered set $\mathbb{P}$ with a maximum element, often denoted by $\mathds{1}_{\mathbb{P}}$. The elements of $\mathbb{P}$ are called conditions and we write $q\leq p$ to mean that $q$ is a stronger condition than $p$, or that $q$ extends $p$. Two conditions are compatible if they have a common extension, and otherwise they are incompatible.

The class of $\mathbb{P}$-names, $V^{\mathbb{P}}$ is defined recursively by $V^{\mathbb{P}}_\alpha=\bigcup\{\mathcal P(\mathbb{P}\times V_\beta^{\mathbb{P}}):\beta<\alpha\}$, with $V^\mathbb{P}=\bigcup_{\alpha\in\Ord}V^\mathbb{P}_\alpha$. Given a set of $\mathbb{P}$-names, $X$, we write $X^\bullet$ to denote the ``obvious'' name it generates. Namely, \[X^\bullet = \{(\mathds 1,\dot x) : \dot x\in X\}.\]
We extend this notation to ordered pairs, as well as sequences and functions whose domains are $\bullet$-names. Using this notation we can define the canonical ground model names, $\check x=\{\check y: y\in x\}^\bullet$.
\subsection{Symmetric systems}

\begin{definition}
    Let $\sG$ be a group. Then a set $\sF$ of subgroups of $\sG$ is a \emph{filter} if 
    
    \begin{enumerate}
        \item $\forall H_0 \leq H_1 \leq \sG ( H_0 \in \sF \rightarrow H_1 \in \sF)$,
        \item $\forall H_0, H_1 \in \sF (H_0 \cap H_1 \in \sF)$. 
    \end{enumerate}$\sF$ is \emph{normal} if for every $\pi \in \sG$, $H \in \sF$, also $\pi H \pi^{-1} \in \sF$.
\end{definition}

\begin{definition}\label{def:symsys}
    A \emph{symmetric system} is a triple $\sS = (\mathbb{P}, \sG, \sF)$ where $\mathbb{P}$ is a forcing notion, $\sG$ is a group of automorphisms on $\mathbb{P}$ and $\sF$ is a normal filter of subgroups of $\sG$.
\end{definition}

In many situations, we may in fact allow $\sG$ not to literally consist of automorphisms on $\mathbb{P}$. Rather sometimes there will merely be an implied action of $\sG$ on $\mathbb{P}$. This is a purely technical distinction that does not affect any of the arguments or the practice of symmetric extensions. In particular, all the notions the we define below can be derived in an analogous way for this more general situation. The official definition remains Definition~\ref{def:symsys}.

Whenever $\pi$ is an automorphism of $\mathbb{P}$, $\pi$ naturally extends to $\mathbb{P}$-names by a recursive definition: \[\pi(\dot x) = \{ (\pi(p), \pi(\dot y)) : (p, \dot y) \in \dot x \}.\] This action is well-behaved with respect to the forcing relation. In other words, if $p\Vdash\varphi(\dot x)$, then $\pi(p)\Vdash\varphi(\pi(\dot x))$.

For the rest of this section, fix a symmetric system $\sS = (\mathbb{P}, \sG, \sF)$.

\begin{definition}
    A $\mathbb{P}$-name $\dot x$ is $\sS$-symmetric if $\sym_\sS(\dot x) \in \sF$, where \[\sym_\sS(\dot x) := \{ \pi \in \sG : \pi(\dot x) = \dot x \}.\]
\end{definition}

Since $\sym_\sS(\dot x)$ only depends on $\dot x$ and the group $\sG$, we may occasionally also write $\sym_\sG(\dot x)$. Also, whenever $\sS$ is clear from context we we may simply write $\sym(\dot x)$. As usual, this will apply to all definitions we make.

\begin{definition}
    The set of \emph{hereditarily $\sS$-symmetric names of rank $\leq \alpha$}, is defined by recursion on $\alpha$ as \[\HS_{\alpha,\sS} = \left\{ \dot x \subseteq \mathbb{P} \times \bigcup_{\beta < \alpha} \HS_{\beta,\sS} : \dot x \text{ is $\sS$-symmetric} \right\}.\]
    The class of \emph{hereditarily $\sS$-symmetric names} is \[\HS_\sS = \bigcup_{\alpha \in \Ord} \HS_{\alpha, \sS}.\] The elements of $\HS_{\sS}$ are also called \emph{$\sS$-names}.
\end{definition}

One of the fundamental properties of $\HS_\sS$ is closure under the action of $\sG$: for every $\dot x\in\HS_\sS$, then for any $\pi\in\sG$ $\pi(\dot x)\in\HS_\sS$. This follows from the normality of $\sF$ and the fact that $\sym_\sS(\pi(\dot x))=\pi\sym_\sS(\dot x)\pi^{-1}$.

By moving from the forcing notion to its Boolean completion, and from a group of automorphisms to the entire group of automorphisms, we can slightly weaken the definition of normality to obtain a more uniform definition of a symmetric extension which now depends entirely on the filter of subgroups. This is done in Grigorieff's work \cite{Grigorieff1975}, but in practice this is a bit more cumbersome to apply to standard situations, and so limiting the group is often useful. In other manuscripts, $\HS_{\sS}$ is usually denoted by $\HS_\sF$ instead, as the same group may have different filters. But as our new notation unfolds throughout the rest of the section, the choice of $\HS_\sS$ will make more sense.

\begin{definition}
    $\sS$ is called \emph{tenacious} if there is a dense set of conditions $p \in \mathbb{P}$, such that $\fix(p) \in \sF$, where $\fix(p) := \{ \pi \in \sG : \pi(p) = p \}$.
\end{definition}

\subsection{Symmetric extensions}

Recall that a filter $G$ on $\mathbb{P}$ is a non-empty upwards-closed subset of $\mathbb{P}$ so that for any $p, q\in G$, there is $r \in G$ so that $r \leq p,q$.

\begin{definition}
    Let $G$ be a filter on $\mathbb{P}$. Then we define the model \[V[G]_\sS := \{ \dot x^G : \dot x \in \HS_\sS \}.\]
\end{definition}

\begin{thm}[see, e.g., {\cite{Jech1973}}]\label{thm:basicsym}
    Let $G$ be a $\mathbb{P}$-generic filter over $V$. Then $V[G]_\sS \models \ZF$.
\end{thm}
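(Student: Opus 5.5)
We follow the standard route: show that $M := V[G]_\sS$ is a transitive class with $V \subseteq M \subseteq V[G]$, verify that it is closed under the needed operations, and then apply the criterion that a transitive class which is almost universal and closed under Gödel operations (equivalently, satisfies $\Delta_0$-separation) is a model of $\ZF$. The key tool is the \emph{symmetry lemma}: if $\pi\in\sG$ and $p\Vdash\varphi(\dot x_1,\dots,\dot x_n)$, then $\pi(p)\Vdash\varphi(\pi\dot x_1,\dots,\pi\dot x_n)$. This was noted just above. We also use the closure of $\HS_\sS$ under the action of $\sG$, which follows from normality of $\sF$.

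The easy steps come first.

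\textbf{Transitivity.} If $\dot x\in\HS_\sS$, every element of $\dot x^G$ has the form $\dot y^G$ with $(p,\dot y)\in\dot x$, and such $\dot y$ lies in $\HS_\sS$ by definition.

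\textbf{Containment of $V$.} Each $\check x$ is fixed by every automorphism. So $\sym(\check x)=\sG\in\sF$, and by induction $\check x\in\HS_\sS$.

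\textbf{Extensionality and Foundation.} These hold in any transitive class.

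\textbf{Pairing and Union.} Given $\dot x,\dot y\in\HS_\sS$, the name $\{\dot x,\dot y\}^\bullet$ is fixed by $\sym(\dot x)\cap\sym(\dot y)\in\sF$. For unions, take
\[\dot u=\{(r,\dot z): \exists (p,\dot y)\in\dot x\,\exists q\,((q,\dot z)\in\dot y \wedge r\le p,q)\}.\]
Every $\pi\in\sym(\dot x)$ fixes $\dot u$. Its elements $\dot z$ are in $\HS_\sS$, so $\dot u\in\HS_\sS$ and $\dot u^G=\bigcup\dot x^G$.

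\textbf{Infinity.} This follows from $\check\omega\in\HS_\sS$.

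\textbf{Almost universality (Power Set and Collection).} Given $A\subseteq M$ with $A\in V[G]$, we want some $Y\in M$ with $A\subseteq Y$. Let $\dot A$ be a $\mathbb P$-name for $A$ and $p\Vdash\dot A\subseteq M$. Fix $\alpha$ large enough that for every $\dot z\in\HS_\sS$ there is $\dot z'\in\HS_{\alpha,\sS}$ forced to have the same interpretation as needed. Since $A\in V[G]$ is a set, the elements of $A$ have names of bounded rank. Then take
\[\dot Y=(\HS_{\alpha,\sS})^\bullet.\]
The class $\HS_{\alpha,\sS}$ is a set closed under $\sG$, so every $\pi\in\sG$ fixes $\dot Y$. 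Hence $\dot Y\in\HS_\sS$ and $A\subseteq\dot Y^G$. This gives Power Set, applied to $\mathcal P(x)\cap M$. Replacement then follows from Separation plus almost universality.

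\textbf{Separation (the main obstacle).} Given $\dot x,\dot a_1,\dots,\dot a_n\in\HS_\sS$ and a formula $\varphi$, we want
\[\{y\in\dot x^G : M\models\varphi(y,\vec a)\}\in M.\]
The difficulty is that $M$ is not itself defined by a forcing relation, so we cannot force $\varphi^M$ directly. We resolve this by defining the relativized relation $\Vdash_{\HS}$: the forcing relation for $\varphi^{M}$, where $M$ is named by the class name $\{(\mathds 1,\dot z):\dot z\in\HS_\sS\}$. By the forcing theorem for class names that are definable in $V$, we have $M\models\varphi(\dot y^G,\ldots)$ iff some $p\in G$ forces $\varphi^{\dot M}(\dot y,\ldots)$. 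Because $\pi(\dot M)=\dot M$ for all $\pi\in\sG$, the symmetry lemma still applies. Now set
\[\dot z=\{(p,\dot y): \exists q\,(q,\dot y)\in\dot x,\ p\le q,\ p\Vdash\varphi^{\dot M}(\dot y,\vec{\dot a})\}.\]
Then $\sym(\dot x)\cap\bigcap_i\sym(\dot a_i)\in\sF$ fixes $\dot z$ by the symmetry lemma, so $\dot z\in\HS_\sS$. Finally, $\dot z^G$ is the desired set by the forcing theorem.

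The delicate point we expect to check carefully is the legitimacy of forcing over the class name $\dot M$, or alternatively reducing to a reflection argument: reflect $\varphi$ to some $\HS_{\alpha,\sS}$-level and use the set name $(\HS_{\alpha,\sS})^\bullet$. We would take the reflection route if the class-name version is inconvenient.
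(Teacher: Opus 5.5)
The paper gives no proof of this theorem and simply cites the literature. Your outline is the standard argument and it is correct. The one real difference from the usual textbook presentation is in Separation. There one shows that $V[G]_\sS$ is closed under the G\"odel operations, which needs only the ordinary forcing relation for $\Delta_0$ formulas (where $\Vdash$ and $\Vdash_\sS$ agree) plus the symmetry lemma. One then appeals to the criterion that a transitive, almost universal class closed under G\"odel operations satisfies $\ZF$; that criterion is where the reflection argument you mention as a fallback is hidden. Your direct route forces $\varphi^{\dot M}$ over the class name $\dot M=\{(\mathds 1,\dot z):\dot z\in\HS_\sS\}$. This needs the forcing theorem for a definable class predicate, which is unproblematic for a set forcing $\mathbb P$. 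In return it yields a syntactic relation that is definable in $V$ and invariant under $\sG$. That is exactly what justifies using the semantically defined $\Vdash_\sS$ inside definitions of names later in the paper (e.g.\ in Lemma~\ref{lem:definablename}).

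One step needs tightening. In the almost universality argument, you fix $\alpha$ so that every $\dot z\in\HS_\sS$ has an equivalent name in $\HS_{\alpha,\sS}$; as written this is false. Moreover, knowing that the $\mathbb P$-names in $\dom(\dot A)$ have bounded rank does not bound the ranks of \emph{hereditarily symmetric} names for elements of $A$, since those $\mathbb P$-names need not lie in $\HS_\sS$. Work in $V$ instead. By Replacement, pick $\alpha$ such that for all $\dot a\in\dom(\dot A)$ and $q\in\mathbb P$, if some $\dot z\in\HS_\sS$ satisfies $q\Vdash\dot a=\dot z$, then one such $\dot z$ lies in $\HS_{\alpha,\sS}$. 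Now let $a=\dot a^G\in A$. Then $a=\dot z^G$ for some $\dot z\in\HS_\sS$, and the truth lemma gives $q\in G$ with $q\Vdash\dot a=\dot z$, so $a$ has a name in $\HS_{\alpha,\sS}$. The same bounding trick handles Collection directly from names in $V$: for $\dot u\in\dom(\dot x)$ and $q\in\mathbb P$, take the least rank of a $\dot y\in\HS_\sS$ with $q\Vdash\varphi^{\dot M}(\dot u,\dot y,\vec{\dot a})$. This is tidier than going through Separation plus almost universality inside $V[G]$.
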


Moreover we can define a forcing relation $\Vdash_\sS$ for symmetric extensions. More specifically, whenever $\varphi$ is a formula in the language of set theory, $p \in \mathbb{P}$ and $\dot x_0, \dots, \dot x_n$ are $\mathbb{P}$-names so that $\Vdash \dot x_0, \dots, \dot x_n \in V[\dot G]_\sS$, then \[p \Vdash_\sS \varphi(\dot x_0, \dots, \dot x_n)\] iff $V[G]_\sS \models \varphi(\dot x_0^G, \dots, \dot x_n^G)$ for every $\mathbb{P}$-generic $G$ over $V$ containing $p$. Note that when $\varphi$ is $\Delta_0$, there is no distinction between $p \Vdash \varphi$ and $p \Vdash_\sS \varphi$.

It turns out that $\mathbb{P}$-genericity is far stronger than necessary for Theorem~\ref{thm:basicsym}.\footnote{This is implicit in \S8 of \cite{Karagila2019}.}

\begin{definition}
    We say that $G$ is an \emph{$\mathcal{S}$-generic} filter over $V$ if $G$ is a filter on $\mathbb{P}$ and $G \cap D \neq \emptyset$ for every $D \subseteq \mathbb{P}$, $D \in V$ such that 
    \begin{enumerate}
        \item $D$ is dense in $\mathbb{P}$ and
        \item $D$ is $\sS$-symmetric, i.e., there is $H \in \mathcal{F}$ such that $\pi``D = D$ for every $\pi \in H$.
    \end{enumerate}
    Under these assumptions $D$ is called \emph{symmetrically dense}. We may also say that $G$ is \emph{symmetrically generic} to make clear that it is not necessarily $\mathbb{P}$-generic.
\end{definition}

\begin{thm}[Symmetric Forcing Theorem]\label{thm:forcingtheorem}
Let $p \in \mathbb{P}$, $\dot x_0, \dots, \dot x_n$ be $\sS$-names and $\varphi$ a formula in the language of set theory. Then $p \Vdash_{\mathcal{S}} \varphi(\dot x_0, \dots, \dot x_n)$ iff for any $\mathcal{S}$-generic filter $G$ over $V$ containing $p$, $V[G]_\sS \models \varphi(\dot x_0^G, \dots, \dot x_n^G)$. In particular, $V[G]_\sS$ is a model of $\ZF$.
\end{thm}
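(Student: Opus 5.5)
The plan is to prove the usual forcing theorem by induction on formula complexity, with a \emph{symmetric} forcing relation defined internally, and to use symmetry to replace arbitrary dense sets by symmetric ones. First I would define, in $V$, the relation $p \Vdash^*_\sS \varphi(\dot x_0,\dots,\dot x_n)$ for $\sS$-names by the standard recursion. For atomic formulas I use the usual clauses for $\in$ and $=$, but quantify only over names appearing in the $\sS$-names involved; these are again $\sS$-names, since $\HS_\sS$ is transitive in the relevant sense. For negation I take $p\Vdash^*\neg\varphi$ iff no $q\le p$ has $q\Vdash^*\varphi$. For $\exists$ I take $p\Vdash^*\exists y\,\varphi$ iff the set of $q$ for which some $\dot y\in\HS_\sS$ satisfies $q\Vdash^*\varphi(\dot y,\vec x)$ is dense below $p$. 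The key structural fact to verify along the way is \emph{invariance}: $p\Vdash^*\varphi(\vec x)$ iff $\pi(p)\Vdash^*\varphi(\pi\vec x)$ for $\pi\in\sG$. This follows by induction, because $\pi$ is an automorphism and $\HS_\sS$ is closed under $\sG$.

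The heart of the proof is the genericity lemma. Whenever $D$ is one of the dense sets used in the clauses, for instance $\{q : q\Vdash^*\varphi \text{ or } q\Vdash^*\neg\varphi\}$ or $\{q : \exists\dot y\in\HS_\sS\, q\Vdash^*\varphi(\dot y,\vec x)\}$, possibly intersected with the cone below $p$ or made dense by adding the conditions incompatible with $p$, then $D$ is invariant under $H=\bigcap_i\sym(\dot x_i)\in\sF$. Indeed, if $\pi\in H$, invariance gives $q\in D\iff\pi(q)\in D$. Note that the extra $p$ is handled by using ``$q\le p$ or $q\perp p$'', where we only need $D$ to be invariant under a group in $\sF$. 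I expect this to be the main subtle point. The set $\{q : q\le p \text{ or } q\perp p\}$ is not symmetric in general. I would avoid the issue by never putting $p$ into a dense set: I only ever use the full dense sets $D_\varphi=\{q: q\Vdash^*\varphi \vee q\Vdash^*\neg\varphi\}$, and I meet them with $G$ and then compare the result with $p$ via compatibility inside $G$. Similarly, the atomic clauses give dense sets defined from the names alone, so they are fixed by $\sym(\dot x)\cap\sym(\dot y)$.

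With this in hand, I would prove by induction on $\varphi$ the truth lemma for every $\sS$-generic $G$: $V[G]_\sS\models\varphi(\vec x^G)$ iff some $p\in G$ has $p\Vdash^*\varphi$. I follow the usual proof, using only that $G$ is a filter meeting the symmetric dense sets just described. In the $\exists$ step, witnesses in $V[G]_\sS$ are by definition of the form $\dot y^G$ with $\dot y\in\HS_\sS$, which matches the clause. For atomic formulas, the standard induction on name rank goes through because the relevant dense sets are symmetric. It then follows that $p\Vdash^*\varphi$ iff every $\sS$-generic $G\ni p$ satisfies $\varphi$. The forward direction is immediate from the truth lemma. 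For the converse, if $p\not\Vdash^*\varphi$, take $q\le p$ with $q\Vdash^*\neg\varphi$, and take any $\sS$-generic containing $q$; for instance a $V$-generic, which exists in the usual metamathematical sense, e.g.\ via countable transitive models or a Boolean-valued reading. Applying the same equivalence to $\mathbb{P}$-generic filters shows that $\Vdash^*_\sS$ coincides with $\Vdash_\sS$, since $\mathbb{P}$-generic filters are $\sS$-generic. This gives the stated equivalence.

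Finally, $\ZF$ in $V[G]_\sS$ for $\sS$-generic $G$ follows by transfer. Each axiom instance is forced by $\mathds 1$ in the sense of $\Vdash_\sS$, since it holds in all $\mathbb{P}$-generic symmetric extensions by Theorem~\ref{thm:basicsym}. By the equivalence just proved, it then holds in $V[G]_\sS$ for every $\sS$-generic $G$, because $\mathds 1\in G$.
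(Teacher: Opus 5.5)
Your proposal is correct and is essentially the paper's argument: run the usual inductive proof of the forcing theorem, observe that every dense set that has to be met is invariant under a group in $\sF$ (the decision sets, and the paper's $\tilde D_{(r,\tau)} = D_{(r,\tau)}\cup\{q : q\perp D_{(r,\tau)}\}$), and handle $p$ through compatibility inside $G$ rather than through the non-symmetric set $\{q : q\le p \text{ or } q\perp p\}$. The differences are minor: you define the relation internally and then identify it with $\Vdash_\sS$ via $\mathbb{P}$-generics, whereas the paper uses known facts about $\Vdash_\sS$ directly; and there is one small slip, namely that the sets in the atomic step are fixed by $\sym(\tau)\cap\sym(\dot y)$ for $\tau$ appearing in $\dot x$, not by $\sym(\dot x)\cap\sym(\dot y)$, which is still enough since $\tau\in\HS_\sS$.
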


\begin{proof}
This is easy to verify by noting that in the usual inductive proof of the forcing theorem the only dense sets we need to hit are symmetric. We use standard facts on the forcing relation $\Vdash_\sS$ that we already know, which makes this quite short.

Consider $p \Vdash_\sS \dot x \subseteq \dot y$ and $p \Vdash_\sS \dot x \in \dot y$ by induction on the ranks of $\dot x$ and $\dot y$. If $\rk(\dot x), \rk(\dot y) \leq \alpha$, then $p \Vdash_\sS \dot x \subseteq \dot y$ iff for every $(r, \tau)\in \dot x, r \parallel p$, the set $D_{(r,\tau)} = \{ q : \exists (r', \sigma) \in \dot y : q \leq r' \wedge q \Vdash_\sS \tau = \sigma \}$ is dense in $\{ q : q \leq p,r \}$. The set $D_{(r,\tau)}$ above is $\sS$-symmetric and $\tilde D_{(r,\tau)} = D_{(r,\tau)} \cup \{ q : q \perp D_{(r,\tau)} \}$ is symmetrically dense. Thus $G \cap \tilde  D_{(r,\tau)} \neq \emptyset$ and if $p, r\in G$, then for any $q \in G \cap \tilde  D_{(r,\tau)}$, there is $q' \leq q,p,r$ and so necessarily $q \in D_{(r,\tau)}$ and $G \cap D_{(r,\tau)} \neq \emptyset$. By the inductive assumption, since $q \Vdash_\sS \tau = \sigma$ and $q \in G$, $\tau^G = \sigma^G \in \dot x^G \cap \dot y^G.$ Bringing everything together, we find that $p \Vdash_\sS \dot x \subseteq \dot y$ and $p \in G$ imply $\dot x^G \subseteq \dot y^G$. On the other hand, if for every $\sS$-generic $G \ni p$, $\dot x^G \subseteq \dot y^G$, then all sets $D_{(r,\tau)}$ must be dense in $\{ t : t \leq p,r \}$, otherwise there is a symmetric generic $G$ with $p,r \in G$, but $G \cap D_{(r,\tau)} = \emptyset$. But then $\tau^G \in \dot x^G \setminus \dot y^G$ using the inductive assumption. Thus indeed, $p \Vdash_\sS \dot x \subseteq \dot y$. Then we have also proven the theorem for $p \Vdash_\sS \dot x = \dot y$. The proof for $p \Vdash_\sS \dot x \in \dot y$ builds on this and is similar. 

Once we have proven the theorem for atomic formulas, the rest follows easily by induction on the complexity of $\varphi$. 

If $p \Vdash_\sS \neg \varphi$, $p\in G$, then $V[G]_{\sS} \not\models \varphi$, since otherwise for some $q \in G$ and thus some $r\leq p, q$, $r \Vdash_\sS \varphi$, by the inductive assumption. Thus $V[G]_{\sS} \models \neg\varphi$. On the other hand, if $p \not\Vdash_\sS \neg\varphi$, there is $q \leq p$, such that $q \Vdash_\sS \varphi$ and thus there is an $\sS$-generic with $q \in G$ and $V[G]_\sS \models \varphi$.

Finally, $p \Vdash_\sS \forall x \varphi(x)$ iff for every $\sS$-name $\dot x$, $p \Vdash_\sS \varphi(\dot x)$, iff for every $\sS$-name $\dot x$ and any $\sS$-generic $G \ni p$, $V[G]_\sS \models \varphi(\dot x^G)$ iff for every $\sS$-generic $G \ni p$, $V[G]_\sS \models \forall x \varphi(x)$.
\end{proof}

Implicit in the above proof is the fact that if $\dot x$ is $\sS$-symmetric, then $\{p\in\mathbb{P}:p\Vdash_\sS\varphi(\dot x)\lor p\Vdash_\sS\lnot\varphi(\dot x)\}$ is a symmetrically dense open set. Let us remark that while the forcing relation $\Vdash_\sS$ may take arbitrary names $\dot x_0, \dots, \dot x_n$ that are forced by $\mathbb{P}$ to be elements of the symmetric extension, in the above forcing theorem we strictly need that these are hereditarily symmetric. For instance consider the case that $\sG = \Aut(\mathbb{P})$, $\sF = \{ \sG \}$ and say $\mathbb{P} = 2^{<\omega}$. Then it is easy to note that any $x \in 2^\omega$ induces an $\sS$-generic $G_x = \{ x \restriction n : n \in \omega \}$. Let $A \subseteq 2^{<\omega}$ be a maximal antichain and $x \in 2^\omega$ so that $G_x \cap A = \emptyset$. The name $\dot y = \{ (s, \check{0}) : s \in A\}$ is clearly forced to be in the symmetric extension as it is forced to equal $\{ 0\}$. On the other hand, $\dot y^{G_x} = \emptyset$. 

\begin{definition}
    A \emph{symmetric extension of $V$} is any model of the form $V[G]_\sS$, where $\sS$ is a symmetric system and $G$ is $\sS$-generic over $V$.
\end{definition}

Traditionally, and throughout most of the literature, symmetric extensions are only formed using $\mathbb{P}$-generic filters. But in fact, as we shall see later, these two notions completely agree. More precisely, we will show that whenever $G$ is $\sS$-generic there is also a $\mathbb{P}$-generic $H$ so that $V[G]_\sS = V[H]_\sS$ (see Theorem~\ref{thm:quotsummary}). In consideration of the general theory, it makes sense though to consider this more general form of genericity and this will become clear through Theorem~\ref{lem:twostepfactor}.

\subsection{Closed names}

\begin{definition}\label{def:closedname}
    For an $\sS$-name $\dot x$ and an ordinal $\alpha$, define \[\cl_\alpha(\dot x) := \{ (p, \dot y) : \dot y \in \HS_\alpha \wedge p \Vdash \dot y \in \dot x \}.\]
    Moreover, we let $\cl(\dot x) := \cl_\alpha(\dot x)$, where $\alpha$ is minimal such that $\Vdash \dot x = \cl_\alpha(\dot x)$. We say that $\dot x$ is \emph{closed} if $\dot x = \cl(\dot x)$.
\end{definition}

Note of course that $\cl(\cl(\dot x)) = \cl(\dot x)$, justifying this terminology. The following lemma is trivial but nevertheless quite substantial. It implies that we can uniformly pick representatives from each equivalence class of names under forcing-equivalence ($\dot x \sim \dot y$ if and only if $\Vdash \dot x = \dot y$). This is a particularly useful observation in the context of $\neg \AC$.

\begin{lemma}
    Every $\sS$-name is forcing equivalent to a unique closed $\sS$-name.
\end{lemma}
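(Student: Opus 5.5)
Fix an $\sS$-name $\dot x$. Existence splits into three checks: $\cl(\dot x)$ is well defined, it is forcing equivalent to $\dot x$, and it is closed. Uniqueness then follows because $\cl$ depends only on the forcing-equivalence class of its argument.

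\emph{Well-definedness.} We need some $\alpha$ with $\Vdash \dot x = \cl_\alpha(\dot x)$. Take $\alpha$ large enough that every $\dot y$ with $(q,\dot y)\in\dot x$ lies in $\HS_\alpha$; since $\dot x\in\HS_\sS$, these constituents are hereditarily symmetric, and $\alpha>\rk(\dot x)$ suffices.
\begin{itemize}
\item[$\subseteq$:] If $(q,\dot y)\in\dot x$, then $q\Vdash \dot y\in\dot x$, so $(q,\dot y)\in\cl_\alpha(\dot x)$. Hence $\dot x\subseteq\cl_\alpha(\dot x)$ literally, and so $\Vdash\dot x\subseteq\cl_\alpha(\dot x)$.
\item[$\supseteq$:] If $(p,\dot y)\in\cl_\alpha(\dot x)$ and $p\in G$, then $p\Vdash\dot y\in\dot x$ gives $\dot y^G\in\dot x^G$.
\end{itemize}
So the minimum $\alpha$ exists.

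\emph{Symmetry.} We must check $\cl(\dot x)\in\HS_\sS$, i.e. that it is symmetric; its constituents already lie in $\HS_\alpha$. Let $\pi\in\sym(\dot x)$. Then $\pi$ maps $\HS_\alpha$ onto itself, because $\sym(\pi\dot y)=\pi\sym(\dot y)\pi^{-1}$, normality, and induction on rank. Also $p\Vdash\dot y\in\dot x$ iff $\pi p\Vdash\pi\dot y\in\dot x$, by the symmetry lemma for $\Vdash$ and $\pi\dot x=\dot x$. Hence $\pi$ permutes $\cl_\alpha(\dot x)$, so $\sym(\dot x)\subseteq\sym(\cl_\alpha(\dot x))\in\sF$.

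\emph{Invariance and closedness.} The key observation: if $\Vdash\dot x=\dot x'$, then $p\Vdash\dot y\in\dot x$ iff $p\Vdash\dot y\in\dot x'$. Therefore $\cl_\alpha(\dot x)=\cl_\alpha(\dot x')$ for every $\alpha$, the minimal $\alpha$ is the same for both, and so $\cl(\dot x)=\cl(\dot x')$. Applying this with $\dot x'=\cl(\dot x)$, which is forcing equivalent to $\dot x$, gives $\cl(\cl(\dot x))=\cl(\dot x)$; that is, $\cl(\dot x)$ is closed.

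\emph{Uniqueness.} If $\dot z,\dot z'$ are closed and both equivalent to $\dot x$, then $\Vdash\dot z=\dot z'$. By invariance, $\dot z=\cl(\dot z)=\cl(\dot z')=\dot z'$.

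The only point needing care is the minimality clause in Definition~\ref{def:closedname}. Because that definition picks the \emph{minimal} $\alpha$, uniqueness depends on the minimal $\alpha$ being determined by the equivalence class, and invariance provides exactly that. The symmetry verification is the only other substantive step.
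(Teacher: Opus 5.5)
Your proof is correct and follows the same route as the paper, which gives no proof beyond calling the lemma trivial and noting that $\cl(\cl(\dot x))=\cl(\dot x)$. Your observation that $\cl$ depends only on the forcing-equivalence class is exactly what yields that idempotence and the uniqueness, and your existence and symmetry checks supply the routine details the paper leaves implicit.
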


\begin{lemma}\label{lem:definablename}
Let $\dot x_0, \dots, \dot x_n$ be $\sS$-names and $\varphi$ a formula such that \[\Vdash_\sS \exists ! y \varphi(y, \dot x_0, \dots, \dot x_n).\] Then there is a unique closed $\sS$-name $\dot y$, such that $\Vdash_\sS\varphi(\dot y, \dot x_0, \dots, \dot x_n)$ and we have that $\sym(\dot x_0, \dots, \dot x_n) \leq \sym(\dot y)$.
\end{lemma}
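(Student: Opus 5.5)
We plan to prove the lemma in three steps: existence of some $\sS$-name witnessing $\varphi$, uniqueness of the closed representative, and the symmetry bound.

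\emph{Existence.} Since $\Vdash_\sS \exists y\,\varphi(y,\dot x_0,\dots,\dot x_n)$, for every $\mathbb{P}$-generic $G$ the witness lies in $V[G]_\sS$, so it is $\dot z^G$ for some $\dot z\in\HS_\sS$. We need a single name. First, the set of conditions $p$ such that some $\dot z\in\HS_\sS$ has $p\Vdash_\sS\varphi(\dot z,\vec{\dot x})$ is dense. Using a bound on ranks (Replacement/Reflection in $V$), we choose a maximal antichain $A$ and names $\dot z_p\in\HS_\alpha$ for $p\in A$ with $p\Vdash_\sS\varphi(\dot z_p,\vec{\dot x})$. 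We then avoid gluing these, which could destroy symmetry, and instead define directly
\[\dot y:=\{(q,\dot u):\dot u\in\HS_\alpha,\ q\Vdash_\sS \exists y\,(\varphi(y,\vec{\dot x})\wedge \dot u\in y)\}.\]
Here $\alpha$ is chosen so that every element of every $\dot z_p$ lies in $\HS_\alpha$ (for instance, $\alpha$ above all the ranks of the $\dot z_p$). Then $\Vdash \dot y=\dot z_p$ below each $p\in A$, and hence $\Vdash_\sS\varphi(\dot y,\vec{\dot x})$ by uniqueness of the witness. This requires a mixing argument: below $p$, each element of $\dot z_p^G$ is $\dot u^G$ for some $\dot u\in\HS_\alpha$ and some $q\in G$ forcing membership, so $\dot y^G\supseteq\dot z_p^G$; conversely, uniqueness of the witness gives $\dot y^G\subseteq \dot z_p^G$. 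We then replace $\dot y$ by $\cl(\dot y)$, which is forcing equivalent to it.

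\emph{Uniqueness.} If $\dot y,\dot y'$ are closed and both satisfy $\varphi$, then $\Vdash_\sS \dot y=\dot y'$. Since this is $\Delta_0$, we get $\Vdash\dot y=\dot y'$, and the preceding lemma (a unique closed representative in each forcing-equivalence class) gives $\dot y=\dot y'$.

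\emph{Symmetry.} Let $\pi\in\sym(\dot x_0,\dots,\dot x_n)$. Applying $\pi$ to the forcing statement gives $\Vdash_\sS\varphi(\pi\dot y,\vec{\dot x})$, because $\pi\mathds 1=\mathds 1$ and $\pi$ preserves $\Vdash_\sS$, which is the restriction of $\Vdash$ to formulas relativized to the $\HS$-interpretation. This relativized model is $\pi$-invariant since $\HS_\sS$ is closed under $\pi$. Moreover $\pi\dot y$ is still closed: $\pi$ permutes $\HS_\beta$ for each $\beta$, and $p\Vdash\dot u\in\dot y$ iff $\pi p\Vdash\pi\dot u\in\pi\dot y$, so $\pi(\cl_\alpha(\dot y))=\cl_\alpha(\pi\dot y)$ with the same minimal $\alpha$. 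By uniqueness, $\pi\dot y=\dot y$. This shows $\sym(\vec{\dot x})\le\sym(\dot y)$, so $\dot y$ is symmetric. Its elements lie in $\HS$ by construction, so $\dot y\in\HS_\sS$.

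The main obstacle is the existence step: producing a single hereditarily symmetric name for the witness without using choice among names. The closure-style definition of $\dot y$ above handles this, and symmetry then comes for free from the third step.
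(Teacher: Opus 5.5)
Your argument is correct and is essentially the paper's proof. The paper takes $\dot y=\{(p,\dot z):\dot z\in\HS_\alpha\wedge p\Vdash_\sS\forall y(\varphi(y,\dot x_0,\dots,\dot x_n)\rightarrow\dot z\in y)\}$ with $\alpha$ minimal. Given uniqueness of the witness, this is exactly your $\cl(\dot y)$. The inclusion $\sym(\dot x_0,\dots,\dot x_n)\leq\sym(\dot y)$ follows because this definition is invariant under any $\pi$ fixing the $\dot x_i$, which is what your uniqueness-of-the-closed-witness argument amounts to.

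One small repair is needed. Choosing a maximal antichain $A$ and names $\dot z_p$ uses $\AC$ in $V$, which the paper does not assume. You only need the ordinal $\alpha$, and Replacement gives it directly: take the supremum over $p\in\mathbb{P}$ of the least $\beta$ such that some $q\leq p$ forces $\varphi(\dot z,\dot x_0,\dots,\dot x_n)$ for some $\dot z\in\HS_\beta$. Then check $\dot y^G=\dot z^G$ for each generic $G$ separately.
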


\begin{proof}
    Simply let \[\dot y = \{ (p, \dot z) : \dot z \in \HS_\alpha \wedge p \Vdash_\sS \forall y ( \varphi(y, \dot x_0, \dots, \dot x_n) \rightarrow \dot z \in y) \},\] where $\alpha$ is minimal such that this works. 
\end{proof}

Thus, whenever we extend the language of $\ZF$ by a definable function (such as $\mathcal{P}(\cdot), (\cdot,\cdot), \bigcup, \trcl,  \dots$), as we implicitly do all the time, we can canonically extend it to $\sS$-names (equally to $\mathbb{P}$-names of course). A particular instance of this that will appear in the next section is when $\dot f$ is an $\sS$-name for a function and $\dot f(\dot x)$ denotes the unique closed name $\dot y$ such that $\Vdash (\dot x, \dot y) \in \dot f$. Another important use of Lemma~\ref{lem:definablename} is the following which serves as a very useful trick and can be thought of as a weak form of mixing.

\begin{lemma}\label{lem:thetrick}
    Let $\dot x, \dot x_0, \dots, \dot x_n$ be $\sS$-names and $p \in \mathbb{P}$ be so that for a formula $\chi$ \[p \Vdash_\sS \chi(\dot x, \dot x_0, \dots, \dot x_n).\] Suppose that there is some $\sS$-name $\dot z$ so that $\Vdash_\sS \chi(\dot z, \dot x_0, \dots, \dot x_n)$. 
    
    Then there is an $\sS$-name $\dot y$, uniformly definable from $\dot x$, $\dot x_0, \dots, \dot x_n$ and $\dot z$, so that $\sym(\dot z, \dot x, \dot x_0, \dots, \dot x_n) \leq \sym(\dot y)$, $p \Vdash_\sS \dot y = \dot x$ and $\Vdash_\sS \chi(\dot y, \dot x_0, \dots, \dot x_n)$.
\end{lemma}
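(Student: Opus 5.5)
The plan is to define $\dot y$ by the mixing formula
\[
y = x \text{ if } p\in\dot G,\qquad y=z \text{ otherwise},
\]
and to obtain it via Lemma~\ref{lem:definablename}. The difficulty is that $p\in\dot G$ is not expressible in $V[G]_\sS$, since $\dot G$ need not be symmetric and $p$ itself need not be fixed by a large group. So instead of $p$ we use a formula over the relevant names. Let $\vec x$ abbreviate $\dot x_0,\dots,\dot x_n$. Consider
\[
\psi(y):\quad \bigl(\chi(x,\vec x)\wedge y=x\bigr)\vee\bigl(\neg\chi(x,\vec x)\wedge y=z\bigr),
\]
with $x,\vec x,z$ interpreted by $\dot x,\vec{\dot x},\dot z$. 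Then $\Vdash_\sS\exists! y\,\psi(y)$ holds trivially: exactly one disjunct applies, and each determines $y$ uniquely.

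By Lemma~\ref{lem:definablename} there is a unique closed $\sS$-name $\dot y$ with $\Vdash_\sS\psi(\dot y)$. Moreover $\sym(\dot z,\dot x,\vec{\dot x})\le\sym(\dot y)$, since the lemma applies with the parameter list $\dot x,\vec{\dot x},\dot z$. The explicit formula given in the proof of Lemma~\ref{lem:definablename} also shows that $\dot y$ is uniformly definable from these names. For the symmetry claim, one checks that each $\pi$ in this group maps $\dot y$ to a closed name with the same defining property, and then uniqueness gives $\pi(\dot y)=\dot y$. This is exactly what the lemma provides.

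It remains to verify the two forcing statements, arguing in $V[G]_\sS$ for $\sS$-generic $G$ and using Theorem~\ref{thm:forcingtheorem}.

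First, suppose $p\in G$. Then $V[G]_\sS\models\chi(\dot x^G,\vec{\dot x}^G)$, so the first disjunct applies and $\dot y^G=\dot x^G$. Hence $p\Vdash_\sS\dot y=\dot x$.

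Second, take any $G$. If $\chi(\dot x^G,\vec{\dot x}^G)$ holds, then $\dot y^G=\dot x^G$ satisfies $\chi$. Otherwise $\dot y^G=\dot z^G$, which satisfies $\chi$ by the hypothesis on $\dot z$. In both cases $\Vdash_\sS\chi(\dot y,\vec{\dot x})$.

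The only mildly delicate point is making sure we never refer to $p$ or to the generic filter inside the defining formula. Using $\chi$ itself as the case-split avoids this, and nothing else is an obstacle.
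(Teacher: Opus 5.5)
Your proposal is correct and is exactly the paper's argument: the paper applies Lemma~\ref{lem:definablename} to the same case-split formula $(\chi(\dot x,\dot x_0,\dots,\dot x_n)\wedge y=\dot x)\vee(\neg\chi(\dot x,\dot x_0,\dots,\dot x_n)\wedge y=\dot z)$ and says the resulting name clearly works. Your explicit verification of $p\Vdash_\sS \dot y=\dot x$ and $\Vdash_\sS\chi(\dot y,\dot x_0,\dots,\dot x_n)$ via the symmetric forcing theorem just fills in the details the paper leaves implicit.
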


\begin{proof}
Simply consider the formula \[\varphi(\dot x, \dot x_0, \dots, \dot x_n, \dot z, y) = (\chi(\dot x, \dot x_0, \dots) \wedge y = \dot x ) \vee (\neg\chi(\dot x, \dot x_0, \dots) \wedge y = \dot z) .\]

The name $\dot y$ obtained through Lemma~\ref{lem:definablename} clearly works.
\end{proof}

What the lemma is saying effectively, is that in many situations where we consider an object $x\in V[G]_\sS$ with a certain property $\chi$, there is a name $\dot y$ for $x$ so that already $\Vdash_\sS \chi(\dot y)$. In many common such situations we have that $\sym(\dot z) = \sym(\dot x_0) = \sG$ and thus $\sym(\dot y) = \sym(\dot x)$. For example, if $\dot{\mathbb{Q}}$ is an $\sS$-name for a forcing poset, $\sym(\dot{\mathbb{Q}}) = \sG$ and $p \Vdash \dot q \in \dot{\mathbb{Q}}$, we may simply assume that $\Vdash_\sS \dot q \in \dot{\mathbb{Q}}$ as the maximal condition of ${\mathbb{Q}}$ is a canonical witness for an element of $\mathbb{Q}$. For $p \Vdash \dot \pi \in \dot H$, where $\Vdash \dot H \leq \Aut(\dot{\mathbb{Q}})$, there is the identity $\id$, etc.

While Lemma~\ref{lem:thetrick} is a quite innocent looking observation, it is a key point in our renewed exposition of iterations that eliminates many of the technical difficulties that arise in \cite{Karagila2019}.

\begin{exmp}[Basic Cohen model]\label{ex:Cohen}
    Let $\mathbb{P} = \Add(\omega, \omega)$, that is, $\mathbb{P}$ consists of finite partial functions $p \colon \omega \times \omega \to 2$. Let $\sG$ be the group of finitary permutations $\pi$ of $\omega$ acting on a condition $p$ by $\pi(p)(\pi(n),m) = p(n, m)$. The filter $\sF$ is generated by subgroups of $\sG$ of the form $\fix(n) := \{ \pi \in \sG : \pi(n) = n \}$. The system $\sC = (\mathbb{P}, \sG, \sF)$ generates what is known as the \emph{basic Cohen model}. 
    
    If $\dot c_n$ is the canonical name for the $n$th Cohen real, then $\pi(\dot c_n) = \dot c_{\pi(n)}$ and it follows that $\sym(\dot c_n) = \fix(n)$. The only names appearing in $\dot c_n$ are check names, so $\dot c_n \in \HS$. Moreover $\dot A := \{ \dot c_n : n \in \omega \}^\bullet \in \HS$, as $\sym(\dot A) = \sG$, but in $V[G]_\sC$, $\dot A^G$ can't be well-ordered and in fact is Dedekind-finite. For more information on the basic Cohen model, see \cite{Jech1973}.
\end{exmp}

\section{Two step iterations}\label{sec:twostep}

\begin{definition}\label{def:twostep}
    Let $\sS_0 = ({\mathbb{P}}_0,{\sG}_0, {\sF}_0)$ be a symmetric system and let $\dot \sS_1 = (\dot{\mathbb{P}}_1, \dot \sG_1, \dot\sF_1)^\bullet$ be an $\sS_0$-name such that $\Vdash_{{\sS_0}} \dot \sS_1 \text{ is a symmetric system}$ and $\sym (\dot \sS_1) = \mathcal{G}_0$. 
    We define the two step iteration $\sS_0 * \dot \sS_1 = (\mathbb{P}, \sG, \sF)$ as follows. 

    \begin{enumerate}
        \item $\mathbb{P} = \mathbb{P}_0 *_{\sS_0} \dot{\mathbb{P}}_1$ consists of pairs $(p, \dot q)$ such that $p \in \mathbb{P}_0$ and $\dot q$ is a closed $\sS_0$-name such that $ \Vdash_{\sS_0} \dot q \in \dot{\mathbb{P}}_1$. The extension relation is given by $(p', \dot q') \leq (p, \dot q)$ iff $p' \leq p$ and $p' \Vdash_{\sS_0} \dot q' \leq \dot q$. Note that this makes $\mathbb{P}$ a dense subposet of the usual forcing iteration $\mathbb{P}_0 * \dot{\mathbb{P}}_1$.\footnote{Also note $\mathbb{P}$ is indeed a set and not a proper class, as there are only set-many closed names for elements of $\dot{\mathbb{P}}_1$.} 
        \item Any $\bar \pi = (\pi_0, \dot \pi_1)$, where $\pi_0 \in \sG_0$, $\dot \pi_1$ is an $\sS_0$-name and $\Vdash_{\sS_0} \dot \pi_1 \in \dot\sG_1$, is identified with the map given by \[\bar \pi (p, \dot q) = (\pi_0(p), \dot \pi_1(\pi_0(\dot q))).\] $\sG$ is the group of all such $\bar \pi$.\footnote{Note that by the convention set after Lemma~\ref{lem:definablename}, $\dot \pi_1(\pi_0(\dot q))$ is another closed name. It is not hard to check that the map induced by $\bar \pi$ is indeed an automorphism and we will check that $\sG$ forms a group in Lemma~\ref{lem:twostep}. Let us remark as well that different pairs $(\pi_0, \dot \pi_1)$ may give rise to the same map.} 
        
        \item Let $(H_0, \dot H_1)$ be such that $H_0 \in {\sF}_0$, $\dot H_1$ is an $\sS_0$-name, $H_0 \leq \sym(\dot H_1)$ and $\Vdash_{\sS_0} \dot H_1 \in \dot{\sF}_1$. Then we identify $(H_0, \dot H_1)$ with the subgroup of $\sG$ of all automorphisms $\bar \pi = (\pi_0, \dot \pi_1)$, where $\pi_0 \in H_0$ and $\Vdash \dot \pi_1 \in \dot H_1$. $\sF$ is generated by these subgroups. 
    \end{enumerate}
\end{definition}

There are many alternate and equivalent ways to define and denote two-step iterations. For us, the purpose is to at least give a technically precise and workable definition in order to study the basic theory. But our hope is that eventually, just as with forcing, when more fundamental questions have been cleared and enough intuition has been built, the precise definition won't matter as much in practice. In this regard, let us make a few remarks. 

The purpose of using closed names in (1) is simply to ensure that the maps induced by pairs $\bar \pi$ are well-defined and form automorphisms. But of course there are many other ways to get around this. For one, there are certainly other choices of representatives for names than closed names as defined in Definition~\ref{def:closedname} that we could use. More interestingly though, we may completely eliminate the mention of closed names in practice by identifying $(p, \dot q)$ with $(p, \cl(\dot q))$, when $\dot q$ is an arbitrary $\sS_0$-name. This is exactly the type of abuse of notation that we already are using by writing $(\pi_0, \dot \pi_1)$ and the like.\footnote{We may also note that whenever $(\pi_0, \dot \pi_1)$ and $(\sigma_0, \dot \sigma_1)$ induce the same map, then actually $\pi_0 = \sigma_0$ and $\Vdash \dot \pi_1 = \dot \sigma_1$.}

\begin{lemma}\label{lem:twostep}
    $\sS_0 * \dot \sS_1$ is a symmetric system.
\end{lemma}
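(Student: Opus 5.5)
We have to check three things: $\mathbb{P}$ is a forcing notion, $\sG$ is a group of automorphisms of $\mathbb{P}$, and $\sF$ is a normal filter of subgroups of $\sG$. That $\mathbb{P}$ is a preorder with maximum $(\mathds 1, \cl(\dot{\mathds 1}))$ is routine, since $\le_{\dot{\mathbb{P}}_1}$ is forced to be a preorder.

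\emph{Automorphisms.} Fix $\bar\pi=(\pi_0,\dot\pi_1)$. Since $\sym(\dot\sS_1)=\sG_0$, we have $\pi_0(\dot{\mathbb{P}}_1)=\dot{\mathbb{P}}_1$ and $\pi_0(\dot\sG_1)=\dot\sG_1$. Hence if $\Vdash_{\sS_0}\dot q\in\dot{\mathbb{P}}_1$, then $\Vdash_{\sS_0}\pi_0(\dot q)\in\dot{\mathbb{P}}_1$, and also $\Vdash \dot\pi_1(\pi_0(\dot q))\in\dot{\mathbb{P}}_1$. The closed name is an $\sS_0$-name by Lemma~\ref{lem:definablename}, and $\pi_0$ maps closed names to closed names because $\pi_0$ preserves $\HS_\alpha$ and the forcing relation.

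Order preservation follows from $p'\Vdash\dot q'\le\dot q \Rightarrow \pi_0(p')\Vdash\pi_0(\dot q')\le\pi_0(\dot q)$, together with the fact that $\dot\pi_1$ is forced to be an automorphism. For bijectivity, I will exhibit the inverse $(\pi_0^{-1},\pi_0^{-1}(\dot\pi_1^{-1}))$, where $\dot\pi_1^{-1}$ is the closed name for the inverse given by Lemma~\ref{lem:definablename}.

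\emph{Group structure.} The key identity I will verify is
\[
(\pi_0,\dot\pi_1)\circ(\sigma_0,\dot\sigma_1)=(\pi_0\sigma_0,\ \dot\pi_1\circ\pi_0(\dot\sigma_1)).
\]
Indeed, $\dot\pi_1(\pi_0(\dot\sigma_1(\sigma_0\dot q)))$ equals $\dot\pi_1\,\pi_0(\dot\sigma_1)\,(\pi_0\sigma_0\dot q)$ up to forcing equivalence, using that $\pi_0$ commutes with the definable operation ``apply a function''. Because closed names are canonical representatives, forcing equivalence yields literal equality of the maps. The right-hand name is forced to lie in $\dot\sG_1$ since $\pi_0(\dot\sG_1)=\dot\sG_1$. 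The identity map is $(\id,\check{\id})$, or more precisely the closed name for the identity.

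\emph{Normal filter.} This is the step I expect to need the most care. Upward closure holds by definition, since $\sF$ is generated. Intersections: $(H_0,\dot H_1)\cap(K_0,\dot K_1)\supseteq(H_0\cap K_0,\dot H_1\cap\dot K_1)$, where the intersection name is symmetric under $H_0\cap K_0$ by Lemma~\ref{lem:definablename}.

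For normality, conjugate a generator by $\bar\pi$. I aim to show
\[
\bar\pi(H_0,\dot H_1)\bar\pi^{-1}\supseteq\bigl(\pi_0H_0\pi_0^{-1},\ \dot\pi_1\pi_0(\dot H_1)\dot\pi_1^{-1}\bigr).
\]
The right-hand side is legitimate for the following reasons. First, $\pi_0H_0\pi_0^{-1}\in\sF_0$ by normality of $\sF_0$. Second, the name is forced into $\dot\sF_1$ by the normality of $\dot\sF_1$ in the extension and $\pi_0(\dot\sF_1)=\dot\sF_1$. Third, its symmetry group contains $\pi_0H_0\pi_0^{-1}\cap\sym(\dot\pi_1)$.

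The obstacle is that $\sym(\dot\pi_1)$ need not lie in $\sF_0$, since $\dot\pi_1$ is an arbitrary $\sS_0$-name with $\Vdash\dot\pi_1\in\dot\sG_1$. But it is an $\sS_0$-name, so $\sym(\dot\pi_1)\in\sF_0$ after all, being hereditarily symmetric. Hence the intersection is in $\sF_0$, and I will shrink the first coordinate to it.

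Finally I check the containment: given $(\tau_0,\dot\tau_1)$ in the right-hand side, conjugating back gives $\bar\pi^{-1}(\tau_0,\dot\tau_1)\bar\pi=(\pi_0^{-1}\tau_0\pi_0,\ \text{name})$. Using the composition formula, this name is forced to be $\pi_0^{-1}\tau_0(\dot\pi_1^{-1})\cdot\dots$, which lands in $\dot H_1$ provided $\tau_0$ fixes $\dot\pi_1$ and $\pi_0(\dot H_1)$. That is exactly why the first coordinate was shrunk to $\pi_0H_0\pi_0^{-1}\cap\sym(\dot\pi_1)$.
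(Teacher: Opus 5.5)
Your proposal is correct and follows the paper's proof. It uses the same composition formula $(\pi_0,\dot\pi_1)\circ(\sigma_0,\dot\sigma_1)=(\pi_0\sigma_0,\dot\pi_1\circ\pi_0(\dot\sigma_1))$ and the same inverse $(\pi_0^{-1},\pi_0^{-1}(\dot\pi_1^{-1}))$. It proves normality the same way: shrink the first coordinate of a generator so that it fixes the name of the conjugating automorphism's second coordinate, then conjugate. The paper intersects with $\sym(\sigma_0^{-1}(\dot\sigma_1^{-1}))$ and you with $\sym(\dot\pi_1)$ after conjugating, which amounts to the same thing.

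The one check you skip, which the paper does first, is that each generator $(H_0,\dot H_1)$ is itself a subgroup. This follows at once from your two formulas together with $H_0\le\sym(\dot H_1)$, which guarantees $\Vdash\pi_0(\dot\sigma_1)\in\dot H_1$ and $\Vdash\sigma_0^{-1}(\dot\sigma_1^{-1})\in\dot H_1$.
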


\begin{proof}
    To check that each $(H_0, \dot H_1)$, and in particular $\sG = (\sG_0, \dot \sG_1)$ is a group, we first compute a formula for composition: \begin{align*}
    (\pi_0, \dot \pi_1)\circ (\sigma_0, \dot \sigma_1) (p, \dot q) &= (\pi_0, \dot \pi_1)(\sigma_0(p), \dot \sigma_1 (\sigma_0(\dot q)))\\
    &=((\pi_0 \circ \sigma_0)(p), \dot \pi_1 (\pi_0(\dot \sigma_1 (\sigma_0(\dot q)))) )\\
    &= ((\pi_0 \circ \sigma_0)(p), \dot \pi_1 \circ \pi_0(\dot \sigma_1) (\pi_0 \circ \sigma_0(\dot q)))\\
    &= (\pi_0 \circ \sigma_0, \dot \pi_1 \circ \pi_0(\dot \sigma_1))(p, \dot q).
\end{align*}
Since $\pi_0, \sigma_0 \in H_0$, also $\pi_0 \circ \sigma_0 \in H_0$ and since $\Vdash \dot \sigma_1 \in \dot H_1$ and $\pi_0$ fixes $\dot H_1$, also $\Vdash \pi_0(\dot \sigma_1) \in \dot H_1$ and so $\Vdash \dot \pi_1 \circ \pi_0(\dot \sigma_1) \in \dot H_1$.
It is also easy to see that \[(\sigma_0, \dot \sigma_1)^{-1} = (\sigma_0^{-1}, \sigma_0^{-1}(\dot \sigma_1^{-1})) \in (H_0, \dot H_1).\]

Let us check that $\sF$ is normal. Suppose $(H_0, \dot H_1) \in \sF$ and $ \bar \sigma = (\sigma_0, \dot \sigma_1) \in \sG$ is arbitrary. Consider $H_0' = H_0 \cap \sym(\sigma_0^{-1}(\dot \sigma_1^{-1}))$, so $(H_0', \dot H_1) \in \sF$ is a subgroup of $(H_0, \dot H_1)$. 
Let $(\pi_0, \dot \pi_1) \in (H_0', \dot H_1)$ be arbitrary. Then:
\begin{align*}
    (\sigma_0, \dot \sigma_1)\circ (\pi_0, \dot \pi_1) \circ  (\sigma_0, \dot \sigma_1)^{-1}&= (\sigma_0, \dot \sigma_1)\circ (\pi_0, \dot \pi_1) \circ (\sigma_0^{-1}, \sigma_0^{-1}(\dot \sigma_1^{-1}))\\
    &=(\sigma_0, \dot \sigma_1)\circ (\pi_0 \circ \sigma_0^{-1}, \dot \pi_1 \circ \pi_0(\sigma_0^{-1}(\dot \sigma_1^{-1})) )\\
     &=(\sigma_0, \dot \sigma_1)\circ (\pi_0 \circ \sigma_0^{-1}, \dot \pi_1 \circ \sigma_0^{-1}(\dot \sigma_1^{-1}) )\\
    &= (\sigma_0 \circ \pi_0 \circ \sigma_0^{-1}, \dot \sigma_1 \circ \sigma_0(  \dot \pi_1 \circ \sigma_0^{-1}(\dot \sigma_1^{-1})))\\
    &= (\sigma_0 \circ \pi_0 \circ \sigma_0^{-1},  \dot \sigma_1 \circ \sigma_0(  \dot \pi_1) \circ \sigma_0\sigma_0^{-1}(\dot \sigma_1^{-1}))\\
    &= (\sigma_0 \circ \pi_0 \circ \sigma_0^{-1},  \dot \sigma_1 \circ \sigma_0(  \dot \pi_1) \circ \dot \sigma_1^{-1})\\
    &= (\sigma_0 \circ \pi_0 \circ \sigma_0^{-1},  \dot \sigma_1 \circ \dot \tau \circ \dot \sigma_1^{-1}),
\end{align*}
where $\Vdash \dot \tau \in \sigma_0(\dot H_1)$. Note that $\Vdash \sigma_0(\dot H_1) \in \dot \sF_1$, since $\sym(\dot \sS_1) = \sG_0$. So if we let $\dot K_1$ be a name for $\dot \sigma_1 \sigma_0(\dot H_1) \dot \sigma_1^{-1}$ and $K_0 = H_0' \cap \sigma_0 H'_0 \sigma_0^{-1} \cap \sym(\dot K_1)$, then $(K_0, \dot K_1) \in \sF$ and $(K_0, \dot K_1) \subseteq \bar \sigma (H_0', \dot H_1) \bar \sigma^{-1} \subseteq \bar \sigma (H_0, \dot H_1) \bar \sigma^{-1}$.
\end{proof}

The following factorization theorem says that this is a correct definition of the two-step iteration.

\begin{thm}[Factorization Theorem]\label{lem:twostepfactor}
Let $\sS = \sS_0 * \dot \sS_1$ be a two-step iteration as above. Whenever $G$ is $\sS$-generic over $V$, then $G_0 = \dom G$ is $\sS_0$-generic over $V$ and $G_1 = \{ \dot q^{G_0} : \exists p \in G_0 ((p, \dot q) \in G)\}$ is $\sS_1$-generic over $V[G_0]_{\sS_0}$, where $\sS_1 = \dot \sS_1^{G_0}$, and \[V[G]_{\sS} = V[G_0]_{\sS_0}[G_1]_{\sS_1}.\]

On the other hand, whenever $G_0$ is $\sS_0$-generic over $V$ and $G_1$ is $\sS_1$-generic over $V[G_0]_{\sS_0}$, then $G = G_0 * G_1 = \{ (p, \dot q) : p \in G_0 \wedge \dot q^{G_0} \in G_1 \}$ is $\sS$-generic over $V$ and $V[G]_{\sS} = V[G_0]_{\sS_0}[G_1]_{\sS_1}.$
\end{thm}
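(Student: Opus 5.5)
I will handle genericity transfer and the equality of models separately, and in both directions I will work with symmetric dense sets. Everything is routed through Lemma~\ref{lem:thetrick} and the Symmetric Forcing Theorem~\ref{thm:forcingtheorem}.

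\textbf{Genericity, first direction.} Let $G$ be $\sS$-generic. For $G_0=\dom G$, take a symmetrically dense $D_0\subseteq\mathbb{P}_0$ with witness $H_0\in\sF_0$. Then $D=\{(p,\dot q):p\in D_0\}$ is dense in $\mathbb{P}$ and is fixed by $(H_0,\dot\sG_1)\in\sF$, so $G$ meets it and $G_0$ meets $D_0$. The filter property of $G_0$ is immediate.

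For $G_1$, let $D_1\in V[G_0]_{\sS_0}$ be dense in $\sS_1$ and symmetric via some $K\in\sF_1$. Choose names $\dot D_1,\dot K\in\HS_{\sS_0}$, and use Lemma~\ref{lem:thetrick} so that it is outright forced that $\dot D_1$ is dense and $\dot K\in\dot\sF_1$ fixes it. Put $H_0=\sym(\dot D_1)\cap\sym(\dot K)$ and
\[D=\{(p,\dot q)\in\mathbb{P}: p\Vdash_{\sS_0}\dot q\in\dot D_1\}.\]
This set is dense. It is invariant under $(H_0,\dot K)$, because $(\pi_0,\dot\pi_1)(p,\dot q)=(\pi_0 p,\dot\pi_1(\pi_0\dot q))$ and $\pi_0 p$ forces $\dot\pi_1(\pi_0\dot q)\in\dot\pi_1``\pi_0(\dot D_1)=\dot D_1$. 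So $G\cap D\neq\emptyset$ and hence $G_1\cap D_1\neq\emptyset$. The filter property of $G_1$ follows from that of $G$: if $(p,\dot q),(p',\dot q')\in G$, a common extension in $G$ witnesses a common extension in $G_1$.

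\textbf{Genericity, converse.} For $G_0*G_1$, let $D\subseteq\mathbb{P}$ be symmetrically dense via $(H_0,\dot H_1)$; I may shrink to basic generators of $\sF$. Set
\[\dot D_1=\{(p,\dot q):(p,\dot q)\in D\}\]
as an $\sS_0$-name for $\{\dot q^{G_0}:\exists p\in G_0\,(p,\dot q)\in D\}$. Its symmetry group contains $H_0\cap\sym(\dot H_1)$, and in $V[G_0]$ it is symmetric under $\dot H_1^{G_0}$. Since the pair $(\id,\dot\pi_1)$ lies in the group whenever $\Vdash\dot\pi_1\in\dot H_1$, the set $\dot D_1^{G_0}$ is closed under $\dot H_1^{G_0}$.

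Density of $\dot D_1^{G_0}$ below a given $q$ uses genericity of $G_0$ against the symmetrically dense set
\[\{p: \exists\dot q'\,((p,\dot q')\in D\wedge p\Vdash\dot q'\le\dot q)\}\cup\{p:p\perp\cdots\},\]
which is symmetric under $H_0\cap\sym(\dot q)$. Here I also apply Lemma~\ref{lem:thetrick} to turn $q$ into a name forced to lie in $\dot{\mathbb{P}}_1$. Once density is established, $G_1$ meets $\dot D_1^{G_0}$ and therefore $G$ meets $D$. Verifying that $G$ is a filter is routine.

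\textbf{Equality of models.} I will define translations in both directions.

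For $V[G]_\sS\subseteq V[G_0][G_1]$, send each $\sS$-name $\dot x$ by recursion to an $\sS_0$-name $\dot x^*$ for an $\sS_1$-name, namely
\[\dot x^*=\{(\dot q,\dot y^*)^\bullet\ \text{placed under } p : (p,\dot q,\dot y)\in\dot x\}\]
in closed form. Then check that $\dot x^{G}=((\dot x^*)^{G_0})^{G_1}$, and that hereditary symmetry transfers. If $(H_0,\dot H_1)\le\sym(\dot x)$, then $H_0\cap\sym(\dot H_1)$ fixes $\dot x^*$, and $\dot H_1^{G_0}$ fixes $(\dot x^*)^{G_0}$, once more using elements of the form $(\id,\dot\pi_1)$.

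For the reverse inclusion, given an $\sS_0$-name $\dot\tau$ for an $\sS_1$-name, build $\tau^\dagger=\{((p,\dot q),\dot\sigma^\dagger): p\Vdash(\dot q,\dot\sigma)\in\dot\tau\}$, with the recursion run on ranks of closed names. Symmetry here is the delicate point: from $H_0\le\sym(\dot\tau)$ and $\Vdash\dot H_1\le\sym(\dot\tau)$, I need $(H_0,\dot H_1)$ to fix $\tau^\dagger$. For this, $\dot H_1$ should be chosen so that $H_0\le\sym(\dot H_1)$, which Lemma~\ref{lem:definablename} provides by taking $\dot H_1$ to be the closed name for $\sym(\dot\tau^{G_0})$ intersected with a filter element.

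I expect this reverse translation, keeping hereditary symmetry while passing from names-of-names to names, to be the main obstacle. It requires careful use of closed names and Lemma~\ref{lem:thetrick} so that everything is outright forced rather than forced below a condition. Genericity alone does not suffice for this, since the recursion also depends on the Symmetric Forcing Theorem~\ref{thm:forcingtheorem} applied to $\sS_0$-generic $G_0$ in order to evaluate $\Vdash_{\sS_0}$ statements.
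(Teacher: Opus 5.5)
Your proposal is correct and follows the paper's proof essentially step for step. It uses the same dense sets $\{(p,\dot q):p\in D_0\}$ and $\{(p,\dot q):p\Vdash_{\sS_0}\dot q\in\dot D_1\}$, with generators $(H_0,\dot\sG_1)$ and $(\sym(\dot D_1,\dot K),\dot K)$, the same reading of $D$ itself as an $\sS_0$-name in the converse, and the same pair of recursive translations $[\dot x]$ and $]\dot x[$ for the equality of models.

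Where you go beyond the paper, your additions are sound: the explicit density argument for $\dot D_1^{G_0}$ via a symmetrically dense subset of $\mathbb{P}_0$, and the choice of $\dot H_1$ via Lemma~\ref{lem:definablename} so that $(H_0,\dot H_1)$ is a legitimate generator. The symmetry check you flag as the main obstacle goes through by proving $\bar\pi(\dot\sigma^\dagger)=(\dot\pi_1(\pi_0(\dot\sigma)))^\dagger$ by induction on rank, with $\dot\pi_1(\pi_0(\dot\sigma))$ read as a closed name.
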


\begin{proof}
    The proof is essentially the same as for forcing and relies on the observation that a symmetrically dense subset of $\mathbb{P}$ is essentially the same as an $\sS_0$-name for an $\dot \sS_1$-symmetrically dense subset of $\dot{\mathbb{P}}_1$.
    
    First let $G$ be $\sS$-generic over $V$. Let $D_0$ be a symmetrically dense subset of $\mathbb{P}_0$ as witnessed by $H \in \sF_0$. Then $D = \{ (p, \dot q) \in \mathbb{P} : p \in D_0 \}$ is a symmetrically dense subset of $\mathbb{P}$ as witnessed by $(H, \dot \sG_1) \in \sF$. As $G \cap D \neq \emptyset$, we also have that $G_0 \cap D_0 \neq \emptyset$. Thus, $G_0$ is $\sS_0$-generic over $V$. Next let $D_1 \in V[G_0]_{\sS_0}$ be a symmetrically dense subset of $\mathbb{P}_1$ as witnessed by $H_1 \in \sF_1$. Then there are $\sS_0$-names $\dot D_1$ and $\dot H_1$ such that $\dot D_1^{G_0} = D_1$, $\dot H_1^{G_0} = H_1$, $\Vdash_{\sS_0} \dot D_1 \text{ is dense}$, $\Vdash_{\sS_0} \dot H_1 \in \dot \sF_1$ and $\Vdash_{\sS_0} \forall \pi \in \dot H_1 ( \pi``\dot D_1 = \dot D_1)$.\footnote{ If $\dot D_1'^{G_0} = D_1$, $\dot H_1'^{G_0} = H_1$ are arbitrary, we find canonical $\sS$-names $\dot D_1$, $\dot H_1$ so that it is forced that ``if $\dot D_1'$ is symmetrically dense as witnessed by $\dot H_1'$, then $\dot D_1 = \dot D_1'$, $\dot H_1 = \dot H_1'$ and otherwise $\dot D_1 = \dot{\mathbb{P}}_1$, $\dot H_1 = \dot{\sG}_1$". This is an instance of Lemma~\ref{lem:thetrick}.} Consider \[D = \{ (p, \dot q) : p \Vdash_{\sS_0} \dot q \in \dot D_1 \} \in V.\]
    $D$ is clearly dense and symmetric as witnessed by $(\sym(\dot D_1, \dot H_1), \dot H_1)$: 
    
    For $(\pi_0,\dot \pi_1) \in (\sym(\dot D_1, \dot H_1), \dot H_1)$, 
        \begin{align*}
            (\pi_0,\dot \pi_1)(p_0, \dot q) \in D &\text{ iff } (\pi_0(p_0),  \dot \pi_1(\pi_0(\dot q))) \in D\\
            &\text{ iff } \pi_0(p_0) \Vdash_{\sS_0} \dot \pi_1(\pi_0(\dot q)) \in \dot D_1\\
            &\text{ iff } p_0 \Vdash_{\sS_0} \pi_0^{-1}(\dot \pi_1(\pi_0(\dot q))) \in \pi_0^{-1}(\dot D_1)\\
            &\text{ iff } p_0 \Vdash_{\sS_0} \pi_0^{-1}(\dot \pi_1)(\dot q) \in \dot D_1 \\
            &\text{ iff } p_0 \Vdash_{\sS_0}  \dot q \in \dot D_1,
        \text{ since } p_0 \Vdash_{\sS_0} \pi_0^{-1}(\dot \pi_1) \in \pi_0(\dot H_1) = \dot H_1, \\
        &\text{ iff } (p_0, \dot q) \in D.
        \end{align*} 
    Thus $G \cap D \neq \emptyset$ and $G_1 \cap D_1 \neq \emptyset$.

    In the other direction, suppose $G_0$ is $\sS_0$-generic over $V$, $G_1$ is $\sS_1$-generic over $V[G_0]_{\sS_0}$ and $G$ is defined as in the statement of the Theorem. Let $D$ be a symmetrically dense subset of $\mathbb{P}$ as witnessed by $(H_0, \dot H_1)$. Then $\dot D_1 := D$ is literally an $\sS_0$-name with $H_0 \subseteq \sym(\dot D_1)$, since \[\pi_0(\dot D_1) = \{ (\pi_0(p), \pi_0(\dot q)) : (p, \dot q) \in D \} = (\pi_0, \dot \id)`` D = D = \dot D_1,\] for any $\pi_0 \in H_0$. Moreover, $\dot D_1$ is a name for a dense set and whenever $\dot \pi_1$ is such that $\Vdash \dot \pi_1 \in \dot H_1$, $\dot D_1 = D = (\id, \dot \pi_1) `` D = \{(p, \dot \pi_1(\dot q)) : (p, \dot q) \in D \}$ is an $\sS_0$-name for $\dot \pi_1 `` \dot D_1$, so \[\Vdash \dot \pi_1 `` \dot D_1 = \dot D_1.\]
    Thus $G_1 \cap \dot D_1^{G_0} \neq \emptyset$, i.e., there is $(p, \dot q) \in \dot D_1 = D$ such that $p \in G_0$ and $\dot q^{G_0} \in G_1$. According to the definition, $(p, \dot q) \in G$ so $G \cap D \neq \emptyset$.

    Finally to see that $V[G]_{\sS} = V[G_0]_{\sS_0}[G_1]_{\sS_1}$, note that we can identify an $\sS$-name $\dot x$ with an $\sS_0$-name for an $\sS_1$-name, by recursively defining \[[\dot x] = \{ (p, (\dot q, [\dot y])^\bullet) : ((p, \dot q), \dot y) \in \dot x \}.\]
    It is easy to check that $\dot x^G = ([\dot x]^{G_0})^{G_1}$. Similarly, when $\dot x$ is an $\sS_0$-name for an $\sS_1$-name, we define \[]\dot x [\ = \{ ((p, \dot q), ]\dot y [) : p \Vdash_{\sS_0} (\dot q, \dot y)^\bullet \in \dot x\}.\qedhere\]\end{proof}

It follows also quite easily that any $\sS_0 * \dot \sS_1$-generic is of the form $G_0*G_1$ as above. Let us note that the use of symmetric generics, in contrast to fully forcing generic filters, is strictly needed for the Factorization Theorem. For instance, consider the system $\sS_0 = (\mathbb{C}, \Aut(\mathbb{C}), \{\Aut(\mathbb{C})\})$, where $\mathbb{C} = \Add(\omega, 1)$ is Cohen forcing. By induction on the rank of an $\sS_0$-name $\dot x$ one can easily see that $\Vdash_{\sS_0} \dot x = \check x$ for some set $x$. Thus this system doesn't add any new sets. But a $\mathbb{C}$-generic is a Cohen real $c$ and $V[c]_{\sS_0} = V$.  Now let $\sS_1 = (\mathbb{C}, \{ \id \}, \{\{ \id \}\}) \in V[c]_{\sS_0}$. This second system adds exactly a Cohen real over $V[c]_{\sS_0}$, which $c$ itself is. But $(c,c)$ is certainly not $\mathbb{C} \times \mathbb{C}$-generic over $V$. It does, however, correspond to a symmetrically generic filter for $\sS_0 * \dot{\sS}_1$.

This is not a pathological example. In fact, by the results of Grigorieff \cite{Grigorieff1975}, and as we shall reprove later, for any symmetric system $(\mathbb{P}, \sG, \sF)$ and a $\mathbb{P}$-generic $G$ over $V$, $G$ itself is generic over $V[G]_\sS$ for some forcing notion, which must be a non-trivial forcing if $V[G]\neq V[G]_\sS$. 

\section{Ideal support iterations}

Building on the last section we now introduce transfinite iterations of symmetric systems, specifically iterations with supports coming from some arbitrary ideal on the ordinals. Some of what is written here can also be found identically in the second author's joint work with P.~Holy \cite{HolySchilhan}, which presents the particular case of finite support iterations in the framework developed here.

\subsection{General iterations}

We first give some raw definition of how iterations are supposed to look and be built. We call these \emph{pre-iterations} and verify some basic properties. The reason why we do not refer to these as ``true'' iterations yet is that initial stages of pre-iterations do not even need to correspond to intermediate models. The are a few simple conditions (Definition~\ref{def:iteration}) that we can add to ensure this, but the presentation is more transparent by giving them afterwards.

\begin{definition}\label{def:preiteration}
Consider a sequence of the form \[\langle \sS_{\alpha}, \dot \sT_\alpha : \alpha < \delta \rangle = \langle (\mathbb{P}_{\alpha}, \sG_\alpha, \sF_\alpha), (\dot{\mathbb{Q}}_\alpha, \dot \sH_\alpha, \dot \sE_\alpha)^\bullet : \alpha < \delta \rangle,\] where each $\sS_\alpha$ is a symmetric system, $\dot \sT_\alpha$ is an $\sS_\alpha$-name for a symmetric system and $\sym(\dot \sT_\alpha) = \sG_\alpha$. Then we call this sequence a \emph{pre-iteration} of length $\delta$ if for each $\alpha < \delta$, 
    \begin{enumerate}
       \item  \begin{enumerate}
        \item $\mathbb{P}_\alpha$ consists of sequences $\bar p = \langle \dot p(\beta) : \beta < \alpha \rangle$, such that $\bar p \restriction \beta \in \mathbb{P}_\beta$ and $\dot p(\beta)$ is an $\sS_\beta$ name for an element of $\dot{\mathbb{Q}}_\beta$, for all $\beta < \alpha$,\footnote{These may or may not be all sequences.}
        \item $\sG_\alpha$ consists of automorphisms of $\mathbb{P}_\alpha$ represented, as detailed below, by sequences $\bar \pi = \langle \dot \pi(\beta) : \beta < \alpha \rangle$,  where $\bar \pi \restriction \beta \in \sG_\beta$ and $\dot \pi(\beta)$ is an $\sS_\beta$ name for an element of $\dot \sH_\beta$, for all $\beta < \alpha$,
        \item $\sF_\alpha$ is generated by subgroups of $\sG_\alpha$ represented, as detailed below, by sequences $\bar H = \langle \dot H(\beta) : \beta < \alpha \rangle$,  where $\bar H \restriction \beta \in \sF_\beta$, $\dot H(\beta)$ is an $\sS_\beta$ name for an element of $\dot \sE_\beta$ and $\bar H \restriction \beta \leq \sym(\dot H(\beta))$, for all $\beta < \alpha$,
        \end{enumerate}
        \item $\sS_{\alpha +1} = \sS_\alpha * \dot{\sT_\alpha}$, where pairs $(\bar p, \dot q)$, $(\bar \pi, \dot \sigma)$, $(\bar H, \dot K)$ as in Definition~\ref{def:twostep} are replaced with the sequences $\bar p^\frown \dot q$, $\bar \pi^\frown \dot \sigma$ and $\bar H^\frown \dot K$ respectively.
    \end{enumerate}
    For $\alpha$ limit,
    \begin{enumerate}\setcounter{enumi}{2}
       \item \begin{enumerate}
        \item the order on $\mathbb{P}_\alpha$ is given by $\bar q \leq \bar p$ iff $\bar q \restriction \beta \leq \bar p \restriction \beta$ for each $\beta < \alpha$,
        \item for any $\bar p \in \mathbb{P}_\alpha$ and  $\bar \pi \in \sG_\alpha$, \[\bar \pi(\bar p) = \bigcup_{\beta < \alpha} \bar \pi \restriction \beta (\bar p \restriction \beta),\]
        \item for any $\bar \pi \in \sG_\alpha$, $\bar H \in \sF_\alpha $, $\bar \pi \in \bar H$ iff $\bar \pi \restriction \beta \in \bar H \restriction \beta$ for all $\beta < \alpha$.
        \end{enumerate}
    \end{enumerate}
\end{definition}

Note that, in each case, ``as detailed below" refers to both the information gained from (2), which tells us for instance which automorphism is represented by $\bar \pi^ \frown \dot \sigma$ assuming we know what $\bar \pi$ represents, as well as the information given in (3) for the limit steps. The action of $\bar \pi$ and the meaning of $\bar H$ can thus be seen as being given inductively. On the other hand, the above is simply a definition which may or may not hold true for $\langle \sS_{\alpha}, \dot \sT_\alpha : \alpha < \delta \rangle$. Of course, in practice, Definition~\ref{def:preiteration} is supposed to be read as an instruction as how to construct an iteration.\footnote{Following our official definition of two-step iterations, it may also be helpful to remark that each $\dot p(\beta)$ in a condition will be a closed name. Again, this is a rather immaterial matter.}

Note one aspect of the definition that could be seen as an oddity, but corresponds to an issue (or rather a non-issue) that also appears in most presentations of iterated forcing. Namely, if read carefully, $\mathbb{P}_0$ can only be the trivial forcing as it can only contain the empty sequence. All this is saying is that according to the official definition, iterations start with a trivial symmetric system $\sS_0$. In practice, this is of course irrelevant and we can safely ignore this.

Also, in practice one only considers iterations that have a last final step, the system that we want to extend with. But we need to be more general with the definition here for the presentation to make sense. Iterations of the form $\langle \sS_{\alpha}, \dot \sT_\alpha : \alpha \leq \delta \rangle$ are just special cases, where $\leq \delta$ is thought of as $< \delta +1$.

Let us now make some simple observations about pre-iterations. As in Lemma~\ref{lem:twostep} we compute some formulas for the composition and inverses of automorphisms. Towards general support iterations, we also define $\supp(\bar p) := \{ \alpha : \not\Vdash_{\sS_\alpha} \dot p(\alpha) = \mathds 1\}$, $\supp(\bar \pi) := \{ \alpha : \not\Vdash_{\sS_\alpha} \dot \pi(\alpha) = \id\}$ and $\supp(\bar H) := \{ \alpha : \not\Vdash_{\sS_\alpha} \dot H(\alpha) = \dot\sH_\alpha\}$. We refer to the above as \emph{support} in each case.

\begin{lemma}\label{lem:preiteration}
Let $\langle \sS_{\alpha}, \dot \sT_\alpha : \alpha < \delta \rangle$ be a pre-iteration as above, $\alpha < \delta$ arbitrary. 
\begin{enumerate}
    \item For any $\bar p \in \mathbb{P}_\alpha$, $\bar \pi \in \sG_\alpha$, $\bar \pi(\bar p) = \langle \dot \pi(\beta)(\bar \pi \restriction \beta(\dot p(\beta))) : \beta < \alpha \rangle$ and $\supp(\bar p) = \supp(\bar \pi(\bar p))$.
    \item For any $\bar p \in \mathbb{P}_\alpha$, $\bar \pi \in \sG_\alpha$, $\beta \leq \alpha$, $\bar \pi(\bar p) \restriction \beta = (\bar\pi \restriction \beta)(\bar p \restriction \beta)$.
    \item For any $\bar \pi, \bar \sigma \in \sG_\alpha$, $\bar \pi \circ \bar \sigma = \langle \dot \pi(\beta) \circ \bar \pi \restriction \beta(\dot \sigma(\beta)) : \beta < \alpha\rangle$ and $\supp(\bar \pi \circ \bar \sigma) \subseteq \supp(\bar \pi) \cup \supp(\bar \sigma)$.
    \item For any $\bar \pi \in \sG_\alpha$, $\bar \pi ^{-1} = \langle \bar \pi \restriction \beta^{-1}(\dot \pi(\beta)^{-1}) : \beta < \alpha\rangle$ and $\supp(\bar \pi^{-1}) = \supp(\bar \pi)$.
    \item For any $\bar \sigma \in \sG_\alpha$ and $\bar H \in \sF_\alpha$, where $\bar H \restriction \beta \leq \sym(\bar \sigma \restriction \beta^{-1}(\dot \sigma(\beta)^{-1}) ),$ for each $\beta<\alpha$, we have that \[\bar \sigma \bar H \bar \sigma^{-1} = \langle \dot \sigma(\beta) \bar \sigma \restriction \beta(\dot H(\beta)) \dot \sigma(\beta)^{-1} : \beta < \alpha \rangle\] and $\supp(\bar \sigma \bar H \bar \sigma^{-1}) = \supp(\bar H).$
    \item For any $\bar \sigma \in \sG_\alpha$ and $\bar H \in \sF_\alpha$, where $\bar H \restriction \beta \leq \sym(\dot \sigma(\beta)),$ for each $\beta<\alpha$, we have that \[\bar \sigma^{-1} \bar H \bar \sigma = \langle( \bar \sigma \restriction \beta)^{-1}\left(\dot \sigma(\beta)^{-1} \dot H(\beta) \dot \sigma(\beta)\right) : \beta < \alpha \rangle\] and $\supp(\bar \sigma \bar H \bar \sigma^{-1}) = \supp(\bar H).$
\end{enumerate}
\end{lemma}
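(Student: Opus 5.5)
We will prove all six items simultaneously by induction on $\alpha<\delta$. The successor case reduces to the two-step computations in Lemma~\ref{lem:twostep}, and the limit case follows from the coordinatewise clauses (3)(a)--(c) of Definition~\ref{def:preiteration}. Item (2) comes first, since the others use it. For $\alpha=\gamma+1$, the action of $\bar\pi=(\bar\pi\restriction\gamma)^\frown\dot\pi(\gamma)$ on $\bar p=(\bar p\restriction\gamma)^\frown\dot p(\gamma)$ is, by Definition~\ref{def:twostep}, the pair $((\bar\pi\restriction\gamma)(\bar p\restriction\gamma),\ \dot\pi(\gamma)(\bar\pi\restriction\gamma(\dot p(\gamma))))$. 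This gives (2) at $\beta=\gamma$, and the induction hypothesis at $\gamma$ gives it for smaller $\beta$. At limit $\alpha$, (2) is immediate from (3)(b) and the induction hypothesis, which guarantees that the union is coherent. Once (2) holds, the coordinate formula in (1) is just the last coordinate of the successor computation, read off at each $\beta<\alpha$.

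For the support claim in (1), we use that $\bar\pi\restriction\beta$ is an automorphism of $\mathbb P_\beta$ fixing $\mathds 1$ and $\dot\pi(\beta)$ is forced to be an automorphism of $\dot{\mathbb Q}_\beta$. Then
\[\Vdash_{\sS_\beta}\dot p(\beta)=\mathds 1 \iff \Vdash_{\sS_\beta}\bar\pi\restriction\beta(\dot p(\beta))=\mathds 1 \iff \Vdash_{\sS_\beta}\dot\pi(\beta)(\bar\pi\restriction\beta(\dot p(\beta)))=\mathds 1.\]
The first equivalence uses the symmetry lemma $p\Vdash\varphi(\dot x)\Rightarrow\pi p\Vdash\varphi(\pi\dot x)$, together with the facts that $\sym(\dot{\mathbb Q}_\beta)=\sG_\beta$ and that the name for $\mathds1$ can be taken fixed.

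For (3) and (4), the successor step is exactly the composition and inverse formula from Lemma~\ref{lem:twostep}, applied to $\sS_\gamma*\dot\sT_\gamma$: $(\pi_0,\dot\pi_1)\circ(\sigma_0,\dot\sigma_1)=(\pi_0\sigma_0,\dot\pi_1\circ\pi_0(\dot\sigma_1))$, and $(\sigma_0,\dot\sigma_1)^{-1}=(\sigma_0^{-1},\sigma_0^{-1}(\dot\sigma_1^{-1}))$. At limits, two automorphisms agree if they agree on every initial segment, by (3)(b) and (2). The support statements follow as in (1). If $\Vdash\dot\pi(\beta)=\id$ and $\Vdash\dot\sigma(\beta)=\id$, then $\bar\pi\restriction\beta(\dot\sigma(\beta))$ is forced to be $\id$ by the symmetry lemma, since $\id$ is $\sG_\beta$-invariant. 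For inverses, $\dot\pi(\beta)^{-1}=\id$ iff $\dot\pi(\beta)=\id$, and applying $(\bar\pi\restriction\beta)^{-1}$ preserves being forced equal to $\id$.

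Items (5) and (6) follow the normality computation in Lemma~\ref{lem:twostep}, coordinate by coordinate. The hypothesis $\bar H\restriction\beta\le\sym(\bar\sigma\restriction\beta^{-1}(\dot\sigma(\beta)^{-1}))$ plays exactly the role of $H_0'$ there: it lets the conjugation collapse so that the $\beta$-th coordinate is $\dot\sigma(\beta)\,\bar\sigma\restriction\beta(\dot\tau)\,\dot\sigma(\beta)^{-1}$ with $\Vdash\dot\tau\in\dot H(\beta)$. We then show the two inclusions by induction on initial segments, using (3)(c) at limits. Item (6) is the symmetric computation for $\bar\sigma^{-1}\bar H\bar\sigma$, now using $\bar H\restriction\beta\le\sym(\dot\sigma(\beta))$.

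The main obstacle is this step: checking that the resulting sequence actually represents a group in $\sF_\alpha$. That requires $\bar\sigma\restriction\beta\,\bar H\restriction\beta\,(\bar\sigma\restriction\beta)^{-1}\le\sym$ of the new coordinate name, which follows from the induction hypothesis and the identity $\sym(\pi\dot x)=\pi\sym(\dot x)\pi^{-1}$. It also requires the reverse inclusion, for which we use (4) to write a general element as a conjugate. Supports are preserved because conjugation by an automorphism and the action of $\bar\sigma\restriction\beta$ both preserve being forced equal to $\dot\sH_\beta$, as $\sym(\dot\sT_\beta)=\sG_\beta$.
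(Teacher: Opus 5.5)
Your proposal is correct and follows the paper's proof. Both argue by induction on $\alpha$: successor steps are read off from the composition, inverse and conjugation formulas of Lemma~\ref{lem:twostep}, and limit steps come from the coordinatewise clauses (3)(a)--(c). The differences are cosmetic: you prove (2) before (1) where the paper derives (2) from (1), and you redo the conjugation computation for (6) where the paper gets it by applying (5) to $\bar\sigma^{-1}$ via (4).
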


\begin{proof}
    All of these are fairly easy inductions on $\alpha$. 
    \begin{enumerate}
        \item For the successor step, suppose we are given $\bar p  = \bar r^\frown \dot s$ and $\bar \pi = \bar \rho ^\frown \dot \sigma$. Then $\bar \pi(\bar p) = \bar \rho(\bar r)^\frown \dot \sigma(\bar \rho(\dot s))$ (see Definition~\ref{def:twostep}). We know already that $\supp(\bar \rho(\bar r)) = \supp(\bar r)$, and we simply note that $\Vdash \dot \sigma(\bar \rho(\dot s)) = \mathds 1$ iff $\Vdash \dot s = \mathds 1$. For the limit step, directly apply the inductive hypothesis and (3)(b) of Definition~\ref{def:preiteration}.
        \item Immediate from (1).
        \item Let us check the successor step. We are given $\bar \pi = \bar \rho^\frown \dot \mu$, $\bar \sigma = \bar \tau^\frown \dot \upsilon$. Then according to the composition formula computed in Lemma~\ref{lem:twostep}, \[(\bar \rho^\frown \dot \mu) \circ (\bar \tau^\frown \dot \upsilon) = (\bar \rho \circ \bar \tau)^\frown \dot \mu \circ \bar \rho(\dot \upsilon).\] Using the inductive hypothesis this corresponds exactly to what is being claimed. The claim about supports is equally simple. Again, the rest follows trivially from how automorphisms act in the limit case.
        \item Exactly as before.
        \item Similar to before, following the last paragraph in the proof of Lemma~\ref{lem:twostep}.
        \item Directly apply (4) and (5).\qedhere
    \end{enumerate}
\end{proof}

\begin{definition}\label{def:iteration}
    $\langle \sS_{\alpha}, \dot \sT_\alpha : \alpha < \delta \rangle$ is called an \emph{iteration} if for each $\alpha < \gamma < \delta$, 
    \begin{enumerate}
        \item for any $\bar p \in \mathbb{P}_\alpha$, $\bar p ^\frown \langle \dot{\mathds 1}_\beta : \beta \in [\alpha, \gamma) \rangle \in \mathbb{P}_\gamma$, where $\dot{\mathds 1}_\beta$ is the closed $\sS_\beta$-name for the trivial condition of $\dot \sT_\beta$,
       \item for any $\bar \pi \in \sG_\alpha$, $\bar \pi ^\frown \langle \dot{\id}_\beta : \beta \in [\alpha, \gamma) \rangle \in \sG_\gamma$, where $\dot{\id}_\beta$ is the closed $\sS_\beta$-name for the trivial automorphism of (the forcing of) $\dot \sT_\beta$,
       \item for any $\bar H \in \sF_\alpha$, $\bar H ^\frown \langle \dot{\sH}_\beta : \beta \in [\alpha, \gamma) \rangle \in \sF_\gamma$.
    \end{enumerate}
\end{definition}

As we will see later, these conditions suffice to show that $\sS_\alpha$ is a \emph{complete subsystem} (Definition~\ref{def:completesubsys}) of $\sS_\gamma$ for each $\alpha < \gamma < \delta$. This implies in particular that an $\sS_\alpha$ extension is always intermediate to an $\sS_\gamma$ extension. This is a crucial implicit part of most iteration arguments. For instance, when at stage $\alpha$ we chose $\dot \sT_\alpha$ specifically to perform some task over the $\sS_\alpha$-extension, we want that in the final iterated extension this really occurred, meaning that indeed there is a corresponding intermediate $\sS_{\alpha +1} = \sS_{\alpha} * \dot \sT_\alpha$ extension.

\subsection{Ideal support iterations}

\begin{definition}\label{def:Isupportiteration}
    Let $\delta$ be an ordinal and $\mathcal{I}$ be an ideal on $\delta$ containing all finite sets. Let $\langle \sS_{\alpha}, \dot \sT_\alpha : \alpha < \delta \rangle$ be a pre-iteration. Then it is called an $\mathcal{I}$-support iteration, if for each limit $\alpha < \delta$: 
    \begin{enumerate}
        \item $\sF_\alpha$ is generated by $\bar H$ as in (1)(c) of Definition~\ref{def:preiteration} such that $\supp(\bar H) \in \mathcal{I}$,
        \item $\mathbb{P}_\alpha$ consists exactly of those $\bar p$ as in (1)(a) of Definition~\ref{def:preiteration} such that 
            \begin{enumerate}
            \item $\supp(\bar p) \in \mathcal{I}$ and
            \item there is a generator $\bar H$ as above such that $\bar H \restriction \beta \leq \sym_{\sS_\beta}(\dot p(\beta))$, for each $\beta < \alpha$,
        \end{enumerate}
        \item $\sG_\alpha$ consists exactly of those $\bar \pi$ as in (1)(b) of Definition~\ref{def:preiteration} such that 
            \begin{enumerate}
            \item $\supp(\bar \pi) \in \mathcal{I}$ and
            \item there is a generator $\bar H$ as above such that $\bar H \restriction \beta \leq \sym_{\sS_\beta}(\dot \pi(\beta))$, for each $\beta < \alpha$.
        \end{enumerate}
    \end{enumerate}
\end{definition}

Together with the definition of a pre-iteration this tells us exactly how to construct $\mathcal{I}$-support iterations by telling us what to do at limit steps. Note immediately by a simple induction that an $\mathcal{I}$-support iteration is an iteration as per Definition~\ref{def:iteration}. We speak of a \emph{finite}-, \emph{countable}-, or more generally a $<\kappa$-support iteration when we mean an $\mathcal{I}$-support iteration for $\mathcal{I}$ the ideal of finite, countable, or $<\kappa$-sized subsets of $\delta$, respectively.

In the case of finite support iterations the inclusion of condition (b) in (2) and (3) is redundant of course and similarly for countable support iterations of systems with $\sigma$-complete filters. In general, this is a quite important condition though. In the case of requiring for the generators that $\bar H \restriction \beta \leq \sym(\dot H(\beta))$ (already included in Definition~\ref{def:preiteration}), this is needed for $\bar H$ to represent a group at all. (3)(b) is crucial in showing that $\sF_\alpha$ as defined above results in a normal filter (see more in Lemma~\ref{lem:Isupport}). Seemingly, (2)(b) is less relevant and in principle there is nothing wrong with removing this condition. But it likely does not result in what one wants of an iteration in general. For instance, consider a condition $\bar p = \langle \dot p(n) : n < \omega \rangle$ in an $\omega$-length iteration. After passing to the first symmetric extension, one would like to think of $\bar p = \langle \dot p(n) : 0< n < \omega \rangle$ as a condition in some sort of tail of the iteration. Each $\dot p(n)$ is symmetric as witnessed by a group $\bar H^n \in \sF_n$, but there is no reason to assume that e.g. $\bigcap_{n} H^n(0) \in \sF_0$. So why should the tail of the condition even be an element of this first extension in a meaningful way?

As it stands, the above is just a definition. To really make sense of it in practice though one needs to check that when constructing the limit steps as above one really obtains a symmetric again. First, the following properties are easily checked by induction as in Lemma~\ref{lem:preiteration}.

\begin{lemma}\label{lem:isupport1}
Let $\langle \sS_{\alpha}, \dot \sT_\alpha : \alpha < \delta \rangle$ be a $\mathcal{I}$-support iteration as above and $\alpha < \delta$ arbitrary.
\begin{enumerate}
    \item All $\bar p \in \mathbb{P}_\alpha$, $\bar \pi \in \sG_\alpha$, $\bar H \in \sF_\alpha$ have support in $\mathcal{I}$. In fact, $\sF_\alpha$ is generated exactly by those $\bar H$, with $\supp(\bar H) \in \mathcal{I}$ and $\bar H \restriction \beta \leq \dot H(\beta)$ for each $\beta$. Similarly for $\sG_\alpha$ and $\mathbb{P}_\alpha$.\footnote{This is of course trivial and part of the definition for limit $\alpha$, but in the successor case we must use that $\mathcal{I}$ contains all singletons and since this is not the literal definition, we state this as a separate fact.}
    \item For any of the generators $\bar H, \bar K \in \sF_\alpha$, $\bar E = \langle\dot E(\beta) : \beta \leq \alpha \rangle \in \sF_\alpha$ is a generator, where $\dot E(\beta)$ is the canonical closed $\sS_\beta$ name for $\dot H(\beta) \cap \dot K(\beta)$ for each $\beta < \alpha$, $\supp(\bar E) \subseteq \supp(\bar H) \cup \supp(\bar K)$ and $\bar E \leq \bar H \cap \bar K$.
\end{enumerate}
\end{lemma}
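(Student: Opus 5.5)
Both parts go by simultaneous induction on $\alpha<\delta$, using the recursive clauses of Definition~\ref{def:preiteration} together with the limit clauses of Definition~\ref{def:Isupportiteration}.

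For (1), the limit case is the definition itself. At a successor $\alpha=\gamma+1$, Definition~\ref{def:twostep} says $\sF_{\gamma+1}$ is generated by pairs $(H_0,\dot K)$ with $H_0\in\sF_\gamma$, $H_0\le\sym(\dot K)$ and $\Vdash\dot K\in\dot\sE_\gamma$. First I would shrink $H_0$ to a generator $\bar H\le H_0$ of $\sF_\gamma$. By the inductive hypothesis such generators have support in $\mathcal I$ and satisfy $\bar H\restriction\beta\le\sym(\dot H(\beta))$. Then $\bar H^\frown\dot K$ is still a legitimate successor generator, since $\bar H\le\sym(\dot K)$, and it lies inside $(H_0,\dot K)$. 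Its support is contained in $\supp(\bar H)\cup\{\gamma\}$, which is in $\mathcal I$ because $\mathcal I$ contains singletons and is closed under finite unions. So $\sF_{\gamma+1}$ is generated by generators of the required form.

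The same argument handles $\bar p^\frown\dot q\in\mathbb{P}_{\gamma+1}$ and $\bar\pi^\frown\dot\sigma\in\sG_{\gamma+1}$, including the symmetry clause (b). The last coordinate $\dot q$ (resp.\ $\dot\sigma$) is an $\sS_\gamma$-name, so $\sym(\dot q)\in\sF_\gamma$. I intersect a witnessing generator for $\bar p$ (given by the inductive hypothesis) with a generator below $\sym(\dot q)$ via part (2) at stage $\gamma$, and then append $\dot\sH_\gamma$ as the last coordinate. This yields a generator whose restrictions fix each coordinate. Hence (1) at $\gamma+1$ uses (2) at $\gamma$, so both must be proved together.

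For (2), I define $\dot E(\beta)$ as the canonical closed name for $\dot H(\beta)\cap\dot K(\beta)$ given by Lemma~\ref{lem:definablename}. That lemma yields $\sym(\dot H(\beta))\cap\sym(\dot K(\beta))\le\sym(\dot E(\beta))$. The only nontrivial check is that $\bar E$ is a generator, namely $\bar E\restriction\beta\le\sym(\dot E(\beta))$ and $\bar E\restriction\beta\in\sF_\beta$. I proceed by induction on $\beta$. Suppose $\bar E\restriction\beta\le\bar H\restriction\beta\cap\bar K\restriction\beta$ as groups. Then $\bar E\restriction\beta\le\sym(\dot H(\beta))\cap\sym(\dot K(\beta))\le\sym(\dot E(\beta))$. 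Next, $\Vdash\dot E(\beta)\in\dot\sE_\beta$ holds because $\dot\sE_\beta$ is forced to be a filter. The support is at most $\supp(\bar H)\cup\supp(\bar K)$ since $\dot E(\beta)$ is forced to be the full group whenever both are. So at limits the support lies in $\mathcal I$.

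The containment $\bar E\le\bar H\cap\bar K$ at stage $\beta+1$ is where I expect the real work. Take $\bar\pi^\frown\dot\sigma\in\bar E\restriction\beta^\frown\dot E(\beta)$. By induction $\bar\pi\in\bar H\restriction\beta$, and $\Vdash\dot\sigma\in\dot E(\beta)\subseteq\dot H(\beta)$, so $\bar\pi^\frown\dot\sigma\in\bar H\restriction(\beta+1)$, and likewise for $\bar K$. Limits follow from clause (3)(c) of Definition~\ref{def:preiteration}. The main obstacle is thus the bookkeeping that $\bar E\restriction\beta$ really is a generator at each intermediate stage, so that membership in the groups is computed coordinatewise. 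This is what forces the simultaneous induction on (1) and (2) rather than separate proofs.
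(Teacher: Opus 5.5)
Your proposal is correct and follows the same route as the paper, which gives no detailed proof and only says these facts are easily checked by induction as in Lemma~\ref{lem:preiteration}, noting that the successor case uses that $\mathcal I$ contains singletons. Your simultaneous induction on $\alpha$, with part (2) at stage $\gamma$ supplying the generators needed for part (1) at $\gamma+1$, spells out that induction correctly, and you rightly read the condition $\bar H\restriction\beta\le\dot H(\beta)$ in the statement as $\bar H\restriction\beta\le\sym(\dot H(\beta))$.
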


\begin{lemma}\label{lem:Isupport}
    Suppose that $\langle \sS_\beta , \dot \sT_{\beta} : \beta < \alpha \rangle$ is an $\mathcal{I}$-support iteration and let $\sS_\alpha = (\mathbb{P}_\alpha, \sG_\alpha, \sF_\alpha)$ be exactly as described in (1), (2) and (3) of Definition~\ref{def:Isupportiteration}. Then $\sS_\alpha$ is a symmetric system.
\end{lemma}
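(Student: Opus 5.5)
We have to verify three things for the limit $\alpha$: that $\mathbb{P}_\alpha$ is a forcing notion, that $\sG_\alpha$ is a group of automorphisms of $\mathbb{P}_\alpha$, and that $\sF_\alpha$ is a normal filter of subgroups of $\sG_\alpha$.

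The first point is immediate. The order of (3)(a) in Definition~\ref{def:preiteration} is a preorder because each $\leq_{\mathbb{P}_\beta}$ is. The sequence $\langle \dot{\mathds 1}_\beta : \beta<\alpha\rangle$ has empty support and is fixed by the trivial generator $\langle \dot\sH_\beta:\beta<\alpha\rangle$, so it lies in $\mathbb{P}_\alpha$ and is the maximum.

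For the group, we take $\bar\pi,\bar\sigma\in\sG_\alpha$ and use the formulas of Lemma~\ref{lem:preiteration}(3),(4), which hold coordinatewise. The support bound gives $\supp(\bar\pi\circ\bar\sigma)\in\mathcal I$, and likewise $\supp(\bar\pi^{-1})=\supp(\bar\pi)\in\mathcal I$.
\begin{itemize}
\item For condition (3)(b) of the composite, let $\bar H$ and $\bar K$ be generators witnessing (3)(b) for $\bar\pi$ and $\bar\sigma$. We want a single generator $\bar E$ with $\bar E\restriction\beta\le\sym(\dot\pi(\beta)\circ\bar\pi\restriction\beta(\dot\sigma(\beta)))$. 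We first intersect $\bar H$ and $\bar K$ via Lemma~\ref{lem:isupport1}(2).
\item We then conjugate by $\bar\pi\restriction\beta$. This uses $\sym(\bar\pi\restriction\beta(\dot x))=\bar\pi\restriction\beta\,\sym(\dot x)\,(\bar\pi\restriction\beta)^{-1}$, so the intersection is conjugated by $\bar\pi$.
\item Lemma~\ref{lem:preiteration}(5),(6) say that conjugating a generator gives a generator with the same support. This requires that $\bar H\restriction\beta\le\sym(\dot\sigma(\beta))$, which is exactly what condition (3)(b) supplies.
\end{itemize}

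We also check that each $\bar\pi$ maps $\mathbb{P}_\alpha$ into itself and preserves the order. Supports are preserved by Lemma~\ref{lem:preiteration}(1). The symmetry witness of $\bar\pi(\bar p)$ is obtained by the same conjugation-and-intersection argument. Order preservation and bijectivity follow from Lemma~\ref{lem:preiteration}(2) together with the fact that each restriction $\bar\pi\restriction\beta$ is an automorphism of $\mathbb{P}_\beta$, by induction.

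For the filter, we first check that each generator $\bar H$ really is a subgroup. This is a coordinatewise computation with the composition formula. It uses $\bar H\restriction\beta\le\sym(\dot H(\beta))$ to get $\Vdash\bar\pi\restriction\beta(\dot\sigma(\beta))\in\dot H(\beta)$, exactly as in Lemma~\ref{lem:twostep}. Lemma~\ref{lem:isupport1}(2) shows that the generators are closed under intersection up to containment, so they generate a filter.

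Normality is the main obstacle. Given a generator $\bar H$ and $\bar\sigma\in\sG_\alpha$, we would take a generator $\bar K$ witnessing (3)(b) for $\bar\sigma$, and also for $\bar\sigma^{-1}$, which is handled by the group argument above. We would then replace $\bar H$ by the coordinatewise intersection $\bar H'$ of $\bar H$ and $\bar K$. Now $\bar H'$ satisfies the hypothesis of Lemma~\ref{lem:preiteration}(5), so $\bar\sigma\bar H'\bar\sigma^{-1}$ is given by the displayed sequence with support $\supp(\bar H')\in\mathcal I$.

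Two things remain to show for this sequence: that each coordinate is forced to lie in $\dot\sE_\beta$, using normality of $\dot\sE_\beta$ and $\sym(\dot\sT_\beta)=\sG_\beta$, and that $\bar\sigma\bar H'\bar\sigma^{-1}\restriction\beta\le\sym$ of its $\beta$-th coordinate. The second is again a conjugation identity, and it must be proved inductively along $\beta$ with the successor and limit cases separated, as in the last paragraph of Lemma~\ref{lem:twostep}. This is where I expect the real bookkeeping. Once it is done, $\bar\sigma\bar H\bar\sigma^{-1}\supseteq\bar\sigma\bar H'\bar\sigma^{-1}\in\sF_\alpha$.
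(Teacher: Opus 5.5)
Your overall strategy is the paper's: coordinatewise formulas from Lemma~\ref{lem:preiteration}, a generator witnessing (3)(b), intersections via Lemma~\ref{lem:isupport1}(2), and conjugation via Lemma~\ref{lem:preiteration}(5),(6). There is, however, a concrete gap at exactly the point the paper handles first: you never construct a (3)(b)-witness for $\bar\pi^{-1}$.

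You need such a witness in three places:
\begin{enumerate}
\item to get $\bar\pi^{-1}\in\sG_\alpha$, where you only check its support;
\item to get surjectivity of $\bar\pi$ on $\mathbb{P}_\alpha$;
\item for every conjugation of the form $\bar\pi(\cdot)\bar\pi^{-1}$.
\end{enumerate}
Your normality paragraph says this witness ``is handled by the group argument above'', but that argument does not produce it.

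Relatedly, the composite bullet conflates (5) and (6). Condition (3)(b) for $\bar\pi$, i.e.\ $\bar H\restriction\beta\le\sym(\dot\pi(\beta))$, is the hypothesis of (6), which computes $\bar\pi^{-1}\bar H\bar\pi$. But to control $\sym(\bar\pi\restriction\beta(\dot\sigma(\beta)))=(\bar\pi\restriction\beta)\sym(\dot\sigma(\beta))(\bar\pi\restriction\beta)^{-1}$ you must conjugate in the other direction, $\bar\pi\bar K\bar\pi^{-1}$. The hypothesis of (5) for that is (3)(b) for $\bar\pi^{-1}$. Moreover, conjugating $\bar H\cap\bar K$ by $\bar\pi$ lands in $\bar\pi\bar H\bar\pi^{-1}$ rather than in $\bar H$, so the result need not fix $\dot\pi(\beta)$.

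The repair is the paper's opening observation. Since $\bar H$ satisfies the hypothesis of (6), the conjugate $\bar H^*:=\bar\pi^{-1}\bar H\bar\pi=\langle(\bar\pi\restriction\beta)^{-1}(\dot\pi(\beta)^{-1}\dot H(\beta)\dot\pi(\beta)):\beta<\alpha\rangle$ is a generator with support $\supp(\bar H)$. Using $\sym(\dot\pi(\beta))\le\sym(\dot\pi(\beta)^{-1})$ (Lemma~\ref{lem:definablename}), we get $\bar H^*\restriction\beta\le(\bar\pi\restriction\beta)^{-1}\sym(\dot\pi(\beta)^{-1})(\bar\pi\restriction\beta)=\sym((\bar\pi\restriction\beta)^{-1}(\dot\pi(\beta)^{-1}))$. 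So $\bar H^*$ witnesses (3)(b) for $\bar\pi^{-1}$; these are the paper's $\bar K$ and $\bar\tau$.

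Now take a generator $\bar E\le\bar H^*\cap\bar K$, where $\bar K$ witnesses (3)(b) for $\bar\sigma$ (or (2)(b) for $\bar p$). Then (5) applies to $\bar E$, and $\bar\pi\bar E\bar\pi^{-1}\le\bar\pi\bar H^*\bar\pi^{-1}\cap\bar\pi\bar K\bar\pi^{-1}=\bar H\cap\bar\pi\bar K\bar\pi^{-1}$. This is a generator that fixes, at each $\beta$, both $\dot\pi(\beta)$ and $\bar\pi\restriction\beta(\dot\sigma(\beta))$. Hence it fixes the composite coordinate by Lemma~\ref{lem:definablename}, and this is the paper's $\bar L$.

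For normality you can avoid (5) altogether. Take $\bar H'\le\bar H\cap\bar W$ with $\bar W$ a witness for $\bar\sigma$; then (6) makes $\bar\sigma^{-1}\bar H'\bar\sigma\subseteq\bar\sigma^{-1}\bar H\bar\sigma$ a generator. This suffices because $\bar\sigma$ ranges over the group $\sG_\alpha$, as the paper remarks.

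Finally, the ``bookkeeping'' you anticipate needs no successor/limit induction. The check that a conjugated sequence is a generator is a one-line computation at each $\beta$, using $\sym(\rho(\dot x))=\rho\,\sym(\dot x)\rho^{-1}$ together with the hypothesis of (5) or (6).
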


\begin{proof}
    $\mathbb{P}_\alpha$ is of course just a forcing poset. We will check that each $\bar \pi$ indeed represents an automorphism of $\mathbb{P}_\alpha$, each $\bar H$ represents a group and that $\sF_\alpha$ is a normal filter. 
    
    First, let us note a few things though. Suppose $\bar \pi, \bar H$ are as in (3)(b). Then, using (6) of Lemma~\ref{lem:preiteration} and the fact that $\sym(\sigma^{-1}(\dot x)) = \sigma^{-1} \sym(\dot x) \sigma$ in any symmetric system, conclude that \begin{align*}
        \bar K &:= \langle( \bar \pi \restriction \beta)^{-1}\left(\dot \pi(\beta)^{-1} \dot H(\beta) \dot \pi(\beta)\right) : \beta < \alpha \rangle\\
        &= \bigcup_{\beta < \alpha} (\bar \pi \restriction \beta)^{-1} (\bar H \restriction \beta) (\bar \pi \restriction \beta)
    \end{align*} is also supposed to represent a generator of $\sF_\alpha$.\footnote{In the union above we replace the group $(\bar \pi \restriction \beta)^{-1} (\bar H \restriction \beta) (\bar \pi \restriction \beta)$ of the system $\sS_\beta$ with the corresponding sequence as computed in Lemma~\ref{lem:preiteration}.} Moreover, if we define \[\bar \tau := \langle (\bar \pi \restriction \beta)^{-1}(\dot \pi(\beta)^{-1}) : \beta < \alpha \rangle = \bigcup_{\beta < \alpha} (\bar \pi \restriction \beta)^{-1},\] then $\bar K \restriction \beta \leq \dot \tau(\beta)$, for each $\beta < \alpha$, and $\supp(\bar \tau) = \supp(\bar \pi)$. In particular, $\bar \tau$ is supposed to represent an automorphism in $\sG_\alpha$, and in fact an element of $\bar H$.  Finally, using (5) of Lemma~\ref{lem:preiteration}, if $\bar E \leq \bar H \cap \bar K$ is defined as in Lemma~\ref{lem:isupport1}, then \begin{align*} \bar L &:= \langle \dot \pi(\beta) \bar \pi \restriction \beta(\dot H(\beta)) \dot \pi(\beta)^{-1} : \beta < \alpha \rangle \\ &= \bigcup_{\beta < \alpha} (\bar \pi \restriction \beta)(\bar E \restriction \beta) (\bar \pi \restriction \beta)^{-1}\end{align*}
    represents a generator in $\sF_\alpha$.

    Let us check that $\bar \pi$ indeed represents an automorphism of $\mathbb{P}_\alpha$. Let $\bar p \in \mathbb{P}_\alpha$ be arbitrary and without loss of generality, using Lemma~\ref{lem:isupport1}, assume that also $\bar H \restriction \beta \leq \sym(\dot p(\beta))$ for all $\beta$. Then $\bar \pi(\bar p) = \langle \dot \pi(\beta) \bar \pi \restriction \beta (\dot p(\beta)) : \beta < \alpha \rangle$ and $\supp(\bar \pi (\bar p)) = \supp(\bar p) \in \mathcal{I}$. Also $\bar L \restriction \beta \leq \sym(\dot \pi(\beta) \bar \pi \restriction \beta (\dot p(\beta)))$ for each $\beta$, so $\bar \pi(\bar p) \in \mathbb{P}_\alpha$. The fact that $\bar \pi$ is order preserving follows immediately from how the order on $\mathbb{P}_\alpha$ and the application of $\bar \pi$ are defined in the limit step and the fact that each $\bar \pi \restriction \beta$ is order preserving. Similarly, $\bar \tau$ represents the inverse of $\bar \pi$ as $\bar \tau \restriction \beta$ is the inverse of $\bar \pi \restriction \beta$ for each $\beta$.

    Checking that each $\bar H$ represents a group is just as straightforward now. Given $\bar \pi \in \sG_\alpha$ and $\bar H \in \sF_\alpha$ arbitrary we can always assume without loss of generality that $\bar \pi$ and $\bar H$ are as in (3)(b) and from there we can construct $\bar L \leq \bar \pi H \bar \pi^{-1}$, showing that the filter is normal (or even more directly $\bar K = \bar \pi^{-1} H \bar \pi$ which is sufficient).  \end{proof}

\subsection{Factorization}

One can show that an $\mathcal{I}$-support iteration can indeed be factorized into initial segments and tails that are themselves $\mathcal{I}$-support iterations. This type of factorization is not really a crucial part in most iterated forcing arguments but it can become relevant sometimes. To simplify notation here let us speak of an $\mathcal{I}$-support iteration when in fact we mean the ideal generated by $\mathcal{I}$.\footnote{In some intermediate extension $\mathcal{I}$ might not be closed under taking subsets.}

Suppose that $\langle \sS_{\alpha}, \dot \sT_\alpha : \alpha \leq \delta \rangle = \langle \sS_{\alpha}, \dot \sT_\alpha : \alpha < \delta + 1 \rangle$ is an $\mathcal{I}$-iteration and $\alpha \leq \delta$ is fixed. By recursion on $\delta \geq \alpha$, one defines an $\sS_{\alpha}$-name $\langle \dot \sS_{\alpha, \gamma}, \dot \sT_{\alpha, \gamma} : \gamma \in [\alpha, \delta] \rangle^\bullet$ for an $\mathcal{I}$-support iteration that naturally corresponds to the tail of the iteration. Simultaneously, one defines for each $\sS_\delta$-name $\dot x$, an $\sS_\alpha$-name $[\dot x]_{\alpha, \delta}$ for an $\dot \sS_{\alpha, \delta}$-name, and similarly, for each $\sS_\alpha$-name $\dot y$ for an $\dot \sS_{\alpha, \delta}$-name, an $\sS_\delta$-name $]\dot y[_{\alpha, \delta}$. Further, for any $\bar p \in \mathbb{P}_\delta$, $\bar \pi \in \sG_\delta$ and a generator $\bar H \in \sF_\delta$ as above, one defines $\sS_\alpha$-names $[\bar p \restriction [\alpha, \delta)]$, $[\bar \pi \restriction [\alpha, \delta)]$ and $[\bar H \restriction [\alpha, \delta)]$ for respective objects in the system $\dot \sS_{\alpha, \delta}$. Before giving the exact construction of these let us state the factorization theorem that is analoguous to Theorem~\ref{lem:twostepfactor}. 

\begin{thm}[Factorization for iterations]\label{thm:factorizationiteration}
Whenever $G$ is $\sS_\alpha$-generic over $V$ and $H$ is $\dot \sS_{\alpha, \delta}^G$-generic over $V[G]_{\sS_\alpha}$, then $G * H = \{ \bar p : \bar p \restriction \alpha \in G \wedge [\bar p \restriction [\alpha, \delta)]^G \in H\}$ is $\sS_{\delta}$-generic over $V$. Similarly, whenever $K$ is $\sS_\delta$-generic over $V$, then $K = G * H$, where $G = \{ \bar p \restriction \alpha : \bar p \in K \}$ is $\sS_\alpha$-generic over $V$ and the upwards closure $H$ of $\{ [\bar p \restriction [\alpha, \delta)]^G : \bar p \in K \}$ is $\dot \sS_{\alpha, \delta}^G$-generic over $V[G]_{\sS_\alpha}$.

In either case, $([\dot x]^G)^H = \dot x^{G * H}$ for every $\sS_\delta$-name $\dot x$ and $]\dot y[^{G*H} = (\dot y^G)^H$ for every $\sS_\alpha$-name $\dot y$ for an $\dot \sS_{\alpha, \delta}$-name. In particular, $V[G*H]_{\sS_\delta} = V[G]_{\sS_\alpha}[H]_{\dot \sS_{\alpha, \delta}^G}$.
\end{thm}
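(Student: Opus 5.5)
The plan is to prove the statement by induction on $\delta \geq \alpha$, simultaneously with the construction of the tail names $\dot \sS_{\alpha,\delta}$, the translations $[\cdot]_{\alpha,\delta}$, $]\cdot[_{\alpha,\delta}$, and the names $[\bar p\restriction[\alpha,\delta)]$, $[\bar\pi\restriction[\alpha,\delta)]$, $[\bar H\restriction[\alpha,\delta)]$. The case $\delta=\alpha$ is trivial, since the tail is the trivial system and $[\dot x]$ is just $\dot x$ read as a check name over the trivial system.

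\textbf{Successor step $\delta+1$.} By Definition~\ref{def:preiteration}(2), $\sS_{\delta+1}=\sS_\delta*\dot\sT_\delta$. Inside $V[G]_{\sS_\alpha}$ I will use the inductive hypothesis to reinterpret the $\sS_\delta$-name $\dot\sT_\delta$ as the $\dot\sS_{\alpha,\delta}^G$-name $[\dot\sT_\delta]^G$, and then set $\dot\sS_{\alpha,\delta+1}=\dot\sS_{\alpha,\delta}*[\dot\sT_\delta]$. Conditions, automorphisms and generators extend coordinatewise, using $[\dot p(\delta)]$, $[\dot\pi(\delta)]$ and $[\dot H(\delta)]$. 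Genericity then follows in two applications of Theorem~\ref{lem:twostepfactor}. Given $G$ and $H=H_\delta * H'$, the inductive hypothesis shows that $G*H_\delta$ is $\sS_\delta$-generic, with $V[G*H_\delta]_{\sS_\delta}=V[G][H_\delta]$. Since $([\dot\sT_\delta]^G)^{H_\delta}=\dot\sT_\delta^{G*H_\delta}$, the filter $H'$ is $\sT_\delta$-generic over this same model, and the two-step theorem yields that $G*H$ is $\sS_{\delta+1}$-generic. The converse direction and the identities $([\dot x]^G)^H=\dot x^{G*H}$ follow by composing the recursive translations $[\cdot]$ from the proof of Theorem~\ref{lem:twostepfactor} with the inductive ones.

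\textbf{Limit step.} At a limit $\delta$ I will define $\dot\sS_{\alpha,\delta}$ as the $\mathcal{I}$-support limit of the tail pre-iteration, following Definition~\ref{def:Isupportiteration}. The key point is that a generator $\bar H\in\sF_\delta$ witnessing (2)(b) or (3)(b) for $\bar p$ or $\bar\pi$ restricts to $\bar H\restriction\alpha\in\sF_\alpha$. This restriction fixes the sequence of names $\langle[\dot p(\beta)]:\beta\in[\alpha,\delta)\rangle$, so $[\bar p\restriction[\alpha,\delta)]$ is an honest $\sS_\alpha$-name. Its tail $[\bar H\restriction[\alpha,\delta)]$ evaluates to a tail generator that witnesses clause (b) in $V[G]_{\sS_\alpha}$. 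Supports are preserved up to $\Vdash$, which is why I work with the ideal generated by $\mathcal{I}$.

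For genericity in the limit case, take a symmetrically dense $D\subseteq\mathbb{P}_\delta$ witnessed by $\bar H$. Following the second half of the proof of Theorem~\ref{lem:twostepfactor}, I will turn it into the $\sS_\alpha$-name
\[
\dot D=\{(\bar p\restriction\alpha,[\bar p\restriction[\alpha,\delta)]):\bar p\in D\},
\]
which is symmetric via $\bar H\restriction\alpha$ and is forced to be symmetrically dense via $[\bar H\restriction[\alpha,\delta)]$. In the other direction, a name for a tail-dense set $\dot D_1$ gives the set $\{\bar p:\bar p\restriction\alpha\Vdash[\bar p\restriction[\alpha,\delta)]\in\dot D_1\}$, exactly as in the two-step proof.

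\textbf{Main obstacle.} I expect the main difficulty in the limit case to be \emph{gluing}. Given $\bar p\restriction\alpha$ together with an $\sS_\alpha$-name $\dot q$ for a tail condition, I need $\bar r\in\mathbb{P}_\delta$ whose tail is forced to equal $\dot q$. The coordinates must be chosen uniformly and symmetrically, and the support and clause (2)(b) must be witnessed in $V$, not merely in $V[G]_{\sS_\alpha}$. My first attempt will be to use Lemma~\ref{lem:thetrick} to make $\dot q$ forced to be a tail condition, and then define $\dot r(\beta)=\,]\dot q(\beta)[$ coordinatewise. I will combine the $\sS_\alpha$-symmetry group of $\dot q$ with a name for its tail witness, using Lemma~\ref{lem:isupport1}(2), to produce a single generator in $V$. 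The support issue is where the "ideal generated by $\mathcal{I}$" convention is needed, so this will need care.
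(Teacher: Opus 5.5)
Your proposal follows the paper's route. The paper runs a simultaneous induction on $\delta$ together with the construction of the tail names, reduces the successor step to Theorem~\ref{lem:twostepfactor}, and at limits uses clauses (2)(b) and (3)(b) to show that $\bar H\restriction\alpha$ fixes $[\bar p\restriction[\alpha,\delta)]$, $[\bar\pi\restriction[\alpha,\delta)]$ and $[\bar H\restriction[\alpha,\delta)]$, so these are $\sS_\alpha$-names. The paper only sketches this and leaves the verification to the reader.

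The gluing step you flag is left implicit in the paper, and your intended fix works:
\begin{itemize}
\item First extend to a condition deciding a ground-model $a\in\mathcal{I}$ that covers the supports of both $\dot q$ and its tail witness.
\item Then apply Lemma~\ref{lem:thetrick} to the pair formed by $\dot q$ and a name for that witness. Build ``support $\subseteq\check a$'' into $\chi$, and use the trivial condition paired with the full tail group as the default witness.
\end{itemize}
The same gluing is needed for automorphisms, when you check that your translated dense sets are symmetric.
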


The recursive construction proceeds as follows: For $\delta = \alpha$, we let $\dot \sS_{\alpha, \delta}$ be a name for the trivial system $(\{ \mathds 1\}, \{\id \}, \{ \{\id \} \})$. $[\bar p \restriction [\alpha, \alpha)]$ is simply a name for $\mathds 1$. At each step $\delta$, by recursion on the rank of names, we define \[[\dot x]_{\alpha, \delta} = \{ (\bar p \restriction \alpha, ([\bar p \restriction [\alpha, \delta)], [\dot z]_{\alpha, \delta})^\bullet) : (\bar p, \dot z) \in \dot x \},\] and similarly, \[] \dot y[_{\alpha, \delta} = \{ (\bar p, ]\dot z[_{\alpha, \delta}) : \bar p \restriction \alpha \Vdash_{\sS_\alpha} ([\bar p \restriction [\alpha, \delta)], \dot z)^\bullet \in \dot y \}.\]

For any $\delta$, we let $\dot \sT_{\alpha, \delta} = [\dot \sT_\delta]_{\alpha, \delta}$. For $\delta = \gamma +1$, we define \[[\bar p \restriction [\alpha, \delta)] = \cl([\bar p \restriction [\alpha, \gamma)]^\frown [\dot p(\gamma)]_{\alpha, \gamma})^\bullet,\] and for $\delta$ limit, \[[\bar p \restriction [\alpha, \delta)] = \bigcup_{\gamma \in [\alpha, \delta)} [\bar p \restriction [\alpha, \gamma)].\] Similarly for $[\bar \pi \restriction [\alpha, \delta)]$ and $[\bar H \restriction [\alpha, \delta)]$. 

That these objects indeed work as described in the paragraph before the theorem and that Theorem~\ref{thm:factorizationiteration} holds is verified inductively. We leave the details to the reader as this is mostly just a lengthy formal verification (see also \cite{HolySchilhan} for the case of finite support iterations). The successor step is essentially as in the proof of Theorem~\ref{thm:factorizationiteration}. Let us just remark that (2)(b) and (3)(b) are crucial here for $[\bar p \restriction [\alpha, \delta)]$, $[\bar \pi \restriction [\alpha, \delta)]$ and $[\bar H \restriction [\alpha, \delta)]$ to be $\sS_\alpha$-names. For example, if $\bar H$ and $\bar p$ are as in (2)(b) say, one shows that $\bar H\restriction \alpha \leq \sym([\bar p \restriction [\alpha, \delta)])$.

\subsection{Preservation theorems}

Let us give just two very simple cases of preservation results to exemplify the technique. 

\begin{prop}
    Let $\langle \sS_{\alpha}, \dot \sT_\alpha : \alpha \leq \delta \rangle$ be a $<\kappa$-support iteration, where $\kappa$ is a regular cardinal, $\DC_{<\kappa}$ holds and the filter of each iterand is forced to be $<\kappa$-complete. Then the filter of $\sS_\delta$ is $<\kappa$-complete.
\end{prop}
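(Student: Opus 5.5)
We argue by induction on $\alpha \leq \delta$, showing that $\sF_\alpha$ is $<\kappa$-complete. The successor and limit stages need different arguments. It suffices to intersect fewer than $\kappa$ generators. Given $\bar H^i$ for $i<\mu$, $\mu<\kappa$, we want a single generator $\bar E$ with $\bar E \leq \bigcap_i \bar H^i$ and $\supp(\bar E)\in\mathcal I$. The natural candidate is $\dot E(\beta)$ = the canonical closed $\sS_\beta$-name for $\bigcap_{i<\mu}\dot H^i(\beta)$, as in Lemma~\ref{lem:isupport1}(2).

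First we check that $\bar E$ is a legitimate generator.
\begin{enumerate}
\item The support is $\supp(\bar E)\subseteq\bigcup_i\supp(\bar H^i)$. This is a union of fewer than $\kappa$ sets each of size $<\kappa$, so by regularity of $\kappa$ it has size $<\kappa$. (Choosing enumerations is harmless here, since the supports are sets of ordinals.)
\item For each $\beta$ we need $\Vdash_{\sS_\beta}\dot E(\beta)\in\dot\sE_\beta$. The iterand filter is forced to be $<\kappa$-complete, so it suffices that the sequence $\langle \dot H^i(\beta):i<\mu\rangle^\bullet$ is an $\sS_\beta$-name for a $\mu$-sequence. That holds provided $\bigcap_i \sym(\dot H^i(\beta))$ contains a group in $\sF_\beta$, which we get from the inductive hypothesis since $\bar H^i\restriction\beta\leq\sym(\dot H^i(\beta))$.
\item We need $\bar E\restriction\beta\leq\sym(\dot E(\beta))$. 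By Lemma~\ref{lem:definablename}, $\sym(\dot E(\beta))\supseteq\sym(\langle\dot H^i(\beta)\rangle_i^\bullet)\supseteq\bigcap_i\sym(\dot H^i(\beta))\supseteq\bigcap_i\bar H^i\restriction\beta$. This in turn contains $\bar E\restriction\beta$ by the inductive claim that $\bar E\restriction\beta\leq\bigcap_i\bar H^i\restriction\beta$, which holds by definition of membership at limit stages and Definition~\ref{def:twostep}(3) at successors.
\end{enumerate}

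In practice the induction should be organised as follows. Simultaneously in $\beta$, we show that $\bar E\restriction\beta$ is a generator of $\sF_\beta$ and that $\bar E\restriction\beta\leq\bigcap_i\bar H^i\restriction\beta$.

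At successor stages $\beta+1$ the two-step definition applies. An element $\bar\pi^\frown\dot\sigma$ of $\bar E\restriction(\beta+1)$ has $\bar\pi\in\bar E\restriction\beta\leq\bar H^i\restriction\beta$ and $\Vdash\dot\sigma\in\dot E(\beta)\subseteq\dot H^i(\beta)$. Hence it lies in each $\bar H^i\restriction(\beta+1)$.

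At limit stages $\beta$, membership is checked coordinatewise by (3)(c) of Definition~\ref{def:preiteration}, so the inclusion passes to the limit. The generator conditions at limits are the three items above, with (1) being where $<\kappa$-supports and regularity enter.

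The main obstacle is where $\DC_{<\kappa}$ is used. Arbitrary elements of $\sF_\delta$ merely contain generators. Given fewer than $\kappa$ filter elements $K_i$, we must choose generators $\bar H^i\subseteq K_i$ simultaneously. This requires $\AC_{\mu}$ for $\mu<\kappa$, which follows from $\DC_{<\kappa}$. Similarly, the passage from "$\bigcap_i\sym(\dot H^i(\beta))$" to a group in $\sF_\beta$ uses the inductive completeness of $\sF_\beta$ applied to fewer than $\kappa$ groups, which are again given only implicitly. I would make this choice once at the outset and then carry it through the induction. Here I also expect to need care that the inductive hypothesis is applied in $V$ rather than in an extension, so that no choice is needed in the symmetric extensions themselves.
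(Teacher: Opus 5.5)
Your proposal is correct and follows essentially the same route as the paper's proof. You use $\DC_{<\kappa}$ once to replace the given filter elements by generators $\bar H^i$, define $\bar E$ coordinatewise as the canonical closed name for $\bigcap_i \dot H^i(\beta)$, get symmetry of the sequence name (and hence of $\dot E(\beta)$) from $\bar E\restriction\beta \le \bigcap_i \bar H^i\restriction\beta$ by simultaneous induction, and bound the support by regularity of $\kappa$. Your worry that further implicit choices are needed at stage $\beta$ is unfounded, since $\bar E\restriction\beta$ is itself the explicit witness in $\sF_\beta$.
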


\begin{proof}
 By $\DC_{<\kappa}$ we may without loss of generality consider sequences $\langle \bar H^\xi : \xi < \gamma \rangle$, $\gamma < \kappa$, of generators instead of arbitrary elements of $\sF_\alpha$. Define $\bar H$ inductively by defining $\dot H(\alpha)$ to be the canonical closed $\sS_\alpha$-name for the intersection of $\langle \dot H^\xi(\alpha) : \xi < \gamma \rangle$, which has an $\sS_\alpha$-name with symmetry group $\bar H \restriction \alpha$. Also note that $\supp(\bar H) \subseteq \bigcup_{\xi < \gamma} \supp(\bar H^\xi)$, which by regularity of $\kappa$ has size $< \kappa$.
\end{proof}

The proof implicitly uses that there is always a canonical witness to each instance of the completeness of a filter, namely simply taking the intersection of a given collection of filter elements. On the other hand this is not always the case for other $\forall\exists$-properties of symmetric systems and some sort of uniformity of witnesses is needed.

\begin{definition}
    We say that $F$ witnesses the $<\kappa$-closure of $\mathbb{P}$ if $F \colon \mathbb{P}^{<\kappa} \to \mathbb{P}$ is such that for any decreasing sequence $\langle p_\xi : \xi < \gamma \rangle$, $\gamma < \kappa$, in $\mathbb{P}$, $F(\langle p_\xi : \xi < \gamma \rangle)$ is a common extension of these conditions.
\end{definition}

\begin{prop}
    Let $\langle \sS_{\alpha}, \dot \sT_\alpha : \alpha \leq \delta \rangle$ be a $<\kappa$-support iteration, $\kappa$ regular and assume that $\DC_{<\kappa}$ holds. Assume that the iterand's posets are uniformly $<\kappa$-closed, meaning that there is a sequence $\langle \dot F_\alpha : \alpha \leq \delta \rangle$ and $\bar H \in \sF_\delta$ such that $\dot F_\alpha$ is an $\sS_\alpha$-name for a witness to the $<\kappa$-closure of the poset of $\dot \sT_\alpha$ and $\bar H \restriction \alpha \leq \dot F_\alpha$, for each $\alpha < \delta$. Then the poset $\mathbb{P}_\delta$ of $\sS_\delta$ is $< \kappa$-closed and has a witnessing function.
\end{prop}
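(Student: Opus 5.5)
We will define the witnessing function $F_\delta \colon \mathbb{P}_\delta^{<\kappa} \to \mathbb{P}_\delta$ explicitly, coordinatewise, using the names $\dot F_\alpha$. For every $\alpha \leq \delta$ we show by induction that the restriction $F_\alpha$ to $\mathbb{P}_\alpha$ witnesses $<\kappa$-closure of $\mathbb{P}_\alpha$. We also require that $F_\alpha$ is \emph{symmetric}: whenever $\bar\pi \in \bar H\restriction\alpha$, we have $F_\alpha(\bar\pi(\bar p_\xi) : \xi<\gamma) = \bar\pi(F_\alpha(\langle \bar p_\xi\rangle))$. Given a decreasing sequence $\langle \bar p_\xi : \xi < \gamma\rangle$ in $\mathbb{P}_\delta$ with $\gamma<\kappa$, we set $F(\langle\bar p_\xi\rangle) = \bar q$, where
\[\dot q(\alpha) := \dot F_\alpha(\langle \dot p_\xi(\alpha) : \xi<\gamma\rangle^\bullet).\]
Here the right-hand side is the closed $\sS_\alpha$-name given by Lemma~\ref{lem:definablename}, after applying Lemma~\ref{lem:thetrick} so that it is forced to be a condition of $\dot{\mathbb{Q}}_\alpha$. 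Concretely, we take $\dot F_\alpha$ of the sequence if $\bar q\restriction\alpha$ forces the sequence to be decreasing, and $\mathds 1$ otherwise; this keeps the definition uniform. No choice is needed here, since everything is defined from $\dot F_\alpha$. $\DC_{<\kappa}$ is used only to pick the symmetry witnesses below, or to avoid the issue entirely as in the previous proposition.

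Next we verify $\bar q \leq \bar p_\xi$ by induction on $\alpha$. We need $\bar q\restriction\alpha \Vdash \dot q(\alpha)\leq \dot p_\xi(\alpha)$. Since $\bar q\restriction\alpha \leq \bar p_\eta\restriction\alpha$ for all $\eta$ by the inductive hypothesis, and the $\bar p_\xi$ are decreasing, $\bar q\restriction\alpha$ forces that $\langle \dot p_\xi(\alpha)\rangle_\xi$ is decreasing in $\dot{\mathbb{Q}}_\alpha$. Hence $\dot F_\alpha$ yields a lower bound. Limit stages are immediate from clause (3)(a) of Definition~\ref{def:preiteration}.

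Then we check $\bar q \in \mathbb{P}_\alpha$. For the support, $\supp(\bar q)\subseteq\bigcup_\xi \supp(\bar p_\xi)$ up to the coordinates where $\dot F_\alpha$ of trivial conditions is nontrivial. To avoid that, we use Lemma~\ref{lem:thetrick} to set $\dot q(\alpha)=\mathds 1$ whenever all $\dot p_\xi(\alpha)$ are forced trivial. Regularity of $\kappa$ then gives $|\supp(\bar q)|<\kappa$. For clause (2)(b) of Definition~\ref{def:Isupportiteration}, each $\bar p_\xi$ has a generator $\bar H^\xi$ with $\bar H^\xi\restriction\beta\leq\sym(\dot p_\xi(\beta))$. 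Using $\DC_{<\kappa}$ we choose these and intersect them with $\bar H$, as in the preceding proposition. By that proposition's argument this intersection $\bar K$ is a generator, since the filters of $<\kappa$-support iterations of this kind are $<\kappa$-complete. If the iterand filters are not assumed complete, one must instead argue that the intersection of $<\kappa$ many generators coordinatewise still lies in $\sF_\alpha$; this is the point to check carefully. Then $\bar K\restriction\beta$ fixes $\dot F_\beta$ and each $\dot p_\xi(\beta)$, hence fixes the sequence name and therefore $\dot q(\beta)$ by Lemma~\ref{lem:definablename}.

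The main obstacle is exactly this symmetry requirement: the generator witnessing $\bar q\in\mathbb{P}_\delta$ must be a single element of $\sF_\delta$ below all the $\bar H^\xi$ together with $\bar H$. This is why the uniform $\bar H\restriction\alpha\leq\sym(\dot F_\alpha)$ hypothesis is needed, and where $<\kappa$-intersections of generators must be handled. I expect to resolve it by noting that $\dot H^\xi(\beta)$ are names in $\dot\sE_\beta$ whose intersection is fixed by the intersected initial segment, mirroring the completeness proof. If completeness of $\dot\sE_\beta$ is unavailable, the fallback is to restrict attention to sequences with a common generator. Finally, $\bar q$ is defined from $\langle\bar p_\xi\rangle$ in a definable way, so $F$ is a genuine function in $V$ and witnesses $<\kappa$-closure.
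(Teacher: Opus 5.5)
Your construction is the argument the paper intends. The paper gives no separate proof, only the remark that it is no harder than the preceding completeness proof. You define $\dot q(\alpha)$ from $\dot F_\alpha$ coordinatewise, check $\bar q\le\bar p_\xi$ by induction, bound $\supp(\bar q)$ by regularity, and get clause (2)(b) of Definition~\ref{def:Isupportiteration} from a single generator below $\bar H$ and the $\bar H^\xi$ chosen by $\DC_{<\kappa}$.

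The gap is the step you flag yourself, and the stated hypotheses cannot close it. Build $\bar K$ inductively, with $\dot K(\beta)$ the canonical name for $\dot H(\beta)\cap\bigcap_{\xi<\gamma}\dot H^\xi(\beta)$. The inductive hypothesis $\bar K\restriction\beta\in\sF_\beta$ makes $\langle\dot H^\xi(\beta):\xi<\gamma\rangle^\bullet$ and $\dot K(\beta)$ into $\sS_\beta$-names fixed by $\bar K\restriction\beta$. But for $\bar K$ to be a generator you also need $\Vdash_{\sS_\beta}\dot K(\beta)\in\dot\sE_\beta$. That requires each iterand filter to be forced $<\kappa$-complete, which is exactly the hypothesis of the preceding proposition; the paper's appeal to ``the previous proof'' tacitly carries it over.

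Neither of your proposed remedies works. Saying the intersection is fixed by the intersected initial segment gives only symmetry of the name, not membership in $\dot\sE_\beta$. Restricting to sequences with a common generator proves a weaker statement, since $<\kappa$-closure quantifies over all decreasing sequences in $V$.

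The hypothesis genuinely cannot be dropped. Take $V\models\ZFC$, $\kappa=\omega_1$ and $\delta=2$.
\begin{itemize}
\item Let the first iterand be $\Add(\omega_1,\omega_1)$, with generic sets $c_\alpha\subseteq\omega_1$. Its group is generated by finitary permutations of the coordinates and coordinatewise flips along ground-model subsets of $\omega_1$. Its filter is generated by the groups $\fix(e)$, $e\in[\omega_1]^{<\omega}$, which fix $e$ pointwise and flip nothing on $e$.
\item In the resulting symmetric extension $N$, put $[c_\alpha]=\{c_\alpha\triangle Y:Y\in\mathcal P(\omega_1)^V\}$ and $A=\{[c_\alpha]:\alpha<\omega_1\}$.
\item Let the second iterand be the countable partial functions $\omega_1\to A$ computed in $N$, with the trivial group. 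It is countably closed in $N$ via unions, which is a fully symmetric witness.
\end{itemize}
Let $\dot q_n$ name $\{(k,[c_k]):k\le n\}$. Then $\dot q_n$ is symmetric via $\fix(\{0,\dots,n\})$, and $\langle(\mathds 1,\dot q_n):n<\omega\rangle$ is decreasing in $\mathbb{P}_2$. Suppose $(p,\dot q)$ were a lower bound; then $p\Vdash\dot q(k)=[\dot c_k]$ for all $k<\omega$. Let $\fix(e)\le\sym(\dot q)$ and pick $n\neq m$ in $\omega\setminus e$. There is $\pi\in\fix(e)$ that swaps coordinates $n,m$ and flips them so that $\pi(p)\parallel p$. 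This $\pi$ sends $[\dot c_n]$ to $[\dot c_m]$. So a common extension of $p$ and $\pi(p)$ forces $c_n\triangle c_m\in V$, which is absurd, since $c_n\triangle c_m$ is $\Add(\omega_1,1)$-generic over $V$.

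So you should state explicitly that each $\dot\sE_\beta$ is forced to be $<\kappa$-complete. With that assumption, your argument is complete.
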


We leave the proof to the reader as there is nothing substantially more difficult than in the previous proof. The following question, however, is the real goal.

\begin{quest}
    Let $\langle \sS_{\alpha}, \dot \sT_\alpha : \alpha \leq \delta \rangle$ be a countable support iteration, $\delta$ limit, and suppose that for each $\alpha < \delta$, $\Vdash_{\sS_\alpha} \DC$. Does $\Vdash_{\sS_\delta} \DC$? If the answer is negative, is there a combinatorial condition which is equivalent to the preservation of $\DC$?
\end{quest}
It is not entirely clear how to approach such a question, even in the case where $\delta$ has an uncountable cofinality.

We conclude this section with a much broader question which should be of significant interest.

\begin{quest}
    Let $\varphi$ be a weak choice principle, e.g.\ the Boolean Prime Ideal theorem or $\AC_\omega$, and let $\langle \sS_\alpha,\dot\sT_\alpha : \alpha\leq\delta\rangle$ be an $\mathcal{I}$-support iteration for some ideal $\mathcal{I}$ such that for each $\alpha<\delta$, $\Vdash_{\sS_\alpha}\varphi$. Are the natural and non-trivial conditions on $\sT_\alpha$ and $\mathcal{I}$ which guarantee $\Vdash_{\sS_\delta}\varphi$?
\end{quest}

For example, $\AC_\omega$ is preserved under $\sigma$-distributive forcings. So, can we find a condition on $\mathcal{I}$ which guarantee the preservation of $\AC_\omega$ at limit steps as well? In the case of the Boolean Prime Ideal theorem, we essentially do not have any good combinatorial condition for this preservation, which would be a strong first step towards this solution.

\section{Equivalence of symmetric systems}\label{sec:equiv}

In the context of forcing there are two notions of equivalence between posets $\mathbb{P}$ and $\mathbb{Q}$ that are typically being considered. The first one, which we call \emph{weak equivalence}, simply says that $\mathbb{P}$ and $\mathbb{Q}$ produce exactly the same extensions. The second one, \emph{strong equivalence} says that $\mathbb{P}$ and $\mathbb{Q}$ have isomorphic complete Boolean algebras and is much stronger. For example, all atomic posets are equivalent in the weak sense, while the Boolean algebras they generate might be different, or less trivially, the lottery sum of $|\mathbb{P}|^+$ many copies of a forcing $\mathbb{P}$ will produce the same extensions, but their Boolean completions will be very different indeed. The strong notion is more ``categorical'', in the sense that typical forcing constructions, such as iterations, do not depend on the particular choice of representative (at least assuming $\AC$). This is not the case for weak equivalence. For instance, a finite support product of trivial forcing notions may or may not be trivial. 

While generalizing weak equivalence to symmetric systems is a trivial task this is much less clear for strong equivalence.

\begin{definition}
    Two symmetric systems $\sS$ and $\sT$ are \emph{weakly equivalent} if for any $\sS$-generic $G$, there is a $\sT$-generic $H$ such that $V[G]_\sS = V[H]_\sT$ and vice-versa.
\end{definition}

A priori, this definition is external to $V$. But it turns out that the weak equivalence of $\sS$ and $\sT$ can be expressed inside $V$. This follows from results of Section~\ref{sec:quotients}. More specifically, by Theorem~\ref{thm:quotsummary} a model $M$ is a symmetric extension of $V$ by $\sT$ iff there is a forcing notion in $M$ adding a generic filter $H$ over $V$ such that $M = V[H]_\sT$. To express the equivalence of $\sS$ and $\sT$ in $V$ we may thus say that $\Vdash_\sS$ ``there is a forcing notion adding a $\sT$-generic $H$ over $V$ such that $V[\dot G]_\sS = V[H]_\sT$'' and vice-versa for $\sT$.\footnote{If we want to be very precise with this, we can say that there is a closed and unbounded class of $\alpha$ such that we can force over $V_\alpha$ to satisfy the equation. This can be simplified, of course, by introducing a predicate for the ground model, $V$.}

\subsection{Classes of names}

Often it makes sense to consider names that are not outright hereditarily symmetric. 

\begin{definition}
    We say that $\C$ is a class of names in symmetric systems if $\C$ consists of pairs $(\sS, \dot x)$, where $\sS = (\mathbb{P}, \sG, \sF)$ is a symmetric system, $\dot x$ is a $\mathbb{P}$-name and $\Vdash_{\mathbb{P}} \dot x \in V[\dot G]_\sS$. We write $\C_\sS$ for the class of $\dot x$ such that $(\sS,\dot x) \in \C$ and typically identify $\C$ and $\C_\sS$ in context where $\sS$ is clear. $\SN$ is the maximal class of names, i.e., $\dot x \in \SN_\sS$ exactly when $\Vdash_{\mathbb{P}} \dot x \in V[\dot G]_\sS$. 
\end{definition}

The most important example is of course $\HS$, the class of hereditarily symmetric names.

\begin{definition}
    $\C$ is forcing stable if for any $\sS$, any $\dot x \in \C_\sS$ and any $\dot y \in \SN_\sS$, if $\Vdash \dot x = \dot y$, then also $\dot y \in \C_\sS$. 
\end{definition}

\begin{definition}
Let $\sS = (\mathbb{P}, \sG, \sF)$ be a symmetric system. Then a $\mathbb{P}$-name $\dot x$ is $\sS$-\emph{respected} if \[\res_\sS(\dot x) := \{ \pi \in \sG : ~ \Vdash_\sS \pi(\dot x) = \dot x \} \in \sF.\]
\end{definition}

We may write $\res_\sG(\dot x)$. The class of respected names $\R$ is clearly an example of a forcing stable class. 

\begin{definition}
        For a class $\C$ we define its hereditary class $\HC$ recursively, where $\dot x \in \HC_\sS$ iff $\dot x \in \C_\sS$ and for every $(p,\dot y) \in \dot x$, $\dot y \in \HC_\sS$. We say that $\HC$ is hereditarily forcing stable if it is the hereditary class of a forcing stable $\C$.
\end{definition}

Of course, $\HS$ is the hereditary class of the symmetric names. The class of \emph{hereditarily respected names} $\HR$ is hereditarily forcing stable.

The following is immediate to check.

\begin{lemma}
    $\HS \subseteq \HR \subseteq \SN$.
\end{lemma}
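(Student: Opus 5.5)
The statement splits into two inclusions, $\HS_\sS \subseteq \HR_\sS$ and $\HR_\sS \subseteq \SN_\sS$. I would prove each by $\in$-induction on the rank of names.

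For the first inclusion, take $\dot x \in \HS_\sS$. Every $\pi \in \sym_\sS(\dot x)$ literally satisfies $\pi(\dot x) = \dot x$, so a fortiori $\Vdash_\sS \pi(\dot x) = \dot x$. Hence $\sym_\sS(\dot x) \subseteq \res_\sS(\dot x)$. The filter $\sF$ is upward closed, and $\res_\sS(\dot x)$ is a subgroup: if $\Vdash \pi(\dot x)=\dot x$ and $\Vdash \sigma(\dot x)=\dot x$, then applying $\pi$ to the latter gives $\Vdash \pi\sigma(\dot x) = \pi(\dot x) = \dot x$, and inverses are handled the same way. So $\res_\sS(\dot x) \in \sF$, i.e.\ $\dot x$ is respected. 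The members $(p,\dot y) \in \dot x$ have $\dot y \in \HS_\sS$ by definition, so by the inductive hypothesis $\dot y \in \HR_\sS$. Therefore $\dot x \in \HR_\sS$.

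For the second inclusion we must show that $\Vdash_{\mathbb{P}} \dot x \in V[\dot G]_\sS$ for every $\dot x \in \HR_\sS$. Again I would induct on rank. Fix a $\mathbb{P}$-generic $G$. By the inductive hypothesis, for each $(p,\dot y)\in\dot x$ there is an $\sS$-name $\dot y'$ with $\dot y^G = \dot y'^G$. Thus every element of $\dot x^G$ lies in $V[G]_\sS$, and in fact in some fixed level $V_\alpha \cap V[G]_\sS$, since $\dot x$ is a set. The remaining task is to show that $\dot x^G$ itself belongs to $V[G]_\sS$, and this is the step I expect to be the main obstacle.

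My first attempt would use the closed-name machinery. Since $\HR$ is by definition the hereditary class of respected names, I would check that ``$\dot x \in V[\dot G]_\sS$'' is the intended reading of $\dot x \in \SN$. The key point is then that the class $\C$ of respected names is designed so that $\HR \subseteq \SN$ holds by construction; the paper already calls $\R$ forcing stable with this in mind. If a genuine argument is required, I would build a symmetric name as follows. Let $\alpha$ bound the ranks of the names involved and form
\[
\dot x^* = \{(p,\dot z) : \dot z \in \HS_\alpha,\ p \Vdash \dot z \in \dot x\}.
\]
Density together with the inductive hypothesis gives $\Vdash \dot x^* = \dot x$. For $\pi \in \res(\dot x)$ we have $\pi(\dot x^*) = \{(\pi p, \pi\dot z) : p\Vdash \dot z\in\dot x\}$, and $\pi p \Vdash \pi\dot z \in \pi\dot x$. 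Since $\Vdash \pi \dot x = \dot x$, this is the same as $\pi p \Vdash \pi\dot z \in \dot x$. Moreover $\HS_\alpha$ is closed under $\pi$, so $\pi(\dot x^*) = \dot x^*$. Hence $\res(\dot x) \le \sym(\dot x^*)$, which makes $\dot x^* \in \HS$ and gives $\dot x^G = \dot x^{*G} \in V[G]_\sS$. The one subtle point is the equality $\Vdash \dot x^* = \dot x$. It uses that each element of $\dot x^G$ has an $\HS$-name of bounded rank, which follows from the inductive hypothesis by replacing names with their closed versions as in Lemma~\ref{lem:definablename}. I would also use that $\Vdash$ and $\Vdash_\sS$ agree on the relevant $\Delta_0$ statements.
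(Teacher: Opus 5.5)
Your proof is correct. The first inclusion is the evident argument: $\sym_\sS(\dot x)\le\res_\sS(\dot x)$, $\res_\sS(\dot x)$ is a subgroup, and $\sF$ is upward closed.

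\textbf{The ``by construction'' remark.} Your stated reason for this is off, because forcing stability only concerns names already in $\SN_\sS$. What actually makes $\HR\subseteq\SN$ trivial under the paper's definitions is that a class of names in symmetric systems is required to consist of names with $\Vdash_{\mathbb{P}}\dot x\in V[\dot G]_\sS$. Hence $\HR_\sS\subseteq\R_\sS\subseteq\SN_\sS$ outright. On that reading the first inclusion also needs $\HS_\sS\subseteq\SN_\sS$, which is immediate since $\dot x^G\in V[G]_\sS$ by the definition of $V[G]_\sS$. This is why the paper calls the lemma ``immediate to check'' and gives no proof.

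\textbf{Comparison with the paper.} Your genuine argument covers the stronger reading, where $\R$ is the class of all respected names. There it takes a different route from the paper's Lemma~\ref{lem:hshr}, which symmetrizes:
$i(\dot x)=\bigcup_{\pi\in\res(\dot x)}\pi(\{(p,i(\dot y)):(p,\dot y)\in\dot x\})$.
\begin{itemize}
\item The paper's construction needs no rank bound and no forcing relation in the definition of the new name. It yields a uniformly definable map $\HR_\sS\to\HS_\sS$. However, the recursion needs the strong hypothesis $\Vdash i(\dot y)=\dot y$ on constituents.
\item Your $\dot x^*$ is the closure $\cl_\alpha(\dot x)$ of Definition~\ref{def:closedname}, as in the map of Proposition~\ref{prop:equitenacious}. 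It needs only that the constituents lie in $\SN_\sS$. So it proves slightly more: a respected name whose constituents are in $\SN_\sS$ is forcing-equivalent to an $\HS$-name.
\end{itemize}
The cost of your route is a uniform $\alpha$, and that comes from Replacement rather than from closed names. For each $q\in\mathbb{P}$ and each constituent $\dot y$, take the least $\beta$ such that some $r\le q$ forces $\dot y=\dot y'$ for some $\dot y'\in\HS_\beta$. Then let $\alpha$ exceed all of these. A generic-by-generic inductive hypothesis does not by itself supply such a bound.

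Finally, your computation only shows $\pi(\dot x^*)\subseteq\dot x^*$. To get equality, apply it to $\pi^{-1}\in\res(\dot x)$ as well.
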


\begin{lemma}\label{lem:hshr}
    There is a uniformly definable map, with parameter $\sS$, $i \colon \HR_\sS \to \HS_\sS$ such that $\Vdash \dot x = i(\dot x)$, for every $\dot x \in \HR_\sS$.
\end{lemma}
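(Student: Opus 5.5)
We define $i$ by recursion on the rank of names. The natural candidate is a symmetrization of the closure: for $\dot x\in\HR_\sS$ we let
\[
i(\dot x) = \{ (p, \dot z) : \dot z \in \HS_{\alpha} \wedge p \Vdash \dot z \in \dot x \},
\]
with $\alpha$ minimal such that $\Vdash \dot x = i(\dot x)$. This is exactly $\cl_\alpha(\dot x)$ from Definition~\ref{def:closedname}, except that $\dot x$ is now only respected rather than symmetric. The map is uniformly definable from $\sS$, since $\HS_\alpha$ and $\Vdash$ are definable. Two things need checking: such an $\alpha$ exists, and the resulting name is symmetric. Heredity is then automatic, because every name appearing in $i(\dot x)$ lies in $\HS$ by construction.

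\emph{Existence of $\alpha$.} We argue by induction on the rank of $\dot x$. For each $(p,\dot y)\in\dot x$ we have $\dot y\in\HR_\sS$, so by the inductive hypothesis $i(\dot y)\in\HS_\sS$ and $\Vdash \dot y=i(\dot y)$. Take $\alpha$ above the ranks of all $i(\dot y)$ for $(p,\dot y)\in \dot x$; this uses Replacement in $V$.

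Now $\Vdash i(\dot x)\subseteq \dot x$ is immediate from the definition. For the reverse inclusion, let $(p,\dot y)\in\dot x$. Then $p\Vdash i(\dot y)\in\dot x$, so $(p,i(\dot y))\in i(\dot x)$, and hence $p\Vdash \dot y=i(\dot y)\in i(\dot x)$. Both inclusions are $\Delta_0$ facts, so $\Vdash$ and $\Vdash_\sS$ agree on them.

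\emph{Symmetry.} We claim $\res(\dot x)\le\sym(i(\dot x))$; this is the key step. Let $\pi\in\res(\dot x)$, so $\Vdash_\sS \pi(\dot x)=\dot x$. By the minimality of $\alpha$ it is cleaner to show directly that $\pi(\cl_\alpha(\dot x))=\cl_\alpha(\dot x)$ for every $\alpha$, which gives $\pi(i(\dot x))=i(\dot x)$ since $\alpha$ is determined by the forcing class of $\dot x$ and $\pi$ preserves that class. Compute
\[
\pi(\cl_\alpha(\dot x))=\{(\pi p,\pi\dot z): \dot z\in\HS_\alpha,\ p\Vdash \dot z\in\dot x\}.
\]
Since $\HS_\alpha$ is closed under $\sG$ and $p\Vdash\dot z\in\dot x$ iff $\pi p\Vdash \pi\dot z\in\pi\dot x$, this equals $\{(q,\dot w):\dot w\in\HS_\alpha,\ q\Vdash \dot w\in\pi\dot x\}$. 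Because $\Vdash\pi\dot x=\dot x$, the latter is $\cl_\alpha(\dot x)$.

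The point requiring the most care is the equality $\Vdash_\sS \pi\dot x=\dot x$ versus $\Vdash_{\mathbb P}\pi\dot x=\dot x$. Both names lie in $\SN$, so both are forced into $V[\dot G]_\sS$. Equality is a $\Delta_0$ statement, so by the remark after Theorem~\ref{thm:basicsym} the two relations coincide and the substitution above is legitimate. Since $\res(\dot x)\in\sF$ and $\sF$ is upward closed, $i(\dot x)$ is symmetric, and therefore $i(\dot x)\in\HS_\sS$.
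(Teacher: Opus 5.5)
Your proof is correct, but it takes a different route from the paper. The paper uses a genuine recursion that symmetrizes by an orbit-union, $i(\dot x)=\bigcup_{\pi\in\res(\dot x)}\pi(\{(p,i(\dot y)):(p,\dot y)\in\dot x\})$. With that definition:

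- invariance under $\res(\dot x)$ is immediate, because $\res(\dot x)$ is a group;
- $\Vdash i(\dot x)=\dot x$ holds because each translate is forced equal to $\pi(\dot x)$, and hence to $\dot x$;
- no rank bound ever has to be found.

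You instead take the canonical closed name $\cl(\dot x)$. Your definition is not actually recursive; induction is only used to produce a bound $\alpha$. Invariance then comes from the identity $\pi(\cl_\alpha(\dot x))=\cl_\alpha(\pi\dot x)$ together with $\Vdash\pi\dot x=\dot x$. This needs the symmetry lemma, closure of $\HS_\alpha$ under $\sG$, and the $\Delta_0$ agreement of $\Vdash$ and $\Vdash_\sS$, and you handle all three correctly ($\pi\dot x\in\SN_\sS$ because $\pi$ fixes $\HS^\bullet$).

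Your route buys canonicity: $i(\dot x)$ depends only on the forcing-equivalence class of $\dot x$, since it is the unique closed name equivalent to it, in line with Definition~\ref{def:closedname} and Lemma~\ref{lem:definablename}. Moreover, your invariance computation works for any name, so heredity is used only to bound $\alpha$. You could replace your induction by Replacement over $\mathbb{P}\times\dom\dot x$: for each $(q,\dot y)$, take the least $\beta$ such that $q$ forces $\dot y$ equal to some name in $\HS_\beta$, if there is one. This shows that every respected name in $\SN_\sS$, hereditary or not, is forcing-equivalent to an $\HS$-name.

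One small point: your remark that $\alpha$ is ``determined by the forcing class'' is unnecessary, since $\pi$ fixes $\cl_\alpha(\dot x)$ for every $\alpha$.
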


\begin{proof}
 The map is defined recursively such that \[i(\dot x) = \bigcup_{\pi \in \res(\dot x)} \pi(\{ (p, i(\dot y)) : (p,\dot y) \in \dot x \}).\qedhere\]
\end{proof}

\subsection{Notions of strong equivalence}

The following definition can formally be thought of as taking place in a conservative extension of $\ZF$ such as $\NBG$ without choice, but can be expressed in a first-order way as we shall see later.

\begin{definition}\label{def:strequi}
 Let $\C$ be a class of names in symmetric systems containing all check names. Then $\sS$ and $\sT$ are called \emph{$\C$-equivalent} and we write $\sS \cong_\C \sT$, if there are maps $i \colon \C_{\sS} \to \C_{\sT}, i^* \colon \C_{\sT} \to \C_{\sS}$, such that 
 \begin{enumerate}
\item for any $x$, $\Vdash_\sT i(\check{x}) = \check x$, and vice-versa $\Vdash_\sS i^*(\check{x}) = \check x$,\footnote{$\check{x}$ always denotes the check name in the system that is clear from the context.}
  \item for any $\dot x \in \C_\sS$, $\Vdash_\sS i^*(i(\dot x)) = \dot x$ and vice-versa, for any $\dot x \in \C_\sT$,\[\Vdash_\sT i(i^*(\dot x)) = \dot x,\]
  \item for any $\Delta_0$-formula $\varphi$ in the language of set theory and any $\dot x_0, \dots, \dot x_n \in \C_\sS$, \[\Vdash_\sS \varphi(\dot x_0, \dots, \dot x_n) \text{ iff } \Vdash_\sT \varphi(i(\dot x_0), \dots, i(\dot x_n)),\] and vice-versa for any $\dot x_0, \dots, \dot x_n \in \C_\sT$, \[\Vdash_\sT \varphi(\dot x_0, \dots, \dot x_n) \text{ iff } \Vdash_\sS \varphi(i^*(\dot x_0), \dots, i^*(\dot x_n)).\] 
 \end{enumerate}
\end{definition}

A rote verfication shows that this is indeed an equivalence relation between symmetric systems.

\begin{lemma}
    Let $\HS \subseteq \C$ and let $i,i^*$ as above. Then (3) already holds for any formula $\varphi$ in the language of set theory. 
\end{lemma}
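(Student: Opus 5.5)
We prove the stronger statement that for every formula $\varphi$, every $p$-free instance satisfies
\[\Vdash_\sS \varphi(\dot x_0,\dots,\dot x_n) \iff \Vdash_\sT \varphi(i(\dot x_0),\dots,i(\dot x_n))\]
for all $\dot x_j\in\C_\sS$, and symmetrically for $i^*$. The argument is an induction on the complexity of $\varphi$.

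Plain ``$\Vdash$'' statements are not closed under negation, so the induction hypothesis has to be strengthened. The natural route is to carry Boolean values, or equivalently to use the trick of Lemma~\ref{lem:thetrick} to code conditions by names. Concretely, for a formula $\psi(\bar x)$ we look at the set-parameter statements $\Vdash_\sS \psi(\bar x)\leftrightarrow \check 0\in \dot t$, where $\dot t$ ranges over names for subsets of $1$, i.e.\ truth values. By Lemma~\ref{lem:definablename}, for each $\sS$-name tuple $\bar{\dot x}$ there is a closed $\sS$-name $\dot t_\psi(\bar{\dot x})\in\HS_\sS\subseteq\C_\sS$ with $\Vdash_\sS \dot t=\{0: \psi(\bar{\dot x})\}$. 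This turns the formula $\psi$ into an element of $\C_\sS$, so the truth value of $\psi$ is itself a name in the class.

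The inductive claim is then: for every $\varphi$ and every $\bar{\dot x}\in\C_\sS$,
\[\Vdash_\sT i(\dot t_\varphi(\bar{\dot x})) = \{0:\varphi(i(\bar{\dot x}))\}.\]
The original equivalence follows by comparing with $\Vdash\check 0\in\dot t$, which is $\Delta_0$ in $\dot t$, and by using (1) for $i(\check 0)=\check 0$.

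The steps go as follows.
\begin{enumerate}
\item \emph{Atomic case.} This is (3) applied to the $\Delta_0$ formula $\forall z\in t\,(z=\emptyset)\wedge(\emptyset\in t\leftrightarrow x\in y)$, together with the $\Delta_0$ statement $t\subseteq 1$.
\item \emph{Connectives.} Negation and conjunction are handled by the $\Delta_0$ relations between $t_{\neg\varphi}$ and $t_\varphi$, and among $t_\varphi$, $t_\psi$, $t_{\varphi\wedge\psi}$. These relations are forced in $\sS$, so by (3) they transfer to the $i$-images in $\sT$.
\item \emph{Quantifier $\exists y\,\varphi$.} Let $\dot s=\dot t_{\exists y\varphi}(\bar{\dot x})$. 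In $\sS$ we show $\Vdash_\sS \dot s=\bigcup\{t_\varphi(\bar{\dot x},\dot y)\}$. Applying Lemma~\ref{lem:thetrick} to the formula ``$0\in s\to\varphi(y)$'' with a default witness gives a single $\dot y\in\HS_\sS\subseteq\C_\sS$ such that $\Vdash_\sS \dot t_\varphi(\bar{\dot x},\dot y)=\dot s$; the $\Delta_0$ fact $\dot t_\varphi(\bar{\dot x},\dot y)\subseteq \dot s$ for all $\dot y$ is by definition. Transferring by induction gives $\Vdash_\sT i(\dot s)=\{0:\varphi(i\bar{\dot x},i\dot y)\}\subseteq\{0:\exists y\,\varphi(i\bar{\dot x},y)\}$. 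For the reverse inclusion, apply Lemma~\ref{lem:thetrick} in $\sT$ to get a witness $\dot w\in\HS_\sT$ for $\exists y\,\varphi$. Then $i^*(\dot w)\in\C_\sS$, and applying the induction hypothesis for $\varphi$ to $i^*$, together with (2) to bring the names back via $i$, yields $\{0:\varphi(i\bar{\dot x},\dot w)\}\subseteq i(\dot s)$. This step needs the hypothesis $\HS\subseteq\C$ in both systems and substitution invariance under forced equality; the latter holds because $\Vdash\dot w=i(i^*(\dot w))$.
\end{enumerate}

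The main obstacle is this quantifier step. The concern is that the $i$-images of $\C_\sS$ might not exhaust $\C_\sT$ up to forced equality, which is exactly what (2) guarantees. Care is also needed so that the witnesses supplied by Lemma~\ref{lem:thetrick} actually land in $\C$; this is where the assumption $\HS\subseteq\C$ is used.
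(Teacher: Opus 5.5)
\textbf{The quantifier step fails.} You claim that Lemma~\ref{lem:thetrick}, applied to $\chi(y)\equiv(0\in\dot s\to\varphi(\bar{\dot x},y))$, produces one $\dot y\in\HS_\sS$ with $\Vdash_\sS\dot t_\varphi(\bar{\dot x},\dot y)=\dot s$. That would be a single name that is a witness whenever any witness exists. But Lemma~\ref{lem:thetrick} needs a default $\dot z$ with $\Vdash_\sS\chi(\dot z)$, and such a $\dot z$ is exactly the global witness you are trying to build, so the appeal is circular.

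The claim is also false in general, because mixing fails for symmetric systems; the paper only obtains it for $\hat\sS$ under $\AC$ (Lemma~\ref{lem:maximalitycompletion}). Take the basic Cohen model of Example~\ref{ex:Cohen}. There $\Vdash_\sS\exists y\,(y\in\dot A\wedge y(0)=1)$ holds, yet no $\dot y\in\HS$ satisfies $\Vdash_\sS\dot y\in\dot A\wedge\dot y(0)=1$. Suppose $\fix(E)\le\sym(\dot y)$, and pick $p$ forcing $\dot c_n(0)=0$ for all $n\in E$ and $\dot y=\dot c_m$. Then $m\notin E$. Let $\pi\in\fix(E)$ be the transposition of $m$ with some $m'\notin E\cup\{m\}$ about which $p$ says nothing. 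Then $\pi(p)$ is compatible with $p$, and $p\cup\pi(p)\Vdash\dot c_m=\dot y=\dot c_{m'}$, which is impossible. The witness $\dot w\in\HS_\sT$ in your reverse inclusion has the same defect.

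\textbf{How to repair it, and a second problem.} You do not need a global witness; local ones suffice. For each $\dot w\in\HS_\sT$ separately, the $i^*$-hypothesis for $\varphi$ together with (2) and (3) gives $\Vdash_\sT\dot t^\sT_\varphi(i\bar{\dot x},\dot w)\subseteq i(\dot s)$. Every witness in a $\sT$-extension is named by such a $\dot w$ below some condition, so this yields $\{0:\exists y\,\varphi(i\bar{\dot x},y)\}\subseteq i(\dot s)$. The opposite inclusion is the same argument with $\sS$ and $\sT$ exchanged, pulled back via (3) and (2).

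There remains a second gap. Lemma~\ref{lem:definablename} applies only to parameters in $\HS_\sS$, while yours range over $\C_\sS$. For $\dot x\in\C_\sS\setminus\HS_\sS$, the Boolean value of $\varphi(\bar{\dot x})$ need not be invariant under any group in $\sF$. For example, take $\dot x=\{(p,\check 5)\}\in\SN$ with $p$ a non-trivial condition in $(\mathbb{C},\Aut(\mathbb{C}),\{\Aut(\mathbb{C})\})$; then no truth-value name for $\check 5\in\dot x$ exists in $\HS$. For $\C=\HR$ or $\C=\SN$, the name $\{(p,\check 0):p\Vdash_\sS\varphi(\bar{\dot x})\}$ does lie in $\C$. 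For an arbitrary $\C\supseteq\HS$, however, nothing supplies one, so even the repaired induction does not prove the lemma as stated.

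\textbf{How the paper avoids both issues.} It uses reflection instead of naming truth values or selecting witnesses. Using $i^*$ and (2), it shows $V[H]_\sT=\bigcup_\alpha i(\HS_\alpha^\bullet)^H$, with the union increasing and continuous. It then picks $\alpha$ in both reflecting clubs, so that $\varphi$ becomes the $\Delta_0$ formula $\varphi^{\HS_\alpha}$ in one extra parameter $\HS_\alpha^\bullet\in\HS\subseteq\C$. Condition (3) applies to this directly, because all witnesses are bounded by one set-sized name rather than chosen individually.
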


\begin{proof}
    This is a consequence of the reflection theorem. Let $\mathbb{P}$ and $\mathbb{Q}$ be the forcing notions of $\sS$ and $\sT$ respectively. Note that for any $\mathbb{P}$-generic $G$, $V[G]_\sS = \bigcup_{\alpha\in\Ord}\HS_\alpha^G $ where the union is increasing and continuous. Also, for any $\sT$-name $\dot x$, there is $\alpha$ so that $\Vdash_{\sS} i^*(\dot x) \in \HS_\alpha$ and thus $\Vdash_\sT \dot x \in i(\HS_\alpha)$. Consequently, for any $\mathbb{Q}$-generic $H$ also $\bigcup_{\alpha\in\Ord}i(\HS_\alpha)^H = V[H]_\sT$ and the union is increasing and continuous. Then for any $\varphi(\dot x_0, \dots, \dot x_n)$, we can find clubs of ordinals $S_{0}, S_{1}$ such that \[\forall \alpha \in S_0 \left[\Vdash_\sS \varphi(\dot x_0, \dots,  \dot x_n) \text{ iff } \Vdash_\sS \HS_\alpha \models \varphi(\dot x_0, \dots, \dot x_n)\right]\] and \[\forall \alpha \in S_1 \left[\Vdash_\sT \varphi(i(\dot x_0), \dots,  i(\dot x_n)) \text{ iff } \Vdash_\sT i(\HS_\alpha) \models \varphi(i(\dot x_0), \dots,  i(\dot x_n))\right].\]
    Thus if $\alpha \in S_0 \cap S_1$, \begin{align*}
        \Vdash_\sS \varphi(\dot x_0, \dots,  \dot x_n) &\text{ iff } \Vdash_\sS \HS_\alpha \models \varphi(\dot x_0, \dots, \dot x_n) \\ &\text{ iff } \Vdash_\sT i(\HS_\alpha) \models \varphi(i(\dot x_0), \dots,  i(\dot x_n)) \\ &\text{ iff } \Vdash_\sT \varphi(i(\dot x_0), \dots,  i(\dot x_n)).
    \end{align*}
    
    Similarly in the other direction.\end{proof}

\begin{prop}
    Let $\HS \subseteq \C$. Then $\sS \cong_\C \sT$ implies that $\sS$ and $\sT$ are weakly equivalent.
\end{prop}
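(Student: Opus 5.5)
The plan is to start from an $\sS$-generic $G$ and produce a $\sT$-generic $H$ for which the correspondence $\dot x^G \mapsto i(\dot x)^H$, with $\dot x\in\HS_\sS$, is a well-defined $\in$-isomorphism between $V[G]_\sS$ and $V[H]_\sT$ that is the identity on $V$. Since both models are transitive, equality then follows. The converse direction is symmetric, using $i^*$.

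\emph{Step 1 (translating truth).} By the preceding lemma, condition (3) holds for all formulas. I would first upgrade it from the trivial condition to conditions in the generic. Suppose $\dot x_0,\dots,\dot x_n\in\HS_\sS$ and $V[G]_\sS\models\varphi(\dot x_0^G,\dots,\dot x_n^G)$. By Theorem~\ref{thm:forcingtheorem} some $p\in G$ satisfies $p\Vdash_\sS\varphi(\vec{\dot x})$. The device for removing $p$ is Lemma~\ref{lem:thetrick}, or more directly the observation that $\varphi$ can be absorbed into a single sentence forced outright. Let $\dot y_\varphi$ be the $\HS$-name for the truth value $\{0:\varphi(\vec{\dot x})\}$, which exists by Lemma~\ref{lem:definablename}. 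Then $\Vdash_\sS$ ``$\dot y_\varphi=1\leftrightarrow\varphi(\vec{\dot x})$'', and hence $\Vdash_\sT$ ``$i(\dot y_\varphi)=1\leftrightarrow\varphi(i(\vec{\dot x}))$''.

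\emph{Step 2 (the type of $G$ and building $H$).} Let $\Sigma$ be the set of all statements ``$i(\dot y_\varphi)=\check b$'' for which $\dot y_\varphi^G=b$. Using (1), (2) and Step~1, I would show that $\Sigma$ is finitely satisfiable by $\sT$-conditions. Concretely, the finitely many true statements combine into one true $\varphi$, and $\Vdash_\sS$ ``$\dot y_\varphi=0$'' fails. Hence $\Vdash_\sT$ ``$i(\dot y_\varphi)=0$'' fails, so some $t\in\mathbb{Q}$ forces $i(\dot y_\varphi)=1$.

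Next I would pass to an outer model, for instance a collapse extension of $V[G]$ in which $\mathbb{Q}$, the relevant names and the symmetrically dense subsets of $\mathbb{Q}$ in $V$ are all countable. There I build a descending sequence $t_0\geq t_1\geq\cdots$ by diagonalizing. At each step the sequence meets the next symmetrically dense set and forces the next statement of $\Sigma$, while keeping the invariant that every finite part of $\Sigma$ is still compatible below the current condition. The filter $H$ generated by this sequence is then $\sT$-generic, and every statement in $\Sigma$ holds in $V[H]_\sT$.

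\emph{Step 3 (the isomorphism).} With $H$ in hand, the statements $\dot x_0=\dot x_1$ and $\dot x_0\in\dot x_1$ transfer. It follows that $\dot x^G\mapsto i(\dot x)^H$ is a well-defined $\in$-embedding. By (1) it is the identity on $V$.

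It remains to see that the map is onto $V[H]_\sT$, and this is where $i^*$ and (2) enter. Every $\dot z\in\HS_\sT$ is forced equal to $i(i^*(\dot z))$. Since $\HS\subseteq\C$, the name $i^*(\dot z)$ is forced equal to an $\HS_\sS$-name via an $i$-style retraction; this step needs care because $\C$ need not equal $\HS$. So $\dot z^H$ lies in the image. An $\in$-induction on the two transitive models then shows the map is the identity, and therefore $V[G]_\sS=V[H]_\sT$.

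\emph{Main obstacle.} The hard part is Step~2, specifically the invariant. The set of possible $\varphi$ is a proper class, and the bookkeeping must interleave deciding truth values with meeting the symmetrically dense sets. If the diagonalization becomes unwieldy, I would first try to fold the whole type into a single set of conditions using the reflection argument from the previous lemma: restrict to $\HS_\alpha$ for a club-large $\alpha$, so that only set-many statements need to be realized.
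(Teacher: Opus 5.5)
\textbf{The gap is in your Step~2, and it is not a bookkeeping problem: the $H$ you are trying to build need not exist.} The relation $\sS\cong_\C\sT$ only controls what is forced by $\mathds 1$. You want a $\sT$-generic $H$ with $i(\dot x)^H=\dot x^G$ for \emph{every} $\dot x\in\HS_\sS$, that is, one realizing the full type of the given $G$. That type can be unrealizable.

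Here is an example. Let $\mathbb{B}=\B(\Add(\omega_1,\omega))$, with generic functions $c_m\colon\omega_1\to 2$. Let $\sG$ be the group of flips of finitely many coordinates $c_m$. Let $\sF$ be generated by the groups $L_n$ of flips that avoid all coordinates below $n$, and put $\sT=(\mathbb{B},\sG,\sF)$.

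Let $\mathbb{A}=\{b\in\mathbb{B}:\fix(b)\in\sF\}$ and $\sS=(\mathbb{A},\sG,\sF)$, the tenacious system of Proposition~\ref{prop:equitenacious}. It is $\HS$-equivalent to $\sT$ via a pair in which $i$ is the inclusion $\HS_\sS\subseteq\HS_\sT$. Since $\{(k,\emptyset)\}\in\HS_\sS$ for every $k\in\mathbb{A}$, your Step~3 forces $H\cap\mathbb{A}=G$.

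Each $\llbracket \dot c_m(0)=\check\epsilon\rrbracket$ lies in $\mathbb{A}$. For every $f\in 2^\omega\cap V$, the elements $\llbracket \dot c_j(0)=f(j) \text{ for } j<n \text{ and } \dot c_n(0)\neq f(n)\rrbracket$, $n<\omega$, form a maximal antichain of $\mathbb{A}$. Indeed, any $k\in\mathbb{A}$ below $\llbracket\forall m\,\dot c_m(0)=f(m)\rrbracket$ is fixed by the flip of some coordinate $N$. That flip moves this value to a disjoint one, so $k=0$.

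Hence any fully $\mathbb{A}$-generic $G$, which is in particular $\sS$-generic, codes a real $g(m)=c_m(0)$ with $g\notin V$. On the other hand, every $\sT$-generic $H$ meets the $\sG$-invariant dense set of conditions $p$ with $\omega\times\{0\}\subseteq\dom p$. If $H\supseteq G$, this $p\in V$ satisfies $p(m,0)=g(m)$ for all $m$, so $g\in V$, a contradiction.

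So no $H$ as in your Steps~2--3 exists, and the invariant you propose cannot be maintained. The proper-class issue you flag is secondary. A smaller point: when $\C\neq\HS$, names such as $i(\dot y_\varphi)$ need not lie in $\HS_\sT$. Their values at a merely $\sT$-generic $H$ are then not governed by $\Vdash_\sT$, and $i^*(\dot z)$ need not be forcing-equivalent to an $\HS$-name.

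\textbf{The missing idea is to transfer an existence statement rather than the type of $G$, which is what the paper does.} Let $\mathbb{Q}$ be the forcing of $\sT$. By Theorem~\ref{thm:quotsummary}, $\Vdash_\sT$ ``the quotient $\mathbb{Q}/\sT$ adds a $\check{\mathbb{Q}}$-generic $H$ over $\check V$ such that $\check V[H]_{\check\sT}$ is the current model''. Here $\check V$ is handled by reflecting to some $V_\alpha$ or by using a ground-model predicate.

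The parameters of this sentence are an $\HS$-name and check names. So by the preceding lemma and clause~(1), it transfers through $i^*$ to $\Vdash_\sS$. Now take any $\sS$-generic $G$. Theorem~\ref{thm:quotsummary}(1) gives a full $\mathbb{P}$-generic $G'$ with $V[G']_\sS=V[G]_\sS$. Evaluating the transferred sentence in $V[G']_\sS$ yields a $\mathbb{Q}$-generic $H$ with $V[H]_\sT=V[G]_\sS$. The other direction is symmetric; the paper runs the argument with the roles of $\sS$ and $\sT$ swapped.

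This $H$ agrees with $G$ only at the level of the model, not name by name via $i$. The example shows that in general nothing stronger is available.
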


\begin{proof}
     By Theorem~\ref{thm:quotsummary}, there is an $\sS$-name for a forcing notion that forces that its ground model $V^\sS$ is of the form $\check{V}[G]_{\check{\sS}}$ for some $\check{\mathbb{P}}$-generic $G$ over $\check V$. This can be transfered to $\sT$. Similarly in the other direction.
\end{proof}

\begin{lemma}\label{lem:hseqhr}
    $\sS \cong_\HR \sT$ is equivalent to $\sS \cong_\HS \sT$.
\end{lemma}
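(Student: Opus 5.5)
The plan is to transfer equivalence maps between the two classes using the map $i\colon \HR\to\HS$ of Lemma~\ref{lem:hshr} together with the inclusion $\HS\subseteq\HR$. Write $j_\sS\colon \HR_\sS\to\HS_\sS$ for the map of Lemma~\ref{lem:hshr}, which satisfies $\Vdash_\sS j_\sS(\dot x)=\dot x$. Write $\iota_\sS\colon\HS_\sS\to\HR_\sS$ for the inclusion. Define $j_\sT$ and $\iota_\sT$ in the same way.

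For the direction from $\cong_\HR$ to $\cong_\HS$, suppose $i\colon\HR_\sS\to\HR_\sT$ and $i^*\colon\HR_\sT\to\HR_\sS$ witness $\sS\cong_\HR\sT$. We set
\[ k := j_\sT\circ i\circ\iota_\sS \colon \HS_\sS\to\HS_\sT, \qquad k^* := j_\sS\circ i^*\circ\iota_\sT\colon\HS_\sT\to\HS_\sS. \]
Since $j$ changes a name only up to forced equality, each clause of Definition~\ref{def:strequi} for $(k,k^*)$ reduces to the corresponding clause for $(i,i^*)$.

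\begin{enumerate}
\item Clause (1): $\Vdash_\sT k(\check x)=i(\check x)=\check x$, because check names lie in $\HS$.
\item Clause (3): for $\Delta_0$ formulas, forced truth is invariant under replacing names by forcing-equivalent names, so it is inherited directly.
\item Clause (2): this needs one extra observation, namely that $i^*$ respects forcing equivalence. If $\Vdash_\sT \dot y=\dot y'$ with $\dot y,\dot y'\in\HR_\sT$, apply clause (3) of $i^*$ to the $\Delta_0$ formula $u=v$. This gives $\Vdash_\sS i^*(\dot y)=i^*(\dot y')$. The analogous statement holds for $i$.
\end{enumerate}

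Using this, we get $\Vdash_\sS k^*(k(\dot x))=i^*(j_\sT(i(\dot x)))=i^*(i(\dot x))=\dot x$. The symmetric identity for $k\circ k^*$ follows in the same way.

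For the converse direction, suppose $k,k^*$ witness $\sS\cong_\HS\sT$. We set
\[ i:=\iota_\sT\circ k\circ j_\sS, \qquad i^*:=\iota_\sS\circ k^*\circ j_\sT. \]
This is the same computation, with one point to check: $k$ and $k^*$ respect forcing equivalence on $\HS$. That again follows from clause (3) applied to $u=v$. Clause (1) needs $j_\sS(\check x)$ to be forcing equivalent to $\check x$, which holds by Lemma~\ref{lem:hshr}, so $\Vdash_\sT i(\check x)=k(j_\sS(\check x))=k(\check x)=\check x$. Clause (3) for $\HR$ names follows because each $\dot x_l$ is forced equal to $j_\sS(\dot x_l)\in\HS$.

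I do not expect a real obstacle. The only points needing care are, first, the repeated use of invariance of $\Vdash$ under forcing-equivalent substitutions, and second, the fact that the maps in Definition~\ref{def:strequi} preserve forced equality. That second fact is the hidden step used in clause (2). Everything else is bookkeeping, and the same transfer shows $\cong_\C$ depends only on $\C$ up to such definable retractions.
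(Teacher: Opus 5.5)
Your proposal is correct and is the intended argument: the paper's proof just says ``use Lemma~\ref{lem:hshr},'' and you have spelled this out by composing the equivalence maps with the retraction $\HR\to\HS$ and the inclusion $\HS\subseteq\HR$. You also correctly identified the one non-bookkeeping step, namely that clause (2) requires the maps to preserve forced equality, which follows from clause (3) applied to the atomic formula $u=v$.
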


\begin{proof}
    Use Lemma~\ref{lem:hshr}.
\end{proof}

\begin{prop}\label{prop:cBaequi}
For any system $\sS$ there is $\sS'$, uniformly definable from $\sS$, such that $\sS' \cong_\HS \sS$ and $\sS' \cong_\SN \sS$ and the forcing of $\sS'$ is a complete Boolean algebra.
\end{prop}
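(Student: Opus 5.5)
The plan is to take $\sS'=(\mathbb{B},\sG',\sF')$, where $\mathbb{B}=\mathrm{RO}(\mathbb{P})$ is the regular open algebra of $\mathbb{P}$. Its positive part is the forcing; this is definable from $\mathbb{P}$ without choice, and $e\colon\mathbb{P}\to\mathbb{B}^+$, $p\mapsto \mathrm{int}\,\mathrm{cl}(\{q:q\le p\})$, is a dense embedding. Every automorphism $\pi$ of $\mathbb{P}$ extends uniquely to an automorphism $\pi'$ of $\mathbb{B}$ via $\pi'(b)=\pi``b$. So we let $\sG'=\{\pi':\pi\in\sG\}$, which is isomorphic to $\sG$ (the map is injective modulo the separative quotient; if $\mathbb{P}$ is not separative, we act on $\mathbb{P}$ by the implied action, as allowed after Definition~\ref{def:symsys}). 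We then transfer the filter: $\sF'=\{H':H\in\sF\}$. Normality and the filter properties transfer along the group isomorphism, so $\sS'$ is a symmetric system.

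Next I define the translation maps. Define $i\colon V^{\mathbb{P}}\to V^{\mathbb{B}}$ recursively by $i(\dot x)=\{(e(p),i(\dot y)):(p,\dot y)\in\dot x\}$. For the reverse direction, define $i^*(\dot x)=\{(p,i^*(\dot y)): \exists b\,((b,\dot y)\in\dot x\wedge e(p)\le b)\}$. Both maps are uniformly definable and commute with the group actions: $\pi'(i(\dot x))=i(\pi(\dot x))$ and $\pi(i^*(\dot x))=i^*(\pi'(\dot x))$. This holds because $e$ is equivariant ($\pi'(e(p))=e(\pi(p))$), and $e(p)\le b$ iff $e(\pi p)\le\pi' b$. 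By induction on rank it follows that $\sym(i(\dot x))\supseteq\sym(\dot x)'$ and $\sym(i^*(\dot x))\supseteq$ the preimage of $\sym(\dot x)$, so both maps send $\HS$ to $\HS$. They also send check names to names forced equal to check names. This gives clause (1) for $\HS$.

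The key semantic fact is the standard correspondence between generics. If $G$ is $\mathbb{P}$-generic, then $G'=\{b:\exists p\in G\,e(p)\le b\}$ is $\mathbb{B}$-generic, $G=e^{-1}[G']$, and $i(\dot x)^{G'}=\dot x^G$, $i^*(\dot y)^{G}=\dot y^{G'}$. This is proved by a routine induction on names. From it, clauses (2) and (3) follow at once: $\Vdash_{\mathbb{P}}\varphi(\dot x_0,\dots)$ iff $\Vdash_{\mathbb{B}}\varphi(i(\dot x_0),\dots)$, and $i^*i(\dot x)$ and $\dot x$ have the same interpretation in every generic. For the $\HS$ version we also need $V[G]_\sS=V[G']_{\sS'}$, which follows because $i$ and $i^*$ preserve $\HS$. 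Thus the forcing relations $\Vdash_\sS$ and $\Vdash_{\sS'}$ correspond. For $\SN$ the same maps work, since $\Vdash_\mathbb{P}\dot x\in V[\dot G]_\sS$ transfers directly to $i(\dot x)$ via the generic correspondence. Alternatively, one could compose with Lemma~\ref{lem:hshr}-type arguments, but this is unnecessary.

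The main obstacle I expect is the group-theoretic bookkeeping when $\mathbb{P}$ is not separative or when distinct $\pi$ induce the same $\pi'$. In that case $\sG\to\sG'$ is only a surjective homomorphism, and we must check that the image filter is still a filter of subgroups: upward closure and closure under intersections of images. I would handle this by taking $\sF'$ to be the filter generated by the images $H'$ and verifying that $\sym_{\sG'}(i(\dot x))\supseteq\sym_\sG(\dot x)'$ and that the preimage of any element of $\sF'$ lies in $\sF$. The preimage claim uses the fact that the kernel acts trivially on $e$-images, hence on names in the range of $i$ up to forcing equivalence. If this becomes delicate, the fallback is to use the implied-action convention, keep $\sG$ itself acting on $\mathbb{B}$ through $\pi\mapsto\pi'$, and take $\sF'=\sF$, which sidesteps the issue entirely.
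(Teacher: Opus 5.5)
Your proposal is correct and follows essentially the paper's own proof. The paper also passes to $\mathcal{B}(\mathbb{P})$ and keeps $\sG$ and $\sF$ acting through the unique extensions of the automorphisms, which is your fallback option. It takes $i$ to be the identity once $\mathbb{P}$ is viewed as a dense subset of $\mathcal{B}(\mathbb{P})$, which is precisely your recursive substitution $p\mapsto e(p)$, and it uses exactly your $i^*$. Your verification via the correspondence $G\leftrightarrow G'$ and the equality $V[G]_\sS=V[G']_{\sS'}$ spells out what the paper summarizes as ``well-known facts about forcing equivalence and the forcing theorem''.
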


\begin{proof}
Let $\sS = (\mathbb{P}, \sG, \sF)$. Then let $\mathcal{B}(\mathbb{P})$ be the corresponding complete Boolean algebra of regular open subsets of $\mathbb{P}$. Any automorphism $\pi$ of $\mathbb{P}$ can be identified with the unique automorphism on $\mathcal{B}(\mathbb{P})$ that agrees with $\pi$ on the dense copy of $\mathbb{P}$ within $\mathcal{B}(\mathbb{P})$. Under this identification we can consider the system $\sS' = (\mathcal{B}(\mathbb{P}), \sG, \sF)$.

Viewing $\mathbb{P}$ as a dense subset of $\mathcal{B}(\mathbb{P})$, let $i\colon \HS_\sS \to \HS_{\sS'}$ be the identity. On the other hand, we define \[i^*(\dot x) = \{ (p, i^*(\dot y)) : p \in \mathbb{P} \wedge \exists q \geq p ( (q, \dot y) \in \dot x)  \}\] by recursion on the rank of $\dot x$. (1), (2) and (3) follow from well-known facts about forcing equivalence and the forcing theorem. The same definition works for $\SN$.
\end{proof}

For the sole purpose of the following theorem let us say that a hereditary class $\HC$ is good if $\HS \subseteq \HC$, $\HC$ is hereditarily forcing stable and $\sS \cong_\HC \sS'$, where $\sS'$ is as in Proposition~\ref{prop:cBaequi}. Then we can formulate the following general result. 

\begin{thm}\label{thm:tenequi}
Let $\sS = (\mathbb{P}, \sG, \sF)$ and $\sT = (\mathbb{Q}, \sH, \sE)$ be symmetric systems. Let $\HC$ be either good and $\sS, \sT$ be tenacious, or $\HC = \SN$. Then $\sS \cong_\HC \sT$ iff there is an isomorphism $\pi \colon \B(\mathbb{P}) \to \B(\mathbb{Q})$ such that $\pi``\HC_{\sS'} = \HC_{\sT'}$. In particular, $\mathbb{P}$ and $\mathbb{Q}$ are forcing equivalent. Moreover, whenever $G$ is $\B(\mathbb{P})$-generic over $V$, then $V[G \cap \mathbb{P}]_{\sS} = V[\pi`` G \cap \mathbb{Q}]_{\sT}$ and vice-versa.
\end{thm}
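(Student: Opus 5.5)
First I reduce to complete Boolean algebras. By goodness of $\HC$ (or directly by Proposition~\ref{prop:cBaequi} when $\HC=\SN$) we have $\sS\cong_\HC\sS'$ and $\sT\cong_\HC\sT'$. Since $\cong_\HC$ is an equivalence relation, it suffices to prove the statement for $\sS'=(\B(\mathbb{P}),\sG,\sF)$ and $\sT'=(\B(\mathbb{Q}),\sH,\sE)$. Tenacity passes to $\sS'$ and $\sT'$, because the dense set of conditions with $\fix(p)\in\sF$ stays dense in $\B(\mathbb{P})$.

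The direction from an isomorphism to equivalence is the easy one. Given $\pi\colon\B(\mathbb{P})\to\B(\mathbb{Q})$ with $\pi``\HC_{\sS'}=\HC_{\sT'}$, I take $i=\pi$ acting recursively on names and $i^*=\pi^{-1}$. Both maps fix check names and are mutually inverse. An isomorphism of forcings transfers the forcing relation, so $\Vdash_{\mathbb{P}}\varphi(\dot x)$ holds iff $\Vdash_{\mathbb{Q}}\varphi(\pi\dot x)$, which gives (3). The "moreover" clause follows because $\dot x^G=\pi(\dot x)^{\pi``G}$ and the classes correspond, so $\{\dot x^G:\dot x\in\HC_{\sS'}\}=\{\dot y^{\pi``G}:\dot y\in\HC_{\sT'}\}$. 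Both sides are the symmetric extensions, since $\HS\subseteq\HC\subseteq\SN$ gives $V[G]_{\sS'}=\{\dot x^G:\dot x\in\HC_{\sS'}\}$. Passing back to $\mathbb{P}$ and $\mathbb{Q}$ uses the identification from Proposition~\ref{prop:cBaequi}.

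For the converse, suppose $i,i^*$ witness $\sS'\cong_\HC\sT'$. I need to recover the generic in a form that $i$ can transport. In the case $\HC=\SN$ the canonical name $\dot G=\{(b,\check b):b\in\B(\mathbb{P})\}$ lies in $\SN$, so $i(\dot G)$ is a $\mathbb{Q}$-name. For $b\in\B(\mathbb{P})$ I define $\pi(b)=\|\check b\in i(\dot G)\|_{\B(\mathbb{Q})}$. Using (3) with $\Delta_0$ formulas, check-name preservation from (1), and the fact that $\Vdash_{\mathbb{P}}$ "$\dot G$ is a generic ultrafilter meeting every dense set of $\check V$", I get that $\pi$ preserves order, complements and arbitrary joins. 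For joins I would express "$\dot G\cap\check A\neq\emptyset$ iff $\bigvee A\in\dot G$" as a statement about check parameters, so that (3) transfers it. The same construction applied to $i^*$ gives $\pi^*$, and (2) shows that $\pi^*=\pi^{-1}$. This makes $\pi$ an isomorphism. I then want to show $\Vdash i(\dot x)=\pi(\dot x)$, by induction on rank: the statement $\check p\in\dot G\to\dot y\in\dot x$ transfers to $\check p\in i(\dot G)\to i(\dot y)\in i(\dot x)$. Forcing stability then yields $\pi``\HC_{\sS'}=\HC_{\sT'}$.

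The main obstacle is the good case, where $\dot G$ need not lie in $\HC$. Here I would use tenacity. For $p$ in the dense set with $\fix(p)\in\sF$, the name $\dot g_p=\{(p,\check 0)\}$ is symmetric and lies in $\HS\subseteq\HC$; it is forced to encode the truth value of $p\in G$. I would define $\pi(p)=\|\check 0\in i(\dot g_p)\|$ on this dense set. Compatibility and order relations between such $p$ are checked through $\Delta_0$ statements like $\dot g_p\subseteq\dot g_q$ and $\dot g_p\cap\dot g_q=\emptyset$, and the question is whether these suffice to extend $\pi$ to a complete embedding. Density of the images would then follow by the symmetric argument applied to $i^*$. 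I expect the delicate point to be showing that $\pi$ is forced-dense, meaning that every $\mathbb{Q}$-condition is compatible with some image. For this I would use that $\{\dot g_p\}$ jointly determine $G$ in the extension, and transfer the statement "some $\dot g_p$ is nonempty below each condition" via (3) applied to a symmetric name for the set of pairs $(\check p,\dot g_p)$.
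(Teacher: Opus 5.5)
\textbf{There is a genuine gap in the converse direction.} In the case $\HC=\SN$ you transport the canonical name $\dot G=\{(b,\check b):b\in\B(\mathbb{P})\}$, but this name is not in $\SN_{\sS'}$. Membership in $\SN$ means $\Vdash \dot x\in V[\dot G]_{\sS}$, and $G\in V[G]_\sS$ holds only when the symmetric extension is the whole generic extension. This already fails for the basic Cohen model, where $V[G]_\sS\not\models\AC$. So $i(\dot G)$ is undefined. Everything built on it breaks down: your definition of $\pi$, the transfer of the join statement, and the inductive step comparing $i(\dot x)$ with $\pi(\dot x)$.

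The good case has a related problem. There you leave open whether $\pi$ extends to an isomorphism. The density argument you propose uses a name for $\{(p,g_p):p\in D\}$, whose evaluation computes $G\cap D$ and hence $G$. So that name is not symmetric, and is not even in $\SN$ unless $V[G]_\sS=V[G]$.

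\textbf{The fix.} You already half-found it with $\dot g_p$, and it handles both cases uniformly. Use the truth-value names $\dot x_p=\{(p,\check p)\}$ (or your $\dot g_p$). They always name elements of $V$, so they lie in $\SN_{\sS'}$ for every $p$. When $\fix(p)\in\sF$ they lie in $\HS\subseteq\HC$, and tenacity makes the set $D$ of such $p$ dense. Then proceed as follows.
\begin{enumerate}
\item Define $\pi$ on all of $\B(\mathbb{P})$, not just on $D$, by $\pi(p)=\bigvee_{p'\le p,\,p'\in D}\llbracket i(\dot x_{p'})\neq\emptyset\rrbracket$. Define $\pi^*$ likewise from $i^*$.
\item Prove the transfer: $p\Vdash_{\sS'}\varphi(\vec{\dot x})$ iff $\pi(p)\Vdash_{\sT'}\varphi(i(\vec{\dot x}))$. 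This comes from applying clause (3) to $\dot x_{p'}\neq\emptyset\to\varphi$ for each $p'\le p$ in $D$.
\item Apply clause (2) to the names $\dot x_{p'}$. For every $p'\in D$, $p\perp p'$ iff $p\Vdash\dot x_{p'}=\emptyset$ iff $\pi^*(\pi(p))\Vdash i^*(i(\dot x_{p'}))=\emptyset$ iff $\pi^*(\pi(p))\perp p'$. Since $D$ is dense, this gives $\pi^*\circ\pi=\id$, and symmetrically $\pi\circ\pi^*=\id$.
\end{enumerate}
Bijectivity together with monotonicity in both directions already makes $\pi$ an isomorphism. No separate density or join-preservation argument is needed. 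The induction showing $\Vdash\pi(\dot x)=i(\dot x)$ then goes through with $\dot x_p$ in place of $\dot G$.

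Your reduction to $\sS',\sT'$, the easy direction, and the ``moreover'' clause are fine and match the paper.
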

 
\begin{proof}
    First suppose $\pi$ as above exists and we let $i(\dot x) = \pi(\dot x)$, $i^*(\dot x) = \pi^{-1}(\dot x)$, then this clearly witnesses that $\sS' \cong_\HC \sT'$. Since $\sS \cong_\HC \sS'$ and $\sT \cong_\HC \sT'$, also $\sS \cong_\HC \sT$. On the other hand, assume that $\sS \cong_\HC \sT$ and thus ${\sS}' \cong_\HC {\sT}'$, as witnessed by $i$ and $i^*$. Note that there is a dense set $D$ of conditions $p \in \mathbb{P}$ such that $\dot x_p := \{(p, \check{p}) \} \in \HC_{{\sS'}}$. For any $p \in \B(\mathbb{P})$, let \[\pi(p) := \bigvee_{p' \leq p, p' \in D} \llbracket i(\dot x_{p'}) \neq \emptyset \rrbracket \in \B(\mathbb{Q}).\] Similarly define $\pi^* \colon \B(\mathbb{Q}) \to \B(\mathbb{P})$ using $i^*$. Note that for any $p \in \B({\mathbb{P}})$ and any $\varphi$, \begin{align*}
p \Vdash_{{\sS}'} \varphi(\dot x_0, \dots, \dot x_n)  &\text{ iff } \forall p' \leq p, p' \in D ( p' \Vdash_{{\sS}'} \varphi(\dot x_0, \dots, \dot x_n)) \\ &\text{ iff } \forall p' \leq p, p' \in D (\Vdash_{{\sS}'} \dot x_{p'} \neq \emptyset \rightarrow \varphi(\dot x_0, \dots, \dot x_n)) \\ &\text{ iff } \forall p' \leq p, p' \in D (\Vdash_{{\sT}'} i(\dot x_{p'})\neq \emptyset \rightarrow \varphi(i(\dot x_0), \dots, i(\dot x_n))) \\ &\text{ iff } \forall p' \leq p, p' \in D (\llbracket i(\dot x_{p'})\neq \emptyset \rrbracket \Vdash_{{\sT}'}\varphi(i(\dot x_0), \dots, i(\dot x_n))) \\ &\text{ iff } \pi(p) \Vdash_{\sT'} \varphi(i(\dot x_0), \dots, i(\dot x_n)).\end{align*}

Similarly for $\pi^*$ and $i^*$. Then note that for any $p\in \B(\mathbb{P}), p' \in D$, 
\begin{align*}
p \perp p' &\text{ iff } p \Vdash_{{\sS}'} \dot x_{p'} = \emptyset \\ &\text{ iff } \pi(p) \Vdash_{\sT'} i(\dot x_{p'}) = \emptyset \\ &\text{ iff } \pi^*(\pi(p)) \Vdash_{\sS'} i^*(i(\dot x_{p'})) = \dot x_{p'} = \emptyset \\ &\text{ iff } \pi^*(\pi(p)) \perp p'.\end{align*}

Thus $p = \pi^*(\pi(p))$. Similarly $\pi(\pi^*(q)) = q$, for every $q \in \B(\mathbb{Q})$. So $\pi^* = \pi^{-1}$ and $\pi$ is indeed an automorphism. Let us check by induction on the ranks of $\dot x \in \HC_{{\sS}'}$ and $\dot y \in \HC_{\sT'}$ that $\Vdash_{\sT'} \pi(\dot x) = i(\dot x)$ and $\Vdash_{\sS'} \pi^{-1}(\dot y) = i^*(\dot y)$. 

Suppose for instance that $q \Vdash_{\sT'} \dot z \in \pi(\dot x)$, where $\dot z$ is of lower rank than $\dot x$. Then $\pi^{-1}(q) \Vdash_{\sS'} \pi^{-1}(\dot z) \in \dot x$, and by the inductive assumption $\pi^{-1}(q) \Vdash_{\sS'} i^*(\dot z) \in \dot x$. Then, by our previous observation, $q \Vdash_{\sT'} i(i^*( \dot z)) = \dot z \in i(\dot x)$.

On the other hand, if $q \Vdash_{\sT'} \dot z \in i(\dot x)$, then $\pi^{-1}(q) \Vdash_{\sS'} i^*(\dot z) \in \dot x$ and by extending $q$ if necessary, we can assume that $\pi^{-1}(q) \Vdash_{\sS'} i^*(\dot z) = \dot z' \in \dot x$ for some $\dot z'$ of lower rank than $\dot x$. Then again, $q \Vdash_{\sT'} i(i^*(\dot z)) = \dot z = i(\dot z') = \pi(\dot z')$. At the same time, $q\Vdash_{\sT'} \pi(\dot z') \in \pi(\dot x)$ and so $q \Vdash_{\sT'} \dot z \in \pi(\dot x)$.

Thus indeed, $\Vdash_{\sT'} \pi(\dot x) = i(\dot x)$ and the argument for $\Vdash_{\sS'} \pi^{-1}(\dot y) = i^*(\dot y)$ is completely analogous. Since $\HC$ is hereditary forcing stable an easy induction shows that $\pi``\HC_{{\sS'}} \subseteq \HC_{{\sT}'}$ and $\pi^{-1}``\HC_{{\sT}'} \subseteq \HC_{{\sS}'}$. Thus $\HC_{{\sT}'} = \pi\pi^{-1}``\HC_{{\sT}'} = \pi``\HC_{{\sS}'}$.

For the last statement of the theorem, note that $\pi``G \cap \mathbb{Q}$ is $\mathbb{Q}$-generic whenever $G$ is $\B(\mathbb{P})$-generic as $\pi$ is an isomorphism. Then $i(\dot x)^{\pi`` G \cap \mathbb{Q}} = \pi(\dot x)^{\pi``G} = \dot x^G$ and $i(\dot x)^{\pi``G \cap \mathbb{Q}} \in V[\pi`` G \cap \mathbb{Q}]_\sT$ as $i(\dot x) \in \SN_{\sT}$. Thus $V[G \cap \mathbb{P}]_\sS \subseteq V[\pi`` G \cap \mathbb{Q}]_\sT$ and similarly in the other direction.
\end{proof}

In the setting of classical forcing, the generic filter is uniquely determined by how names are evaluated. The same is still true for tenacious systems and moreover when two such systems are equivalent, there is a one-to-one correspondence between (full) generics on one side and the other, just like in the case of forcing. When the system is not tenacious this generally cannot be the case. For example consider any forcing $\mathbb{P}$ and the lottery sum $\mathbb{P} \oplus \mathbb{P} = \{0,1\} \times \mathbb{P}$ with the involution $\pi(i,p) = (1-i,p)$. Consider the recursively defined maps \[i(\dot x) = \{ (p, i(\dot y)) : \exists i \in 2 (((i,p), \dot z) \in \dot x) \}\] and \[i^*(\dot x) = \{ ((0,p), i^*(\dot z)) : (p,\dot z) \in \dot x \} \cup \{((1,p),i^*(\dot z)) : (p,\dot z) \in \dot x \}.\] It is easy to see that these maps witness that $(\mathbb{P} \oplus \mathbb{P}, \sG, \sF ) \cong \mathbb{P}$, where $\sG = \{\id, \pi \}$, $\sF = \{\sG \}$, and $\mathbb{P}$ is identified with the system that amounts to forcing with $\mathbb{P}$, namely $(\mathbb{P}, \{\id \}, \{\{\id \}\})$. But for any $\mathbb{P}$-generic $G$, $\{0\} \times G$ and $\{1\}\times G$ produce exactly the same evaluations. 

\begin{cor}\label{thm:tenequilr}
    Let $\sS = (\mathbb{P}, \sG, \sF)$ and $\sT = (\mathbb{Q}, \sH, \sE)$ be symmetric systems. Then $\sS \cong_\SN \sT$ iff there is an isomorphism $\pi \colon \B(\mathbb{P}) \to \B(\mathbb{Q})$ so that whenever $G$ is $\B(\mathbb{P})$-generic, then $V[G \cap \mathbb{P}]_{\sS} = V[\pi`` G \cap \mathbb{Q}]_{\sT}$ and vice-versa.
\end{cor}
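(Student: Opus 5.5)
The plan is to derive the corollary from Theorem~\ref{thm:tenequi} in the case $\HC = \SN$, which needs neither tenacity nor goodness. The forward direction then comes for free. If $\sS \cong_\SN \sT$, Theorem~\ref{thm:tenequi} supplies an isomorphism $\pi\colon \B(\mathbb{P}) \to \B(\mathbb{Q})$ with $\pi``\SN_{\sS'} = \SN_{\sT'}$. Its final clause ("Moreover\dots") states exactly that $V[G\cap\mathbb{P}]_\sS = V[\pi``G\cap\mathbb{Q}]_\sT$ for every $\B(\mathbb{P})$-generic $G$, and symmetrically for $\pi^{-1}$.

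For the converse, I would assume $\pi$ is given with the stated property on generic extensions. By Theorem~\ref{thm:tenequi}, it then suffices to show $\pi``\SN_{\sS'} = \SN_{\sT'}$. Take $\dot x \in \SN_{\sS'}$, i.e.\ $\Vdash_{\B(\mathbb{P})} \dot x \in V[\dot G]_{\sS'}$. Since $\sS$ and $\sS'$ have the same symmetric extensions (Proposition~\ref{prop:cBaequi}, or directly because $G\cap\mathbb{P}$ and $G$ determine the same extensions), we have $V[\dot G]_{\sS'} = V[\dot G\cap \mathbb{P}]_\sS$. Now let $H$ be any $\B(\mathbb{Q})$-generic and put $G = \pi^{-1}``H$, which is $\B(\mathbb{P})$-generic. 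Then
\[
\pi(\dot x)^H = \dot x^G \in V[G\cap\mathbb{P}]_\sS = V[H\cap\mathbb{Q}]_\sT = V[H]_{\sT'}.
\]
The first equality is the standard fact that evaluation commutes with an isomorphism of Boolean algebras. Since $H$ was arbitrary, $\Vdash_{\B(\mathbb{Q})} \pi(\dot x) \in V[\dot H]_{\sT'}$, so $\pi(\dot x)\in \SN_{\sT'}$. The same argument with $\pi^{-1}$ gives the reverse inclusion, hence $\pi``\SN_{\sS'} = \SN_{\sT'}$, and Theorem~\ref{thm:tenequi} yields $\sS \cong_\SN \sT$.

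One could also bypass Theorem~\ref{thm:tenequi} and define the maps directly. Set $i(\dot x) = \pi(\dot x)$, viewed as a $\B(\mathbb{Q})$-name and transferred to a $\mathbb{Q}$-name via the map $i^*$ from Proposition~\ref{prop:cBaequi}, and likewise for the inverse. Clauses (1)--(3) of Definition~\ref{def:strequi} then follow because $\pi$ preserves the forcing relation. I regard this only as a fallback.

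The main obstacle is bookkeeping between $\mathbb{P}$ and $\B(\mathbb{P})$. One must check that "forced into the symmetric extension" is a property of the generic alone, so that quantifying over all generics $H$ legitimately yields a forcing statement. One must also check that $\SN_\sS$ and $\SN_{\sS'}$ correspond under the maps of Proposition~\ref{prop:cBaequi}. Both points are routine once it is noted that $\SN$ is defined via $\Vdash_{\mathbb{P}}$, which is determined by full generics, and that $G \mapsto G\cap\mathbb{P}$ is a bijection between $\B(\mathbb{P})$-generics and $\mathbb{P}$-generics.
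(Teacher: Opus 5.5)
Your proposal is correct and follows the paper's proof. The forward direction is read off from the final clause of Theorem~\ref{thm:tenequi} with $\HC=\SN$. For the converse, you establish $\pi``\SN_{\sS'}=\SN_{\sT'}$ by transporting generics along $\pi$, using $\pi(\dot x)^H=\dot x^{\pi^{-1}``H}$ and $V[G]_{\sS'}=V[G\cap\mathbb{P}]_\sS$, and then apply Theorem~\ref{thm:tenequi} again, exactly as the paper does in its chain of equivalences.
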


\begin{proof}
The direction from left to right is already included in Theorem~\ref{thm:tenequi}. On the other hand, note that the statement ``for any $\B(\mathbb{P})$-generic $G$, $V[G \cap \mathbb{P}]_{\sS} = V[\pi`` G \cap \mathbb{Q}]_\sT$'' is equivalent to ``for any $\B(\mathbb{P})$-generic $G$, $V[G]_{\sS'} = V[\pi`` G ]_{\sT'}$'' which is equivalent to saying that for any $\mathcal{B}(\mathbb{P})$-name $\dot x$, ``$\Vdash_{\mathcal{B}(\mathbb{P})} \dot x \in V[\dot G]_{\sS'}$ iff $\Vdash_{\mathcal{B}(\mathbb{Q})} \pi(\dot x) \in V[\dot G]_{\sT'}$'', i.e., $\pi \SN_{\sS'} \subseteq \SN_{\sT'}$. The analoguous statement for $\B(\mathbb{Q})$ implies that $\SN_{\sT'} \subseteq \pi``\SN_{\sS'}$. By Theorem~\ref{thm:tenequi}, $\pi``\SN_{\sS'} =\SN_{\sT'}$ is enough to conclude that $\sS' \cong_\SN \sT'$.
\end{proof}

\begin{prop}\label{prop:equitenacious}
For any system $\sS$ there is a tenacious system $\sT$, uniformly definable from $\sS$, such that $\sT \cong_\HS \sS$.
\end{prop}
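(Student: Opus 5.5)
The plan is to build $\sT$ on the same forcing, with the same filter, but with conditions that record symmetry data, and then compare the two systems via the identity-like maps on hereditarily symmetric names. Concretely, I would take $\sS = (\mathbb{P}, \sG, \sF)$ and let $\mathbb{Q}$ consist of pairs $(p, H)$ with $p \in \mathbb{P}$ and $H \in \sF$. The order is $(q, K) \leq (p, H)$ iff $q \leq p$ and $K \leq H$. The group $\sG$ acts by $\pi(p, H) = (\pi(p), \pi H \pi^{-1})$, which is well defined by normality, and the filter stays $\sF$. Thus $\sT = (\mathbb{Q}, \sG, \sF)$.

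For tenacity, one would like $\fix(p,H) \in \sF$. However, $\fix(p,H) = \fix(p) \cap N(H)$, and this need not lie in $\sF$, so this first attempt does not work. I would therefore refine the construction. Let $\mathbb{Q}$ consist of pairs $(p, H)$ with $H \in \sF$, where the pair is regarded only up to the $H$-orbit of $p$. Equivalently, the conditions of $\mathbb{Q}$ are the sets $H``\{p\}$ (orbits) for $H \in \sF$, ordered by $A \leq B$ iff every element of $A$ extends some element of $B$, or another suitable "weak" order. Then $\pi$ sends $H``\{p\}$ to $(\pi H \pi^{-1})``\{\pi p\}$, and $H$ fixes $H``\{p\}$, so $\fix \in \sF$ for every condition and tenacity is immediate. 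This is exactly the standard Grigorieff-style trick.

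The key steps, in order, are as follows.
\begin{enumerate}
\item Verify that $\mathbb{Q}$ is a preorder with a maximum ($\sG``\{\mathds 1\}$), that $\sG$ acts by automorphisms, and that $\sF$ is still a normal filter. All of this is routine.
\item Define $i\colon \HS_\sS \to \HS_\sT$ recursively by $i(\dot x) = \{ (H``\{p\}, i(\dot y)) : (p,\dot y) \in \dot x,\ H \leq \sym(\dot y) \cap \sym(\dot x), H\in\sF \}$. Symmetry is preserved because $\pi$ permutes these clauses. Define $i^*$ in the reverse direction by replacing an orbit condition $A$ with all of its members: $i^*(\dot y) = \{ (p, i^*(\dot z)) : \exists A\,((A,\dot z)\in\dot y \wedge p \in A)\}$. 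The symmetry of $i^*(\dot y)$ comes from that of $\dot y$.
\item Check that a $\mathbb{Q}$-generic $K$ induces a filter $G_K$ on $\mathbb{P}$ which is $\sS$-generic, namely the upward closure of the elements of the conditions in $K$ that are chosen compatibly. Conversely, an $\sS$-generic $G$ induces $K_G = \{A : A \cap G \ne \emptyset\}$ up to closure. The point is that the symmetric dense sets of $\mathbb{P}$ correspond to dense sets of $\mathbb{Q}$, using that symmetric dense sets are closed under the relevant $H$.
\item Show $i(\dot x)^{K} = \dot x^{G}$ by induction, and deduce (1)--(3) of Definition~\ref{def:strequi} via the Symmetric Forcing Theorem~\ref{thm:forcingtheorem}.
\end{enumerate}

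I expect the main obstacle to be step 3, together with the choice of the order on $\mathbb{Q}$ that makes it work. With orbits as conditions, a generic for $\mathbb{Q}$ determines only "some element of each orbit", so extracting a filter on $\mathbb{P}$ is not literally possible. The correct statement is that $\Vdash_\sT$ and $\Vdash_\sS$ agree on the translated names, rather than that the generics correspond. I would therefore first try to prove directly that $p \Vdash_\sS \varphi(\dot x)$ iff $H``\{p\} \Vdash_\sT \varphi(i(\dot x))$ for every $H \leq \sym(\dot x)$. This is by induction on atomic formulas, using that $p \Vdash_\sS \varphi(\dot x)$ implies $\pi p \Vdash_\sS \varphi(\dot x)$ for $\pi \in H$. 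This avoids needing a generic correspondence and is exactly what (2) and (3) require.
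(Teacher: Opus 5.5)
Your construction is correct, and it is a more combinatorial version of the paper's. The paper first passes to the Boolean completion (Proposition~\ref{prop:cBaequi}) and takes $\mathbb{Q}=\{b\in\mathbb{P}:\fix(b)\in\sF\}$. Its crucial elements are $\bigvee_{\pi\in H}\pi(p)$, and your orbit $H``\{p\}$ is exactly a formal version of that join. With your refinement order, two orbits are compatible iff some of their members are, and the separative order is $\bigvee A\le\bigvee B$. So $A\mapsto\bigvee A$ embeds the separative quotient of your poset densely into the paper's $\mathbb{Q}$.

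The paper's route buys $\mathbb{Q}\subseteq\B(\mathbb{P})$. Hence $\HS_\sT\subseteq\HS_\sS$, $i^*$ can be the identity, and the induced generic is simply $G\cap\mathbb{Q}$. Your route avoids the completion, at the cost of a non-trivial $i^*$ and the induced filter $K_G=\{A\in\mathbb{Q}:A\cap G\neq\emptyset\}$. Your $i$ and $i^*$ are fine: both are $\sG$-equivariant, so they land in $\HS_\sT$ and $\HS_\sS$ respectively. Also $i(\check x)=\check x$, since $\{\mathds 1\}$ is the maximum of $\mathbb{Q}$.

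The weak point is your last paragraph. You never need $K\mapsto G_K$, which is indeed unavailable, and you do not need a direct induction on formulas either. As sketched, that induction is also incomplete: its $\exists$-step needs every $\sT$-name to be $\sT$-forced equal to some $i(\dot x)$, which is clause (2) itself. The paper uses only the direction from $\sS$-generics to $\sT$-generics, and your $G\mapsto K_G$ does the same job.

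What you must verify is that $K_G$ is $\sT$-generic. This needs $H$-invariant dense subsets of $\mathbb{Q}$ to yield $H$-invariant dense subsets of $\mathbb{P}$, the opposite of the direction you state in step 3. Let $D\subseteq\mathbb{Q}$ be dense and $H$-invariant, and put $E=\{r\in\mathbb{P}:\exists A\in D\,\exists a\in A\,(r\le a)\}$. Then $E$ is $H$-invariant. It is also dense:
\begin{itemize}
\item given $r\in\mathbb{P}$, some $C\le H``\{r\}$ lies in $D$;
\item any $c\in C$ satisfies $c\le h(r)$ for some $h\in H$;
\item then $h^{-1}(c)\le r$ and $h^{-1}(c)\in h^{-1}``C\in D$.
\end{itemize}
Here it matters that the orbit of $r$ is taken under a subgroup of $H$.

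By induction you then get $i(\dot x)^{K_G}=\dot x^G$ and $i^*(\dot y)^G=\dot y^{K_G}$. These give $\Vdash_\sS i^*(i(\dot x))=\dot x$ and transfer statements forced by $\sT$ to $\sS$. For the remaining directions, suppose some $C\in\mathbb{Q}$ $\sT$-forces the negation. Pick any $r\in C$ and an $\sS$-generic $G\ni r$; then $C\in K_G$, which is a contradiction. This step plays the role of the paper's $\dot x_q$ trick.
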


\begin{proof}
    Let $\sS = (\mathbb{P}, \sG, \sF)$ and assume without loss of generality that $\mathbb{P}$ is a complete Boolean algebra. Then let $\mathbb{Q} \subseteq \mathbb{P}$ be the subalgebra of $\mathbb{P}$ consisting of conditions $p$, such that $\fix(p) \in \sF$. $\mathbb{Q}$ is then closed under automorphisms from $\sG$, by normality of $\sF$. In other words, for any $\pi \in \sG$, $\pi \restriction \mathbb{Q} \in \Aut(\mathbb{Q})$. So we can consider the system $\sT = (\mathbb{Q}, \sG, \sF)$. Note that $\HS_\sT \subseteq \HS_\sS$.

    To show that $\sS \cong \sT$, for any $\dot x \in \HS_\sS$, define \[i(\dot x) = \{ (q,\dot y) : \dot y \in \HS_\sT, q \in \mathbb{Q}, q \Vdash \dot y \in \dot x  \}\] and let $i^* \colon \HS_\sT \to \HS_\sS$ be the identity.
    
    By induction, $i(\dot x) \in \HS_\sT$, and moreover, $\Vdash_\sS \dot x = i(\dot x)$. Clearly $\Vdash_\sS i(\dot x) \subseteq \dot x$. On the other hand, if $p \Vdash_\sS \dot y \in \dot x $ for some $\dot y \in \HS_\sS$, then from the induction we can assume that $\dot y \in \HS_\sT$. Let $H = \sym(\dot x) \cap \sym(\dot y)$. If $q = \bigvee_{\pi \in H} \pi(p)$, then $q \in \mathbb{Q}$, $p \leq q$ and $(q,\dot y) \in i(\dot x)$.
    
\begin{claim}
    Let $G$ be $\sS$-generic over $V$. Then $G \cap \mathbb{Q}$ is $\sT$-generic.
\end{claim}

\begin{proof}
    Let $D \subseteq \mathbb{Q}$ be symmetrically dense and $H \in \sF$ such that $\pi``D = D$ for every $\pi \in H$. It suffices to show that $D$ is predense in $\mathbb{P}$, since then the downwards closure of $D$ is a symmetrically dense set in $\sS$ and so $G \cap D \neq \emptyset$. So suppose $p \in \mathbb{P}$ is arbitrary. Let $q = \bigvee_{\pi \in H} \pi(p) \in \mathbb{Q}$. Then there is $r \in D$ such that $r$ is compatible with $q$. In particular $r$ is compatible with $\pi(p)$, for some $\pi \in H$. But then $\pi^{-1}(r) \in D$ is compatible with $p$.
\end{proof}

It follows from that claim that whenever $\Vdash_\sT \varphi(\dot x_0, \dots, \dot x_n)$, for $\dot x_0, \dots, \dot x_n \in \HS_\sT$, then also $\Vdash_\sS \varphi(\dot x_0, \dots, \dot x_n)$. On the other hand, suppose that $\Vdash_\sS \varphi(\dot x_0, \dots, \dot x_n)$, for $\dot x_0, \dots, \dot x_n \in \HS_\sS$, but there is $q \in \mathbb{Q}$ such that $q \Vdash_\sT \neg \varphi(i(\dot x_0), \dots, i(\dot x_n))$. Then \begin{align*}q \Vdash_\sT \neg \varphi(i(\dot x_0), \dots, i(\dot x_n)) &\text{ iff } \Vdash_\sT \dot x_q \neq \emptyset \rightarrow \neg \varphi(i(\dot x_0), \dots, i(\dot x_n))\\
&\text{ iff } \Vdash_\sS \dot x_q \neq \emptyset \rightarrow \neg \varphi(i(\dot x_0), \dots, i(\dot x_n)) \\
&\text{ iff } q \Vdash_\sS \neg \varphi(i(\dot x_0), \dots, i(\dot x_n)) \\
&\text{ iff } q \Vdash_\sS \neg \varphi(\dot x_0, \dots, \dot x_n),
\end{align*} where $\dot x_q = \{ (q, \check q) \}$.
This is a contradiction and so we have shown (3).
\end{proof}

It will follow from a later result (Theorem~\ref{thm:completion}) that the statement of Proposition~\ref{prop:equitenacious} also holds for $\cong_\SN$.

\begin{cor}
    $\sS \cong_\HS \sT$ as well as $\sS \cong_\SN \sT$ are first-order definable.
\end{cor}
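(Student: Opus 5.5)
Our plan is to reduce both relations to a first-order statement about the existence of a set-sized object, using Theorem~\ref{thm:tenequi}, Corollary~\ref{thm:tenequilr} and Propositions~\ref{prop:cBaequi} and \ref{prop:equitenacious}. The difficulty with Definition~\ref{def:strequi} is that it quantifies over the class maps $i,i^*$ between classes of names. The characterizations proved above replace these maps by an isomorphism $\pi\colon\B(\mathbb{P})\to\B(\mathbb{Q})$, which is a set, together with a condition on how $\pi$ acts on a class of names. That condition is itself first-order expressible.

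\textbf{The case $\cong_\HS$.} By Proposition~\ref{prop:equitenacious} we may pass, uniformly and definably, from $\sS$ and $\sT$ to tenacious systems $\tilde\sS\cong_\HS\sS$ and $\tilde\sT\cong_\HS\sT$. Since $\cong_\HS$ is an equivalence relation, $\sS\cong_\HS\sT$ holds iff $\tilde\sS\cong_\HS\tilde\sT$. We then check that $\HS$ is good: it is the hereditary class of the forcing stable class of symmetric names, and $\sS\cong_\HS\sS'$ by Proposition~\ref{prop:cBaequi}. Here we must verify that ``symmetric'' is understood in the forcing stable sense, and if necessary replace $\HS$ by $\HR$ via Lemma~\ref{lem:hseqhr}. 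With this in place, Theorem~\ref{thm:tenequi} gives that $\tilde\sS\cong_\HS\tilde\sT$ iff there is an isomorphism $\pi\colon\B(\tilde{\mathbb{P}})\to\B(\tilde{\mathbb{Q}})$ with $\pi``\HC_{\tilde\sS'}=\HC_{\tilde\sT'}$. Both sides of this equation are definable classes with set parameters, so the condition ``for every name $\dot x$, $\dot x\in\HC_{\tilde\sS'}\leftrightarrow\pi(\dot x)\in\HC_{\tilde\sT'}$'' is a single first-order formula in $\pi$ and the systems. Hence $\sS\cong_\HS\sT$ is expressed by ``there exists such a $\pi$''.

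\textbf{The case $\cong_\SN$.} Here we apply Corollary~\ref{thm:tenequilr} directly: $\sS\cong_\SN\sT$ iff there is an isomorphism $\pi$ of the completions with $\pi``\SN_{\sS'}=\SN_{\sT'}$. Membership $\dot x\in\SN_{\sS'}$ means $\Vdash_{\B(\mathbb{P})}\dot x\in V[\dot G]_{\sS'}$, i.e.\ $\Vdash\exists\dot y\in\HS\,(\dot x=\dot y)$. This is first-order definable over $V$, since the forcing relation of $\B(\mathbb{P})$ is definable and $\HS_{\sS'}$ is a definable class.

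\textbf{Main obstacle.} The delicate point is the applicability of Theorem~\ref{thm:tenequi} in the $\HS$ case, that is, confirming that the relevant hereditary class is good. Everything else is routine definability of forcing relations and of recursively defined classes of names.
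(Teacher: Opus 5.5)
This is the paper's argument: reduce $\cong_\HS$ to tenacious representatives via Proposition~\ref{prop:equitenacious}, then read off a first-order condition on an isomorphism $\pi\colon\B(\mathbb{P})\to\B(\mathbb{Q})$ from Theorem~\ref{thm:tenequi}; for $\cong_\SN$ take $\HC=\SN$, where no tenacity is needed. Your hedge is mandatory, not optional. The class of symmetric names is not forcing stable: for a non-symmetric maximal antichain $A$, the name $\{(p,\check 0):p\in A\}$ is forced equal to $\check 1$ but need not be symmetric. So $\HS$ is not good, and you must apply Theorem~\ref{thm:tenequi} with $\HC=\HR$. This class is good, because $\sS\cong_\HS\sS'$ from Proposition~\ref{prop:cBaequi} gives $\sS\cong_\HR\sS'$ by Lemma~\ref{lem:hseqhr}, and the same lemma identifies $\cong_\HR$ with $\cong_\HS$.
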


\begin{proof}
    Theorem~\ref{thm:tenequi} gives a first-order characterization for $\sS \cong_\SN \sT$ as well as for $\sS \cong_\HS \sT$, when $\sS,\sT$ are tenacious. For arbitrary $\sS, \sT$, $\sS \cong_\HS \sT$ iff the tenacious systems given by the previous proposition are equivalent.
\end{proof}

Combining Lemma~\ref{lem:hseqhr}, Theorem~\ref{thm:tenequi} and Corollary~\ref{thm:tenequilr} we immediately obtain the following. 

\begin{cor}
    Suppose that $\sS, \sT$ are tenacious. Then $\sS \cong_\HS \sT$ implies $\sS \cong_\SN \sT$. 
\end{cor}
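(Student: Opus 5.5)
The plan is to chain the three cited results. Starting from $\sS \cong_\HS \sT$ with $\sS,\sT$ tenacious, Lemma~\ref{lem:hseqhr} gives $\sS \cong_\HR \sT$. The point of passing to $\HR$ is that $\HR$ is the hereditary class of the forcing stable class $\R$ of respected names, so it is hereditarily forcing stable. It also contains $\HS$. To see that $\HR$ is \emph{good}, I still need $\sS \cong_\HR \sS'$ for the Boolean completion system $\sS'$ of Proposition~\ref{prop:cBaequi}. This follows by applying Lemma~\ref{lem:hseqhr} to the pair $\sS,\sS'$, using $\sS \cong_\HS \sS'$ from Proposition~\ref{prop:cBaequi}.

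Since $\sS$ and $\sT$ are tenacious and $\HR$ is good, Theorem~\ref{thm:tenequi} applies with $\HC=\HR$. It yields an isomorphism $\pi\colon \B(\mathbb{P})\to\B(\mathbb{Q})$ with $\pi``\HR_{\sS'}=\HR_{\sT'}$. The ``moreover'' clause of that theorem gives, for every $\B(\mathbb{P})$-generic $G$,
\[V[G\cap\mathbb{P}]_\sS = V[\pi``G\cap\mathbb{Q}]_\sT,\]
and the analogous statement for $\B(\mathbb{Q})$-generics via $\pi^{-1}$. This is exactly the right-hand side of Corollary~\ref{thm:tenequilr}, so that corollary gives $\sS\cong_\SN\sT$.

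The only step needing care is checking the hypotheses of Theorem~\ref{thm:tenequi}, namely that $\HR$ is good. Forcing stability of $\R$ is immediate from its definition, since $\res_\sS(\dot x)$ depends only on the forcing-equivalence class of $\dot x$. The other point is the ``moreover'' clause of Theorem~\ref{thm:tenequi}. Its proof uses that $i(\dot x)\in\SN_\sT$, which holds because $\HR\subseteq\SN$, so no further argument is needed there.
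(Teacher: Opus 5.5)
Your proposal is correct and follows the paper's route exactly: upgrade $\sS \cong_\HS \sT$ to $\sS \cong_\HR \sT$ via Lemma~\ref{lem:hseqhr}, apply Theorem~\ref{thm:tenequi} with the good class $\HR$ to get the isomorphism $\pi$ and the equality of the two symmetric extensions, and then conclude with Corollary~\ref{thm:tenequilr}. Your explicit check that $\HR$ is good ($\res$ is invariant under forcing equivalence, and $\sS \cong_\HR \sS'$ follows from Proposition~\ref{prop:cBaequi} together with Lemma~\ref{lem:hseqhr}) just spells out what the paper calls immediate.
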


\section{Products and reduced two-step iterations}
Products of symmetric systems have been widely used since the the early days of forcing, as they make it easy to create specific and ``local'' failures of the axiom of choice (see \cite{Monro:DF} for example).

\begin{definition}
    Let $\sS_0 = (\mathbb{P}_0, \sG_0, \sF_0)$ and $\sS_1 = (\mathbb{P}_1, \sG_1, \sF_1)$ be symmetric systems. Then we define the product system $\sS_0 \times \sS_1 = (\mathbb{P}, \sG, \sF)$, where \begin{enumerate}
        \item $\mathbb{P} = \mathbb{P}_0 \times \mathbb{P}_1$,
        \item $\sG = \sG_0 \times \sG_1$ acts on $\mathbb{P}$ coordinatewise, 
        \item and $\sF$ is generated by subgroups of $\sG$ of the form $H_0 \times H_1$, for $H_0 \in \sF_0$, $H_1 \in \sF_1$.
    \end{enumerate} 
\end{definition}

It is straightforward to check that the product is a symmetric system and any $\sS_0 \times \sS_1$-generic is of the form $G_0 \times G_1$ for an $\sS_0$-generic $G_0$ and an $\sS_1$-generic $G_1$.

The goal of this section is to show that $\sS_0 \times \sS_1$ is equivalent, and in fact $\SN$-equivalent, to $\sS_0 * \check \sS_1$ as we would expect. In order to do so we prove a more general result about \emph{reduced two-step iterations} that will also be useful in Section~\ref{sec:quotients}.

\begin{definition}\label{def:namewitness}
    Let $\sS_0 = (\mathbb{P}_0, \sG_0, \sF_0)$ and $\dot \sS_1 = (\dot{\mathbb{P}}_1, \dot \sG_1, \dot \sF_1)$ be an $\sS_0$-name for a symmetric system, $\sym(\dot \sS_1) = \sG_0$. Then a \emph{name-witness} for $\dot \sS_1$ is a triple $\mathbf{S}_1 = (\mathbf{P}_1, \mathbf G_1, \mathbf F_1)$ of sets of $\sS_0$-names such that \begin{enumerate}
        \item $\mathbf{P}_1$, $\mathbf{G}_1$ and $\mathbf{F}_1$ are closed under $\sG_0$,
        \item $\Vdash_{\sS_0} \dot{\mathbb{P}}_1 = \mathbf{P}_1^\bullet \wedge \dot \sG_1 = \mathbf{G}_1^\bullet \wedge \dot \sF_1 = \mathbf{F}_1^\bullet$,\footnote{Note that by the previous clause, $\mathbf{P}_1^\bullet$, $\mathbf{G}_1^\bullet$, $\mathbf{F}_1^\bullet$ are $\sS_0$-names.}
        \item for any $\dot p \in \mathbf{P}_1$ and $\dot \pi \in \mathbf{G}_1$, there is a unique $\dot q \in \mathbf{P}_1$ such that $\Vdash_{\sS_0} \dot q = \dot \pi(\dot p)$,
        \item for any $\Delta_0$-formula $\psi$ and any $\dot x_0, \dots, \dot x_n \in \mathbf{G}_1 \cup \mathbf{F}_1$, if \[\Vdash_{\sS_0} \exists! x \in \dot \sG_1 \cup \dot \sF_1 \psi(x, \dot x_0, \dots, \dot x_n),\] then there is $\dot x \in \mathbf{G}_1 \cup \mathbf{F}_1$ so that $\Vdash_{\sS_0} \psi(\dot x, \dot x_0, \dots, \dot x_n)$.
    \end{enumerate}
\end{definition}

Let us remark that this definition is quite ad-hoc and simply summarizes conditions that are used to prove Proposition~\ref{prop:reduced}. Looking carefully at the proof one can see for instance that (4) can be weakened by restricting to a small finite list of $\Delta_0$-formulas. In our official definition of two-step iterations (Definition~\ref{def:twostep}) the role of $\mathbf{P}_1$ was played by closed names.

\begin{exmp}
    If $\sS_1 = ({\mathbb{P}}_1, \sG_1, \sF_1)$ is a symmetric system in $V$, then $({\mathbf{P}}_1, \mathbf G_1, \mathbf F_1)$ is a name witness for $\check{\sS_1}$, where ${\mathbf{P}}_1 = \{ \check{p} : p \in \mathbb{P}_1 \}$, $\mathbf G_1 = \{ \check{\pi} : \pi \in \sG_1 \}$ and $\mathbf F_1 = \{ \check{H} : H \in \sF_1 \}$.
\end{exmp}

Specifically note that if $p \Vdash_{\sS_0} \psi(\check x, \check x_0, \dots, \check x_n)$, then already $\Vdash_{\sS_0} \psi(\check x, \check x_0, \dots, \check x_n)$ by $\Delta_0$-absoluteness.

\begin{definition}
    Let $\sS_0$, $\dot \sS_1$ and $\mathbf{S}_1$ be as in Definition~\ref{def:namewitness}. Then we define the reduced two-step iteration $\sS_0 \star_{\mathbf{S}_1} \dot \sS_1$ exactly as in Definition~\ref{def:twostep}, but only using names from the name-witness $\mathbf{S}_1$.
\end{definition}

It is straightforward to check that this forms a symmetric system again, exactly as in the proof of Lemma~\ref{lem:twostep}. The only thing that we need to modify is to replace the names $\dot \pi( \dot p)$ with the respective unique names from $\mathbf{S}_1$ and note that $\pi \circ \sigma$, $\sigma^{-1}$, $\pi H \pi^{-1}$ etc. can be defined by $\Delta_0$ formulas. If we let $\mathbf{P}_1$ be the set of closed names for elements of $\dot{\mathbb{P}}_1$, and $\mathbf{G}_1$ and $\mathbf{F}_1$ be large enough sets of names for elements of $\dot \sG_1$ and $\dot \sF_1$, we obtain $\sS_0 \star_{\mathbf{S}_1} \dot \sS_1 = \sS_0 * \dot \sS_1$. 

Let us note that the uniqueness in clause (3) is not strictly necessary, in the same way that the restriction to closed names can be avoided in Definition~\ref{def:twostep}. For instance, we may simply identify two conditions $(p, \dot q_0)$, $(p, \dot q_1)$ whenever $\Vdash \dot q_0 = \dot q_1$ and obtain an equivalent system. 
 
At this point, one might think naively that one should just be able to just reprove the Factorization Theorem~\ref{lem:twostepfactor} in exactly the same way and be done. This is not quite the case as there is a key difference between closed names and the more general name-witnesses that becomes relevant in the proof of Theorem~\ref{lem:twostepfactor}. Namely, while any object definable from any $\sS_0$-names $\dot x_0, \dots, \dot x_n$ has a unique closed name and can be used in the definition of the two-step iteration, (3) only guarantees this for particular $\dot x_0, \dots, \dot x_n$. For instance, if $[\dot x]$ is an $\sS_0$-name for an $\sS_1$-name, there is a group of the form $(H_0, \sym_{\dot \sG_1}([\dot x]))$ in the filter of the two-step iteration, but this might not be the case for the reduced iteration, simply because there might be no name for $\sym_{\dot \sG_1}([\dot x])$ in $\mathbf{F}_1$. It turns out that the actual proof is quite a bit more technical. First, a simple lemma concerning usual two-step iterations:

\begin{lemma}\label{lem:equiautom}
    Let $\sS_0 * \dot{\sS_1} = (\mathbb{P}, \sG, \sF)$ be a two step iteration as in Definition~\ref{def:twostep}, let $\dot x$ be any $\mathbb{P}$-name and $G = G_0 * G_1$ an $\sS_0 * \dot{\sS_1}$-generic over $V$. Moreover let $(\pi, \dot \sigma), (\pi, \dot \tau) \in (\sG_0, \dot \sG_1)$ and suppose that $\dot \sigma^{G_0} = \dot \tau^{G_0}$. Then also $(\pi, \dot \sigma)(\dot x)^G = (\pi, \dot \tau)(\dot x)^G.$
\end{lemma}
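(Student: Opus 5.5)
We argue by induction on the rank of the $\mathbb{P}$-name $\dot x$, proving the stronger statement that $(\pi,\dot\sigma)(\dot x)^G = (\pi,\dot\tau)(\dot x)^G$ for every $\mathbb{P}$-name $\dot x$ simultaneously. Since both sides are evaluations of names obtained by applying an automorphism recursively, it suffices to compare, element by element, which pairs $(\bar\rho(r), \bar\rho(\dot y))$ contribute to the evaluation. Here $\bar\rho$ ranges over the two maps $(\pi,\dot\sigma)$ and $(\pi,\dot\tau)$.

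The key step is the following claim about conditions. For $(p,\dot q)\in\mathbb{P}$ we have
\[
(\pi,\dot\sigma)(p,\dot q) = (\pi(p), \dot\sigma(\pi(\dot q))) \in G
\quad\text{iff}\quad
(\pi,\dot\tau)(p,\dot q) = (\pi(p), \dot\tau(\pi(\dot q))) \in G .
\]
Both conditions have the same first coordinate $\pi(p)$, and membership in $G = G_0*G_1$ requires $\pi(p)\in G_0$ together with $(\dot\sigma(\pi(\dot q)))^{G_0}\in G_1$, respectively $(\dot\tau(\pi(\dot q)))^{G_0}\in G_1$. Recall that $\dot\sigma(\pi(\dot q))$ is, by the convention after Lemma~\ref{lem:definablename}, the closed name $\dot y$ with $\Vdash_{\sS_0} \dot y = \dot\sigma(\pi(\dot q))$ in the sense of function application. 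Since $G_0$ is $\sS_0$-generic, the Symmetric Forcing Theorem~\ref{thm:forcingtheorem} lets us evaluate:
\[
(\dot\sigma(\pi(\dot q)))^{G_0} = \dot\sigma^{G_0}\bigl(\pi(\dot q)^{G_0}\bigr) = \dot\tau^{G_0}\bigl(\pi(\dot q)^{G_0}\bigr) = (\dot\tau(\pi(\dot q)))^{G_0}.
\]
The middle equality uses the hypothesis $\dot\sigma^{G_0}=\dot\tau^{G_0}$. We must check that all names involved are $\sS_0$-names, so that the Forcing Theorem applies to symmetric generics. The name $\pi(\dot q)$ is hereditarily symmetric because $\HS$ is closed under $\sG_0$, and the closed names produced via Lemma~\ref{lem:definablename} are again $\sS_0$-names. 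This establishes the claim.

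Now the induction. By definition of the action, $(\pi,\dot\sigma)(\dot x)=\{(\bar\sigma(r),\bar\sigma(\dot y)) : (r,\dot y)\in\dot x\}$, where $\bar\sigma=(\pi,\dot\sigma)$, and similarly for $\bar\tau=(\pi,\dot\tau)$. Hence
\[
\bar\sigma(\dot x)^G=\{\bar\sigma(\dot y)^G : (r,\dot y)\in\dot x,\ \bar\sigma(r)\in G\}.
\]
By the claim, $\bar\sigma(r)\in G$ iff $\bar\tau(r)\in G$, and by the induction hypothesis $\bar\sigma(\dot y)^G=\bar\tau(\dot y)^G$. Therefore the two evaluations coincide.

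The main obstacle is the claim above, in particular making sure that evaluating closed names of the form $\dot\sigma(\pi(\dot q))$ by an $\sS_0$-generic (rather than fully $\mathbb{P}_0$-generic) filter commutes with function application. This is exactly what the Symmetric Forcing Theorem provides, once we know that $\Vdash_{\sS_0}$ ``$\dot\sigma$ is a function and $(\pi(\dot q),\dot\sigma(\pi(\dot q)))\in\dot\sigma$''. Everything else is a routine recursion on rank.
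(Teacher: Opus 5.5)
Your proposal is correct and follows essentially the same route as the paper: first show that $(\pi,\dot\sigma)(p,\dot q)\in G$ iff $(\pi,\dot\tau)(p,\dot q)\in G$, using $\dot\sigma(\pi(\dot q))^{G_0}=\dot\sigma^{G_0}(\pi(\dot q)^{G_0})$ and the hypothesis $\dot\sigma^{G_0}=\dot\tau^{G_0}$, and then run a rank induction on $\dot x$. Your explicit appeal to the Symmetric Forcing Theorem, after checking that all the names involved are $\sS_0$-names, justifies a step the paper leaves implicit.
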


\begin{proof}
    First note that \begin{align*}
        (\pi, \dot \sigma)(p,\dot q) \in G &\text{ iff } (\pi(p), \dot \sigma(\pi(\dot q))) \in G\\
        &\text{ iff }  \pi(p) \in G_0 \wedge \dot \sigma(\pi(\dot q)))^{G_0} \in G_1 \\
        &\text{ iff }  \pi(p) \in G_0 \wedge \dot \sigma^{G_0}(\pi(\dot q)^{G_0}) \in G_1 \\
        &\text{ iff }  \pi(p) \in G_0 \wedge \dot \tau^{G_0}(\pi(\dot q)^{G_0}) \in G_1 \\
        &\text{ iff }  \pi(p) \in G_0 \wedge \dot \tau(\pi(\dot q))^{G_0} \in G_1\\
        &\text{ iff }  (\pi, \dot \tau)(p,\dot q) \in G.
    \end{align*}
    The claim now follows by induction on the rank of $\dot x$. If $(\pi, \dot \sigma)(\dot y)^G \in (\pi, \dot \sigma)(\dot x)^G$, it is because $((p, \dot q), \dot y) \in \dot x$ and $(\pi, \dot \sigma)(p,\dot q) \in G$. 
    
    Then $(\pi, \dot \tau)(p,\dot q) \in G$, so $(\pi, \dot \tau)(\dot y)^G \in (\pi, \dot \tau)(\dot x)^G$ and by the inductive hypothesis $(\pi, \dot \tau)(\dot y)^G = (\pi, \dot \sigma)(\dot y)^G$. The same argument works in the other direction.
\end{proof}

In the following, when $G_0$ is $\sS_0$-generic over $V$ and $G_1$ is $\dot \sS_1^{G_0}$-generic over $V[G_0]_{\sS_0}$, we let \[G_0 \star_{\mathbf{S}_1} G_1 = \{ (p, \dot q) \in \mathbb{P}_0 \times \mathbf{P}_1 : p \in G_0, \dot q^{G_0} \in G_1 \}.\]

We will also drop the subscript $\mathbf{S}_1$ from $\star_{\mathbf{S}_1}$ everywhere to enhance readability. 

\begin{prop}\label{prop:reduced}
    Let $\sS_0$, $\dot \sS_1$ and $\mathbf{S}_1$ be as in Definition~\ref{def:namewitness}. Then $\sS_0 * \dot \sS_1 \cong_\SN \sS_0 \star \dot \sS_1$. Moreover, whenever $G_0 * G_1$ is $\sS_0 * \dot \sS_1$-generic over $V$, $G_0 \star G_1$ is $\sS_0 \star \dot \sS_1$-generic over $V$ and vice-versa, if $G$ is $\sS_0 \star \dot \sS_1$-generic, then $G_0 * G_1$ is $\sS_0 * \dot \sS_1$-generic, where $G_0 = \dom G$ and $G_1 = \{ \dot q^{G_0} : \exists p \in G_0 ((p, \dot q) \in G)\}$. In either case, \[V[G_0 * G_1]_{\sS_0 * \dot{\sS_1}} = V[G_0 \star G_1]_{\sS_0 \star\dot \sS_1}.\]
    
\end{prop}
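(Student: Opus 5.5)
The plan is to treat the reduced system $\sS_0 \star \dot \sS_1 = (\mathbb{P}^\star, \sG^\star, \sF^\star)$ as a subsystem of $\sS_0 * \dot \sS_1 = (\mathbb{P}, \sG, \sF)$. We use the identification of $(p,\dot q)$ with $(p,\cl(\dot q))$, and of $(\pi_0,\dot\pi_1)$ and $(H_0,\dot H_1)$ with their closed-name versions. By clause (2) of Definition~\ref{def:namewitness}, every closed name for an element of $\dot{\mathbb{P}}_1$ is forced equal to some $\dot q\in\mathbf{P}_1$ only \emph{generically}, not uniformly. Still, $\mathbb{P}^\star$ embeds into $\mathbb{P}$ as a dense subset: given $(p,\dot q)\in\mathbb{P}$, find $p'\leq p$ and $\dot q'\in\mathbf{P}_1$ with $p'\Vdash \dot q=\dot q'$, and then $(p',\dot q')\leq(p,\dot q)$. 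So $\B(\mathbb{P})\cong\B(\mathbb{P}^\star)$, and $\sG^\star$ sits inside $\sG$ via the same action, by clause (3). By Corollary~\ref{thm:tenequilr} it then suffices to show that for every $\mathbb{P}$-generic $G=G_0*G_1$ we have $V[G]_{\sS_0*\dot\sS_1}=V[G\cap\mathbb{P}^\star]_{\sS_0\star\dot\sS_1}$. We would in fact prove this directly for symmetric generics, together with the genericity transfer claims.

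For genericity, we follow the proof of Theorem~\ref{lem:twostepfactor} essentially verbatim, with $\mathbf{S}_1$ in place of closed names. Given $G_0$ and $G_1$, a symmetrically dense $D\subseteq\mathbb{P}^\star$ witnessed by $(H_0,\dot H_1)$ with $\dot H_1\in\mathbf{F}_1$ is literally an $\sS_0$-name $\dot D_1$ for a subset of $\dot{\mathbb{P}}_1$. It is dense because $\mathbf{P}_1^\bullet$ names $\dot{\mathbb{P}}_1$, and it is forced to be $\dot H_1$-invariant because $\mathbf{G}_1^\bullet$ names $\dot\sG_1$. So $G_1$ meets it, and $G_0\star G_1$ meets $D$. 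Conversely, if $G$ is $\sS_0\star\dot\sS_1$-generic, a symmetrically dense $D_1\in V[G_0]_{\sS_0}$ with witness $H_1$ can be named via Lemma~\ref{lem:thetrick}. By clause (2) we may choose $\dot H_1\in\mathbf{F}_1$ after extending to a condition in $G_0$; we mix with the default $\dot\sF_1$-element to get a name forced to lie in $\mathbf{F}_1^\bullet$, using clause (4) to keep it inside $\mathbf{F}_1$. Then $\{(p,\dot q)\in\mathbb{P}^\star: p\Vdash\dot q\in\dot D_1\}$ is symmetrically dense, witnessed by $(\sym(\dot D_1,\dot H_1),\dot H_1)$.

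For equality of models, the inclusion $\supseteq$ is the harder one. Every $\sS_0\star\dot\sS_1$-name is a $\mathbb{P}$-name, and its symmetry group contains a generator $(H_0,\dot H_1)$ with $\dot H_1\in\mathbf{F}_1$, which generates an element of $\sF$. One checks hereditarily that the name is respected in $\sS_0*\dot\sS_1$: an element $(\pi_0,\dot\pi_1)\in(H_0,\dot H_1)$ of $\sG$ may have $\dot\pi_1\notin\mathbf{G}_1$, but generically $\dot\pi_1^{G_0}=\dot\sigma^{G_0}$ for some $\dot\sigma\in\mathbf{G}_1$. Lemma~\ref{lem:equiautom} then gives that $(\pi_0,\dot\pi_1)(\dot x)$ and $(\pi_0,\dot\sigma)(\dot x)=\dot x$ have the same evaluation. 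Hence the name lies in $\HR$, and Lemma~\ref{lem:hshr} moves it into $\HS$.

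For $\subseteq$, given $\dot x\in\HS_{\sS_0*\dot\sS_1}$, we form its translation $[\dot x]$, an $\sS_0$-name for an $\dot\sS_1$-name, as in Theorem~\ref{lem:twostepfactor}. We then pull it back into a $\mathbb{P}^\star$-name by recursion, in the spirit of Lemma~\ref{lem:hshr} and Proposition~\ref{prop:reduced}'s setup: \[j(\dot x)=\{((p,\dot q),j(\dot y)) : \dot q\in\mathbf{P}_1,\ \exists(r,\dot q')\ ((r,\dot q'),\dot y)\in\dot x,\ p\leq r,\ p\Vdash\dot q\leq\dot q'\}.\] Clearly $\Vdash j(\dot x)=\dot x$. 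Symmetry of $j(\dot x)$ in the reduced system is the main obstacle. Its symmetry group in the full system contains some $(H_0,\dot H_1)$ with $\dot H_1$ an arbitrary name, possibly not in $\mathbf{F}_1$. To handle this, we would first show that the reduced filter is cofinal: given $(H_0,\dot H_1)$, there is a maximal antichain $A\subseteq\mathbb{P}_0$ and names $\dot K^a\in\mathbf{F}_1$ with $a\Vdash\dot H_1=\dot K^a$. Using clause (4) and the closure in clause (1), these would be amalgamated into a single member of $\mathbf{F}_1$ where possible. Where that fails, we would instead use respectedness, again via Lemma~\ref{lem:equiautom}, to show that $j(\dot x)$ is hereditarily respected in the reduced system, and finish with Lemma~\ref{lem:hshr}.
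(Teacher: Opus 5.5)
Your treatment of genericity is fine. So is the inclusion $V[G_0\star G_1]_{\sS_0\star\dot\sS_1}\subseteq V[G_0*G_1]_{\sS_0*\dot\sS_1}$, which you do via $\HR$ and Lemma~\ref{lem:hshr} and which differs from the paper's explicit symmetrisation only in packaging. It needs one small repair. A $\dot\sigma\in\mathbf{G}_1$ with $\dot\sigma^{G_0}=\dot\pi_1^{G_0}$ is only generically in $\dot H_1$. So before $(\pi_0,\dot\sigma)\in(H_0,\dot H_1)_\star$ holds, you must use clause (4) of Definition~\ref{def:namewitness} to pass to the name for ``$\dot\sigma$ if $\dot\sigma\in\dot H_1$, and $\dot\sigma\circ\dot\sigma^{-1}$ otherwise''.

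The genuine gap is the reverse inclusion. Both of your strategies aim at a single reduced name forced equal to $\dot x$: amalgamating the local $\dot K^a$ into one member of $\mathbf{F}_1$, or showing that $j(\dot x)$ is hereditarily respected in the reduced system. In general no such name exists. Here is an example.
\begin{itemize}
\item Let $\sS_0=(\mathbb{C},\{\id\},\{\{\id\}\})$, and let $\dot\sS_1=\check\sS_1$, where $\sS_1$ is the system of Example~\ref{ex:Cohen}, with the check-name witness.
\item Let $\dot n$ be a $\mathbb{C}$-name for a natural number that can take every value, and let $\dot x$ name the Cohen real $c_{\dot n}$.
\item In $\sS_0*\check\sS_1$ the name $\dot x$ is respected by $(\{\id\},\dot H_1)$, where $\dot H_1$ names $\fix(\{\dot n\})$. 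By Lemma~\ref{lem:hshr} it is therefore forced equal to a name in $\HS_{\sS_0*\check\sS_1}$.
\item Suppose $\Vdash\dot z=\dot x$ and $\dot z$ is respected in the reduced system by some $(\{\id\},\check K)_\star$ with $\fix(e)\le K$. Pick $\pi\in\fix(e)$ swapping some $j\neq j'$ outside $e$, and a condition $p$ forcing $\dot n=\check j$.
\item Then $(p,\mathds 1)$ forces $c_j=\dot z=(\id,\check\pi)(\dot z)=c_{j'}$, which is absurd.
\end{itemize}
This rules out every single reduced name, including any mixture over an antichain deciding $\dot H_1$.

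The missing idea is to localise. It suffices to show $\Vdash_{\mathbb{P}}\dot x\in\HS_\star^\bullet$, and the reduced name witnessing this may depend on the condition.
\begin{itemize}
\item Take $\dot x$ of minimal rank for which this fails. By minimality, $\dot x$ is forced equal to $\dot y=\{(\bar p,\dot z)\in\mathbb{P}^\star\times\HS_{\gamma,\star}:\bar p\Vdash_{\mathbb{P}}\dot z\in\dot x\}$ for large enough $\gamma$.
\item Let $(H_0,\dot H_1)\le\sym(\dot x)$. Consider the dense set of $p_0$ forcing $\dot H_1=\dot H_1'$ for some $\dot H_1'\in\mathbf{F}_1$. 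For each such $p_0$, symmetrise $\dot y$ by $(H_0\cap\sym(\dot H_1'),\dot H_1')_\star$.
\item Lemma~\ref{lem:equiautom} shows that $(p_0,\mathds 1)$ forces this reduced-symmetric name to equal $\dot x$, contradicting the choice of $\dot x$.
\end{itemize}
Different $p_0$ yield different names, and the example shows this cannot be avoided. With both inclusions in hand, Theorem~\ref{thm:tenequilr} gives $\cong_\SN$, as you planned.
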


\begin{proof}
    First, let us introduce a bit of notation to enhance readability. We will denote the forcing notion of $\sS_0 * \dot \sS_1$ by $\mathbb{P}$ and the the forcing notion of $\sS_0 \star \dot \sS_1$ by $\mathbb{P}_\star$. Also we will generally substitute subscripts of the form ${\sS_0 \star \dot \sS_1}$ with $\star$ when reasonable. Thus, we write $\sym_\star( \dot x)$ instead of $\sym_{\sS_0 \star \dot \sS_1}(\dot x)$ and reserve $\sym(\dot x)$ for the system $\sS_0 * \dot \sS_1$. And similarly, for $H_0 \in \sF_0$, $\dot H_1 \in \mathbf{F}_1$, we let \[(H_0, \dot H_1)_{\star} = \{ (\pi_0, \dot \pi_1) : \pi_0 \in H_0, \dot \pi_1 \in \mathbf{G}_1, \Vdash \dot \pi_1 \in \dot H_1 \},\] and keep the meaning of $(H_0, \dot H_1)$ as before. 
    
    Next, without any loss of generality, we can assume that $\mathbf{P}_1$ just consists of closed names: Item (4) of Definition~\ref{def:namewitness} implies that there is a name $\dot \id \in \mathbf{G}_1$ for the identity. Namely, if $\dot \sigma \in \mathbf{G}_1$ is arbitrary, $\dot \sigma \circ \dot\sigma^{-1}$ is $\Delta_0$-definable. Next, the uniqueness condition in (3) implies that for any $\dot p, \dot q \in \mathbf{P}_1$, $\Vdash \dot p = \dot \id(\dot p) = \dot q$ if and only if $\dot p = \dot q$. So we might as well replace each $\dot p$ with $\cl(\dot p)$ and obtain a completely isomorphic situation. In particular then, $\mathbb{P}_{\star}$ is a dense subset of $\mathbb{P}$. Also this means for instance that for $\dot \pi \in \mathbf{G}_1$, $\dot p \in \mathbf{P}_1$, the closed name $\dot \pi(\dot p)$ is already the unique $\dot q \in \mathbf{P}_1$ so that $\Vdash \dot \pi(\dot p) = \dot q$ and there is no difference between computing $\bar \pi(\dot x)$, $\bar \pi = (\pi_0, \dot \pi_1)$, relative to $\sS_0 \star \dot \sS_1$ or relative to $\sS_0 * \dot \sS_1$, when $\dot x$ is a $\mathbb{P}_\star$-name.

Now suppose that $G$ is $\sS_0 \star \dot \sS_1$-generic and $G_0$, $G_1$ are as in the statement of the proposition. Let $D$ be an $\sS_0 * \dot \sS_1$-symmetrically dense set, say $(H_0, \dot H_1)``D = D$. Note that $G_0$ is $\sS_0$-generic and thus there is $\dot H_1' \in \mathbf{F}_1$ so that $\dot H_1^{G_0} = \dot H_1'^{G_0}$. Also define $H_0' = H_0 \cap \sym(\dot H_1')$. Let $E$ consists of the conditions in $\mathbb{P}_\star$ that are below an element of $D$. Then \[\tilde D = \{ (\pi_0, \dot \pi_1)(q_0, \dot q_1) : (q_0, \dot q_1) \in E,  (\pi_0, \dot \pi_1) \in (H_0', \dot{H_1'})_\star \}\] is $\sS_0 \star \dot \sS_1$-symmetric and dense. Whenever $(\pi_0, \dot \pi_1)(q_0,\dot q_1) \in G_0 \star G_1 \cap \tilde D$ is of the form above, there is $(p_0, \dot p_1) \in D$ above $(q_0, \dot q_1)$ and $\dot \tau$ so that $\Vdash \dot \tau \in \dot H_1$ and $\dot \pi_1^{G_0} = \dot \tau^{G_0}$ (see Lemma~\ref{lem:thetrick} and the discussion afterwards). Then $(\pi_0, \dot \tau) \in (H_0, \dot H_1)$ and $(\pi_0, \dot \tau)(p_0, \dot p_1) \in G_0* \dot G_1 \cap D$. This shows that $G_0 * G_1$ is $\sS_0 * \dot \sS_1$-generic.  To show that any $\sS_0 * \dot \sS_1$-generic induces an $\sS_0 \star \dot \sS_1$-generic is not much harder.

Finally, let us show that the two models agree. Suppose that $\dot x$ is an $\sS_0 \star  \dot{\sS_1}$-symmetric name and $\bar H_\star = (H_0, \dot{H_1})_\star \leq \sym_{\star}(\dot x)$. Then $\bar H = (H_0, \dot{H_1})$ is a group in the system $\sS_0 * \dot{\sS_1}$ and we can define a symmetrisation of $\dot x$ by \[s(\bar H, \dot x) = \bigcup_{\bar \sigma \in \bar H} \bar \sigma(\dot x).\]
    
    Note by Lemma~\ref{lem:equiautom} that $s(\bar H, \dot x)^{G_0 * G_1} = \dot x^{G_0 * G_1}$ and in fact $\Vdash_{\mathbb{P}} s(\bar H, \dot x) = \dot x.$ Also $\bar H \leq \sym(s(\bar H, \dot x))$, so $s(\bar H, \dot x)$ is $\sS_0 * \dot{\sS_1}$-symmetric. By recursion on the rank of $\dot x$, define \[j(\dot x) = \bigcup_{\bar H_\star \leq \sym_\star(\dot x)} s(\bar H, \{ (r, j(\dot y)) : (r, \dot y) \in \dot x\}).\]

By induction on the rank one can then also see that $j(\dot x)^{G_0 * G_1} = \dot x^{G_0 * G_1}$, $\Vdash_{\mathbb{P}} j(\dot x) = \dot x$ and that $j(\dot x)$ is an $\sS_0 * \dot \sS_1$-name. This shows that $V[G_0 * G_1]_{\sS_0 * \dot{\sS_1}} \supseteq V[G_0 \star G_1]_{\sS_0 \star\dot \sS_1}$. 

Next, suppose that $\dot x$ is an $\sS_0 * \dot \sS_1$-name of minimal rank such that $\neg \Vdash_{\mathbb{P}} \dot x \in \HS_\star^\bullet$. Define the $\mathbb{P}_\star$-name \[\dot y = \{ (\bar p, \dot z) \in \mathbb{P}_\star \times \HS_{\gamma,\star} : \bar p \Vdash_{\sS_0 * \dot \sS_1} j(\dot z) \in \dot x \}\] for large enough $\gamma$, and note by the minimality of $\dot x$ that $\Vdash_{\mathbb{P}} \dot y = \dot x$. Let $(H_0, \dot H_1) \leq \sym(\dot x)$. Then there is a dense set of $p_0 \in \mathbb{P}_0$ with $\dot H_1' \in \mathbf{F}_1$ so that $p_0 \Vdash \dot H_1 = \dot H_1'$. Given such $p_0$ and $\dot H_1'$, let $H_0' = H_0 \cap \sym(\dot H_1')$ and $\dot x'$ be the symmetrization of $\dot y$ by $(H_0', \dot H_1')_\star$, so $\dot x' \in \HS_\star$. Using Lemma~\ref{lem:equiautom} again, for any $\bar \pi \in (H_0', \dot H_1')_\star$, $(p_0, \mathds 1) \Vdash_{\mathbb{P}} \bar \pi(\dot y) = \bar \pi(\dot x) = \dot x$, and further, $(p_0, \mathds 1) \Vdash_{\mathbb{P}} \dot x' = \dot x$. By density, $\Vdash_{\mathbb{P}} \dot x \in \HS_{\star}^\bullet$ and we obtain a contradiction. Finally, if $\Vdash_{\mathbb{P}} \dot x \in \HS_\star^\bullet$, there is a large enough $\alpha$ such that \[\Vdash_{\mathbb{P}} \dot x \in \HS_{\alpha,\star} = j(\HS_{\alpha,\star})\] and in particular, \[\Vdash_{\sS_0 * \dot{\sS_1}} \dot x \in j(\HS_{\alpha, \star}) \text{ and } \dot x^{G_0 * G_1} \in j(\HS_{\alpha, \star})^{G_0 * G_1} = \HS_{\alpha, \star}^{G_0 * G_1} = \HS_{\alpha, \star}^{G_0 \star G_1}.\]
For the last equality simply note that $G_0 \star G_1 = (G_0 * G_1) \cap \mathbb{P}_\star$. Following the characterization in Theorem~\ref{thm:tenequilr} we have that the two systems are $\SN$-equivalent. This finishes the proof.
\end{proof}

The following is now an immediate corollary:

\begin{thm}
    Let $\sS_0$ and $\sS_1$ be symmetric systems. Then $\sS_0 \times \sS_1 \cong_{\SN} \sS_0 * \check \sS_1$. Moreover $G_0 \times G_1$ is $\sS_0 \times \sS_1$ generic iff $G_0 * G_1$ is $\sS_0 * \check \sS_1$-generic and we have that \[V[G_0 \times G_1]_{\sS_0 \times \sS_1} = V[G_0 * G_1]_{\sS_0 * \check \sS_1}.\]
    Consequently, $\sS_0\times\sS_1\cong_{\SN}\sS_1\times\sS_0$.
\end{thm}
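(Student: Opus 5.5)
The plan is to apply Proposition~\ref{prop:reduced} with the name-witness from the Example following Definition~\ref{def:namewitness}. For $\dot\sS_1=\check\sS_1$, take $\mathbf{P}_1=\{\check p:p\in\mathbb{P}_1\}$, $\mathbf{G}_1=\{\check\pi:\pi\in\sG_1\}$ and $\mathbf{F}_1=\{\check H:H\in\sF_1\}$.

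\emph{Verifying the name-witness.} Check names are fixed by $\sG_0$, so clause (1) holds, and $\sym(\check\sS_1)=\sG_0$ as required. Clause (2) is immediate. Clause (3) holds with $\dot q=(\pi(p))\check{}$, which is unique because distinct check names are forced to be distinct. For clause (4), use $\Delta_0$-absoluteness as in the remark after the Example. If it is forced that a unique $x\in\check\sG_1\cup\check\sF_1$ satisfies $\psi(x,\check x_0,\dots)$, take any condition forcing $\psi(\check x,\dots)$ for a specific $x\in\sG_1\cup\sF_1$. Then $\psi(x,x_0,\dots)$ holds in $V$, and so it is forced outright.

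\emph{Identifying the reduced iteration with the product.} Proposition~\ref{prop:reduced} gives $\sS_0*\check\sS_1\cong_\SN\sS_0\star\check\sS_1$, with the correspondence of generics $G_0*G_1\leftrightarrow G_0\star G_1$ and equal models. It remains to show that $\sS_0\star\check\sS_1$ is literally isomorphic to $\sS_0\times\sS_1$ via $(p,\check q)\mapsto(p,q)$ and $(\pi_0,\check\pi_1)\mapsto(\pi_0,\pi_1)$.
\begin{itemize}
\item \emph{Order.} In the reduced iteration, $(p',\check q')\le(p,\check q)$ iff $p'\le p$ and $p'\Vdash\check q'\le\check q$, which by absoluteness is iff $q'\le q$. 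This is the product order.
\item \emph{Group action.} The action is $(\pi_0,\check\pi_1)(p,\check q)=(\pi_0(p),\check\pi_1(\pi_0(\check q)))=(\pi_0(p),(\pi_1(q))\check{})$, which is coordinatewise. The composition formula from Lemma~\ref{lem:twostep} reduces to $(\pi_0\sigma_0,(\pi_1\sigma_1)\check{})$ because $\pi_0(\check\sigma_1)=\check\sigma_1$. So the groups are isomorphic, and the isomorphism respects the actions.
\item \emph{Filters.} The generators $(H_0,\check H_1)_\star$ correspond exactly to $H_0\times H_1$. The side condition $H_0\le\sym(\check H_1)$ is automatic.
\end{itemize}
An isomorphism of symmetric systems transports names, hereditarily symmetric names and generics. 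Hence $\sS_0\times\sS_1\cong_\SN\sS_0\star\check\sS_1$, and $G_0\times G_1$ corresponds to $G_0\star G_1$.

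\emph{Transporting generics and concluding.} Suppose $G_0\times G_1$ is $\sS_0\times\sS_1$-generic. By the remark after the product definition, $G_0$ and $G_1$ are symmetrically generic for $\sS_0$ and $\sS_1$ respectively. We must show that $G_1$ is $\sS_1$-generic over $V[G_0]_{\sS_0}$, and this is the main obstacle. To get it, use the isomorphism with the reduced system and apply Proposition~\ref{prop:reduced} to $G$, the image of $G_0\times G_1$ in $\sS_0\star\check\sS_1$. We have $\dom G=G_0$ and the second coordinates evaluate to $G_1$, so the proposition yields that $G_0*G_1$ is $\sS_0*\check\sS_1$-generic. Conversely, if $G_0*G_1$ is $\sS_0*\check\sS_1$-generic, the proposition gives that $G_0\star G_1\cong G_0\times G_1$ is generic.

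Chaining the equalities of models gives $V[G_0\times G_1]_{\sS_0\times\sS_1}=V[G_0*G_1]_{\sS_0*\check\sS_1}$. Transitivity of $\cong_\SN$ gives the equivalence $\sS_0\times\sS_1\cong_\SN\sS_0*\check\sS_1$.

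\emph{Symmetry of the product.} The swap map is an isomorphism of systems $\sS_0\times\sS_1\to\sS_1\times\sS_0$. It sends $G_0\times G_1$ to $G_1\times G_0$ and maps names accordingly, so it witnesses $\sS_0\times\sS_1\cong_\SN\sS_1\times\sS_0$ directly, with $i$ and $i^*$ given by applying the swap to names.
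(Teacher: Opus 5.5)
Your proposal is correct and follows the paper's route. The paper states this theorem as an immediate corollary of Proposition~\ref{prop:reduced}, applied to the check-name name-witness from the Example; under $(p,\check q)\leftrightarrow(p,q)$ the reduced iteration $\sS_0\star\check\sS_1$ is literally $\sS_0\times\sS_1$, which is the identification you verify in detail, and the final clause is, as you note, witnessed directly by the coordinate-swap isomorphism.
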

\section{Quotients}\label{sec:quotients}

The dual notion of iterations would be quotients, and much like in the case of forcing, it is can be useful to know when one forcing is a quotient of another. In the following we define when $\sS_0$ is a complete subsystem of $\sS_1$, which will imply that an extension by $\sS_1$ is a two-step iteration of $\sS_0$ and some quotient system. We will define the notion of a complete subsystem, then in order to define the quotient symmetric extension we will first define ``respect bases'', which play an important role in the definition of the quotient system, and will play an important role later as well.

\begin{definition}[Complete subsystems]\label{def:completesubsys}
Let $\sS_0 = (\mathbb{P}, \sG, \sF)$, $\sS_1 = (\mathbb{Q}, \sH, \sE)$ be symmetric systems. Then we say that \emph{$\sS_0$ is a subsystem of $\sS_1$} iff 
\begin{enumerate}
    \item $\mathbb{P}$ is a subforcing of $\mathbb{Q}$, i.e., $\mathbb{P} \subseteq \mathbb{Q}$ and the extension and incompatibility relations coincide, 
    \item $\forall H \in \sF \exists K \in \sE (K \restriction \mathbb{P} := \{ \pi \restriction \mathbb{P} : \pi \in K \} \leq H)$.
\end{enumerate}

We say that \emph{$\sS_0$ is a complete subsystem of $\sS_1$} and write $\sS_0 \lessdot \sS_1$ if additionally 

\begin{enumerate}
\setcounter{enumi}{2}
    \item $\mathbb{P} \lessdot_{\sS_0} \mathbb{Q}$, i.e., every $\sS_0$-symmetrically dense subset of $\mathbb{P}$ is predense in $\mathbb{Q}$.
\end{enumerate}
\end{definition}

Every symmetric system $(\mathbb{P}, \sG, \sF)$ is a complete subsystem of $(\mathbb{P}, \{ \id \}, \{ \{\id \} \})$, which corresponds to forcing with $\mathbb{P}$. This may seem confusing at first, but if we think of the standard case with forcing, when $\mathbb{P}$ completely embeds into $\mathbb{Q}$, that means that adding a generic for $\mathbb{Q}$ will add one for $\mathbb{P}$, so in a meaningful sense, $\mathbb{Q}$ adds ``more'' objects than $\mathbb{P}$. In the case of symmetric extensions we can often see $\sG$ as preserving some kind of structure, and $\sF$ preserving subsets of that structure. In that case, shrinking $\sG$ or enlarging $\sF$ will allow us to preserve either more structure or more subsets, and this culminates with the trivial group (or the improper filter) which will add all the generic subsets.

If we identify $\sS$ naturally with a subsystem of a two-step iteration $\sS * \dot \sT$, we have that $\sS$ is a complete subsystem of $\sS * \dot \sT$. In fact in all applications we consider, we will have that $\mathbb{P} \lessdot \mathbb{Q}$, i.e., every dense subset of $\mathbb{P}$ is predense in $\mathbb{Q}$, and this is particularly the case in two-step iterations. But (3) is a slightly more natural choice and it turns out that it does not add too much complexity.

\begin{lemma}\label{lem:completesubgen}
Let $\sS_0$ be a complete subsystem of $\sS_1$ as above and let $G$ be $\sS_1$-generic. Then $G_0 := G \cap \mathbb{P}$ is $\sS_0$-generic and $V[G_0]_{\sS_0} \subseteq V[G]_{\sS_1}$. In fact every $\sS_0$-name is an $\sS_1$-name.
\end{lemma}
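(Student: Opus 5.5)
The statement has three parts, and I would prove them in this order. First, every $\sS_0$-name is an $\sS_1$-name. Second, $G_0 = G\cap\mathbb{P}$ is $\sS_0$-generic. Third, $V[G_0]_{\sS_0}\subseteq V[G]_{\sS_1}$, which will follow because $\dot x^{G_0}=\dot x^{G}$ for every $\sS_0$-name $\dot x$.

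\emph{Names.} I argue by induction on rank. Since $\mathbb{P}\subseteq\mathbb{Q}$, every $\mathbb{P}$-name is a $\mathbb{Q}$-name. Let $\dot x\in\HS_{\sS_0}$ and let $H\in\sF$ with $H\leq\sym_{\sS_0}(\dot x)$. By clause (2) of Definition~\ref{def:completesubsys} there is $K\in\sE$ with $K\restriction\mathbb{P}\leq H$. I read this clause as saying that every $\pi\in K$ maps $\mathbb{P}$ onto $\mathbb{P}$, with $\pi\restriction\mathbb{P}\in H$. The action of $\pi$ on a $\mathbb{P}$-name only uses the values of $\pi$ on $\mathbb{P}$, so a rank induction gives $\pi(\dot x)=(\pi\restriction\mathbb{P})(\dot x)=\dot x$. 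Hence $K\leq\sym_{\sS_1}(\dot x)$, so $\dot x$ is $\sS_1$-symmetric. Applying the same argument to the names occurring in $\dot x$ (by the induction hypothesis) gives $\dot x\in\HS_{\sS_1}$. Since only conditions from $\mathbb{P}$ occur in $\dot x$, a further rank induction gives $\dot x^{G}=\dot x^{G\cap\mathbb{P}}=\dot x^{G_0}$. Once genericity is established, this yields $V[G_0]_{\sS_0}\subseteq V[G]_{\sS_1}$.

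\emph{Meeting symmetrically dense sets.} Let $D\subseteq\mathbb{P}$ be $\sS_0$-symmetrically dense, witnessed by $H\in\sF$. Let $D^{\downarrow}$ be the downward closure of $D$ in $\mathbb{Q}$. By clause (3), $D$ is predense in $\mathbb{Q}$, so $D^{\downarrow}$ is dense in $\mathbb{Q}$. Choose $K\in\sE$ with $K\restriction\mathbb{P}\leq H$. Each $\pi\in K$ is an automorphism of $\mathbb{Q}$ with $\pi``D=D$, and so it preserves $D^\downarrow$. Thus $D^\downarrow$ is $\sS_1$-symmetrically dense, and $G$ meets it. Since $G$ is upward closed, $G\cap D\neq\emptyset$, and hence $G_0\cap D\neq\emptyset$. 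Upward closure of $G_0$ within $\mathbb{P}$ is immediate because the orders agree, and $G_0$ is nonempty because it meets the symmetrically dense set $\mathbb{P}$.

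\emph{Directedness (the main obstacle).} The hardest step is showing that $p,q\in G_0$ have a common extension inside $G_0$. The obvious dense set $\{r\in\mathbb{P}: r\leq p,q \text{ or } r\perp p\text{ or } r\perp q\}$ is only symmetric under $\fix(p)\cap\fix(q)$, which need not lie in $\sF$. I would first try making it invariant under the whole group $\sG$, for example
\[D=\{r:\exists\pi\in\sG\,(r\leq\pi p,\pi q)\}\cup\{r:\forall\pi\in\sG\ \neg\exists r'\leq r\,(r'\leq \pi p,\pi q)\}.\]
Genericity then gives some $r\in G_0\cap D$. Ruling out the second alternative needs $p,q,r$ to have a common extension in $\mathbb{P}$, which is not immediate.

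My fallback is the following. By density, $G$ contains some $t\leq p,q$ with $t\in\mathbb{Q}$. Using that compatibility in $\mathbb{P}$ and $\mathbb{Q}$ coincides, I would then show that the set $\{r\in\mathbb{P}: r\leq p,q\}\cup\{r\in\mathbb{P}: r\perp p\text{ or }r\perp q\}$ is predense below $t$. Alternatively, I would pass to the Boolean completion via Proposition~\ref{prop:cBaequi}, where $p\wedge q$ exists and the $H$-orbit argument works directly. I expect this directedness step to require the most care.
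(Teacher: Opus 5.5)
Your treatment of names (clause (2) applied hereditarily, plus $\dot x^G=\dot x^{G_0}$) and of symmetrically dense sets (the downward closure of $D$ is $K$-invariant, and dense by clause (3)) is essentially the paper's argument. The gap is the step you flag yourself: you never prove that $G_0$ is directed, and neither fallback can work. Predensity below $t$ of $\{r\in\mathbb{P}: r\le p,q\}\cup\{r\in\mathbb{P}: r\perp p\text{ or }r\perp q\}$ does not help, because $G$ only meets \emph{symmetrically} dense sets, and this set is only $\fix(p)\cap\fix(q)$-invariant. Passing to a Boolean completion misses the real obstruction, which is that the meet of $p$ and $q$ in $\mathbb{Q}$ need not lie in $\mathbb{P}$. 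The paper's proof only checks the density clause and says nothing about directedness. In fact directedness fails under the stated hypotheses, so no argument can close this gap.

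\emph{Counterexample.} Let $X=2^\omega$ and let $\mathbb{Q}$ be the nonempty regular open subsets of $X$, ordered by inclusion. Let $\mathbb{P}=\{X\}\cup\{U\in\mathbb{Q}:U\text{ not clopen}\}$. This set is dense in $\mathbb{Q}$: if $N_0\supsetneq N_1\supsetneq\cdots$ are clopen with $\bigcap_k N_k$ a single point, then $\bigcup_k(N_{2k}\setminus N_{2k+1})$ is regular open but not closed. Hence $\mathbb{P}$ is a subforcing, and every dense subset of $\mathbb{P}$ is predense in $\mathbb{Q}$.

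Let $\sH$ be the automorphisms of $\mathbb{Q}$ induced by homeomorphisms of $X$. Put $\sG=\{\pi\restriction\mathbb{P}:\pi\in\sH\}$, $\sF=\{\sG\}$ and $\sE=\{\sH\}$. Then clauses (1)--(3) hold.

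Let $C=\{x:x(0)=0\}$ and $G=\{U\in\mathbb{Q}:C\subseteq U\}$. Suppose $D\subseteq\mathbb{Q}$ is dense and $\sH$-invariant. Pick $d\in D$ and a proper nonempty clopen $c\subseteq d$. All nonempty clopen subsets of $X$ are homeomorphic to $X$, so some homeomorphism $h$ of $X$ satisfies $h[c]=C$. Then $h[d]\in D\cap G$, so $G$ is $\sS_1$-generic.

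Now take regular open, non-closed sets $W'\subseteq\{x:x(0)=1,x(1)=0\}$ and $W''\subseteq\{x:x(0)=1,x(1)=1\}$. Then $p=C\cup W'$ and $q=C\cup W''$ both lie in $G\cap\mathbb{P}$. Any common extension $r$ of them in $G$ satisfies $C\subseteq r\subseteq p\cap q=C$, so $r=C$, and $C\notin\mathbb{P}$. So $G\cap\mathbb{P}$ is not a filter.

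\emph{What would repair the lemma.} The lemma needs an extra hypothesis. One option is a map $\rho\colon\mathbb{Q}\to\mathbb{P}$ with $t\le\rho(t)$, and $\rho(t)\le p$ whenever $t\le p\in\mathbb{P}$. Examples are $(p,\dot q)\mapsto(p,\dot{\mathds 1})$ for two-step iterations, restriction for iterations, or the trivial case $\mathbb{P}=\mathbb{Q}$. Given such $\rho$, for $p,q\in G_0$ pick $t\in G$ below both; then $\rho(t)\in G_0$ is a common extension. Alternatively, assume $G$ is fully $\mathbb{Q}$-generic and $\mathbb{P}\lessdot\mathbb{Q}$ in the usual sense.
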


\begin{proof}
    To see that $G_0$ is generic, by (3) it suffices to check that if $D\subseteq \mathbb{P}$ is $\sS_0$-symmetric, then $D$ is also $\sS_1$-symmetric. But this immediately follows from (2). Similarly, if $\dot x$ is an $\sS_0$-name, then by $(2)$ there is $K \in \sE$ such that $K \restriction \mathbb{P} \leq \sym(\dot x)$, so $K$ fixes $\dot x$. This argument can be made hereditarily and thus $\dot x$ is also an $\sS_1$-name. Clearly, $\dot x^G = \dot x^{G_0}$, so $V[G_0]_{\sS_0} \subseteq V[G]_{\sS_1}$.
\end{proof}

\begin{definition}\label{def:Hred}
Let $\mathbb{P}$ be a subforcing of $\mathbb{Q}$, let $p\in \mathbb{P}$, $q\in \mathbb{Q}$ and $H$ be a group of automorphisms on $\mathbb{P}$. Then $p$ is an \emph{$H$-reduction of $q$} if \[\forall r \in \mathbb{P}, r \leq p \exists \pi \in H (\pi(r) \parallel q).\]
\end{definition}

The usual notion of reduction (as given, e.g., in \cite[Definition 7.1]{Kunen1980}) corresponds to the case where $H$ is the trivial group. 

\begin{lemma}\label{lem:Hred}
    Let $\sS = (\mathbb{P}, \sG, \sF)$ and $\mathbb{P}$ be a subforcing of $\mathbb{Q}$. Then $\mathbb{P} \lessdot_{\sS} \mathbb{Q}$ iff for every $H \in \sF$ and $q \in \mathbb{Q}$, $q$ has an $H$-reduction in $\mathbb{P}$. Moreover, in this case, whenever $G \subseteq \mathbb{Q}$ is a filter such that $G_0 = G \cap \mathbb{P}$ is $\sS_0$-generic, then for every $H \in \sF$, any $q \in G$ has an $H$-reduction $p \in G_0$.
\end{lemma}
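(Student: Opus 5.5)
We prove the two directions of the equivalence separately and then the ``moreover'' clause. Throughout, fix $H\in\sF$ and $q\in\mathbb{Q}$. The key object is
\[D_{H,q}=\{r\in\mathbb{P} : \forall\pi\in H\,(\pi(r)\perp q)\}\cup\{r\in\mathbb{P}: r \text{ has no extension in } D^0_{H,q}\},\]
where $D^0_{H,q}$ denotes the first set. We first check that $D^0_{H,q}$ is closed under $H$: if $r\in D^0_{H,q}$ and $\sigma\in H$, then for $\pi\in H$ we have $\pi\sigma\in H$, so $\pi(\sigma(r))\perp q$. The second part is then also $H$-invariant, since automorphisms in $H$ preserve $D^0_{H,q}$ and the order. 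Hence $D_{H,q}$ is symmetrically dense, being open dense as the usual ``either enter or avoid'' set. This mirrors the sets $\tilde D_{(r,\tau)}$ from the proof of Theorem~\ref{thm:forcingtheorem}.

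($\Rightarrow$) Assume $\mathbb{P}\lessdot_\sS\mathbb{Q}$. Then $D_{H,q}$ is predense in $\mathbb{Q}$, so some $r\in D_{H,q}$ is compatible with $q$. Such an $r$ cannot lie in $D^0_{H,q}$, because $\id\in H$ would give $r\perp q$. So $r$ has no extension in $D^0_{H,q}$. We claim that $p:=r$ is an $H$-reduction of $q$. Indeed, for any $r'\le p$ in $\mathbb{P}$ we have $r'\notin D^0_{H,q}$, so there is $\pi\in H$ with $\pi(r')\parallel q$.

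($\Leftarrow$) Let $D\subseteq\mathbb{P}$ be symmetrically dense, witnessed by $H\in\sF$, and let $q\in\mathbb{Q}$. Take an $H$-reduction $p\in\mathbb{P}$ of $q$. By density pick $r\in D$ with $r\le p$. There is then $\pi\in H$ with $\pi(r)\parallel q$, and $\pi(r)\in\pi``D=D$. So $D$ is predense in $\mathbb{Q}$.

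For the ``moreover'' clause, let $G\subseteq\mathbb{Q}$ be a filter with $G_0=G\cap\mathbb{P}$ $\sS$-generic, and let $q\in G$. Since $D_{H,q}$ is symmetrically dense in $\mathbb{P}$, $G_0$ meets it at some $r$. As $r,q\in G$ and $G$ is a filter, $r\parallel q$, so $r\notin D^0_{H,q}$, using $\id\in H$ as before. Therefore $r$ lies in the second part, and by the argument in ($\Rightarrow$) it is an $H$-reduction of $q$ belonging to $G_0$.

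The only real obstacle is verifying that $D_{H,q}$ is $H$-invariant, which needs $H$ to be a group and uses that automorphisms preserve both compatibility and the order. Note that no completeness assumption is used in the ``moreover'' part beyond the genericity of $G_0$.
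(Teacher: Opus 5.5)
Your proof is correct and is essentially the paper's argument. For ($\Rightarrow$) and the ``moreover'' clause, the paper uses exactly the $H$-invariant dense set $E\cup\{p : p\perp E\}$ with $E=\{p\in\mathbb{P}:\forall\pi\in H\,(\pi(p)\perp q)\}$. For ($\Leftarrow$), it likewise moves an element of $D$ into compatibility with $q$ via some $\pi\in H$, and your choice of $r\le p$ directly in $D$ is a slightly more streamlined form of that step. The one inaccuracy is your closing remark that automorphisms ``preserve compatibility'' with $q$: your argument never uses this, and elements of $H$ act only on $\mathbb{P}$, not on $q$.
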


\begin{proof}
    Suppose that every condition in $\mathbb{Q}$ has an $H$-reduction and $D \subseteq \mathbb{P}$ is symmetrically dense open as witnessed by $H$. Fix $q \in \mathbb{Q}$ and let $p$ be an $H$-reduction. Then $p \parallel r$ for some $r \in D$. Let $r' \in \mathbb{P}$, $r' \leq p,r$ and $\pi \in H$ such that $\pi(r') \parallel q$. Then $\pi(r) \in D$ and $\pi(r) \parallel q$. Since $q$ was arbitrary, $D$ is indeed predense in $\mathbb{Q}$. 
        
    On the other hand, assume that $\mathbb{P} \lessdot_\sS \mathbb{Q}$ and let $q \in \mathbb{Q}$, $H \in \sF$ be arbitrary. Consider $E = \{ p \in \mathbb{P} : \forall \pi \in H (\pi(p) \perp q) \}$ and $D = E \cup \{ p \in \mathbb{P} : p \perp E \}$. $D$ is obviously dense in $\mathbb{P}$ and moreover $\pi``D = D$. Thus $D$ is predense in $\mathbb{Q}$ and there is $p \in D$ such that $p \parallel q$. Clearly $p \notin E$ and thus $p \perp E$. But then for any $r \leq p$, there must be some $\pi \in H$ such that $\pi(r) \parallel q$. So $p$ is an $H$-reduction of $q$.

    Now whenever $G \subseteq \mathbb{Q}$ is a filter such that $q \in G$ and $G_0 = G \cap \mathbb{P}$ is $\sS$-generic, there is of course such $p$ in $D \cap G_0$.\end{proof}

\subsection{Respect-bases}
\begin{definition}
    Let $\sS = (\mathbb{P}, \sG, \sF)$ be a symmetric system and $\dot x$ be a $\mathbb{P}$-name. Then the \emph{respect-diagram} of $\dot x$ is \[ R_\sS(\dot x) := \{ (p,\pi) : p \in \mathbb{P}, \pi \in \sG, p \Vdash \pi(\dot x) = \dot x  \}.\]
 A collection $\mathcal{R} \subseteq \HS$ is a \emph{respect-basis} for $\sS$ if 

 \begin{enumerate}
    \item for every $\dot x \in \HS$, the set of $p \in \mathbb{P}$ for which there are $\dot x' \in \HS$ and $\dot y \in \sR$ such that \begin{enumerate}
        \item $p \Vdash \dot x = \dot x'$ and
        \item $R_\sS(\dot y) \subseteq R_\sS(\dot x')$,
     \end{enumerate} is dense. 
     \item $\sR$ is closed under $\sG$. That is to say, $\sym(\sR^\bullet)=\sG$.
 \end{enumerate}
\end{definition}

A simple case of a respect-basis is when for each $\dot x \in \HS$ there is $\dot y \in \sR$ with $R_\sS(\dot y) \subseteq R_\sS(\dot x)$. This is probably the most common case one will encounter, but (1) is also sufficient and will be relevant for Proposition~\ref{prop:respectbasefromHOD}.

The following is well-known. 

\begin{lemma}
    Assume $\DC_\kappa$. Then for any structure $N$ in a language of size $\leq \kappa$ and any $A \subseteq N$ of size $\leq \kappa$, there is an elementary substructure $M \preccurlyeq N$ of size $\leq \kappa$ with $A \subseteq M$.
\end{lemma}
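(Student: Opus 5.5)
The plan is to run the Löwenheim--Skolem closure argument with Skolem functions, using $\DC_\kappa$ to make the countably many closure stages (and the choices within each stage) simultaneously. Fix $N$ with universe $|N|$, a language $L$ with $|L|\leq\kappa$, and $A\subseteq N$ with $|A|\leq\kappa$. We may assume $N\neq\emptyset$, and that $A\neq\emptyset$ (otherwise add one element). By the Tarski--Vaught criterion it suffices to find $M\supseteq A$ of size $\leq\kappa$ such that, for every $L$-formula $\varphi(x,\bar y)$ and every tuple $\bar a\in M^{<\omega}$ with $N\models\exists x\,\varphi(x,\bar a)$, there is a witness in $M$.

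The difficulty is that without full $\AC$ we cannot fix Skolem functions on all of $N$ at once. So instead of fixing them, we build $M$ in a single application of $\DC_\kappa$. Let $T$ be the set of sequences $\langle a_\xi:\xi<\gamma\rangle$, $\gamma<\kappa$, of elements of $N$. First fix in advance, using $|A|\leq\kappa$ and $|L|\leq\kappa$ (so $|L\text{-formulas}|\leq\kappa$, and $\kappa\times\kappa\times\omega$ has a definable bijection with $\kappa$, for $\kappa$ infinite), an enumeration of $A$ and a bookkeeping bijection $b\colon\kappa\to\kappa\times\mathrm{Form}_L\times\kappa^{<\omega}$ in which each triple $(\eta,\varphi,\bar\zeta)$ appears at a stage $\xi$ with $\eta,\max\bar\zeta<\xi$. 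We then use $\DC_\kappa$ on the relation ``$s\frown\langle a\rangle$ extends $s$ correctly'': at stage $\xi$ with $b(\xi)=(\eta,\varphi,\bar\zeta)$, the next element is either the $\xi$-th element of $A$ (interleaving even and odd stages), or a witness for $\exists x\,\varphi(x,\langle a_{\zeta}\rangle_{\zeta\in\bar\zeta})$ if one exists, and otherwise any element.

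$\DC_\kappa$ guarantees that every such tree with no maximal nodes of length $<\kappa$ has a branch of length $\kappa$. Here no node is maximal, since $N\neq\emptyset$ and witnesses exist when required. Let $M=\{a_\xi:\xi<\kappa\}$. Then $M$ contains $A$, has size $\leq\kappa$ because it is the image of $\kappa$, and every finite tuple from $M$ and every formula is handled at some later stage. Hence Tarski--Vaught applies and $M\preccurlyeq N$.

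The main obstacle is the bookkeeping. Each requirement refers only to earlier indices, so a single ordinal-length sequence handles all of them. This avoids iterating $\omega$ rounds each of which would need a $\kappa$-sized choice, and so we use exactly $\DC_\kappa$ and no more. For finite $\kappa$ the statement should be read with $\kappa$ replaced by $\omega$, and it follows from $\DC$ in the same way.
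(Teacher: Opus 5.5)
Your argument is correct. The paper does not prove this lemma (it is quoted as well-known), so the natural comparison is with the textbook Skolem-hull proof. There one builds $A=M_0\subseteq M_1\subseteq\cdots$, where $M_{n+1}$ adds witnesses for the $\kappa$ many pairs (formula, tuple from $M_n$), chosen using $\AC_\kappa$. One then makes these $\omega$ dependent choices with $\DC$ and takes $M=\bigcup_n M_n$. That route uses only $\AC_\kappa$ and $\DC$, both consequences of $\DC_\kappa$. Yours instead spends a single instance of $\DC_\kappa$ on one $\kappa$-length sequence and pushes all the work into bookkeeping, which is shorter and equally elementary.

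The bookkeeping needs two small repairs.

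First, no bijection $b$ can satisfy $\eta,\max\bar\zeta<\xi$ at $\xi=0$, and the coordinate $\eta$ is never used. Instead, take a canonical injection $c\colon\mathrm{Form}_L\times\kappa^{<\omega}\to\kappa$, built from a fixed injection $L\to\kappa$ and G\"odel pairing, so that $c(\varphi,\bar\zeta)\geq\max\bar\zeta$. Treat $(\varphi,\bar\zeta)$ at the $c(\varphi,\bar\zeta)$-th odd ordinal below $\kappa$, which is strictly larger than $c(\varphi,\bar\zeta)$. Leave the remaining odd stages unconstrained, and let the even stages run through a surjection $\kappa\to A$.

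Second, the tree form of $\DC_\kappa$ you quote requires the tree to be closed under unions of chains of length $<\kappa$, not merely to lack maximal nodes. However, what you actually apply is the relational form, and your relation is total on all of $N^{<\kappa}$. So this is only a matter of phrasing, and the rest (Tarski--Vaught, and $|M|\leq\kappa$ because $M$ is the range of a function on $\kappa$) goes through in $\ZF$.
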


\begin{lemma}\label{lem:computingrespect}
  $(\ZF+\DC_{\kappa})$  Suppose that $\vert \mathbb{P} \cup \sG \vert = \kappa$. Then $\HS_{\kappa^+}$ is a respect-basis for $\sS$.
\end{lemma}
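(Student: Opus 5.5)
The plan is to verify the two clauses of the definition of a respect-basis directly. In fact I would prove the stronger ``simple case'' of clause (1): for every $\dot x \in \HS$ there is $\dot y \in \HS_{\kappa^+}$ with $R_\sS(\dot y) = R_\sS(\dot x)$, so that we may take $\dot x' = \dot x$ and $p$ arbitrary.

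Clause (2) is immediate. Each $\pi \in \sG$ acts on names rank-by-rank, so $\pi``\HS_{\kappa^+} = \HS_{\kappa^+}$, using that $\HS$ is closed under $\sG$. Hence $\sym(\HS_{\kappa^+}^\bullet) = \sG$.

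For clause (1) the first step is a harmless normalisation. Since $\vert \mathbb{P} \cup \sG \vert = \kappa$, transport the structure along a bijection so that $\mathbb{P} \cup \sG \subseteq \kappa$, with the order, the action, and $\sF$ transported accordingly. This changes neither $\HS$ up to the induced isomorphism nor the respect-diagrams. Now fix $\dot x \in \HS$ and a large $\theta$ with $\dot x \in V_\theta$, $V_\theta$ reflecting the relevant fragment of $\ZF$, the definitions of $\Vdash$ for atomic formulas and of $\HS$. Using the quoted Löwenheim--Skolem lemma under $\DC_\kappa$, pick $M \preccurlyeq (V_\theta, \in)$ of size $\kappa$ containing $\kappa \cup \{\dot x, \mathbb{P}, \sG, \sF\}$. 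Let $\sigma \colon M \to N$ be the transitive collapse. Because $\kappa \subseteq M$, $\sigma$ fixes every condition and every automorphism, and it fixes every $H \in \sF \cap M$, as such $H$ are subsets of $\sG \subseteq M$. Set $\dot y = \sigma(\dot x)$. Since $N$ is transitive of size $\kappa$, $\dot y$ is a $\mathbb{P}$-name of rank $<\kappa^+$.

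The key steps, in order, are as follows. First, $\sigma$ commutes with the action: $\sigma(\pi(\dot z)) = \pi(\sigma(\dot z))$ for $\pi \in \sG$ and names $\dot z \in M$. This holds because the action is defined by recursion inside $M$, and $\sigma$ is an isomorphism fixing $\pi$ and all conditions. Second, for $p \in \mathbb{P}$, $\pi \in \sG$ we get
\[ p \Vdash \pi(\dot x) = \dot x \iff M \models p \Vdash \pi(\dot x)=\dot x \iff N \models p \Vdash \pi(\dot y) = \dot y \iff p \Vdash \pi(\dot y) = \dot y .\]
The first equivalence is elementarity, the second is the collapse isomorphism, and the third is absoluteness of the atomic forcing relation between the transitive model $N \ni \mathbb{P}$ and $V$ (it is defined by a well-founded recursion on names). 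Hence $R_\sS(\dot y) = R_\sS(\dot x)$. Third, $\dot y \in \HS$. We have $M \models \dot x \in \HS$, so $N \models \dot y \in \HS^{N}$. Since $\sigma$ fixes each $H \in \sF \cap M$ and $\sym$ of a name in $N$ is computed identically in $N$ and $V$ (again by the first step), $\HS^N \subseteq \HS$ by induction on rank. Thus $\dot y \in \HS_{\kappa^+}$.

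The main obstacle is the third equivalence together with the hereditary symmetry of $\dot y$. One must check carefully that the atomic forcing relation and the computation of $\sym$ are absolute for the collapsed model. In particular $\sF$ itself is not fixed by $\sigma$; only its members lying in $M$ are. I would handle this by noting that ``$\sym(\dot z) \in \sF$'' in $M$ is witnessed by some $H \in \sF \cap M$ with $H \leq \sym(\dot z)$, and that this inclusion is preserved verbatim under $\sigma$.
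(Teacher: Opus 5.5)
Your proposal is correct and follows essentially the same route as the paper. Code $\mathbb{P}$ and $\sG$ inside $\kappa$, take an elementary submodel of size $\kappa$ containing $\kappa$ and $\dot x$, collapse it, and transfer $p \Vdash \pi(\dot x)=\dot x$ to the collapsed name $\dot y$ via elementarity, the collapse isomorphism and absoluteness, which gives $R_\sS(\dot y)=R_\sS(\dot x)$ with $\dot y$ of rank below $\kappa^+$. Your additional checks fill in details the paper leaves implicit: that $\dot y$ is hereditarily symmetric because $\sigma$ fixes the members of $\sF\cap M$ and $\sym$ is absolute for $N$, and that $\HS_{\kappa^+}$ is closed under $\sG$.
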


\begin{proof}
    We may assume without loss of generality that $\mathbb{P}$ and $\sG$ are both coded by subsets of $\kappa$. Let $\dot x\in V_\beta$ for $\beta>\kappa^+$ and let $M$ be an elementary submodel of $V_\beta$ of size $\kappa$ such that $\kappa\cup\{\mathbb{P},\sG,\dot x\}\subseteq M$. Letting $\iota\colon M\to N$ be the Mostowski collapse, then $\iota(\mathbb{P})=\mathbb{P}$ and $\iota(\sG)=\sG$. Let $\dot y=\iota(\dot x)$. For every $\pi\in\sG$, we have that     
    $p\Vdash\pi(\dot x)=\dot x$ iff
    $M\models p\Vdash\pi(\dot x)=\dot x$ iff
    $N\models p\Vdash\pi(\dot y)=\dot y$ iff
    $p\Vdash\pi(\dot y)=\dot y$.
    Let $\alpha=N\cap\Ord$, then $\alpha < \kappa^+$ by $\DC_\kappa$. So, $\dot y \in \HS_\alpha$ and $R_\sS(\dot y) = R_\sS(\dot x)$.\end{proof}

The computation above needs some form of choice in order to obtain the elementary submodel $M$. In the abscence of choice, weakly Löwenheim-Skolem cardinals can still be used towards a uniform bound for respect-bases.

\begin{definition}[Usuba \cite{Usuba2021}]
    A cardinal $\kappa$ is a \emph{weakly Löwenheim-Skolem cardinal} if for every $\gamma < \kappa$, $\alpha \geq \kappa$ and $x \in V_\alpha$ there is an elementary $M \preccurlyeq V_\alpha$, $V_\gamma \cup \{ x \} \subseteq M$ and the Mostowski-collapse of $M$ is an element of $V_\kappa$.
\end{definition}

With essentially the same proof as Lemma~\ref{lem:computingrespect}, this time not relying on $\DC_\kappa$, we have that: 

\begin{lemma}\label{lem:compLS}
    Let $\kappa$ be a weakly Löwenheim-Skolem cardinal with $\sS\in V_\kappa$. Then $\HS_\kappa$ is a respect-basis for $\sS$.
\end{lemma}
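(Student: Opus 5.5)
The plan is to copy the proof of Lemma~\ref{lem:computingrespect}, replacing the $\DC_\kappa$-built elementary submodel by one supplied by the weak Löwenheim--Skolem property. Since $\sR = \HS_\kappa$ is automatically closed under $\sG$ (the action of $\sG$ preserves $\HS$ and rank), clause (2) of the definition of a respect-basis is immediate. It therefore suffices to show the simple form of clause (1): for every $\dot x \in \HS$ there is $\dot y \in \HS_\kappa$ with $R_\sS(\dot y) = R_\sS(\dot x)$. Here we take $\dot x' = \dot x$ and the dense set is all of $\mathbb{P}$.

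Fix $\dot x \in \HS$. Since $\sS \in V_\kappa$, choose $\gamma < \kappa$ with $\sS \in V_\gamma$; we may enlarge $\gamma$ so that $\trcl(\{\mathbb{P},\sG,\sF\}) \subseteq V_\gamma$. Choose a limit $\alpha \geq \kappa$ large enough that $\dot x \in V_\alpha$ and that $V_\alpha$ computes correctly the relevant facts. These facts are the statements $p \Vdash \pi(\dot x) = \dot x$ for $p\in\mathbb{P}$, $\pi\in\sG$, together with the hereditary symmetry of $\dot x$. Correctness holds because the forcing relation for $=$ between names is $\Delta_1$ in parameters and is computed inside any sufficiently closed $V_\alpha$. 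The weakly Löwenheim--Skolem property then gives $M \preccurlyeq V_\alpha$ with $V_\gamma \cup \{\dot x\} \subseteq M$, whose Mostowski collapse $\iota\colon M \to N$ has $N \in V_\kappa$.

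Since $V_\gamma \subseteq M$ is transitive, $\iota$ is the identity on $V_\gamma$, so $\iota(\mathbb{P}) = \mathbb{P}$, $\iota(\sG)=\sG$, $\iota(\sF) = \sF$, and every $p$ and $\pi$ is fixed. Put $\dot y = \iota(\dot x)$. Then for $p\in\mathbb{P}$ and $\pi \in \sG$ we have the chain
\[p\Vdash\pi(\dot x)=\dot x \iff M\models p\Vdash\pi(\dot x)=\dot x \iff N\models p\Vdash\pi(\dot y)=\dot y \iff p\Vdash\pi(\dot y)=\dot y.\]
The last equivalence uses that $N$ is transitive and the forcing relation for atomic formulas is absolute between transitive models containing $\mathbb{P}$ and computing it (it is defined by a well-founded recursion). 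Hence $R_\sS(\dot y) = R_\sS(\dot x)$.

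It remains to check that $\dot y \in \HS$. It has rank below $\kappa$ since $N \in V_\kappa$, so then $\dot y \in \HS_\kappa$. For this, $M$ satisfies ``$\dot x$ is hereditarily $\sS$-symmetric'', so $N$ satisfies the same of $\dot y$. The witnessing groups $H \in \sF$ lie in $V_\gamma$ and are fixed by $\iota$, and being $\pi(\dot z)=\dot z$ is absolute, so $\dot y$ is genuinely in $\HS$.

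The main obstacle is the absoluteness bookkeeping. Choose $\gamma$ so that $\mathbb{P}$, $\sG$, $\sF$ and all their elements, including the subgroups in $\sF$, lie inside $V_\gamma$; then the collapse fixes them pointwise without any coding into $\kappa$, which was needed earlier only because $\DC_\kappa$ gave a model containing $\kappa$ rather than $V_\gamma$. Also confirm that $V_\alpha$ correctly computes $\Vdash$ for the atomic formulas involved.
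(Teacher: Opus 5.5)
Your proposal is correct and is essentially the paper's proof. The paper proves this lemma by rerunning the elementary-submodel-and-collapse argument of Lemma~\ref{lem:computingrespect}, with the weakly Löwenheim--Skolem property supplying $M$ in place of $\DC_\kappa$; your choice of $\gamma<\kappa$ with $\sS\in V_\gamma\subseteq M$, so that the collapse fixes $\mathbb{P}$, $\sG$ and $\sF$ pointwise without any coding into $\kappa$, is exactly what that rerun needs.
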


In $\ZFC$, weakly LS cardinals are exactly the $\beth$-fixed points. Thus the computation of Lemma~\ref{lem:computingrespect} is in general much better than that of Lemma~\ref{lem:compLS}. In concrete examples, respect-bases can often be found directly. 

Note that for any $\mathbb{P}$-name $\dot x$, we always have that \[\mathbb{P} \times \sym_\sS(\dot x) \subseteq \mathbb{P} \times \res_\sS(\dot x) \subseteq R_\sS(\dot x).\]

Thus, if for $H \in \sF$ we find a name $\dot y$ such that $R_\sS(\dot y) = \mathbb{P} \times H$, then $R_\sS(\dot y) \subseteq R_\sS(\dot x)$ for all $\dot x$ with $H \leq \sym(\dot x)$. This makes it easy to find respect-bases in some concrete examples.

\begin{exmp}
    Let $\sS$ and $\langle \dot c_n : n \in \omega \rangle$ be as in Example~\ref{ex:Cohen}. Then the set of all names of the form $\bar c_s = \langle \dot c_{s(i)} : i < \vert s \vert \rangle^\bullet$ for injective $s \in \omega^{<\omega}$ forms a respect-basis. Namely, whenever $\pi \notin \fix(\ran(s)) = \sym(\bar c_s)$, we have that \[p \Vdash \pi(\bar c_s) \neq \bar c_{s}\] for any $p \in \mathbb{P}$. Thus $R_\sS(\bar c_s) = \mathbb{P} \times \fix(\ran(s))$.
\end{exmp}

\begin{quest}
    Is Lemma~\ref{lem:computingrespect} optimal?
\end{quest}

\begin{remark}
    Suppose that $\mathbb{P}$ is a complete Boolean algebra and $\sG = \Aut(\mathbb{P})$. Then \[R_\sS(\dot y) \subseteq R_\sS(\dot x) \leftrightarrow \res_\sS(\dot y) \subseteq \res_\sS(\dot x).\]
\end{remark}

\begin{proof}
    For any $\pi \in \sG$ and $p \in \mathbb{P}$, we can define an involution $\sigma \in \sG$ that acts as the identity below the complement of $p \vee \pi(p)$, acts as $\pi$ below $p$ and as $\pi^{-1}$ below $\pi(p)$. Then note that $p \Vdash \dot x = \pi(\dot x)$ iff $\Vdash \dot x = \sigma(\dot x)$.
\end{proof}

\subsection{The quotient forcing}

For the rest of the entire section we fix an assignment of a respect-basis $\sR(\sS)$ to each system $\sS$. The definitions we give below technically depend on this choice but none of the results are affected. The particular choice will become more important later though. For now we may simply use the assignment of $\sR(\sS) = \HS_{\sS, \alpha}$, where $\alpha$ is least ordinal for which $\HS_{\sS,\alpha}$ is a respect-basis.  

\begin{definition}\label{def:quotforce}
    Let $\sS = (\mathbb{P}, \sG, \sF)$ be a symmetric system and let $\mathbb{P} \lessdot_{\sS} \mathbb{Q}$. Consider the formula \[\psi(p,r, (\dot x_i, \dot y_i)_{i <n}) = \forall p' \leq p \exists \pi \in \sG (p \Vdash \forall i < n(\pi(\dot x_i) = \dot y_i) \wedge \pi^{-1}(p') \parallel r)\] Then we define an $\sS$-name for a forcing notion \[\dot{\mathbb{Q}/\sS} = \{ (p, (\check r ,\{((\dot x_i)\check{}, \dot y_i ) : i < n\})^\bullet) : \psi(p,r, (\dot x_i, \dot y_i)_{i <n}) \},\] where $p, p'$ range over $\mathbb{P}$, $r$ ranges over $\mathbb{Q}$, $\dot x_i, \dot y_i$ range over $\sR(\sS)$ and $n$ ranges over $\omega$. In $V^{\sS}$, $(r_1, a_1) \leq (r_0, a_0)$ iff $r_1 \leq r_0$ and $a_0 \subseteq a_1$.
\end{definition}

The following is an easy observation.
\begin{prop}
     For any $\sigma \in \sG$, $\sigma(\dot{\mathbb{Q}/\sS}) = \dot{\mathbb{Q}/\sS}$.
\end{prop}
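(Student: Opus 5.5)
We need $\sigma(\dot{\mathbb{Q}/\sS}) = \dot{\mathbb{Q}/\sS}$ for $\sigma\in\sG$. Since $\sigma$ acts on names by $\sigma(\dot x)=\{(\sigma(p),\sigma(\dot y)):(p,\dot y)\in\dot x\}$, applying $\sigma$ to a generic element of $\dot{\mathbb{Q}/\sS}$ gives
\[(\sigma(p),(\check r,\{((\dot x_i)\check{},\sigma(\dot y_i)):i<n\})^\bullet),\]
because check names and $\bullet$-names of check names are fixed by every automorphism. The plan is to show that this pair belongs to $\dot{\mathbb{Q}/\sS}$ again, i.e.\ that
\[\psi(p,r,(\dot x_i,\dot y_i)_{i<n})\ \text{implies}\ \psi(\sigma(p),r,(\dot x_i,\sigma(\dot y_i))_{i<n}).\]
We also need $\sigma(\dot y_i)\in\sR(\sS)$, which holds by clause (2) of the definition of a respect-basis ($\sR$ is closed under $\sG$). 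Once this implication is established, applying it to $\sigma^{-1}$ yields the reverse inclusion, and hence equality.

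Assume $\psi(p,r,(\dot x_i,\dot y_i)_{i<n})$ and let $p''\le\sigma(p)$. Put $p'=\sigma^{-1}(p'')\le p$. The hypothesis gives $\pi\in\sG$ such that
\[p\Vdash\pi(\dot x_i)=\dot y_i\ \text{for all } i<n \quad\text{and}\quad \pi^{-1}(p')\parallel r.\]
The natural witness for $p''$ is $\sigma\pi$. Applying $\sigma$ to the forcing statement gives $\sigma(p)\Vdash\sigma\pi(\dot x_i)=\sigma(\dot y_i)$. Moreover,
\[(\sigma\pi)^{-1}(p'')=\pi^{-1}\sigma^{-1}(p'')=\pi^{-1}(p'),\]
which is compatible with $r$. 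So $\sigma\pi$ witnesses the required instance of $\psi$ at $p''$.

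One minor point to check: in the formula $\psi$ as written, the forcing clause is attached to $p$ rather than $p'$, and the proof above preserves this shape exactly. The only real care needed is bookkeeping, namely recognising that $r$ and the $\dot x_i$ enter only through check names and so are untouched by $\sigma$. I expect no genuine obstacle.
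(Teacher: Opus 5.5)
Your argument is correct. The paper states this as an easy observation without proof, and your witness $\sigma\pi$ (with $p'=\sigma^{-1}(p'')$) is the same witness-composition trick the paper uses for the analogous checks in Definition~\ref{def:quotsym} and Lemma~\ref{lem:homquotient}; it also goes through unchanged if the forcing clause in $\psi$ is read with $p'$ in place of $p$, since then $\sigma(p')=p''\Vdash \sigma\pi(\dot x_i)=\sigma(\dot y_i)$.
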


Note that the forcing $\mathbb{P}/\sS$ depends on the particular system $\sS$ and not just on its $\cong_\SN$-equivalence class. For instance, for $\sS = (\mathbb{P}, \{\id\}, \{\{ \id\}\})$, $\mathbb{P}/\sS$ is a trivial forcing, as we would expect but for $\sS = (\mathbb{P}, \Aut(\mathbb{P}), \langle \{ \id \} \rangle)$ the quotient $\mathbb{P}/\sS$ is a big lottery sum of trivial forcings, more specifically $\bigoplus_{\pi \in \Aut(\mathbb{P})^V} \pi``G$, where $G$ is the generic filter (so a trivial forcing on its own). 

This is somewhat of a drawback as we cannot generally guarantee that $\sS * \dot{\mathbb{P}/{\sS}}$ is strongly equivalent to $\mathbb{P}$, something that does hold for quotients of forcing notions. This is also not just due to our specific definition of $\mathbb{P}/\sS$. For a more general example, consider any homogeneous minimal forcing $\mathbb{P}$, such as Sacks forcing. Let $\sS = (\mathbb{P}, \Aut(\mathbb{P}),\{\Aut(\mathbb{P})\})$. Then $\sS$ does not add any new sets, so in order for $\sS * \dot{\mathbb{P}/\sS}$ to generate a $\mathbb{P}$-extension, $\dot{\mathbb{P}/\sS}$ must be a name for a non-trivial forcing notion. But then surely $\mathbb{P}$ and $\mathbb{P} * \dot{\mathbb{P}/\sS}$ are not even weakly equivalent.

\begin{lemma}\label{lem:homquotient}
    $\Vdash_\sS \dot{\mathbb{P}/\sS}$ is weakly homogeneous.
\end{lemma}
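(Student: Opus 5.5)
The plan is to exhibit, in $V[G]_\sS$, for any two conditions $(r_0,a_0),(r_1,a_1)$ of $\mathbb{P}/\sS$ (here $\mathbb{Q}=\mathbb{P}$, or more generally $\mathbb{Q}$ with $\mathbb{P}\lessdot_\sS\mathbb{Q}$), an automorphism $\tau$ of the quotient with $\tau(r_0,a_0)\parallel(r_1,a_1)$. The natural candidates are the maps induced by $\pi\in\sG^V$: send $(r,a)$ to $(\pi(r), \pi\cdot a)$, where $\pi\cdot a=\{(\pi(\dot x),\dot y):(\dot x,\dot y)\in a\}$. Since $\sR(\sS)$ is closed under $\sG$, $\pi\cdot a$ is again a finite set of pairs from $\sR(\sS)$. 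Moreover $\sG^V$ lies in $V\subseteq V[G]_\sS$, so this family of maps is available inside the symmetric extension. The intuition is that a condition $(r,a)$ says ``the generic over $\mathbb{Q}$ contains $r$ and is obtained from $G$ by an automorphism $\pi$ with $\pi(\dot x)^G=\dot y^G$ for $(\dot x,\dot y)\in a$'', so composing with $\pi$ permutes these possibilities.

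First, I will check via $\psi$ that membership is respected. Suppose $p\in G$ witnesses $(r,a)\in\mathbb{P}/\sS$, and $\sigma\in\sG$. The $\pi$ witnessing $\psi(p,r,a)$ for $p'\leq p$ should be replaced by $\pi\sigma^{-1}$: the requirement $p\Vdash \pi(\dot x_i)=\dot y_i$ becomes $p\Vdash \pi\sigma^{-1}(\sigma\dot x_i)=\dot y_i$, and $\pi^{-1}(p')\parallel r$ becomes $(\pi\sigma^{-1})^{-1}(p')=\sigma\pi^{-1}(p')\parallel \sigma(r)$. Thus $\psi(p,\sigma r,\sigma\cdot a)$ holds with the same $p$. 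Here I use the extended action of $\sG$ on $\mathbb{Q}$, which I will either assume or handle by restricting to $\mathbb{Q}=\mathbb{P}$. The induced map is order preserving, its inverse is induced by $\sigma^{-1}$, and so it is an automorphism of $\mathbb{P}/\sS$ in $V[G]_\sS$.

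Second, and this is the main step, given $(r_0,a_0),(r_1,a_1)$ in the quotient, I find $\sigma$ making them compatible. I will pick $p\in G$ forcing both memberships and apply $\psi$ with $p'=p$ to each, obtaining $\pi_0,\pi_1$ with $p\Vdash\pi_j(\dot x)=\dot y$ for $(\dot x,\dot y)\in a_j$ and $\pi_j^{-1}(p)\parallel r_j$. Set $\sigma=\pi_1^{-1}\pi_0$. Then $\sigma\cdot a_0$ consists of pairs $(\pi_1^{-1}\pi_0\dot x,\dot y)$, and $p\Vdash \pi_1(\pi_1^{-1}\pi_0\dot x)=\dot y$. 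So $\pi_1$ is a uniform candidate witness for the combined condition $(\sigma r_0\wedge r_1,\ \sigma\cdot a_0\cup a_1)$.

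The obstacle is that $\sigma r_0$ and $r_1$ need a common extension $r$ for which $\psi$ holds below some $p\in G$ for all $p'$, not just for $p'=p$. I would argue by density: let $D$ be the set of $q\leq p$ in $\mathbb{P}$ that decide whether some $r$ and some $\pi$ work uniformly below $q$. This set is symmetric, with the symmetry coming from the finitely many names in $a_0,a_1$ lying in $\HS$, so $G$ meets it. Then I would use an $H$-reduction argument via Lemma~\ref{lem:Hred} to choose, for each $p'\leq q$, an automorphism $\rho$ fixing the relevant names with $\rho^{-1}(p')$ compatible with both $\pi_0^{-1}(p)$-translates. If this is delicate, I will instead use the weaker formulation of weak homogeneity: for each pair there is an automorphism making them compatible, obtained by a density argument below the given conditions, which is all the lemma requires.
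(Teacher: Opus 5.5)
Your first step is correct and matches the paper's own check: $\sigma\in\sG$ induces $(r,a)\mapsto(\sigma(r),\sigma\cdot a)$, with the witness $\pi$ replaced by $\pi\sigma^{-1}$. Your choice $\sigma=\pi_1^{-1}\pi_0$ is also the paper's. The gap is in the compatibility step. Applying $\psi$ only with $p'=p$ gives you $\pi_j^{-1}(p)\parallel r_j$. So $\pi_1^{-1}(p)=\sigma(\pi_0^{-1}(p))$ is compatible with $\sigma(r_0)$ and with $r_1$ \emph{separately}. That gives no common extension of $\sigma(r_0)$ and $r_1$, let alone one satisfying the clause $\forall p'\le p$ of $\psi$ with witnesses respecting $\sigma\cdot a_0\cup a_1$. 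Your proposed repair is not an argument: the set $D$ is left unspecified, and the $H$-reductions of Lemma~\ref{lem:Hred} concern $\mathbb{P}\lessdot_\sS\mathbb{Q}$ and play no role when $\mathbb{Q}=\mathbb{P}$. The fallback is circular, since ``for each pair there is an automorphism making them compatible'' is the definition of weak homogeneity, not a weaker form of it.

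The missing idea is to upgrade the witnesses from compatibility to extension before choosing $\sigma$. Let $E_j$ be the set of $q\in\mathbb{P}$ for which some $\pi\in\sG$ satisfies $q\Vdash\pi(\dot x)=\dot y$ for all pairs $(\dot x,\dot y)$ in the name for $a_j$, and $\pi^{-1}(q)\le r_j$.

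\begin{itemize}
\item $E_j$ is open and dense below $p_j$. Given $p'\le p_j$, take $\pi$ from $\psi$ with $\pi^{-1}(p')\parallel r_j$ and a common extension $s\in\mathbb{P}$; then $\pi(s)\le p'$ lies in $E_j$. This uses $s\in\mathbb{P}$, i.e.\ $\mathbb{Q}=\mathbb{P}$, so you should drop your generalisation to arbitrary $\mathbb{Q}$.
\item $E_j$ is invariant under $\bigcap\sym(\dot y)$: if $q\in E_j$ via $\pi$, then $\rho(q)\in E_j$ via $\rho\pi$. So $E_j\cup\{q:q\perp E_j\}$ is symmetrically dense, and since $p_j\in G$, even a merely $\sS$-generic $G$ meets $E_j$.
\item Take $p\in G$ below $p_0,p_1$ and below points of $G\cap E_0$ and $G\cap E_1$, with witnesses $\pi_0,\pi_1$. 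Then $\pi_j^{-1}(p)\le r_j$, so $r:=\pi_1^{-1}(p)=\sigma(\pi_0^{-1}(p))$ lies below both $\sigma(r_0)$ and $r_1$.
\item The condition $(r,\sigma\cdot a_0\cup a_1)$ belongs to $\mathbb{P}/\sS$ via $p$, with the single witness $\pi_1$ for every $p'\le p$. Indeed $\pi_1^{-1}(p')\le r$, $p\Vdash\pi_1(\sigma(\dot x))=\pi_0(\dot x)=\dot y$ for pairs from $a_0$, and $p\Vdash\pi_1(\dot x)=\dot y$ for pairs from $a_1$.
\end{itemize}

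This condition extends both $\sigma(r_0,a_0)$ and $(r_1,a_1)$. The paper does this upgrade once and for all: it replaces $\psi$ in the definition of the name by the forcing-equivalent formula $\exists\pi\in\sG\,(p\Vdash\forall i<n\,(\pi(\dot x_i)=\dot y_i)\wedge\pi^{-1}(p)\le r)$.
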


\begin{proof}
First note that using a density argument we can replace $\psi$ in Definition~\ref{def:quotforce} by \[\exists \pi \in \sG (p \Vdash \forall i < n(\pi(\dot x_i) = \dot y_i) \wedge \pi^{-1}(p) \leq r)\] and obtain an $\sS$-name for the same object. This is the definition of the name $\dot{\mathbb{P}/\sS}$ that we will use for this proof.

    Let $G$ be $\sS$-generic over $V$. In $V[G]_\sS$, let $(r_0,a_0), (r_1,a_1) \in \mathbb{P}/\sS$ be arbitrary. Then there are $p_0, p_1 \in G$, $\pi_0, \pi_1\in \sG$ and $(p_0, (\check r_0 ,\dot a_0)^\bullet), (p_1,(\check r_1, \dot a_1)^\bullet) \in \dot{\mathbb{P}/\sS}$, witnessed by $\pi_0\in \sG$ and $\pi_1 \in \sG$ respectively, so that $\dot a_0^G = a_0$, $\dot a_1^G = a_1$. Let $\tau = \pi_1^{-1} \circ \pi_0$. We define an automorphism $\sigma$ of $\mathbb{P}/\sS$ such that \[\sigma(r,b) = (\tau (r), \{ (\tau(\dot x), y) : (\dot x, y) \in b \} ),\] for any $(r,b) \in \mathbb{P}/\sS$. We need to check that $\sigma(r,b) \in \mathbb{P}/\sS$. Once we have shown this, it is obvious, from the definition of $\leq$ and the fact that $\sigma$ has an inverse, that $\sigma$ is an automorphism of $\mathbb{P}/\sS$. So let $p \in G$, $(p, (\check{r}, \dot b)^\bullet) \in \dot{\mathbb{P}/\sS}$, where $\dot b^G = b$, and let $\pi$ be a witness as before. Let $\pi' = \pi \circ \tau^{-1}$. As $\pi^{-1}(p) \leq r$, $\pi'^{-1}(p) = \tau \circ \pi^{-1}(p)\leq \tau(r)$. Furthermore, for every $((\dot x)\check{}, \dot y)$ appearing in $\dot b$, \[p \Vdash \dot y = \pi(\dot x) = \pi \circ \tau^{-1} \circ \tau (\dot x) = \pi'(\tau(\dot x)).\] Thus \[(p, ({\tau(r)}\check{}, \{ (\tau(\dot x)\check{}, \dot y) : ((\dot x)\check{}, \dot y) \text{ appears in } \dot b \})) \in \dot{\mathbb{P}/\sS}\] and indeed $\sigma(r,b) \in \mathbb{P}/\sS$. Finally, note that $\sigma(r_0, a_0) \parallel (r_1, a_1)$. Namely, if $p \leq p_0, p_1$ and since $\pi_0^{-1}(p) \leq r_0$ and $\pi_1^{-1}(p) \leq r_1$, we have that $\pi_1^{-1}(p) \leq \tau(r_0), r_1$. If we let $r = \pi_1^{-1}(p)$, it is easy, similarly to before, to check that \[(p,(\check{r}, \dot a_1 \cup \{ (\tau(\dot x)\check{}, \dot y) : ((\dot x)\check{}, \dot y) \text{ appears in } \dot a_0 \})^\bullet) \in \dot{\mathbb{P}/\sS},\] as witnessed by $\pi_1$. 
\end{proof}

$\mathbb{P}/\sS$ is homogeneous no matter how inhomogeneous $\mathbb{P}$ is, which might seem odd at first. But consider for instant the case where $\mathbb{P}$ is completely rigid. Then any symmetric extension based on $\mathbb{P}$ is just a forcing extension by $\mathbb{P}$ and an appropriate notion of a quotient forcing, i.e., a forcing notion that lets us pass from this symmetric extension to a full generic extension, is expected to be trivial since these extensions agree. And indeed it is not hard to see that in such a case $\mathbb{P}/\sS$ is directed. In a sense, the symmetric extension knows the generic on the non-homogeneous parts of $\mathbb{P}$ already and the only information we need to pass to the full forcing extension is the generic on homogeneous parts of $\mathbb{P}$. 

\begin{lemma}\label{lem:canondense}
             Let $\dot D \in V$ be an $\sS$-name for a dense subset of $\mathbb{Q}/{\sS}$ and $q \in \mathbb{Q}$ be arbitrary. Then there is $q' \leq q$, $r \geq q'$ and $X \in [\sR(\sS)]^{<\omega}$ so that  \[q' \Vdash (\check r, \{ ((\dot x)\check{}, \dot x) : \dot x \in X\}) \in \dot D.\] 
\end{lemma}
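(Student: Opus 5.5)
The plan is to argue via a $\mathbb{Q}$-generic filter and then pull the conclusion back to $V$ with the forcing theorem. Here $\Vdash$ is read as forcing over $\mathbb{Q}$, with $\dot D$ viewed as a $\mathbb{Q}$-name via $\mathbb{P}\subseteq\mathbb{Q}$. Let $E$ be the set of $q'\in\mathbb{Q}$ for which some $r\geq q'$ and some $X\in[\sR(\sS)]^{<\omega}$ satisfy $q'\Vdash(\check r,\{((\dot x)\check{},\dot x):\dot x\in X\})\in\dot D$. It suffices to show that $E$ is dense in $\mathbb{Q}$.

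\emph{The diagonal filter.} Fix a $\mathbb{Q}$-generic $G$ over $V$ and put $G_0=G\cap\mathbb{P}$. By Lemma~\ref{lem:completesubgen} applied to $\mathbb{P}\lessdot_\sS\mathbb{Q}$, $G_0$ is $\sS$-generic, and $D:=\dot D^{G_0}$ is a dense subset of $\mathbb{Q}/\sS$ in $V[G_0]_\sS$. Let
\[K=\{(s,\{(\check{\dot x},\dot x^{G_0}):\dot x\in X\}) : s\in G,\ X\in[\sR(\sS)]^{<\omega}\}.\]
I first check that $K\subseteq\mathbb{Q}/\sS$. Given $s\in G$ and $X$, set $H=\bigcap_{\dot x\in X}\sym(\dot x)\in\sF$. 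The ``moreover'' part of Lemma~\ref{lem:Hred} gives an $H$-reduction $p\in G_0$ of $s$. For any $p'\leq p$ there is then $\pi\in H$ with $\pi(p')\parallel s$; hence $\pi^{-1}\in H$ witnesses $\psi(p,s,(\dot x,\dot x)_{\dot x\in X})$, since $\pi^{-1}$ fixes each $\dot x$. So $(p,\ldots)\in\dot{\mathbb{Q}/\sS}$ and the pair lies in $\mathbb{Q}/\sS$. Moreover, $K$ is directed, because $G$ is a filter and unions of the finite diagonal sets stay diagonal.

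\emph{Main step: $K\cap D\neq\emptyset$.} This is the step I expect to be the obstacle. Take $p\in G_0$ and $H'\leq\sym(\dot D)$ in $\sF$. By density, some $(r,a)\in D$ extends an element $(s,\mathrm{diag}_X)\in K$. Writing out $a$ via Definition~\ref{def:quotforce}, there are $p_1\in G_0$ and $\pi\in\sG$ with $p_1\Vdash\pi(\dot x_i)=\dot y_i$ for every pair $(\check{\dot x}_i,\dot y_i)$ in $a$, and $\pi^{-1}(p')\parallel r$ for every $p'\leq p_1$.

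I will then use the transport trick from the proof of Lemma~\ref{lem:homquotient}, with $\pi$ in the role of $\tau$, to turn $(r,a)$ into a diagonal condition. Replacing $r$ by a condition below $\pi^{-1}(p')$ and relabelling the $\dot x_i$ as $\pi(\dot x_i)$, one can arrange $\dot y_i=\dot x_i$. Two things must be controlled along the way: the relabelled names must lie in $\sR(\sS)$, which holds because $\sR(\sS)$ is closed under $\sG$; and $\dot D$ must be preserved, so $\pi$ has to be taken in $\sym(\dot D)$. To get the latter, I will choose the reductions with respect to $H'\cap\sym(\dot D)$ rather than $H$ alone. This should yield a condition $(r',\mathrm{diag}_{X'})\in D$ with $r'\in G$, i.e.\ an element of $K\cap D$.

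\emph{Pulling back to $V$.} Once $(r',\mathrm{diag}_{X'})\in K\cap D$ with $r'\in G$, the forcing theorem gives some $q''\in G$ with $q''\Vdash(\check r',\mathrm{diag}_{X'})\in\dot D$. Taking $q'\leq q'',r'$ in $G$ gives $q'\in E$ with $r'\geq q'$. Since $G$ was an arbitrary generic filter, $E$ meets every generic filter, so $E$ is dense. Choosing $G$ through a given $q$ then produces $q'\leq q$ in $E$, as the statement requires.
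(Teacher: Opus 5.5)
Your outer shell is sound, but the main step $K\cap\dot D^{G_0}\neq\emptyset$ does not go through, and that step is the entire content of the lemma. The sound parts: $E$ is open, so meeting every generic filter gives density; $K\subseteq\mathbb{Q}/\sS$ via $H$-reductions is the first paragraph of the proof of Proposition~\ref{prop:thirdquotient}; and the pull-back is routine.

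\emph{Why the transport fails.} The witness $\pi$ is not under your control. It is whatever witnesses $\psi(p_1,r,(\dot x_i,\dot y_i)_{i<n})$ for the element $(r,a)\in D$ that density hands you (and in $\psi$ it even depends on $p'$). Choosing the reductions for elements of $K$ relative to $H'\cap\sym(\dot D)$ only affects the witnesses for $K$, not this $\pi$, so there is no reason for $\pi\in\sym(\dot D)$. From $a\supseteq\mathrm{diag}_X$ you learn only that, for $\dot x\in X$, $\pi(\dot x)$ and $\dot x$ are forced equal by some condition of $G_0$. That is a statement about respect-diagrams, not symmetry groups. Moreover, transporting by $\pi$ relocates the $\mathbb{P}$-side to $\pi^{-1}(p_1)$, which need not lie in $G$, so you cannot expect to land in $K$ for the fixed $G$. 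Finally, $\sG$ does not act on $\mathbb{Q}$, so the map $(r,b)\mapsto(\tau(r),\dots)$ from Lemma~\ref{lem:homquotient} is unavailable when $\mathbb{Q}\neq\mathbb{P}$.

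\emph{The missing idea.} You never use clause~(1) of the definition of a respect-basis, only closure under $\sG$, and with closure alone the statement is false. Take $\mathbb{Q}=\mathbb{P}=2^{<\omega}$, $\sG=\Aut(\mathbb{P})$, $\sF$ generated by $\{\id\}$, and $\sR=\emptyset$. Then $\{(r,\emptyset):r\notin G\}$ is dense in $\mathbb{Q}/\sS$, yet no $q'\le r$ forces $(\check r,\emptyset)$ into it.

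\emph{The paper's argument.} Use clause~(1) to shrink $q$ and obtain $\dot x_0\in\sR(\sS)$ with $R_\sS(\dot x_0)\subseteq R_\sS(\dot D)$, after replacing $\dot D$ by a name forced equal to it below $q$. Then build this tag into the starting condition: $s=(q,\{(\dot x_0,\dot x_0^{G_0})\})$, which lies in $\mathbb{Q}/\sS$ via a $\sym(\dot x_0)$-reduction of $q$ in $G_0$. Now take any $t=(r,\{(\dot x_i,\dot y_i^{G_0})\}_{i<n})\le s$ in $\dot D^{G_0}$. Let $p\in G_0$ witness this with $p\Vdash\dot t\le\dot s\wedge\dot t\in\dot D$, and let $\pi$ satisfy $\pi^{-1}(p)\parallel r$ and $p\Vdash\pi(\dot x_i)=\dot y_i$. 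Because $t$ contains the tag, $p\Vdash\pi(\dot x_0)=\dot x_0$, hence $(\pi^{-1}(p),\pi^{-1})\in R_\sS(\dot x_0)\subseteq R_\sS(\dot D)$. Apply $\pi^{-1}$ to the forcing statement and take $q'\le\pi^{-1}(p),r$. This yields $q'\Vdash(\check r,\{((\dot x_i)\check{},\dot x_i):i<n\})\in\dot D$ with $q'\le r\le q$.

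This $q'$ need not lie in $G$. The conclusion is a density statement in $V$, and $K\cap D\neq\emptyset$ is derived from the lemma afterwards, not used to prove it.
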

\begin{proof}
     Let $G$ be any $\mathbb{Q}$-generic over $V$ with $q \in G$ and let $G_0 = G \cap \mathbb{P}$. By extending $q$, without loss of generality, we may find $\dot x_0 \in \sR(\sS)$ and $\dot D' \in \HS$ such that $q \Vdash \dot D = \dot D'$ and $\sR_{\sS}(\dot x_0) \subseteq \sR_{\sS}(\dot D')$. For simplicity we may then also simply assume that $\dot D = \dot D'$. By Lemma~\ref{lem:Hred} there is a $\sym(\dot x_0)$-reduction $p'$ of $q$ in $G_0$. Then note that $\psi(p', q, (\dot x_0, \dot x_0))$, so \[(p', (\check q ,\{((\dot x_0)\check{}, \dot x_0 )\})^\bullet) \in \dot{\mathbb{Q}/{\sS}}.\]  Thus, in $V[G_0]_{\sS}$, we can consider the condition $s = (q, \{(\dot x_0, \dot x_0^{G_0})\} ) \in \mathbb{Q}/\sS$. Since $\dot D^{G_0}$ is dense, there is $t \leq s$ in $\dot D^{G_0}$. According to the definition of $\dot{\mathbb{Q}/\sS}$, there is $p \in G_0, r \in \mathbb{Q}$, $\dot x_i, \dot y_i \in \sR(\sS)$, $i < n$, where $\dot y_0 = \dot x_0$, such that $t = (r, \{ (\dot x_i, \dot y_i^{G}) : i < n \})$ and $\psi(p,r,(\dot x_i, \dot y_i)_{i <n}))$. Moreover, we can assume wlog that $p \Vdash \dot t \leq \dot s \wedge \dot t \in \dot D$, where \[\dot t = (\check{r}, \{ ((\dot x_i)\check{}, \dot y_i) : i < n \})^\bullet\] and \[\dot s = (\check{q}, \{((\dot x_0)\check{}, \dot x_0)\} )^\bullet.\] 
    According to $\psi$, there is some $\pi \in \sG$ such that $\pi^{-1}(p) \parallel r$ and $p \Vdash \pi(\dot x_i) = \dot y_i$, for every $i < n$. In particular, $p \Vdash \pi(\dot x_0) = \dot x_0$. Let $q' \leq \pi^{-1}(p), r$ in $\mathbb{Q}$. By the symmetry lemma, $q' \Vdash \dot x_0 = \pi^{-1}(\dot x_0)$ and more generally $q' \Vdash \dot x_i = \pi^{-1}(\dot y_i)$. Thus, $(q', \pi^{-1}) \in R_{\sS}(\dot D) \supseteq R_{\sS}(\dot x_0)$ and $q' \Vdash \dot D = \pi^{-1}(\dot D).$ All in all, \[ q' \Vdash \pi^{-1}(\dot s) \geq \pi^{-1}(\dot t) \in \dot D,\] and since $\pi^{-1}(\dot t) = (\check{r}, \{ ((\dot x_i)\check{}, \pi^{-1}(\dot y_i)) : i < n \})^\bullet$, \[q' \Vdash \pi^{-1}(\dot t) = (\check{r}, \{ ((\dot x_i)\check{}, \dot x_i) : i < n \}) \in \dot D.\]
    Since $q' \leq r \leq q$ this finishes the proof of our claim. 
\end{proof}
\begin{cor}
    Let $G$ be $\mathbb{P}$-generic, then $K=G\times[\{(\dot x,\dot x^G)):x\in\sR(\sS)\}]^{<\omega}$ is $\mathbb{P}/\sS$-generic over $V[G]_\sS$, and $V[G]_\sS[K]=V[G].$
\end{cor}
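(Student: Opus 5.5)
The corollary is the case $\mathbb{Q}=\mathbb{P}$ of the preceding machinery, and I read $K$ as the set of conditions $(r,a)$ with $r\in G$ and $a$ a finite subset of $\{(\dot x,\dot x^G):\dot x\in\sR(\sS)\}$. Note $\mathbb{P}\lessdot_\sS\mathbb{P}$ holds trivially. The plan has three steps: show $K\subseteq\mathbb{P}/\sS$ and that $K$ is a filter, show $K$ meets every dense set in $V[G]_\sS$ using Lemma~\ref{lem:canondense}, and then compute $V[G]_\sS[K]$.

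\textbf{Membership and filter.} Let $r\in G$ and $X\in[\sR(\sS)]^{<\omega}$. Take $p\in G$ with $p\le r$. Then $\psi(p,r,(\dot x,\dot x)_{\dot x\in X})$ holds, witnessed by $\pi=\id$ for every $p'\le p$, since $\id^{-1}(p')=p'\parallel r$. Hence $(p,(\check r,\{((\dot x)\check{},\dot x):\dot x\in X\})^\bullet)\in\dot{\mathbb{P}/\sS}$, and $(r,\{(\dot x,\dot x^G):\dot x\in X\})\in(\dot{\mathbb{P}/\sS})^G$. Here $G$ is $\mathbb{P}$-generic, hence $\sS$-generic, so evaluating at $G$ is legitimate. $K$ is upward closed because $G$ is upward closed and subsets of $a$ stay of the required form. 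Two elements of $K$ have a common extension $(r,a_0\cup a_1)$ with $r\le r_0,r_1$ chosen in $G$. Also $K\in V[G]$, which is all we need for the last step.

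\textbf{Genericity.} Let $D\in V[G]_\sS$ be dense in $\mathbb{P}/\sS$, and pick an $\sS$-name $\dot D\in V$ with $\Vdash_\sS$ ``$\dot D$ is dense''. This uses Lemma~\ref{lem:thetrick}: replace the name by $\mathbb{P}/\sS$ itself off the condition forcing density. Consider the set $E$ of $q'\in\mathbb{P}$ for which there are $r\ge q'$ and finite $X\subseteq\sR(\sS)$ with $q'\Vdash(\check r,\{((\dot x)\check{},\dot x):\dot x\in X\})\in\dot D$. By Lemma~\ref{lem:canondense}, applied with $\mathbb{Q}=\mathbb{P}$, $E$ is dense in $\mathbb{P}$, and $E\in V$. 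So some $q'\in G\cap E$. Since $r\ge q'$ we get $r\in G$, and the corresponding condition $(r,\{(\dot x,\dot x^G):\dot x\in X\})$ lies in $K\cap D$.

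One subtlety needs care: Lemma~\ref{lem:canondense} uses $\Vdash$ for $\mathbb{Q}$-names, while $D$ is interpreted in $V[G]_\sS$. Because $\dot D$ is an $\sS$-name and $\mathbb{P}$-generics are $\sS$-generic, this is consistent with Theorem~\ref{thm:forcingtheorem}. This is the main point to check, but it is routine.

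\textbf{Computing the model.} From the previous step $V[G]_\sS\subseteq V[G]_\sS[K]\subseteq V[G]$, since $K\in V[G]$ and $V[G]_\sS\subseteq V[G]$. Conversely, $G$ is the projection $\{r:\exists a\,(r,a)\in K\}$, which is definable from $K$. So $G\in V[G]_\sS[K]$, and $V\subseteq V[G]_\sS$ gives $V[G]\subseteq V[G]_\sS[K]$. Hence equality holds. The only genuine obstacle is the density argument, and it is already carried by Lemma~\ref{lem:canondense}.
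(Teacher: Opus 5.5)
Your proposal is correct and follows the paper's own argument: genericity comes from Lemma~\ref{lem:canondense} with $\mathbb{Q}=\mathbb{P}$, which produces a dense set $E\in V$ that $G$ must meet, and the equality of models follows from $K\in V[G]$ together with $G=\dom K$. You also check two things the paper leaves implicit in this corollary: that $K\subseteq\mathbb{P}/\sS$ is a filter, with $\psi$ witnessed by $\pi=\id$, and that via Lemma~\ref{lem:thetrick} you may assume $\dot D$ is forced to be dense.
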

\begin{proof}
    The genericity is an easy consequence of Lemma~\ref{lem:canondense}, and since $K$ is defined from $G$, it is easy to see that $V[G]_\sS[K]\subseteq V[G]$. In the other direction, note that $G=\dom K$, and so $G\in V[G]_\sS[K]$, and therefore $V[G]\subseteq V[G]_\sS[K]$ as wanted.
\end{proof}
In fact, more is true, as we will see in Proposition~\ref{prop:thirdquotient}.
\subsection{Quotients of systems}

\begin{definition}\label{def:quotsym}
    Let $\sS_0 \lessdot\sS_1$ where $\sS_0 = (\mathbb{P}, \sG, \sF)$ and $\sS_1 = (\mathbb{Q}, \sH, \sE)$. We let $\sH/\sS_0 = \{ \sigma \in \sH : \sigma \restriction \mathbb{P} \in \sG \}$ and $\sE/\sS_0 = \{ H \in \sE : H \leq \sH/\sS_0 \}$. Then we define an $\sS_0$-name $\dot{\sS_1 / \sS_0}$ for the system $(\mathbb{Q}/\sS_0, \sH/\sS_0, \sE/\sS_0)$, where $\sigma \in \sH/\sS_0$ acts on a condition by \[\sigma(r,\{ (\dot x_i, y_i) : i < n \}) = (\sigma(r), \{ (\sigma(\dot x_i), y_i) : i < n \}).\]

Note that this does define an automorphism of $\mathbb{Q} / \sS_0$. Whenever an instance $p' \leq p$ of $\psi(p,r, (\dot x_i, \dot y_i)_{i <n})$ is witnessed by $\pi \in \mathcal{G}$, then the same instance for $\psi(p,\sigma(r), (\sigma(\dot x_i), \dot y_i)_{i <n})$ is witnessed by $\pi \circ (\sigma^{-1} \restriction \mathbb{P}) \in \sG$.
\end{definition}

By item (2) of Definition~\ref{def:completesubsys}, $\sH/\sS_0 \in \sE$. Thus $\sS_1$ is strongly equivalent to $(\mathbb{Q}, \sH/\sS_0, \sE/\sS_0)$ and we may typically assume without loss of generality that already $\sH = \sH/\sS_0$ and so $\sE = \sE/\sS_0$. Passing to this subgroup of $\sH$ is really only a technical requirement for the definition of the quotient to make sense.\footnote{In fact, in an earlier draft of this paper, $\sH = \sH/\sS_0$ was an explicit requirement in Definition~\ref{def:completesubsys}, but it turned out to be inessential. Also, the results of Section~\ref{sec:characterising} are easier to formulate with this more generous definition.}

\begin{prop}\label{prop:thirdquotient}
Let $G$ be $\sS_1$-generic over $V$. Then letting \[K=G \times [\{ (\dot x, \dot x^G) : \dot x \in \sR(\sS_0)  \}]^{<\omega},\] $K$ is $\sS_1/\sS_0$-generic over $V[G_0]_{\sS_0}$, where $G_0 = G \cap \mathbb{P}$.
\end{prop}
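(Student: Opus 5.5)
We will mirror the proof of Lemma~\ref{lem:canondense} and its corollary, replacing full $\mathbb{Q}$-genericity by $\sS_1$-genericity and taking into account the symmetries in $\sH/\sS_0$. We may assume $\sH=\sH/\sS_0$ and $\sE=\sE/\sS_0$. By Lemma~\ref{lem:completesubgen}, $G_0$ is $\sS_0$-generic and $V[G_0]_{\sS_0}\subseteq V[G]_{\sS_1}$.

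\textbf{Step 1: $K$ is a filter on $\mathbb{Q}/\sS_0$.} Fix $r\in G$ and $X\in[\sR(\sS_0)]^{<\omega}$. We must show that $(r,\{(\dot x,\dot x^{G_0}):\dot x\in X\})\in\mathbb{Q}/\sS_0$. Let $H=\bigcap_{\dot x\in X}\sym(\dot x)\in\sF$. By Lemma~\ref{lem:Hred}, $r$ has an $H$-reduction $p\in G_0$. Then $\psi(p,r,(\dot x,\dot x)_{\dot x\in X})$ holds: for $p'\le p$ pick $\pi\in H$ with $\pi(p')\parallel r$. Then $\pi^{-1}$ fixes each $\dot x$, so this witnesses the required instance after renaming $\pi\leftrightarrow\pi^{-1}$. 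Hence the canonical name for the condition lies in $\dot{\mathbb{Q}/\sS_0}$ with $p\in G_0$. Directedness is clear, since $G$ is a filter and finite sets are closed under unions. Upward closure is either taken as part of the definition of $K$, or we simply replace $K$ by its upward closure.

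\textbf{Step 2: Genericity.} Let $D\in V[G_0]_{\sS_0}$ be symmetrically dense in $\mathbb{Q}/\sS_0$, with witness $L\in\sE/\sS_0$. Choose $\sS_0$-names $\dot D,\dot L$ forced, via Lemma~\ref{lem:thetrick}, to be such that $\dot D$ is dense and $\dot L$-invariant. Put
\[E=\{q'\in\mathbb{Q}: \exists r\ge q'\,\exists X\in[\sR(\sS_0)]^{<\omega}\ (q'\Vdash(\check r,\{((\dot x)\check{},\dot x):\dot x\in X\})\in\dot D)\}.\]
Lemma~\ref{lem:canondense} shows that $E$ is dense in $\mathbb{Q}$. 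Its proof goes through verbatim for $\mathbb{P}\lessdot_{\sS_0}\mathbb{Q}$, since it only uses $H$-reductions and respect-bases, and it runs in $V$ by the forcing theorem.

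If $E$ is $\sS_1$-symmetric, then $G\cap E\ne\emptyset$. A witness $q'\in G$ with its $r\ge q'$ gives $r\in G$. Since $q'\in G$ also forces the membership, and the evaluation of the forced statement under $G$ is correct, the condition $(r,\{(\dot x,\dot x^{G}):\dot x\in X\})\in K$ lies in $D$. To conclude that the name is evaluated correctly we need that $\Delta_0$ statements about $\sS_0$-names forced by $q'\in G$ hold in $V[G]_{\sS_1}$. This is Theorem~\ref{thm:forcingtheorem} applied to $\sS_1$, because $\sS_0$-names are $\sS_1$-names.

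\textbf{Step 3, the main obstacle: symmetry of $E$.} Take $K_0\in\sE$ with $K_0\restriction\mathbb{P}\le\sym(\dot D)\cap\sym(\dot L)$, as provided by (2) of Definition~\ref{def:completesubsys}. For $\sigma\in K_0\cap L$, let $\sigma_0=\sigma\restriction\mathbb{P}\in\sG$. From $q'\Vdash(\check r,\{(\check{\dot x},\dot x)\})\in\dot D$ we obtain $\sigma(q')\Vdash(\check r,\{(\check{\dot x},\sigma_0(\dot x))\})\in\sigma_0(\dot D)=\dot D$. Since $\sigma\in L$ and $\dot D$ is $L$-invariant, applying $\sigma$ in the quotient sense maps $(r,\{(\dot x,y)\})$ to $(\sigma(r),\{(\sigma(\dot x),y)\})$. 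Thus $\sigma(q')$ forces $(\sigma(r),\{(\sigma(\dot x)\check{},\sigma(\dot x))\})\in\dot D$, and this uses that $\sR(\sS_0)$ is closed under $\sG$. Hence $\sigma``E\subseteq E$, and symmetrically for $\sigma^{-1}$.

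The delicate point is that the quotient action of $\sigma$ must be defined uniformly in $V$ through the name $\dot L$. This is where we may need $L\in V$, or else we must intersect with $\sym(\dot L)$ and use that $\dot\sigma$ is a check name. We will handle this by restricting to $\sigma$ with $p\Vdash\check\sigma\in\dot L$ for some $p\in G_0$, and by running the density argument below such a $p$.
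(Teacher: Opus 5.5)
Your proposal is correct and follows the paper's proof essentially step by step. You use $H$-reductions to show $K\subseteq\mathbb{Q}/\sS_0$, then the set $E$ (the paper's $\tilde D$), which is dense by Lemma~\ref{lem:canondense}, $\sS_1$-symmetric under a group in $\sE$, and therefore met by $G$.

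The ``delicate point'' at the end is a non-issue. As you yourself wrote, $L\in\sE/\sS_0\subseteq\sE$ is a ground-model group, so you can take $\dot L=\check L$ and use Lemma~\ref{lem:thetrick} to get $\Vdash\forall\sigma\in\check L\,(\sigma``\dot D=\dot D)$, exactly as the paper does. You should drop the fallback of restricting to those $\sigma$ with $p\Vdash\check\sigma\in\dot L$ for some $p\in G_0$, since that would make the symmetry group of $E$ depend on $G_0$ rather than lie in $V$.
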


\begin{proof}
    Let us first check that $K \subseteq \mathbb{Q}/{\sS_0}$. Let $(r,\{ (\dot x_i, \dot x_i^G) : i < n \}) \in K$ be arbitrary and consider $H = \bigcap_{i < n} \sym(\dot x_i)$. By Lemma~\ref{lem:Hred} there is an $H$-reduction $p$ of $r$ in $G_0$. Then it suffices to note that $(p, (\check r ,\{((\dot x_i)\check{}, \dot x_i ) : i < n\})^\bullet) \in \dot{\mathbb{Q}/{\sS_0}}$. $K$ is also clearly a filter. 
    
    Now let $\dot D \in V$ be an $\sS_0$-name for a symmetrically dense subset of $\mathbb{Q}/{\sS_0}$. Without loss of generality we may assume that there is $H \in \sE$ such that $H \restriction \mathbb{P} \leq \sym(\dot D)$ and $\Vdash \forall \sigma \in \check{H} (\sigma`` \dot D = \dot D)$.\footnote{See (2) of Definition~\ref{def:completesubsys} and Lemma~\ref{lem:thetrick}. More specifically, picking any $H \in \sE$ such that $H \restriction \mathbb{P} \leq \sym(\dot D)$ and $\forall \sigma \in H (\sigma `` \dot D^G = \dot D^G)$, consider $\chi(\dot D, \check{H}) \equiv \forall \sigma \in \check H (\sigma `` \dot D = \dot D)$ and note for instance that $\Vdash \chi(\mathbb{Q} \dot /{\sS_0}, \check H)$.} Then note that if \[q \Vdash (\check r, \{ ((\dot x)\check{}, \dot x) : \dot x \in X\}) \in \dot D,\] then $\sigma(q) \Vdash (\check r, \{ ((\dot x)\check{}, \sigma(\dot x)) : \dot x \in X\}) \in \dot D$, for any $\sigma \in H$. But since $\sigma$ is also forced to fix $\dot D$ under the action defined in Definition~\ref{def:quotsym}, \[\sigma(q) \Vdash (\sigma(r)\check{}, \{ ((\sigma(\dot x))\check{}, \sigma(\dot x)) : \dot x \in X\}) \in \dot D.\] Thus the set \[\tilde D = \{ q \in \mathbb{Q} : \exists r \geq q,  X \in [\sR(\sS_0)]^{<\omega} ( q \Vdash (\check r, \{ ((\dot x)\check{}, \dot x) : \dot x \in X\}) \in \dot D)\}\] is $\sS_1$-symmetric and according to Lemma~\ref{lem:canondense} it is dense in $\mathbb{Q}$. As $G$ is $\sS_1$-generic, $G \cap \tilde D \neq \emptyset$ and followingly, $K \cap \dot D^{G_0} = K \cap \dot D^{G} \neq \emptyset$.
\end{proof}

\begin{thm}\label{prop:fullquotient}
Let $\sS_0\lessdot \sS_1$. Then $\sS_1$ and $\sS_0 * \dot{\sS_1/\sS_0}$ are weakly equivalent. Moreover: 

\begin{enumerate}
\item Whenever $G$ is $\sS_1$-generic over $V$, $K = G \times [\{ (\dot x, \dot x^G) : \dot x \in \sR(\sS_0)  \}]^{<\omega}$ and $G_0=G\cap\mathbb{P}$, then
     \begin{enumerate}
     \item $G_0$ is $\sS_0$-generic over $V$,
    \item $K$ is $\sS_1/\sS_0$-generic over $V[G_0]_{\sS_0}$,
    \item and $V[G]_{\sS_1} = V[G_0]_{\sS_0}[K]_{\sS_1/\sS_0}$,
    \end{enumerate}
 \item Whenever $K_0$ is $\sS_0$-generic over $V$ and $K$ is $\sS_1/\sS_0$-generic over $V[K_0]_{\sS_0}$, $G = \dom K$ and $G_0 = G \cap \mathbb{P}$, then
     \begin{enumerate}
        \item $G$ is $\sS_1$-generic over $V$,
    \item $V[K_0]_{\sS_0} = V[G_0]_{\sS_0}$, 
    \item $K = G \times [\{ (\dot x, \dot x^G) : \dot x \in \sR(\sS_0)  \}]^{<\omega}$,
    \item and $V[K_0]_{\sS_0}[K]_{\sS_1} = V[G]_{\sS_1}$.
    \end{enumerate}
\end{enumerate}
\end{thm}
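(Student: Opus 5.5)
Part (1)(a) is Lemma~\ref{lem:completesubgen}, and (1)(b) is Proposition~\ref{prop:thirdquotient}. So for part (1) the only real work is (1)(c), the equality of models.

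For the inclusion $V[G_0]_{\sS_0}[K]_{\sS_1/\sS_0} \subseteq V[G]_{\sS_1}$, I take an $\sS_0$-name $\dot\tau$ for an $\dot{\sS_1/\sS_0}$-name and translate it into a $\mathbb{Q}$-name $\dot\tau^*$, much as in the $]\cdot[$ construction of Theorem~\ref{lem:twostepfactor}. By recursion,
\[\dot\tau^* = \{(q, \dot z^*) : q \Vdash_{\mathbb{Q}} ((\check r, \{((\dot x_i)\check{}, \dot x_i)\}_i)^\bullet, \dot z)^\bullet \in \dot\tau,\ q \le r\}.\]
Here $q$ is read through the canonical generic $K$, which is definable from $G$. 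By Lemma~\ref{lem:canondense} it suffices to consider conditions of this canonical form. Since $K$ is defined from $G$, evaluation gives $(\dot\tau^{G_0})^K = (\dot\tau^*)^G$. For symmetry, suppose $\dot\tau$ is fixed by a group $H\in\sF$ and the inner names are forced to be fixed by some $E\in\sE/\sS_0$. Take $K'\in\sE$ with $K'\restriction\mathbb{P}\le H$ (clause (2) of Definition~\ref{def:completesubsys}) and intersect with $E$. The action $\sigma(r,\{(\dot x_i,y_i)\}) = (\sigma(r),\{(\sigma(\dot x_i),y_i)\})$ from Definition~\ref{def:quotsym} is arranged precisely so that such $\sigma$ fix $\dot\tau^*$, in the same way as in the symmetry computation in the proof of Proposition~\ref{prop:thirdquotient}. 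Hereditariness follows by induction on rank.

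For the inclusion $V[G]_{\sS_1}\subseteq V[G_0]_{\sS_0}[K]_{\sS_1/\sS_0}$, I use an $\sS_1$-name $\dot x$ with $E=\sym(\dot x)\in\sE$, which I may assume lies in $\sE/\sS_0$. In $V[G_0]_{\sS_0}$ I form the name $\dot x' = \{((r,\emptyset), \dot y') : (r,\dot y)\in\dot x\}$, in effect $\check{\dot x}$ with conditions lifted to $\mathbb{Q}/\sS_0$. This is a ground-model object, so it lies in $V[G_0]_{\sS_0}$. It is fixed by $E$ under the quotient action, since $\sigma(r,\emptyset) = (\sigma(r),\emptyset)$. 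Since $G=\dom K$, we get $\dot x'^K = \dot x^G$. The one thing to verify is that the pair $(r,\emptyset)$ really is a condition whenever $r\in G$. This follows from Lemma~\ref{lem:Hred} applied with the trivial list (any $r$ admits a $\sG$-reduction), as in the first paragraph of the proof of Proposition~\ref{prop:thirdquotient}.

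For part (2), I plan to deduce everything from part (1) together with a uniqueness argument, and this is the main obstacle. Let $G=\dom K$. First I show $G$ is $\sS_1$-generic, reversing the argument of Proposition~\ref{prop:thirdquotient}. Given a symmetrically dense $D\subseteq\mathbb{Q}$ fixed by $H\in\sE/\sS_0$, the set $\{(r,a)\in\mathbb{Q}/\sS_0 : r\in D\}$ is a symmetric dense subset of the quotient in $V[K_0]_{\sS_0}$. Density uses the definition of $\psi$: extend $r$ into $D$ and keep $a$. So $K$ meets it.

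Next I must show that $G_0=G\cap\mathbb{P}$ and $K_0$ give the same model, and that $K$ has the canonical form. The key claim is that if $(r,\{(\dot x_i,y_i)\})\in K$ then $y_i=\dot x_i^{G}$ up to a shift: by $\psi$ there is $\pi\in\sG$ with $K_0$-condition $p\Vdash\pi(\dot x_i)=\dot y_i$ and $\pi^{-1}(p)\parallel r$. Using genericity of $K$ and the homogeneity computation of Lemma~\ref{lem:homquotient}, I expect to find a single $\pi\in\sG$, given by directedness of $K$ and a density argument, such that $G_0=\pi^{-1}``K_0'$ for a generic $K_0'$ with $V[K_0']=V[K_0]$, and such that $y_i=\dot x_i^{G}$ for all entries. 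Applying $\pi$ to names preserves $\HS_{\sS_0}$, so $V[K_0]_{\sS_0}=V[G_0]_{\sS_0}$, giving (b). Once $y_i=\dot x_i^G$ is known, (c) follows. (d) is then (1)(c) applied to $G$. The delicate step is extracting this coherent $\pi$ from finitely many local witnesses. I would attempt it by arguing that the set of quotient conditions whose second coordinate already pins down $\dot x_0^{G}$ for the respect-basis element $\dot x_0$ controlling a given name is dense. This uses clause (1) of the respect-basis definition, after which one compares the evaluations.
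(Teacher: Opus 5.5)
Parts (1)(a), (1)(b), (2)(a) and (2)(d) are essentially fine, but (2)(b) and (2)(c) are not proved, and the mechanism you propose for them does not exist in general.

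\textbf{Part (1).} Your lift $\dot x'$ is a direct and somewhat simpler version of the paper's map $i$. You should keep only those $r$ with $(r,\emptyset)\in\mathbb{Q}/\sS_0$; for $r\in G$ this holds by Lemma~\ref{lem:Hred}. Your $\dot\tau^*$ is the paper's $i^*$ in disguise. There is one glossed point, which is repairable. You assume a single ground-model $E$ that is forced to fix $\dot\tau$ and all its inner names. In general the group of an inner name is only decided below some condition. This is why the paper works in the reduced iteration $\sS_0\star\dot{\sS_1/\sS_0}$ with $\mathbf E=\{\check H : H\in\sE\}$ and invokes Proposition~\ref{prop:reduced}.

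\textbf{Part (2)(a).} An arbitrary $r'\le r$ in $D$ need not make $(r',a)$ a condition. You have to take $r'$ below both $\pi^{-1}(p_1)$ and $r$, pass to a $\bigcap_i\sym(\dot x_i)$-reduction of $r'$, and use the $\sS_0$-genericity of $K_0$ to find a new witness in $K_0$. With that repair, your direct density argument is a legitimate alternative to the paper's route, which uses $\dot x_D=\{(q,\emptyset):q\in D\}$ together with $i$.

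\textbf{The gap: (2)(b) and (2)(c).} You leave these as an expectation, and there need not be any single $\pi$ relating $K_0$ and $G_0$. Take the example after Theorem~\ref{lem:twostepfactor}: $\sS_0=(\mathbb{C},\Aut(\mathbb{C}),\{\Aut(\mathbb{C})\})\lessdot\sS_1=(\mathbb{C},\{\id\},\{\{\id\}\})$.
\begin{itemize}
\item Here $V[K_0]_{\sS_0}=V$ for every $K_0$, and the quotient is essentially $\mathbb{C}$.
\item So you may let $K_0$ come from a Cohen real $c$, and $K$ from a Cohen real $d$ generic over $V[c]$.
\item Then $G_0$ is the filter of $d$. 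It is not $\pi``K_0$ for any $\pi\in\sG$, because $\pi``K_0\in V[c]$ while $d\notin V[c]$.
\end{itemize}
If instead you allow an auxiliary $K_0'$, the requirement $V[K_0']_{\sS_0}=V[K_0]_{\sS_0}$ is just (2)(b) again, so the argument is circular.

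\textbf{The missing idea: transfer to the two-step system over $V$.}
\begin{itemize}
\item The paper uses the names $i(\dot x)$ of $\sS_0\star\dot{\sS_1/\sS_0}$. These satisfy $i(\dot x)^{K_0*K}=\dot x^{\dom K}$ for every generic, and $(\pi,\id)(i(\dot x))=i(\dot x)$ for all $\pi\in\sG$.
\item Suppose $\pi$ witnesses $\psi(p,r,(\dot x_j,\dot y_j)_{j<n})$. Then $(\pi^{-1},\id)$ moves $(p,(\check r,\{((\dot x_j)\check{},\dot y_j):j<n\}))$ to a condition equivalent to the canonical one $(\pi^{-1}(p),(\check r,\{((\dot x_j)\check{},\dot x_j):j<n\}))$.
\item This canonical condition lies in the canonical generic from part (1), built from a $\mathbb{Q}$-generic through a common extension of $\pi^{-1}(p)$ and $r$. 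There $i(\dot x)$ evaluates to $\dot x^G$.
\item Hence no condition forces $\dot y_{j'}\neq i(\dot x_{j'})$. Likewise no condition forces $\pi(\dot x)\neq i(\dot x)$ for an $\sS_0$-name $\dot x$.
\item $K_0*K$ is generic for the two-step system (Theorem~\ref{lem:twostepfactor}). So the symmetric forcing theorem transfers these equalities to it.
\end{itemize}
This gives $y_j=\dot x_j^{G}$, which is (2)(c). It also gives, for each $\sS_0$-name $\dot x$, some $\pi\in\sG$ (depending on $\dot x$) with $\pi(\dot x)^{K_0}=\dot x^{G_0}$. Applied to the $\sG$-invariant names $\HS_{\alpha,\sS_0}^\bullet$, this yields (2)(b). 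No uniform $\pi$ is ever needed.
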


\begin{proof}
As mentioned earlier we may assume without any loss of generality that $\sH = \sH/\sS_0$ and thus also $\sE = \sE/\sS_0$. This will simplify the notation slightly.

First, let us consider the reduced iteration $\sS_0 \star  \dot{\sS_1/\sS_0} = \sS_0 \star_{(\mathbf{Q}, \mathbf{H}, \mathbf{E})} \dot{\sS_1/\sS_0}$, where $\mathbf{Q}$ is the set of closed names for elements of ${\mathbb{Q}/\sS_0}$, $\mathbf{H}= \{ \check \sigma : \sigma \in \sH \}$ and $\mathbf{E} = \{\check H : H \in \sE \}$. It is straightforward to check that $(\mathbf{Q}, \mathbf{H}, \mathbf{E})$ is a name-witness for $\dot{\sS_1/\sS_0}$. Then the forcing notions of $\sS_0 * \dot{\sS_1/\sS_0}$ and $\sS_0 \star  \dot{\sS_1/\sS_0}$ agree. We will use similar notation as lined out at the beginning of the proof of Proposition~\ref{prop:reduced}. For instance $(H_0, \check{H}_1)_\star$ will denote a group in the system $\sS_0 \star  \dot{\sS_1/\sS_0}$, subscripts of the form $\sS_0 \star  \dot{\sS_1/\sS_0}$ will typically be replaced by $\star$, etc.

We will now define certain maps $i \colon \HS_{\sS_1} \to \HS_\star$ and $i^* : \HS_\star \to \HS_{\sS_1}$. We recursively define \[i(\dot x) = \{ ((p, (\check{r}, \emptyset)^\bullet), i(\dot y)) : \psi(p, r, \emptyset) \wedge (r, \dot y) \in \dot x\},\] for any $\dot x \in \HS_{\sS_1}$. 
Then inductively,  for $(\pi, \check{\sigma}) \in (\sG, \sH)_\star$, $(\pi, \check{\sigma})(i(\dot x)) = i(\sigma(\dot x))$.
Thus if $H_1 \leq \sym_{\sS_1}(\dot x)$, then $(\sG, \check{H_1})_\star \leq \sym_\star(i(\dot x))$ and by induction $i(\dot x) \in \HS_{\star}$. Also, by induction note that in the context of case (1) of the Theorem, \begin{equation}\label{eq:1i}
    i(\dot x)^{G_0 * K} = \dot x^G \tag{$\alpha$}
\end{equation} and also in case (2), 
\begin{equation}\label{eq:2i}
    i(\dot x)^{K_0 * K} = \dot x^G. \tag{$\beta$}
\end{equation}

On the other hand, for $\dot x \in \HS_{\star}$, recursively  define
\[i^*(\dot x) = \{ (r, i^*(\dot y)) :  \exists ((p,\dot q), \dot y) \in \dot x ~\chi(r, p, \dot q) \},\] where \[\chi(r,p,\dot q) \leftrightarrow \exists p' \leq p, r \leq p' \exists X \in [\sR(\sS_0)]^{<\omega}  ( p' \Vdash  (\check{r}, \{((\dot z)\check{}, \dot z) : \dot z \in X \})^\bullet \leq \dot q).\]
Then note by induction that for $\sigma \in \sH$, $\sigma(i^*(\dot x)) = i^*((\sigma \restriction \mathbb{P}, \check{\sigma})(\dot x))$. 

If $(H_0, \check{H_1})_\star \leq \sym_\star(\dot x )$ and $H \in \sE$ is such that $H \restriction \mathbb{P} \leq H_0$, we thus have that $H \cap H_1 \leq \sym_{\sS_1}(i^*(\dot x))$. By induction we see that $i^*(\dot x) \in \HS_{\sS_1}$. As before, we defined $i^*(\dot x)$ in a way that in case (1), \begin{equation}\label{eq:1i*}
    \dot x^{G_0 * K} = i^*(\dot x)^G.\tag{$\gamma$}
\end{equation}

Let $\varphi$ be $\Delta_0$ and suppose that $\dot x \in \HS_{\sS_1}$ and $\Vdash_{\sS_1} \varphi(\dot x)$, but there is $p \in \mathbb{P}$, $r \in \mathbb{Q}$ and $(\dot x_j, \dot y_j)_{j <n}$ so that $\psi(p,r, (\dot x_j, \dot y_j)_{j <n})$ and \[(p,(\check{r}, \{ ((\dot x_j)\check{}, \dot y_j) : j < n \})) \Vdash_{\star} \neg\varphi(i(\dot x)).\] Let $\pi \in \sG$ be so that $\pi^{-1}(p) \parallel r$ and $p \Vdash \pi(\dot x_j) = \dot y_j$, for all $j <n$. Note that $\Vdash_{\star} (\pi^{-1},\id )(i(\dot x)) = i(\dot x)$. So by the symmetry lemma \[(\pi^{-1}(p),(\check{r}, \{ ((\dot x_j)\check{}, \dot x_j) : j < n \})) \Vdash_{\star} \neg\varphi(i(\dot x)).\]
If $G$ is any $\sS_1$-generic over $V$ containing $\pi^{-1}(p)$ and $r$, we have that $V[G]_{\sS_1} \models \varphi(\dot x^{G})$. Then if $G_0 * K$ is as in (1), it is $\sS_0 * \dot{\sS_1/\sS_0}$-generic by Proposition~\ref{prop:thirdquotient} and contains the condition above. By Proposition~\ref{prop:reduced} it is also $\sS_0 \star \dot{\sS_1/\sS_0}$-generic. By (\ref{eq:1i}), $\dot x^G = i(\dot x)^{G_0 * K}$, yielding a contradiction. 

Now suppose that $\dot x \in \HS_\star$ and $\Vdash_{\star} \varphi(\dot x)$, but there is $r \in \mathbb{Q}$ so that $r \Vdash_{\sS_1} \neg \varphi(i^*(\dot x))$. If $G$ is $\sS_1$-generic containing $r$, then again $i^*(\dot x)^G = \dot x^{G_0 * K}$ and we obtain a contradiction.  

(1)(a) and (b) are of course already covered by Proposition~\ref{prop:thirdquotient}. For (1)(c), (\ref{eq:1i}) and (\ref{eq:1i*}) imply that $V[G]_{\sS_1} = V[G_0 * K]_{\sS_0 \star \dot{\sS_1/\sS_0}} = V[G_0 * K]_{\sS_0 * \dot{\sS_1/\sS_0}}$.

For (2)(a), let $D \in V$ be an arbitrary $\sS_1$-symmetrically dense set and consider $\dot x_D = \{ (q, \emptyset) : q \in D \} \in \HS_{\sS_1}$. Then $\Vdash_{\sS_1} \dot x_D \neq \emptyset$, thus $\Vdash_{\star} i(\dot x_D) \neq \emptyset$. 
Applying the symmetric forcing theorem to the system $\sS_0 \star \dot{\sS_1/\sS_0}$, we have that $i(\dot x_D)^{K_0 * K} \neq \emptyset$. 
By (\ref{eq:2i}), $i(\dot x_D)^{K_0 * K} =  \dot x_D^{G}$, so $G \cap D\neq \emptyset$.

For (2)(c), let $(p,(\check{r}, \{ ((\dot x_j)\check{}, \dot y_j) : j < n \}))$ be a condition as earlier and suppose that for some $j' < n$, \[(p,(\check{r}, \{ ((\dot x_j)\check{}, \dot y_j) : j < n \})) \Vdash_{\sS_0 * \dot{\sS_1/\sS_0}} \dot y_{j'} \neq i(\dot x_{j'}).\] In the same way as before, we find an automorphism $\pi$ such that \[(\pi^{-1}(p),(\check{r}, \{ ((\dot x_j)\check{}, \dot x_j) : j < n \})) \Vdash_{\sS_0 * \dot{\sS_1/\sS_0}} \dot x_{j'} \neq i(\dot x_{j'}).\] Here note that since $\dot y_{j'}$ is an $\sS_0$-name, $(\pi^{-1}, \id)(\dot y_{j'}) = \pi^{-1}(\dot y_{j'})$ Again we reach a contradiction from (\ref{eq:1i}).

(2)(b) is similar to (c). Let $\dot x \in \sS_0$ and $(p,(\check{r}, \{ ((\dot x_j)\check{}, \dot y_j) : j < n \}))$ be arbitrary. Let $\pi \in \sG$ be as before. Then \[(\pi^{-1}(p),(\check{r}, \{ ((\dot x_j)\check{}, \dot x_j) : j < n \})) \not\Vdash_{\sS_0 * \dot{\sS_1/\sS_0}} i(\dot x) \neq \dot x,\] according to (\ref{eq:1i}). Going back, also \[(p,(\check{r}, \{ ((\dot x_j)\check{}, \dot y_j) : j < n \})) \not\Vdash_{\sS_0 * \dot{\sS_1/\sS_0}} \pi(\dot x) \neq i(\dot x).\] Thus, generically there is some $\pi \in \sG$ such that $\pi(\dot x)^{K_0} = i(\dot x)^{K_0 * K} = \dot x^{G} = \dot x^{G_0}$. In particular, if $\dot x = \HS_{\alpha, \sS_0}^\bullet$ for some $\alpha \in \Ord$, then \[(\HS_{\alpha, \sS_0}^\bullet)^{K_0} = \pi(\HS_{\alpha, \sS_0}^\bullet)^{K_0} = (\HS_{\alpha, \sS_0}^\bullet)^{G_0}.\] It follows that $V[K_0]_{\sS_0} = V[G_0]_{\sS_0}$.

(2)(d) now follows directly from (1)(c).
\end{proof}

The proof shows that there are maps $i$, $i^*$ mapping $\SN$-names between the systems and preserving the forcing relation for $\Delta_0$-formulas. But these maps are not inverses of each other and so $i$, $i^*$ fail to witness the strong equivalence. As noted earlier, this could also not be possible in general, at least not with our notion of complete subsystems. One can easily strengthen Definition~\ref{def:completesubsys} to essentially say that $\sS_1$ is of the form $\sS_0 * \dot{\sT}$, but that is beyond the point.

\begin{cor}\label{cor:forcingback}
    Let $\sS = (\mathbb{P}, \sG, \sF)$ be a symmetric system and $G_0$ be $\sS$-generic over $V$. If $K$ is $\mathbb{P} / \sS$ generic over $V[G_0]_\sS$, then $G = \dom K$ is $\mathbb{P}$-generic over $V$, $V[G_0]_{\sS} = V[G]_\sS$ and $V[G_0]_{\sS}[K] = V[G] = V(K)$.
\end{cor}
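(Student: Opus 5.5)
This is the special case of Theorem~\ref{prop:fullquotient} with $\sS_0=\sS$ and $\sS_1=(\mathbb{P},\{\id\},\{\{\id\}\})$, the system that simply forces with $\mathbb{P}$. As noted after Definition~\ref{def:completesubsys}, $\sS\lessdot\sS_1$: condition (1) is trivial; (2) holds because the trivial group restricts into every $H\in\sF$; and (3) holds because an $\sS$-symmetrically dense subset of $\mathbb{P}$ is dense in $\mathbb{P}$.

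We then compute the quotient system. Since $\sH=\{\id\}$, we have $\sH/\sS=\{\id\}$ and $\sE/\sS=\{\{\id\}\}$, so $\dot{\sS_1/\sS}$ is the trivial-group system on $\dot{\mathbb{P}/\sS}$. Consequently, being $\sS_1/\sS$-generic over $V[G_0]_\sS$ is the same as being fully $\mathbb{P}/\sS$-generic, and $V[G_0]_\sS[K]_{\sS_1/\sS}=V[G_0]_\sS[K]$. Symmetric genericity for $\sS_1$ is likewise ordinary $\mathbb{P}$-genericity, and $V[G]_{\sS_1}=V[G]$, because every $\mathbb{P}$-name is $\sS_1$-symmetric.

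Now apply part (2) of Theorem~\ref{prop:fullquotient} with $K_0=G_0$ and $K$ as given, writing $G=\dom K$.
\begin{enumerate}
\item By (2)(a), $G$ is $\sS_1$-generic, that is, $\mathbb{P}$-generic over $V$.
\item By (2)(b), $V[G_0]_\sS=V[G\cap\mathbb{P}]_\sS=V[G]_\sS$.
\item By (2)(d), $V[G_0]_\sS[K]=V[G]_{\sS_1}=V[G]$.
\end{enumerate}

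It remains to show $V(K)=V[G]$. By (2)(c), $K=G\times[\{(\dot x,\dot x^G):\dot x\in\sR(\sS)\}]^{<\omega}$, and this set is definable in $V[G]$ from $G$ and the ground-model set $\sR(\sS)$, so $K\in V[G]$ and $V(K)\subseteq V[G]$. Conversely, $G=\dom K\in V(K)$, hence $V[G]\subseteq V(K)$.

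The one point needing care is that the two notions of generic coincide in the trivial-group case. Every dense set is symmetric via $\{\id\}\in\sF$, so this is immediate, and I expect no real obstacle.
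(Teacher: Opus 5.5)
Your proof is correct and follows the paper's route. Like the paper, you specialize Theorem~\ref{prop:fullquotient}(2) to a supersystem of $\sS$ on the same forcing $\mathbb{P}$ whose filter contains $\{\id\}$, so that $\sS_1$-genericity and $\sS_1/\sS$-genericity collapse to ordinary genericity, and you then get $V(K)=V[G]$ from the interdefinability of $K$ and $G=\dom K$ over $V$. The only difference is cosmetic: the paper takes $\sS_1=(\mathbb{P},\sG,\sE)$ with $\sE$ generated by $\{\id\}$, while you take the trivial-group system $(\mathbb{P},\{\id\},\{\{\id\}\})$, which the paper itself uses for Theorem~\ref{thm:quotsummary}; both give the same quotient forcing $\mathbb{P}/\sS$, since its definition involves only $\sS$ and $\mathbb{P}$.
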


\begin{proof}
    $\sS \lessdot \sS_1 = (\mathbb{P}, \sG, \sE)$, where $\sE$ is generated by $\{ \id\}$. Then $K$ is $\sS_1/\sS_0$-generic over $V[G_0]_{\sS}$, so $G = \dom K$ is $\sS_1$-generic over $V$, which is the same as being $\mathbb{P}$-generic over $V$ and $V[G_0]_{\sS} = V[G]_\sS$. Clearly $V[G_0]_{\sS}[K]= V[G] = V(K)$ as $K$ can be defined from $G$ over $V$ and vice-versa.
\end{proof}

Applying Theorem~\ref{prop:fullquotient} to the special case of $(\mathbb{P}, \sG, \sF) \lessdot (\mathbb{P},\{\id \}, \{\{\id \}\})$ we obtain the following.

\begin{thm}\label{thm:quotsummary}
    Let $\sS = (\mathbb{P}, \sG, \sF)$ be a symmetric system.
    \begin{enumerate}
        \item For any $\sS$-generic $G$ over $V$, there is a $\mathbb{P}$-generic $H$ over $V$ so that $V[G]_\sS = V[H]_\sS$. Moreover, such $H$ is added generically over $V[G]_\sS$ by $\mathbb{P}/ \sS$.
        \item For any $\mathbb{P}$-generic $G$ over $V$, $V[G]$ is a $\mathbb{P}/\sS$-generic forcing extension of $V[G]_\sS$.
    \end{enumerate}
\end{thm}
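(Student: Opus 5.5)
The plan is to apply Theorem~\ref{prop:fullquotient} and Corollary~\ref{cor:forcingback} to the pair $\sS_0 = \sS = (\mathbb{P}, \sG, \sF)$ and $\sS_1 = (\mathbb{P}, \{\id\}, \{\{\id\}\})$, which amounts to plain forcing with $\mathbb{P}$. First I will verify that $\sS \lessdot \sS_1$ in the sense of Definition~\ref{def:completesubsys}.
\begin{enumerate}
\item Clause (1) is trivial, since the posets are identical.
\item Clause (2) holds because $\{\id\}\restriction\mathbb{P} = \{\id\} \leq H$ for every $H \in \sF$.
\item Clause (3) holds because every dense subset of $\mathbb{P}$ is in particular predense in $\mathbb{P}$.
\end{enumerate}
I will also note two identifications. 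An $\sS_1$-generic filter is exactly a $\mathbb{P}$-generic filter, because every dense set is fixed by the trivial group. Moreover $V[G]_{\sS_1} = V[G]$, because every name is hereditarily symmetric for the trivial group. Under these identifications, $\sH/\sS_0 = \{\id\}$ and the quotient system $\sS_1/\sS_0$ is just the forcing $\mathbb{P}/\sS$ with trivial group and filter. So a symmetric extension by $\sS_1/\sS_0$ is simply a forcing extension by $\mathbb{P}/\sS$.

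For (1), let $G_0$ be $\sS$-generic over $V$. I will pass to some outer model, for instance a further forcing extension of $V[G_0]_\sS$, in which there is a filter $K$ that is $\mathbb{P}/\sS$-generic over $V[G_0]_\sS$. Then Corollary~\ref{cor:forcingback}, or equivalently Theorem~\ref{prop:fullquotient}(2)(a),(b) for the pair above, gives that $H = \dom K$ is $\mathbb{P}$-generic over $V$ and that $V[G_0]_\sS = V[H]_\sS$. This $H$ is added over $V[G_0]_\sS$ by forcing with $\mathbb{P}/\sS$, which is the ``moreover'' clause.

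For (2), let $G$ be $\mathbb{P}$-generic over $V$. Then $G$ is $\sS_1$-generic, and by Lemma~\ref{lem:completesubgen}, $G_0 = G \cap \mathbb{P} = G$ is $\sS$-generic. Theorem~\ref{prop:fullquotient}(1) then produces
\[ K = G\times[\{(\dot x,\dot x^G):\dot x\in\sR(\sS)\}]^{<\omega}, \]
which is $\mathbb{P}/\sS$-generic over $V[G]_\sS$ and satisfies $V[G] = V[G]_{\sS_1} = V[G]_\sS[K]$. This is exactly the corollary following Lemma~\ref{lem:canondense}.

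The only point needing care is the meta-mathematical phrasing of ``there is a $\mathbb{P}$-generic $H$'' in (1). Such an $H$ need not exist in any model we already have, so (1) should be read, as in Theorem~\ref{prop:fullquotient}, as saying that $H$ exists in a $\mathbb{P}/\sS$-generic extension of $V[G_0]_\sS$. With that reading, and with the identification of the trivial system with plain forcing, the remaining steps are routine unwrapping of Theorem~\ref{prop:fullquotient}.
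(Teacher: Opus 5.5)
Your proposal is correct and is essentially the paper's own argument: the paper obtains Theorem~\ref{thm:quotsummary} precisely by applying Theorem~\ref{prop:fullquotient} to $(\mathbb{P},\sG,\sF)\lessdot(\mathbb{P},\{\id\},\{\{\id\}\})$. Part (1) then corresponds to clause (2) of that theorem, which is also Corollary~\ref{cor:forcingback}, and part (2) to clause (1). Your check of the complete-subsystem clauses and of the identification of the trivial quotient system with plain forcing by $\mathbb{P}/\sS$ supplies the routine details the paper leaves implicit.
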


This shows that the symmetric extensions formed by symmetric generics agree with those that use full generics. 

Let us note that the only place where the notion of a respect basis really is used is in Lemma~\ref{lem:canondense}. And all that this is needed for is to show that the canonical filter $G \times \{ (\dot x, \dot x^G) : \dot x \in \mathcal{R}(\sS) \}$ is a particular generic. Every other argument essentially goes by moving a particular condition into this canonical generic, using that this one has a particular desirable property, and thus showing that the original condition couldn't force the opposite either.

\section{The seed of a symmetric extension}
Recall that for an inner model $W$ and a class $X$ of $W$, $\HOD^W_X$ consists of the sets which are hereditarily ordinal definable in $W$ using additional parameters from $X$. If $V \subseteq W$ is an inner model of $W$ and $x \in W$ is a set, $V(x)$ is the smallest inner model containing $V \cup \{ x\}$ as a subclass. 

\begin{thm}\label{thm:V(P/S)}
   Let $\sS = (\mathbb{P}, \sG, \sF)$ be a symmetric system and $G$ be $\sS$-generic over $V$. Then $V[G]_\sS = V(\mathbb{P}/\sS)$. When $G$ is $\mathbb{P}$-generic over $V$, then also $V[G]_\sS = \HOD^{V[G]}_{V(\mathbb{P}/\sS)}$.
\end{thm}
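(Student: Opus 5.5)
First I reduce to the case where $G$ is $\mathbb{P}$-generic. By Theorem~\ref{thm:quotsummary}, for any $\sS$-generic $G$ there is a $\mathbb{P}$-generic $H$ with $V[G]_\sS=V[H]_\sS$. The object $\mathbb{P}/\sS$ is $(\dot{\mathbb{P}/\sS})^G$, and I will check it equals $(\dot{\mathbb{P}/\sS})^H$. Both are interpretations of the same $\sS$-name. By Theorem~\ref{prop:fullquotient}(2)(b), whose proof produces a $\pi\in\sG$ with $\pi(\dot x)^{H}=\dot x^{G}$ generically, together with $\sym(\dot{\mathbb{P}/\sS})=\sG$, the two interpretations agree. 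So from now on $G$ is $\mathbb{P}$-generic and $Q:=\mathbb{P}/\sS=(\dot{\mathbb{P}/\sS})^G$.

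\textbf{The inclusion $V(Q)\subseteq V[G]_\sS$.} This is immediate, since $Q\in V[G]_\sS$, $V\subseteq V[G]_\sS$, and $V[G]_\sS$ is a transitive model of $\ZF$.

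\textbf{The inclusion $V[G]_\sS\subseteq V(Q)$.} The plan is to recover every $\dot x^G$, for $\dot x\in\HS$, inside $V(Q)$. The set $Q$ consists of pairs $(r,a)$, where $a$ is a finite set of pairs $(\dot z,y)$ with $\dot z\in\sR(\sS)$ and $y=\dot y^G$ for some $\dot y\in\sR(\sS)$. So $V(Q)$ contains the set $Y=\{y:(\dot z,y)\in a,\ (r,a)\in Q\}$, which includes every $\dot z^G$ for $\dot z\in\sR(\sS)$ (use canonical conditions with $\dot y=\dot z$). It also contains the relation $E=\{(\dot z,y)\}$ collecting all such pairs. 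By the respect-basis property, every $\dot x\in\HS$ agrees, densely below conditions in $G$, with some $\dot x'$ whose respect-diagram contains that of some $\dot y\in\sR$. The key claim is that $\dot x'^G$ is definable in $V[G]_\sS$ from $\dot y^G$ and parameters in $V$, via a formula that evaluates inside $V(Q)$. Concretely, I would try to show
\[
\dot x^G=\{\pi(\dot x')^{G'}\},
\]
where $\pi$ ranges over automorphisms and $G'$ over the filters obtained as $\dom$ of a generic for $Q$ over $V(Q)$, subject to $\pi(\dot y)$ evaluating to $\dot y^G$. Then I would invoke the weak homogeneity of $Q$ (Lemma~\ref{lem:homquotient}) together with Corollary~\ref{cor:forcingback}. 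Forcing with $Q$ over $V(Q)$ adds $G'=\dom K$ that is $\mathbb{P}$-generic with $V[G']_\sS=V[G]_\sS$. By homogeneity, the statement ``$\check{v}\in\dot x'^{\dom\dot K}$ whenever $\dot y^{\dom \dot K}=\check{y}$'' is decided by the trivial condition, which gives a definition over $V(Q)$.

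\textbf{Main obstacle.} The main obstacle is this definability step: making the forcing over $V(Q)$ rather than over $V[G]_\sS$ legitimate, since a priori $V(Q)$ might be smaller. To handle it, I would induct on the rank of $\dot x$, proving $\dot x^G\in V(Q)$ and using the inductive hypothesis that all lower-rank names are already in $V(Q)$. The elements of $\dot x^G$ are then $\dot w^G$ with $(p,\dot w)\in\dot x$ and $p\in G$. I decide membership using $R_\sS(\dot y)\subseteq R_\sS(\dot x')$: membership of $\dot w^G$ in $\dot x^G$ holds iff some $(p,\pi)$ with $p\Vdash\pi(\dot y)=\dot y$ and the relevant condition is coded by an element of $Q$. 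Condition (1) of $Q$'s definition, via $\psi$, exactly encodes pairs $(\dot y,\dot y^G)$ together with conditions compatible with some $\pi$-translate of $G$. I expect this bookkeeping to show that $\dot x^G=\{\dot w^G : \exists (r,a)\in Q \text{ with } (\dot y,\dot y^G)\in a \text{ and } r\Vdash \dot w\in\dot x'\}$, which is computed in $V(Q)$.

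\textbf{The $\HOD$ statement.} One inclusion is clear: $V[G]_\sS=V(Q)\subseteq\HOD^{V[G]}_{V(Q)}$, since every element of $V(Q)$ is a parameter and $V(Q)$ is transitive. For the other inclusion, $V[G]=V(Q)[K]$ with $K$ generic for the weakly homogeneous $Q\in V(Q)$. The standard homogeneity argument then says any set of ordinals definable in $V[G]$ from parameters in $V(Q)$ lies in $V(Q)$. Extending this from sets of ordinals to hereditarily such sets uses that $\HOD_{V(Q)}$-elements are coded by sets of ordinals over $V(Q)$-parameters. Applying the homogeneity argument to $\in$-induction on rank then gives $\HOD^{V[G]}_{V(Q)}\subseteq V(Q)$.
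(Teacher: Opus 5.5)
Your reduction to a $\mathbb{P}$-generic $G$ and the inclusion $V(Q)\subseteq V[G]_\sS$ are fine. So is your handling of the $\HOD$ clause: weak homogeneity of $Q$ (Lemma~\ref{lem:homquotient}) plus $\in$-induction on subsets of $V(Q)$ definable in $V[G]=V(Q)[K]$. Drop the remark about coding by sets of ordinals; it is unavailable without choice and not needed.

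The gap is the inclusion $V[G]_\sS\subseteq V(Q)$. The formula you aim for,
\[
\dot x^G=\{\dot w^G : \exists (r,a)\in Q,\ (\dot y,\dot y^G)\in a,\ r\Vdash \dot w\in\dot x'\},
\]
is false. Membership $(r,a)\in Q$ only says, roughly, that $r$ is compatible with some translate of $G$ under which the basis names in $a$ take the listed values. So $r\Vdash\dot w\in\dot x'$ tells you that a translate of $\dot w$, not $\dot w^G$, lands in $\dot x'^G$.

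Here is a counterexample in the basic Cohen model of Example~\ref{ex:Cohen}. Take $\dot x'=\{(p,\dot c_n):p(n,0)=1\}$, so $\sym(\dot x')=\sG$, and let $\dot y=\bar c_\emptyset$ be the empty sequence. Finitary permutations can move any $p'$ off the rows used by $r$, so every $r$ occurs in some $(r,\{(\dot y,\emptyset)\})\in Q$. Hence your right-hand side is all of $\{c_n:n\in\omega\}$, whereas $\dot x'^G=\{c_n:0\in c_n\}$.

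Independently, the rank induction cannot put such a set into $V(Q)$. The hypothesis gives each $\dot w^G\in V(Q)$, but collecting them inside $V(Q)$ needs the evaluation map $\dot w\mapsto\dot w^G$ there. In general that map is not even in $V[G]_\sS$: on the names $\dot c_n$ it would enumerate the Dedekind-finite set $\dot A^G$.

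The obstacle you flagged disappears if you force with $Q$ over $V[G]_\sS$ rather than over $V(Q)$, and use genericity over the larger model instead of homogeneity.
\begin{enumerate}
\item By Corollary~\ref{cor:forcingback}, a $K$ that is $Q$-generic over $M:=V[G]_\sS$ satisfies $M\subseteq V(K)=V(Q)[K]$. Such a $K$ is automatically generic over the smaller model $V(Q)$.
\item For $x\in M$, choose a $Q$-name $\dot x\in V(Q)$ with $\dot x^K=x$. Working in $M$, some $p\in K$ forces $\dot x=\check x$ over $M$.
\item Hence in $V(Q)$ the condition $p$ decides $\check y\in\dot x$ for every $y\in V(Q)$. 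Extensions of $p$ deciding it differently would, through $M$-generic filters containing them, contradict $p\Vdash_M\dot x=\check x$.
\item So $x\cap V(Q)=\{y:p\Vdash\check y\in\dot x\}\in V(Q)$, and $\in$-induction gives $M\subseteq V(Q)$.
\end{enumerate}
This is the general lemma the paper uses. It needs no analysis of individual $\HS$-names or of the respect-basis. Since Corollary~\ref{cor:forcingback} already applies to $\sS$-generic $G$, it also makes your preliminary reduction unnecessary.
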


The second equality follows from the first and the fact that $V[G]$ is a generic extension of $V(\mathbb{P}/\sS)$ via a homogeneous forcing. The equality $V[G]_\sS = V(\mathbb{P}/\sS)$ follows immediately from Corollary~\ref{cor:forcingback} and the following general observation: 

\begin{lemma}
    Let $V$ be an inner model of $M$, $\mathbb{Q} \in M$ and $G$ be $\mathbb{Q}$-generic over $M$ so that $M \subseteq V(G)$. Then $M = V(\mathbb{Q})$.
\end{lemma}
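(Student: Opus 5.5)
The inclusion $V(\mathbb{Q}) \subseteq M$ is immediate. $M$ is an inner model of $M$ that contains $V$ as a subclass and $\mathbb{Q}$ as an element, and $V(\mathbb{Q})$ is the smallest such model. So the work is in showing $M \subseteq V(\mathbb{Q})$, which I would do by $\in$-induction on $x \in M$. Assume $x \in M$ and $x \subseteq V(\mathbb{Q})$; I want $x \in V(\mathbb{Q})$.

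The first step is to identify $V(G)$ with a forcing extension of $V(\mathbb{Q})$.
\begin{enumerate}
\item $V(\mathbb{Q}) \subseteq M$, so every dense subset of $\mathbb{Q}$ lying in $V(\mathbb{Q})$ lies in $M$. Hence $G$ is $\mathbb{Q}$-generic over $V(\mathbb{Q})$, which is a model of $\ZF$.
\item $\mathbb{Q} \in M \subseteq V(G)$, so $V(G)$ contains $V$, $\mathbb{Q}$ and $G$, and therefore $V(\mathbb{Q})[G] \subseteq V(G)$.
\item Conversely, $V(\mathbb{Q})[G]$ is an inner model containing $V \cup \{G\}$, so $V(G) \subseteq V(\mathbb{Q})[G]$.
\end{enumerate}
Thus $V(G) = V(\mathbb{Q})[G]$. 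In particular our $x$ lies in $V(\mathbb{Q})[G]$, so $x = \tau^G$ for some $\mathbb{Q}$-name $\tau \in V(\mathbb{Q})$. Since $V(\mathbb{Q}) \subseteq M$, the name $\tau$ also lies in $M$.

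The second step applies the forcing theorem over $M$, where $G$ is generic. Because $\tau^G = x$ and $x \in M$, there is $p \in G$ with $p \Vdash_M \tau = \check x$. For each $y \in V(\mathbb{Q})$, the check name $\check y$ computed in $V(\mathbb{Q})$ is the same set as the one computed in $M$. Let $\beta$ be the rank of $x$. I then define, inside $V(\mathbb{Q})$,
\[x' = \{ y \in V(\mathbb{Q}) : \rk(y) < \beta \wedge p \Vdash^*_{V(\mathbb{Q})} \check y \in \tau \}.\]
Here $\Vdash^*$ is the recursively defined forcing relation for atomic formulas. The collection of $y \in V(\mathbb{Q})$ of rank $<\beta$ is a set in $V(\mathbb{Q})$, so $x' \in V(\mathbb{Q})$ by Separation.

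It remains to show $x = x'$. If $y \in x$, then $y \in V(\mathbb{Q})$ by the induction hypothesis, $\rk(y) < \beta$, and $p \Vdash_M \check y \in \tau$. If $y \in x'$, then $p \Vdash_M \check y \in \tau$, and since $p \in G$ this gives $y = \check y^G \in \tau^G = x$. Both directions use only that $p \Vdash_M \check y \in \tau$ and $p \Vdash^*_{V(\mathbb{Q})} \check y \in \tau$ are equivalent.

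I expect that equivalence to be the main obstacle. The atomic forcing relation $\Vdash^*$ is defined by a recursion on pairs of names that refers only to $\mathbb{Q}$, its order and the names involved. Such a definition should be absolute between transitive models of $\ZF$ that contain those parameters, here $V(\mathbb{Q}) \subseteq M$. What needs checking is that this recursion, and the quantification over extensions and dense sets below $p$ that it involves, is really absolute in this choiceless setting. If full absoluteness turns out to be delicate, the fallback is the generic-filter characterization in each model: in both $M$ and $V(\mathbb{Q})$, $p \Vdash \check y \in \tau$ says that the set of conditions deciding membership of $\check y$ in $\tau$ positively is dense below $p$, and that set is built from the same data in both models.
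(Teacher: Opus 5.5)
Your proposal is correct and follows essentially the same route as the paper. Both arguments identify $V(G)$ with $V(\mathbb{Q})[G]$, pick a name $\tau\in V(\mathbb{Q})$ and $p\in G$ with $p\Vdash_M \tau=\check x$, read off $x$ inside $V(\mathbb{Q})$ as $\{y : p\Vdash \check y\in\tau\}$ (the paper phrases this as $x\cap V(\mathbb{Q})\in V(\mathbb{Q})$), and finish by rank induction.

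The absoluteness you flag as the main obstacle is standard and needs no choice. The recursive atomic relation $\Vdash^*$ quantifies only over $\mathbb{Q}$ and the domains of the names, so it is absolute between transitive models of $\ZF$. The paper uses this implicitly when it says $p$ must decide $\check y\in\dot x$ in $V(\mathbb{Q})$. For the inclusion $x'\subseteq x$ you do not need it at all, because $G$ is already generic over $V(\mathbb{Q})$.
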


\begin{proof}
    Clearly $\mathbb{Q} \in V(\mathbb{Q}) \subseteq M$. In particular, $G$ is also $\mathbb{Q}$-generic over $V(\mathbb{Q})$ and $V(\mathbb{Q})[G] = V(G) \supseteq M$. Let $x \in M$ be arbitrary and $\dot x$ be a $\mathbb{Q}$-name in $V(\mathbb{Q})$ so that $\dot x^G=x$. Working in $M$, there is some $p \in G$ so that $p \Vdash \dot x = \check x$. Back in $V(\mathbb{Q})$, we find that for arbitrary $y$, necessarily either \[p \Vdash \check{y} \in \dot x,\] or \[p \Vdash \check{y} \notin \dot x,\] depending on whether $y \in x$ or not, since otherwise we contradict the fact that in $M$, $p \Vdash \dot x = \check{x}$. Thus we have shown that $x \cap V(\mathbb{Q}) \in V(\mathbb{Q})$. An easy rank-induction shows that in fact $x \in V(\mathbb{Q})$.
\end{proof}

Let us remark that Theorem~\ref{thm:V(P/S)} offers a different way to prove that the product of two systems corresponds to the two-step iteration, namely by showing that $V[G*H]_{\sS * \check{\sT}} = V(\mathbb{P}/\sS)(\mathbb{Q}/\sT) = V(\mathbb{P} \times \mathbb{Q} / \sS \times \sT) = V[ G \times H]_{\sS \times \sT}$. In most textbooks on forcing the equivalence of a product with a two-step iteration is shown in a similar way, using that $V[G]$ is the smallest transitive model containing $V$ and $G$, in other words $V[G] = V(G)$.

\begin{prop}\label{prop:respectbasefromHOD}
    Let $\sS= (\mathbb{P}, \sG, \sF)$ be a symmetric system and $\dot x \in \HS$ be such that $\sym(\dot x) = \sG$ and $\Vdash V[\dot G]_\sS = \HOD_{V(\dot x)}^{V[\dot G]}$. Then \[\sR = \{ (\dot y_0, \dots, \dot y_n)^\bullet :~ n \in \omega, \dot y_0, \dots \dot y_n \text{ appear in } \dot x \}\] is a respect-basis for $\sS$.
\end{prop}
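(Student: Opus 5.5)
The plan is to verify the two clauses of the definition of a respect-basis separately. The closure clause is immediate, and the density clause is where the hypothesis $\Vdash V[\dot G]_\sS = \HOD^{V[\dot G]}_{V(\dot x)}$ is used, via a definability argument.

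\emph{Clause (2).} Since $\sym(\dot x)=\sG$, every $\pi\in\sG$ satisfies $\pi(\dot x)=\dot x$. Hence $\pi$ permutes the set $\{\dot y : \exists p\,((p,\dot y)\in\dot x)\}$ of names appearing in $\dot x$. Consequently $\pi((\dot y_0,\dots,\dot y_n)^\bullet)=(\pi(\dot y_0),\dots,\pi(\dot y_n))^\bullet\in\sR$, so $\sym(\sR^\bullet)=\sG$. Also $\sR\subseteq\HS$: the names appearing in $\dot x\in\HS$ are in $\HS$, and a finite tuple of $\HS$-names is again in $\HS$.

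\emph{Clause (1).} Fix $\dot z\in\HS$ and a condition $p_0$. Let $G\ni p_0$ be $\mathbb{P}$-generic. Then $z=\dot z^G\in V[G]_\sS=\HOD^{V[G]}_{V(x)}$, where $x=\dot x^G$. So $z$ is definable in $V[G]$ by some formula $\varphi$ from an ordinal $\xi$ and a parameter $u\in V(x)$. Next I would reduce $u$ to a parameter of the form $(x, v, y_0,\dots,y_n)$, with $v\in V$ and each $y_i=\dot y_i^G$ for some $\dot y_i$ appearing in $\dot x$.

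This reduction is the step I expect to be the main obstacle. The idea is that every element of $V(x)$ is definable in $V[G]$ from $x$, a ground model set and finitely many elements of $x$; I would obtain it by analysing the construction of $V(x)$ (e.g.\ as $\bigcup_\alpha L(V_\alpha\cup\{x\})$-style hulls). If that fails, I would first replace $\dot x$ by a name for its transitive closure and use that membership in $x$ is witnessed by names appearing in $\dot x$.

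Granting the reduction, there is $p\le p_0$ in $G$ that forces ``$\dot z$ is the unique $w$ with $V[\dot G]\models\varphi(w,\check\xi,\dot x,\check v,\dot y_0,\dots,\dot y_n)$''. Put $\dot y=(\dot y_0,\dots,\dot y_n)^\bullet\in\sR$. Analogously to Lemma~\ref{lem:definablename}, but using the full forcing relation $\Vdash_{\mathbb P}$, I would define
\[\dot z'=\{(q,\dot w): \dot w\in\HS_\alpha,\ q\Vdash_{\mathbb P}\forall w\,(\varphi(w,\check\xi,\dot x,\check v,\dot y_0,\dots,\dot y_n)\rightarrow \dot w\in w)\},\]
with $\alpha$ large. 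This is chosen so that $p\Vdash\dot z=\dot z'$, using that the unique witness is forced to lie in the symmetric extension.

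It remains to check two things about $\dot z'$. First, every $\pi\in\sym(\dot y)$ fixes all the parameters $\check\xi,\dot x,\check v,\dot y$ as well as $\HS_\alpha$. The symmetry lemma applied to $\Vdash_{\mathbb P}$ then gives $\sym(\dot y)\le\sym(\dot z')$, so $\dot z'\in\HS$. Second, if $(r,\pi)\in R_\sS(\dot y)$, then $r\Vdash\pi(\dot y)=\dot y$. Since $\pi(\dot z')$ is defined by the same clause with parameter $\pi(\dot y)$, and $\dot x$, $\check v$, $\check\xi$ are $\pi$-invariant, this yields $r\Vdash\pi(\dot z')=\dot z'$. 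Hence $R_\sS(\dot y)\subseteq R_\sS(\dot z')$.

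Because $p\le p_0$ was obtained from an arbitrary $p_0$, the set of conditions described in clause (1) is dense. This completes the argument, modulo the parameter reduction flagged above.
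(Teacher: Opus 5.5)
\textbf{Gap: the parameter reduction.} The gap is the step you flag yourself, and the version you state first is false. It is not true that every element of $V(x)$ is definable in $V[G]$ from $x$, a ground-model set, ordinals and finitely many \emph{elements of} $x$.

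Take the basic Cohen model with $\dot x=\{\dot A\}^\bullet$, where $\dot A=\{\dot c_n:n\in\omega\}^\bullet$. Then $\sym(\dot x)=\sG$ and $V[G]_\sS=V(A)=V(x)$, so the hypothesis holds. But $c_0\in V(x)$ is not definable from $x$, $A$, ordinals and sets of $V$. Suppose $p$ forced $\dot c_0$ to be the unique $w$ with $\varphi(w,\dot x,\check v,\check\xi)$. Let $\pi=(0\;n)$ with $n\neq 0$ not mentioned in $p$. Then $\pi(p)$ forces the same of $\dot c_n$, and $p\parallel\pi(p)$. A common extension forces $\dot c_0=\dot c_n$, contradicting $\Vdash\dot c_0\neq\dot c_n$.

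The same computation shows the proposition itself fails in this example if ``$\dot y$ appears in $\dot x$'' is read literally as $(p,\dot y)\in\dot x$ for some $p$. Every member of $\sR$ then has respect-diagram $\mathbb P\times\sG$, and no $\dot z'$ with $p\Vdash\dot z'=\dot c_0$ satisfies $\mathbb P\times\sG\subseteq R_\sS(\dot z')$.

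\textbf{The fix.} What is needed, and what the paper's proof uses in its first line, is $\HOD^{V[G]}_{V(x)}=\HOD^{V[G]}_{V\cup\trcl(\{x\})}$. Every element of $V(x)$ is definable in $V[G]$ from $x$, a set in $V$, ordinals and finitely many elements of $\trcl(\{x\})$; think of $V(x)$ as $\bigcup_\beta L(V_\beta\cup\trcl(\{x\}))$. Every element of $\trcl(\{x\})$ is $\dot y^G$ for a name $\dot y$ occurring \emph{hereditarily} in $\dot x$, i.e.\ in $\trcl(\dot x)$. So ``appear in'' must be read hereditarily, as the paper does.

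Your fallback of replacing $\dot x$ by a name for $\trcl(x)$ is unnecessary. Done with an arbitrary (e.g.\ closed) name, it would also change $\sR$. Simply take the $\dot y_i$ from $\trcl(\dot x)$.

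With that change, the rest of your argument is correct and is exactly what the paper calls immediate. An automorphism fixing $\dot x$ permutes the names in $\trcl(\dot x)$, all of which lie in $\HS$, which gives clause (2). Your $\dot z'$ satisfies $p\Vdash\dot z=\dot z'$, $\sym(\dot y)\le\sym(\dot z')$ and $R_\sS(\dot y)\subseteq R_\sS(\dot z')$.
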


Note that $\Vdash V[\dot G]_\sS = V(\dot x)$ also implies that $\Vdash V[\dot G]_\sS = \HOD_{V(\dot x)}^{V[\dot G]}$.

\begin{proof}
    Clearly $\sR$ is closed under $\sG$ as $\sym(\dot x) = \sG$. Let $\dot z \in \HS$ and $p \in \mathbb{P}$ be arbitrary. Note that $\HOD_{V(\dot x)}^{V[\dot G]} = \HOD_{V \cup \trcl(\{\dot x\})}^{V[\dot G]}$. Thus there is $q \leq p$, $u_0, \dots, u_k \in V$,  $\dot y_0, \dots, \dot y_n$ appearing in $\dot x$, $\alpha \in \Ord$ and a formula $\varphi$ so that \[q \Vdash_{\mathbb{P}} \dot z = \{ v \in \HS_\alpha^\bullet : \varphi(v, \check{u}_0, \dots, \check{u}_k, \dot y_0, \dots, \dot y_n, \dot x) \}.\]
    Letting $\dot z'$ be a name for the right-hand side, it is immediate to check that \[R_\sS(\dot y_0, \dots, \dot y_n) \subseteq R_\sS(\dot z').\qedhere\]
\end{proof}

\begin{prop}
    Let $\sS= (\mathbb{P}, \sG, \sF)$ be a symmetric system, $\mathcal{R}$ a respect-basis for $\sS$, and $\dot \Gamma = \{ \pi(\dot G) : \pi \in \sG \}^\bullet$, where $\dot G$ is the canonical name for the $\mathbb{P}$-generic. Let $G$ be $\mathbb{P}$-generic over $V$, $\Gamma = \dot \Gamma^G$ and $R = (\mathcal{R}^\bullet)^G$. Then \[V[G]_\sS = \HOD_{V \cup R \cup \{\Gamma\}}^{V[G]} = \HOD_{V \cup R \cup \{ \mathbb{P}/\sS \}}^{V[G]_\sS}.\] Moreover, there is a map $F$ definable in $V[G]_\sS$ from the parameter $\mathbb{P}/\sS$ that surjects $R \times V$ onto $V[G]_\sS$.\footnote{We do not claim that $R \times V$ is the domain of $F$. This would imply that $V$ is definable in $V[G]_\sS$ which is an open problem in general (but known to be true when $V\models \ZFC$).}
\end{prop}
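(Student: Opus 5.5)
The plan is to prove three inclusions: $V[G]_\sS \subseteq \HOD_{V\cup R\cup\{\Gamma\}}^{V[G]}$, $\HOD_{V\cup R\cup\{\Gamma\}}^{V[G]} \subseteq V[G]_\sS$, and the analogous statements for $\HOD_{V \cup R \cup \{\mathbb{P}/\sS\}}^{V[G]_\sS}$. The surjection $F$ will be extracted from the first inclusion.

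\emph{Step 1: every element of $V[G]_\sS$ is definable from parameters in $V\cup R$ together with $\Gamma$.} Let $z=\dot z^G$ with $\dot z\in\HS$. By clause (1) of a respect-basis, using genericity, there are $p\in G$, $\dot z'\in\HS$ and $\dot y\in\sR$ with $p\Vdash \dot z=\dot z'$ and $R_\sS(\dot y)\subseteq R_\sS(\dot z')$. I claim that in $V[G]$,
\[ z = \text{the unique } w \text{ such that } \exists H\in\Gamma\,\bigl(p\in H \wedge \dot y^H = y \wedge \dot z'^H = w\bigr), \]
where $y=\dot y^G\in R$. Note that every $H\in\Gamma$ has the form $\pi``G$, equivalently $H=\pi^{-1}(\dot G)^G$, so $\dot x^H=\pi^{-1}(\dot x)^G$. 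If $p\in H=\pi``G$ and $\dot y^H=y$, then $\pi^{-1}(p)\in G$ and $\pi^{-1}(\dot y)^G=\dot y^G$. Hence some $r\in G$ below $\pi^{-1}(p)$ forces $\pi^{-1}(\dot y)=\dot y$. This gives $(r,\pi^{-1})\in R_\sS(\dot y)\subseteq R_\sS(\dot z')$, so $\dot z'^H=\pi^{-1}(\dot z')^G=\dot z'^G=z$. Existence of a witness is provided by $H=G$. The definition uses the parameters $p,\dot z',\dot y\in V$, $y\in R$ and $\Gamma$. Applying the same argument hereditarily (since $V[G]_\sS$ is transitive) gives $V[G]_\sS\subseteq \HOD_{V\cup R\cup\{\Gamma\}}^{V[G]}$.

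\emph{Step 2: the reverse inclusion.} Since $\sym(\dot\Gamma)=\sG$, $\sym(\sR^\bullet)=\sG$ and $\sR\subseteq\HS$, any formula with ordinal parameters, check names, elements of $\sR$ and $\dot\Gamma$ yields a $\mathbb{P}$-name for the defined set. This name is fixed by $\bigcap_i \sym(\dot y_i)\in\sF$ and, by the symmetry lemma, can be replaced (as in Lemma~\ref{lem:definablename}, but with $\Vdash_{\mathbb{P}}$) by a symmetric name. Its members are again of the same kind, so induction on rank gives that $\HOD_{V\cup R\cup\{\Gamma\}}^{V[G]}\subseteq V[G]_\sS$.

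\emph{Step 3: replacing $\Gamma$ by $\mathbb{P}/\sS$.} Here the key point is that the definition in Step 1 can be carried out inside $V[G]_\sS$. The set $\{H: \exists K \text{ generic for } \mathbb{P}/\sS,\ H=\dom K\}$ is not available, but its forcing relation is. By weak homogeneity (Lemma~\ref{lem:homquotient}) and Corollary~\ref{cor:forcingback}, $V[G]=V[G]_\sS[K]$, and any generic $K'$ gives some $\dom K'$ of the form $\pi``G$ with the same symmetric model, so the Step 1 equation holds for it as well. I would therefore define
\[ z=\text{the unique } w \text{ such that } \Vdash_{\mathbb{P}/\sS} \exists H\,\bigl(H=\dom\dot K \wedge p\in H\to\ldots\bigr), \]
or, more concretely, use conditions $(r,a)$ of $\mathbb{P}/\sS$ whose $a$ records $(\dot y,y)$. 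The set $z$ is then the set of $w$ with $(p,\{(\dot y,y)\})\Vdash \check w\in \dot z'^{\dom\dot K}$, which is computable in $V[G]_\sS$ from $\mathbb{P}/\sS$, $p$, $\dot y$, $y$ and $\dot z'$. For this I need the domain $V$ to be usable as a parameter class, which is allowed since the $\HOD$ is taken with parameters from $V$. This also yields $F(y,(p,\dot y,\dot z'))=z$, defined from $\mathbb{P}/\sS$ alone, with inputs from $R\times V$.

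The main obstacle will be Step 3: checking that the condition $(p,\{(\dot y,y)\})$ genuinely lies in $\mathbb{P}/\sS$, via $\psi$ with $\pi=\id$ and $p\in G$. I also need to verify that the forcing over $V[G]_\sS$ evaluates $\dot z'$ correctly for every generic through that condition, which requires redoing the Step 1 argument for $\dom K'=\pi``G$ using $(2)(c)$ of Theorem~\ref{prop:fullquotient}. The inclusion $\HOD^{V[G]_\sS}_{\ldots}\subseteq V[G]_\sS$ is trivial.
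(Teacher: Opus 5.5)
Your Steps 1 and 2 are the paper's two inclusions. The paper uses the same formula, ``some $H\in\Gamma$ with $\dot r^H=r$ has $y\in\dot x'^H$'', and your extra clause $p\in H$ is harmless.

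The gap is in Step 3. The assertion that every $\mathbb{P}/\sS$-generic $K'$ over $V[G]_\sS$ has $\dom K'=\pi``G$ for some $\pi\in\sG$ is false in general. Suppose $V[G]_\sS\neq V[G]$. Then weak homogeneity (Lemma~\ref{lem:homquotient}) makes $\mathbb{P}/\sS$ atomless, so a $K'$ generic over $V[G]$ itself is not in $V[G]$. But $\dom K'=\pi``G$ would give $K'=\pi``G\times[\{(\dot x,\dot x^{\pi``G}):\dot x\in\sR(\sS)\}]^{<\omega}\in V[G]$ by (2)(c) of Theorem~\ref{prop:fullquotient}.

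For $G'=\dom K'$ with $K'$ through $(p,\{(\dot y,y)\})$, (2)(c) only gives $p\in G'$, $\dot y^{G'}=y$ and $V[G']_\sS=V[G]_\sS$. The Step~1 argument does not run from this, because it needs $G'=\pi``G$ to turn $\dot y^{G'}=\dot y^G$ into a pair in $R_\sS(\dot y)$. So the central claim of Step 3 is left unproved: that $(p,\{(\dot y,y)\})\Vdash\check w\in\dot z'^{\dom\dot K}$ for every $w\in z$. Also, $(p,\{(\dot y,y)\})$ is a condition of $\mathbb{P}/\sS$ only if $\dot y$ lies in the basis $\sR(\sS)$ used to define the quotient, not merely in the given $\mathcal{R}$.

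The missing idea is to let homogeneity do the work with an invariant parameter. The paper keeps your Step~1 formula and replaces $\Gamma$ by a $\mathbb{P}/\sS$-name $\dot\Gamma'$ for $\{\pi``\dom K:\pi\in\sG\}$.
\begin{itemize}
\item The automorphisms from Lemma~\ref{lem:homquotient} act on $\dom K$ by some $\tau\in\sG$, so they fix $\dot\Gamma'$.
\item All other parameters ($\check w$, $(\dot z')\check{}$, $(\dot y)\check{}$, $\check y$) are check names.
\item Hence $\mathds 1$ decides the formula. Its truth value can be read off in the canonical generic $K=G\times[\{(\dot x,\dot x^G):\dot x\in\sR(\sS)\}]^{<\omega}$, where $\dom K=G$ and $\dot\Gamma'^K=\Gamma$, so Step~1 applies verbatim.
\end{itemize}
This also removes the basis mismatch, since $\dot y\in\mathcal{R}$ enters only through a check name.

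Alternatively, you could keep your condition-based definition. Then you must prove that all generics through the condition agree, using the transfer argument from (2)(b),(c) of Theorem~\ref{prop:fullquotient}. Work in $\sS*\dot{\sS_1/\sS}$ with $\sS_1=(\mathbb{P},\{\id\},\{\{\id\}\})$.
\begin{itemize}
\item Take a condition recording $(\dot y,\dot y_0)$ with $p_1\Vdash\dot y_0=\dot y$ and forcing $i(\dot z')\neq\dot z'$.
\item Via $\psi$ it yields some $\pi$ with $p_1\Vdash\pi(\dot y)=\dot y$, hence $p_1\Vdash\pi(\dot z')=\dot z'$.
\item Moving by $(\pi^{-1},\id)$ produces a condition lying in a canonical generic and still forcing $i(\dot z')\neq\dot z'$. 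This contradicts (\ref{eq:1i}).
\end{itemize}
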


\begin{proof}
    $\HOD_{V \cup R \cup \{\Gamma\}}^{V[G]} \subseteq V[G]_\sS$ is shown by $\in$-induction. If $x \subseteq V[G]_\sS$ and \[x = \{ y : V[G] \models \varphi(y, v_0, \dots, v_n, r_0, \dots, r_m, \Gamma) \},\] then consider \[\dot x = \{ (p, \dot y) : p \Vdash_{\mathbb{P}} \varphi(\dot y, \check{v}_0, \dots, \check{v}_n, \dot r_0, \dot r_m, \dot \Gamma) \},\] where $\dot r_0^G = r_0, \dots, \dot r_m^G = r_m$, $\dot r_0, \dots, \dot r_m \in \mathcal{R}$. Then $\dot x \in \HS$ and $\dot x^G = x$. 

    On the other hand, let $\dot x \in \HS$ be arbitrary. Since $\mathcal{R}$ is a respect basis, there is $\dot x'\in \HS$ and $\dot r \in \mathcal{R}$ with $\dot x^G = \dot x'^G$ and $R_s(\dot r) \subseteq R_s(\dot x')$. Let $r = \dot r^G$. Consider $\varphi(y, \dot x', \dot r, \dot r^G, \Gamma)$ expressing that for some $H \in \Gamma$ such that $\dot r^H = r$, $y \in \dot x'^H$. Whenever $H \in \Gamma$, then $H = \pi``G$ for some $\pi \in \sG$. Then there must be some $p \in G$ so that $p \Vdash \dot r = \pi^{-1}(\dot r)$. Then also $p \Vdash \dot x' = \pi^{-1}(\dot x')$ and $x = \dot x'^G = \dot x^{\pi``G} = \dot x^H$. This shows that $\varphi$ works. 

    For the last part, we define $F(r, v) = x$ in $V[G]_\sS$ so that when $v$ is a pair of $\sS$-names $(\dot x, \dot r)$ with $\dot r \in \mathcal{R}$, $y \in x$ iff it is forced by $\mathbb{P}/\sS$ that $\varphi(\check{y}, (\dot x)\check{}, (\dot r)\check{}, \check{r}, \dot \Gamma')$, where $\dot \Gamma'$ is a $\mathbb{P}/\sS$-name so that when $K$ is generic, $\dot \Gamma'^K = \{ \pi``\dom(K) : \pi \in \sG \}$. We note that the automorphisms of $\mathbb{P}/\sS$ that witness the homogeneity in Lemma~\ref{lem:homquotient} fix the name for $\dot \Gamma'$, so $\varphi$ is indeed decided by the trivial condition. 
    
  This immediately also shows that $V[G]_\sS = \HOD_{V \cup R \cup \{ \mathbb{P}/\sS \}}^{V[G]_\sS}$.
\end{proof}

\section{The completion of a system}

Let $\mathbb{P}$ be a forcing notion. Then we say that a class $\dot X$ is a \emph{class name} if $\dot X$ consists of pairs $(p, \dot x)$ where $p \in \mathbb{P}$ and $\dot x$ is a (usual set-sized) $\mathbb{P}$-name. 

\begin{definition}
    Let $\mathbb{P}$ be a forcing notion and $\dot X$ be a class name. Then we define the system $\sO(\mathbb{P}, \dot X) = (\mathbb{P}, \sG, \sF)$, where $\sG = \{ \pi \in \Aut(\mathbb{P}) :~ \Vdash \pi(\dot X) = \dot X \}$ and $\sF$ is generated by groups of the form $\res_\sG(\dot x)$ for $\mathbb{P}$-names $\dot x$ such that $\Vdash \dot x \in \dot X$.
\end{definition}

To see that $\sF$ is a normal filter, simply note that if $\Vdash \pi(\dot X) = \dot X$ and $\Vdash \dot x \in \dot X$, then also $\Vdash \pi(\dot x ) \in \dot X$ and $\pi\res_\sG(\dot x)\pi^{-1} = \res_\sG(\pi(\dot x))$.

\begin{definition}\label{def:reflect}
    Let $\mathbb{P}$ be a forcing notion and $\dot X$ be a class name. Then we say that \emph{$\dot X$ reflects to a $\mathbb{P}$-name $\dot x$} if \[\{ \pi \in \Aut(\mathbb{P}) :~ \Vdash \pi(\dot X) = \dot X \} = \{ \pi \in \Aut(\mathbb{P}) :~ \Vdash \pi(\dot x) = \dot x \}.\] We say that $\dot X$ \emph{self-reflects} if $\dot X$ reflects to $\dot x$ with $\Vdash \dot x \in \dot X$. 
\end{definition}

 \begin{lemma}[see, e.g., {\cite[Lemma 25.5]{Jech1997}}]
       Let $\mathbb{P}$ be a complete Boolean algebra. For any $\mathbb{P}$-generic filters $G,H$ over $V$ with $V[H] = V[G]$ and any class name $\dot X$ in $V$ with $\dot X^H = \dot X^G$, there is $\pi \in \Aut(\mathbb{P})^{V}$ so that $H = \pi `` G$ and $\Vdash \dot X = \pi^{-1}(\dot X)$. 
    \end{lemma}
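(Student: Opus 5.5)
We will follow the classical argument behind Jech's Lemma 25.5: first obtain an automorphism from the fact that $G$ and $H$ generate the same extension, and then adjust it so that it also carries the class name correctly.

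\emph{Step 1: defining $\pi$ on a condition.} Since $H\in V[G]$, fix a $\mathbb{P}$-name $\dot h$ with $\dot h^G=H$. Symmetrically, fix a name $\dot g$ with $\dot g^H=G$. Work below a condition $p_0\in G$ forcing that $\dot h$ is a $\check{\mathbb{P}}$-generic filter over $\check V$ and that $\dot g^{\dot h}=\dot G$. For $b\in\mathbb{P}$ put
\[\rho(b)=\llbracket \check b\in\dot h\rrbracket\wedge p_0 .\]
This is a complete homomorphism from $\mathbb{P}$ into $\mathbb{P}\restriction p_0$, because $\dot h$ is forced to be a generic ultrafilter. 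In the same way, using $\dot g$ we obtain a complete homomorphism in the other direction below some $q_0\in H$. The requirement $\dot g^{\dot h}=\dot G$ makes the two maps inverse to each other below suitable conditions.

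\emph{Step 2: extending to a global automorphism.} As in Jech, use the standard patching argument (splitting into maximal antichains, on whose pieces we glue isomorphisms between relativizations $\mathbb{P}\restriction a\cong\mathbb{P}\restriction b$) to obtain $\pi\in\Aut(\mathbb{P})$ with $\pi``G=H$. Strictly, the local isomorphism is defined on $\mathbb{P}\restriction p$ for some $p\in G$, and it extends to all of $\mathbb{P}$ by a swap-type construction on the complements, in the style of the involution trick from the remark following Lemma~\ref{lem:computingrespect}. Whether $\pi``G=H$ or $\pi^{-1}``G=H$ comes out depends on the orientation chosen, which is harmless. Note that in $\ZF$ no choice is needed here, since everything is defined in $V$ from the Boolean values.

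\emph{Step 3: the class name, which I expect to be the main obstacle.} From $\pi``G=H$ we get $\pi(\dot x)^{H}=\dot x^{G}$ for all names, hence
\[\pi(\dot X)^H=\dot X^G=\dot X^H .\]
So some $q\in H$ forces $\pi(\dot X)=\dot X$. (This needs the forcing theorem for class names; a definable class name is fine.) This only gives local agreement, while the statement asks for $\Vdash\dot X=\pi^{-1}(\dot X)$ outright. To fix this, replace $\pi$ by $\pi'$, defined to agree with $\pi$ below $\pi^{-1}(q)$ and to be modified elsewhere.

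The cleanest way is to build this into Step 2. Choose $p\in G$ so small that $p$ forces $\dot h$ to be generic, $\dot g^{\dot h}=\dot G$, and $\dot X^{\dot h}=\dot X$. The last property is possible because $\dot X^H=\dot X^G$ holds in $V[G]$. Then let $\pi$ map $p$ onto $\rho(p)$ via $\rho$, map $\rho(p)$ back onto $p$ via $\rho^{-1}$ (arranging $p\wedge\rho(p)=0$ or $p=\rho(p)$ by shrinking, as in the involution construction), and act as the identity on the complement of $p\vee\rho(p)$. Then:
\begin{itemize}
\item on the identity part, $\pi(\dot X)=\dot X$ is trivially forced;
\item below $p$ and below $\rho(p)$, the forced equality $\dot X^{\dot h}=\dot X$ transfers to $\Vdash\pi^{-1}(\dot X)=\dot X$.
\end{itemize}

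The delicate point is the case where $p$ and $\rho(p)$ are compatible but unequal. This needs a decomposition argument: partition into the parts where $\rho$ acts as the identity and the parts where it moves conditions off themselves. I would handle this exactly as in Jech's proof, which is where most of the bookkeeping lies.
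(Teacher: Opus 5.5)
Your plan is correct and is essentially the standard argument behind the cited lemma of Jech (the paper gives no proof beyond the citation), with the class-name clause handled as it should be: choose $p\in G$ that additionally forces $\dot X^{\dot h}=\dot X$ before building the involution, so that $\dot X$ is preserved on all three pieces. The ``delicate'' case you defer is only a density argument --- if every nonzero $v\le u\le p_0$ is compatible with $\rho^{-1}(v)$, then $\rho^{-1}$ is the identity on $\mathbb{P}\restriction u$, so shrinking $p$ inside $G$ yields either $p\wedge\rho^{-1}(p)=0$ or $G=H$ (take $\pi=\id$) --- and note that $\rho$ sends $H$-conditions to $G$-conditions, so it is $\rho^{-1}$, not $\rho$, that you should apply to $p\in G$.
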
 

\begin{prop}\label{prop:nameclassHOD}
    Let $\mathbb{P}$ be a complete Boolean algebra, $\dot X$ be a class name and $G$ be $\mathbb{P}$-generic over $V$. Let $\sS = \sO(\mathbb{P}, \dot X)$ and $X = \dot X^G$. Then $V[G]_{\sS} = \HOD^{V[G]}_{V \cup X \cup \{X\}}$. Moreover, when $\dot X$ self-reflects, then $V[G]_{\sS} = \HOD^{V[G]}_{V \cup X}$.
\end{prop}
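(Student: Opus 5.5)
We prove the two inclusions separately, following the pattern of the preceding proposition (the one with $\dot\Gamma$ and a respect-basis). Throughout, $\sS=\sO(\mathbb{P},\dot X)=(\mathbb{P},\sG,\sF)$, where $\sG$ is the stabiliser of $\dot X$ and $\sF$ is generated by the groups $\res_\sG(\dot x)$ with $\Vdash \dot x\in\dot X$.

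\emph{The inclusion $\HOD^{V[G]}_{V\cup X\cup\{X\}}\subseteq V[G]_\sS$.} We argue by $\in$-induction. Suppose $x\subseteq V[G]_\sS$ is defined in $V[G]$ by $\varphi(y,v_0,\dots,v_n,a_0,\dots,a_m,X)$, where $v_i\in V$ and $a_j\in X$. Choose names $\dot a_j$ with $\Vdash\dot a_j\in\dot X$ and $\dot a_j^G=a_j$; this is possible by the usual mixing argument in the complete Boolean algebra $\mathbb{P}$, which requires some fixed name forced to lie in $\dot X$. Then set
\[\dot x=\{(p,\dot y):\dot y\in\HS_\alpha,\ p\Vdash\varphi(\dot y,\check v_0,\dots,\dot a_0,\dots,\dot X)\}.\]
Every $\pi\in\bigcap_j\res_\sG(\dot a_j)$ preserves the forcing statement, because $\pi(\dot X)$ and $\dot X$ are forced equal and each $\pi(\dot a_j)$ is forced equal to $\dot a_j$. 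Hence $\dot x$ is respected, and Lemma~\ref{lem:hshr} converts it into an $\HS$-name. If $X$ is empty the parameter situation is degenerate, and we treat it separately and trivially.

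\emph{The inclusion $V[G]_\sS\subseteq\HOD^{V[G]}_{V\cup X\cup\{X\}}$.} Take $\dot z\in\HS$, and fix $\dot x_0,\dots,\dot x_k$ forced into $\dot X$ with $\bigcap_i\res(\dot x_i)\le\sym(\dot z)$. Let $a_i=\dot x_i^G$. We claim that $z=\dot z^G$ is definable in $V[G]$ from $\dot z,\dot x_i\in V$, the $a_i$, $X$, and the ground model $V$; the last is definable in $V[G]$ by Laver--Woodin, or else is available directly since parameters from $V$ are allowed. The definition is: $y\in z$ iff there exists a $\mathbb{P}$-generic $H$ over $V$ with $V[H]=V[G]$, $\dot X^H=X$, $\dot x_i^H=a_i$ for all $i$, and $y\in\dot z^H$.

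For correctness, let $H$ be such a generic. By the cited lemma \cite[Lemma 25.5]{Jech1997}, there is $\pi\in\Aut(\mathbb{P})^V$ with $H=\pi``G$ and $\Vdash\dot X=\pi^{-1}(\dot X)$, so $\pi\in\sG$. From $\dot x_i^{\pi``G}=\dot x_i^G$ we get $\pi^{-1}(\dot x_i)^G=\dot x_i^G$, so some $p\in G$ forces $\pi^{-1}(\dot x_i)=\dot x_i$ for all $i$. Here lies the main obstacle: we must pass from this forcing below $p$ to $\dot z^H=\dot z^G$. Only agreement below $p$ is given, not membership of $\pi$ in $\res(\dot x_i)$. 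Since $\mathbb{P}$ is complete and $\sG$ consists of all automorphisms that respect $\dot X$, I would use the involution trick from the Remark after Lemma~\ref{lem:compLS}. Build $\sigma$ which agrees with $\pi^{-1}$ below $p$, with $\pi$ below $\pi^{-1}(p)$ (suitably arranged), and with the identity elsewhere. We must check that $\sigma$ still fixes $\dot X$ forcing-wise. This holds because $\pi$ does, and $\dot X$ is handled piecewise on a maximal antichain. Then $\sigma\in\bigcap\res(\dot x_i)\le\sym(\dot z)$, and $\dot z^H=\dot z^G$ follows.

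\emph{The self-reflecting case.} When $\dot X$ reflects to $\dot x$ with $\Vdash\dot x\in\dot X$, the parameter $X$ can be replaced by $a=\dot x^G\in X$. Indeed, in the argument above we only used $X$ to ensure $\pi\in\sG$. Requiring $\dot x^H=a$ instead gives, via the same involution argument, a $\sigma$ with $\Vdash\sigma(\dot x)=\dot x$, and by reflection this gives $\Vdash\sigma(\dot X)=\dot X$. The first inclusion likewise never needs $X$ as a parameter once $\dot X$ is fixed by all of $\sG$. Hence $V[G]_\sS=\HOD^{V[G]}_{V\cup X}$.
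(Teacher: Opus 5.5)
This is essentially the paper's proof. The two inclusions match: you use the same name for $\HOD^{V[G]}_{V\cup X\cup\{X\}}\subseteq V[G]_\sS$ (it is already fixed by $\bigcap_j\res_\sG(\dot a_j)$, so Lemma~\ref{lem:hshr} is not needed), and your $z$ is the same union of $\dot z^H$ over generics $H$ that agree with $G$ on $X$ and on the $\dot x_i$. The self-reflecting case is also treated the same way.

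Your ``main obstacle'' does not arise in the paper. There the cited lemma is applied to $\dot X$ together with $\dot x_0,\dots,\dot x_k$, for instance to one class name coding all of them, which is legitimate since $H$ agrees with $G$ on each. This directly gives $\pi$ with $H=\pi``G$, $\Vdash\pi^{-1}(\dot X)=\dot X$ and $\Vdash\pi^{-1}(\dot x_i)=\dot x_i$. Hence $\pi^{-1}\in\res_\sG(\dot x_0,\dots,\dot x_k)\le\sym(\dot z)$.

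Your involution step just re-proves that clause of the lemma. It can be made to work, but as written it is mis-oriented. If $\sigma$ agrees with $\pi^{-1}$ below $p\in G$, then $\sigma``G=\pi^{-1}``G\neq H$. Moreover, $\Vdash\sigma(\dot x_i)=\dot x_i$ would then need $p\Vdash\pi(\dot x_i)=\dot x_i$, which you do not have. The fix has two steps:
\begin{enumerate}
\item Shrink $p$ inside $G$ so that either $p\wedge\pi(p)=0$ or $\pi$ is the identity below $p$. Such conditions are dense, and in the second case $H=G$.
\item Let $\sigma$ be $\pi$ below $p$, $\pi^{-1}$ below $\pi(p)$, and the identity elsewhere.
\end{enumerate}
Then $\sigma``G=\pi``G=H$. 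Also $\Vdash\sigma(\dot x_i)=\dot x_i$ follows from $p\Vdash\pi^{-1}(\dot x_i)=\dot x_i$ together with its image $\pi(p)\Vdash\pi(\dot x_i)=\dot x_i$.

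You are right to build $V[H]=V[G]$ into the definition of $z$: the lemma needs it, and the paper's $\Gamma$ leaves it implicit. However, your justification of its definability fails in $\ZF$. Laver--Woodin is a theorem about grounds satisfying $\ZFC$, and allowing elements of $V$ as parameters does not make the class $V$ definable.

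It is nevertheless expressible with a set parameter from $V$. For generic $H\in V[G]$, $V[H]=V[G]$ holds iff every $A\subseteq\mathbb{P}$ has the form $\{q: f(q)\in H\}$ for some $f\in(\mathbb{P}^{\mathbb{P}})^V$. For the forward direction take $f(q)=\llbracket\check q\in\dot a\rrbracket$, where $\dot a$ is a name with $\dot a^H=A$. For the converse apply the condition to $A=G$. Genericity over $V$ is likewise definable from the set of dense subsets of $\mathbb{P}$ lying in $V$.
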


\begin{proof}
    First let us check that $\HOD_{V \cup X \cup \{ X \}}^{V[G]} \subseteq V[G]_\sS$. Let $a \subseteq V[G]_\sS$ be arbitrary such that $a = \{b : V[G] \models \varphi(b, \bar x, \bar y, \bar \alpha, X) \}$, for $\bar x \in X^{<\omega}$, $\bar y \in V^{<\omega}$ and $\bar \alpha \in \Ord^{<\omega}$. Furthermore, let $\bar x = (\dot x_0^G, \dots, \dot x_n^G)$, where $\Vdash \dot x_i \in \dot X$ for $i\leq n$. Then consider the name \[\dot a = \{ (p, \dot b): \dot b \in \HS_\gamma \wedge p \Vdash \varphi(b,\dot x_0, \dots, \dot x_n , \bar y, \bar \alpha, \dot X) \},\] for large enough $\gamma$ and note that $\dot a \in \HS$ and $\dot a^G = a$. Thus $a \in V[G]_\sS$. An $\in$-induction shows that the inclusion holds.

    In the other direction, note that the set $\Gamma$ of $\mathbb{P}$-generics over $V$ in $V[G]$ is definable using only parameters from $V$. Let $\dot z \in V$ be an arbitrary $\sS$-name and suppose that $\res_\sG(\dot x_0, \dots, \dot x_n) \leq \sym(\dot z)$, where $\Vdash \dot x_0, \dots, \dot x_n \in X$. Then consider \[z := \bigcup \{ \dot z^H : H \in \Gamma \wedge \dot X^H = X\wedge \forall i \leq n( \dot x_i^H = \dot x_i^G)\},\] which clearly is $\OD_{V \cup X \cup \{ X\}}^{V[G]}$. In either case we claim that $z = \dot z^G$. Clearly $\dot z^G \subseteq z$. On the other hand, for any $H$ as above there is some $\pi \in \Aut(\mathbb{P})^{V}$ so that $H = \pi `` G$ and $\Vdash \dot X = \pi^{-1}(\dot X) \wedge \forall i \leq n( \dot x_i = \pi^{-1}(\dot x_i))\}$. In particular, $\pi^{-1} \in \res_\sG(\dot x_0, \dots, \dot x_n)$ and thus $\dot z^G = \pi^{-1}(\dot z)^G = \dot z^{\pi``G} = \dot z^H$. Since $H$ was arbitrary, we have that $z = \dot z^G$ and we have shown that $\dot z^G \in \OD_{V \cup X \cup \{X\}}^{V[G]}$. By induction, $V[G]_\sS \subseteq \HOD_{V \cup X \cup \{X\}}^{V[G]}$.

     When $\dot X$ reflects to a name $\dot x$, we may replace $X$ by $\dot x^G$ and $\dot X$ by $\dot x$ in the definition of $z$ and obtain an $\OD_{V \cup X}^{V[G]}$ set.
\end{proof}

\begin{definition}
    Let $\sS = (\mathbb{P}, \sG, \sF)$ be a symmetric system. Then the \emph{completion} of $\sS$ is defined as $\hat{\sS} = \sO(\B(\mathbb{P}), \SN_\sS^\bullet)$, where $\mathbb{P}$-names are, as usual, also viewed as $\B(\mathbb{P})$-names. 
\end{definition}

\begin{thm}\label{thm:completion}
    Let $\sS = (\mathbb{P}, \sG, \sF)$ be a symmetric system. Then $\sS \cong_\SN \hat{\sS}$ and whenever $G$ is $\B(\mathbb{P})$-generic over $V$, then $V[G]_{\hat{\sS}} =  V[G\cap \mathbb{P}]_\sS$. Moreover, $\hat{\sS}$ is tenacious and $\SN_{\hat{\sS}} = \HR_{\hat{\sS}}$.
\end{thm}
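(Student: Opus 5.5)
Write $\hat\sS = (\B(\mathbb{P}), \hat\sG, \hat\sF)$. Here $\hat\sG = \{\pi\in\Aut(\B(\mathbb{P})) : \Vdash \pi(\SN_\sS^\bullet)=\SN_\sS^\bullet\}$, and $\hat\sF$ is generated by the groups $\res_{\hat\sG}(\dot x)$ for $\dot x\in\SN_\sS$. By Proposition~\ref{prop:cBaequi} we may pass to $\sS'=(\B(\mathbb{P}),\sG,\sF)$, which satisfies $\sS'\cong_\SN\sS$ and $V[G]_{\sS'}=V[G\cap\mathbb{P}]_\sS$. It therefore suffices to prove $V[G]_{\hat\sS}=V[G]_{\sS'}$ for every $\B(\mathbb{P})$-generic $G$. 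Once this holds, we get $\SN_{\hat\sS}=\SN_{\sS'}$ as classes of $\B(\mathbb{P})$-names, and Corollary~\ref{thm:tenequilr} with $\pi=\id$ gives $\sS'\cong_\SN\hat\sS$, hence $\sS\cong_\SN\hat\sS$.

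For the equality of models I would use Proposition~\ref{prop:nameclassHOD} with $\dot X=\SN_{\sS}^\bullet$ (names in $\SN_{\sS'}$). It yields $V[G]_{\hat\sS}=\HOD^{V[G]}_{V\cup X\cup\{X\}}$, where $X=\dot X^G=V[G]_{\sS'}$.

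For the inclusion $\supseteq$: each element of $V[G]_{\sS'}$ is some $\dot x^G$ with $\dot x\in\SN_{\sS'}$, and it is $\OD$ from the parameter $\dot x^G\in X$, so every element of $X$ lies in the $\HOD$.

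For the inclusion $\subseteq$: the class $X$ is definable in $V[G]$ as $\bigcup_\alpha(\HS_{\alpha,\sS'}^\bullet)^G$, using parameters from $V$ together with $G$. I need to be careful here, because the definition uses $G$. Instead I would argue as in the proof of Proposition~\ref{prop:nameclassHOD}: the set $z$ defined there is $\OD$ from parameters in $V\cup X\cup\{X\}$, so everything in the $\HOD$ is the value of a name that is $\sS'$-hereditarily respected up to forcing equivalence. Concretely, take $\dot z$ with $\res_{\hat\sG}(\dot x_0,\dots,\dot x_n)\le\sym(\dot z)$. Since $\sG\le\hat\sG$ (automorphisms in $\sG$ preserve $\SN_{\sS'}$) and $\res_\sG(\dot x_i)\in\sF$ (because $\dot x_i\in\SN$ is forced equal to an $\HS$-name $\dot y_i$, so $\res_\sG(\dot x_i)\supseteq\sym(\dot y_i)$), we get $\sym_\sG(\dot z)\in\sF$. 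Hereditarily this gives $\HS_{\hat\sS}\subseteq\HS_{\sS'}$ up to the identification, hence $V[G]_{\hat\sS}\subseteq V[G]_{\sS'}$. Doing this directly avoids the $\HOD$ detour for this direction, and the $\HOD$ argument gives the other direction.

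Tenacity: for a condition $b\in\B(\mathbb{P})$, the name $\dot x_b=\{(b,\check 0)\}$ lies in $\SN_{\sS'}$ once we know $b$ is decided in $V[G]_{\sS'}$. That is not true for arbitrary $b$, and this is the main obstacle. I would instead show that the set of $b$ with $\fix(b)\supseteq\res_{\hat\sG}(\dot x)$ for some $\dot x\in\SN$ is dense. Given $b$, take the name $\dot G$ restricted appropriately, or better $\dot x=\{(b,\check 0)\}$ together with $\llbracket\dot x\neq\emptyset\rrbracket=b$. This requires $\Vdash\dot x\in V[\dot G]_{\sS'}$, which forces $b$ to be decided by the symmetric model. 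So I would take $b'\le b$ of the form $\bigwedge$-closure: $b'=\llbracket \varphi(\dot y)\rrbracket$ for some $\dot y\in\HS_{\sS'}$ and $\Delta_0$ statement $\varphi$. Such Boolean values are dense, because $\dot G$ itself is the union of $\sG$-translates, and the tenacious construction of Proposition~\ref{prop:equitenacious} gives elements $q=\bigvee_{\pi\in H}\pi(p)$ with $q\ge p$. Then $\{(q,\check0)\}$ is symmetric, hence in $\SN$, and any $\pi\in\res_{\hat\sG}(\{(q,\check 0)\})$ satisfies $\pi(q)=q$ because $\llbracket\pi(\dot x)\ne\emptyset\rrbracket=\pi(q)$. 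Density needs refining to find such $q$ below a given $b$. I expect to use that $q$ is only required to be compatible, and to pick $b'\le b\wedge q$ via the maximal $q$ with $b\wedge q$ ranging over a dense set. This is the step I expect to need the most care.

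Finally, $\SN_{\hat\sS}=\HR_{\hat\sS}$. The inclusion $\supseteq$ is general. For $\subseteq$, given $\dot x\in\SN_{\hat\sS}=\SN_{\sS'}$, the group $\res_{\hat\sG}(\dot x)$ is itself a generator of $\hat\sF$ by definition, so $\dot x$ is $\hat\sS$-respected. Each $\dot y$ appearing in $\dot x$ with condition $p$ need not be in $\SN$. However, the hereditary requirement can be met after replacing $\dot x$ by a forcing-equivalent name, so I would verify that the definition of $\HR$ tolerates this, or else show that every name in $\dot x$ is forced into the model below its condition and mix it with $\check\emptyset$ via Lemma~\ref{lem:thetrick}.
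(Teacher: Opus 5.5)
Your reduction to $\sS'$, the closing appeal to Corollary~\ref{thm:tenequilr} with $\pi=\id$, the inclusion $V[G]_{\sS'}\subseteq\HOD^{V[G]}_{V\cup X\cup\{X\}}$, and the respect half of $\SN_{\hat\sS}=\HR_{\hat\sS}$ are all fine. The real gap is the inclusion $V[G]_{\hat\sS}\subseteq V[G]_{\sS'}$.

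\textbf{The inclusion $V[G]_{\hat\sS}\subseteq V[G]_{\sS'}$.} Your direct argument needs $\res_\sG(\dot x)\in\sF$ for every $\dot x\in\SN_\sS$, which you justify by saying $\dot x$ is forced equal to an $\HS$-name. Membership in $\SN$ only gives an $\HS$-name that depends on the generic, and the claim fails in general. Take the basic Cohen model. Let $p_n$ say that $c_m(0)=0$ for $m<n$ and $c_n(0)=1$, and let $\dot x$ be $\dot c_n$ below $p_n$. Then $\dot x^G$ is always some $c_n$, so $\dot x\in\SN_\sS$. Now fix a finite $e$ and pick $b>a>\max e$. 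Any condition below $p_a$ with $c_b(0)=1$ and $c_a(1)\neq c_b(1)$ forces $(a\,b)(\dot x)=\dot c_b\neq\dot c_a=\dot x$. Hence $\res_\sG(\dot x)\notin\sF$. So the generator $\res_{\hat\sG}(\dot x)$ of $\hat\sF$ has trace on $\sG$ outside $\sF$. Symmetrising $\dot x$ as in Lemma~\ref{lem:hshr} then gives a name in $\HS_{\hat\sS}\setminus\HS_{\sS'}$, so the inclusion of models cannot be proved name by name this way.

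What is missing is the homogeneity fact the paper uses. By Theorem~\ref{thm:V(P/S)}, which rests on the weak homogeneity of $\mathbb{P}/\sS$ (Lemma~\ref{lem:homquotient}), $V[G]_\sS=V(\mathbb{P}/\sS)=\HOD^{V[G]}_{V(\mathbb{P}/\sS)}$. Combine this with Proposition~\ref{prop:nameclassHOD} and the self-reflection of $\SN_\sS^\bullet$, which removes the class parameter $X$. This gives $V[G]_{\hat\sS}=\HOD^{V[G]}_{V[G]_\sS}=V[G]_\sS$. You set the $\HOD$ route aside exactly where this fact is needed.

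\textbf{Tenacity.} You assume $\dot x_b=\{(b,\check 0)\}$ lies in $\SN$ only if the symmetric model decides $b$; that premise is mistaken. $\dot x_b^G$ is either $\emptyset$ or $\{\emptyset\}$, both in $V\subseteq V[G]_\sS$, so $\dot x_b\in\SN_\sS$ for every $b$. Since $\llbracket\pi(\dot x_b)\neq\emptyset\rrbracket=\pi(b)$, we get $\res_{\hat\sG}(\dot x_b)=\{\pi\in\hat\sG:\pi(b)=b\}$. So every condition is fixed by a group in $\hat\sF$, which is the paper's one-line argument. Your workaround via $q=\bigvee_{\pi\in H}\pi(p)$ produces conditions above $p$, not below it, so it yields no dense set, and you leave that step open.
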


\begin{proof}
    It is immediate to see that $\SN_\sS^\bullet$ self-reflects. Moreover, ${\SN_\sS^\bullet}^G = V[G]_\sS$. Thus $V[G]_{\hat{\sS}} = \HOD^{V[G]}_{V[G]_\sS} = \HOD^{V[G]}_{V(\mathbb{P}/\sS)} = V[G]_\sS$. The natural identification of $\mathbb{P}$- and $\B(\mathbb{P})$-names witnesses that $\sS \cong_\SN \hat{\sS}$, see Theorem~\ref{thm:tenequilr}. Then it is immediate from the definition of $\hat{\sS}$ that $\SN_{\hat{\sS}} = \HR_{\hat{\sS}}$. To see that $\hat{\sS}$ is tenacious, simply note that for any $p \in \mathbb{P}$, $\dot x_p = \{ (p, \emptyset) \} \in \SN_\sS$ and that to respect $\dot x_p$, $p$ must be fixed as a condition in a separative poset. 
\end{proof}

The completion of the system $\sS$ can be seen, in a sense, as the largest most canonical system that produces the same models as $\sS$ with the same generics. All groups that somehow relate to the symmetry of the symmetric extension are in the filter of $\hat{\sS}$.

\begin{prop}
    For any system $\sS$, $\hat{\hat{\sS}}=\hat{\sS}$. That is, $\hat{\sS}$ is its own completion.
\end{prop}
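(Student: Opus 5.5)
We have to show $\hat{\hat{\sS}}=\hat{\sS}$. Write $\mathbb{B}=\B(\mathbb{P})$, so that $\hat{\sS}=\sO(\mathbb{B},\SN_\sS^\bullet)=(\mathbb{B},\hat\sG,\hat\sF)$. The plan is to unravel both completions and compare their three components one at a time.

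Since $\mathbb{B}$ is already a complete Boolean algebra, $\B(\mathbb{B})=\mathbb{B}$ canonically, so $\hat{\hat{\sS}}=\sO(\mathbb{B},\SN_{\hat\sS}^\bullet)$. The first task is therefore to show that the class names $\SN_{\hat\sS}^\bullet$ and $\SN_\sS^\bullet$ agree, at least up to forcing equivalence. By Theorem~\ref{thm:completion}, for every $\B(\mathbb{P})$-generic $G$ we have $V[G]_{\hat\sS}=V[G\cap\mathbb{P}]_\sS$. By definition, $\SN_\sS$, viewed as a class of $\mathbb{B}$-names, is the class of $\mathbb{B}$-names $\dot x$ with $\Vdash\dot x\in V[\dot G\cap\mathbb{P}]_\sS$, and $\SN_{\hat\sS}$ is the class of those with $\Vdash\dot x\in V[\dot G]_{\hat\sS}$. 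Hence the two classes are literally equal, and so $\SN_{\hat\sS}^\bullet=\SN_\sS^\bullet$. I expect a small technicality here: $\SN_\sS$ was originally defined for $\mathbb{P}$-names, and the completion already regards it as a class of $\mathbb{B}$-names. I would fix the convention that in $\hat\sS$ the class $\SN_\sS$ means all $\mathbb{B}$-names forced into the symmetric extension. Any $\mathbb{B}$-name is forcing equivalent to a $\mathbb{P}$-name (by Proposition~\ref{prop:cBaequi}), so the two readings yield forcing-equivalent class names with the same interpretation in every extension.

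With the class names shown to agree, the groups agree: both are $\{\pi\in\Aut(\mathbb{B}):\ \Vdash\pi(\dot X)=\dot X\}$ for the same $\dot X$, up to forcing equivalence. Forcing-equivalent class names have the same set of automorphisms fixing them in the forced sense, because $\Vdash\pi(\dot X)=\dot X$ depends only on the interpretation. The filters are generated by the groups $\res(\dot x)$ with $\Vdash\dot x\in\dot X$, and these range over the same names. So the generators coincide, and hence so do the filters. This gives equality of the triples.

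The main obstacle is exactly the identification in the second paragraph. The completion is defined by taking $\SN$ as a class of names, and I must make sure the convention by which $\mathbb{P}$-names are viewed as $\mathbb{B}$-names does not make $\SN_\sS^\bullet$ strictly smaller than $\SN_{\hat\sS}^\bullet$ as a literal class. If the literal classes differ, I would first try to argue that only the interpretations matter: $\sO(\mathbb{B},\dot X)$ depends on $\dot X$ only through forced statements $\Vdash\pi(\dot X)=\dot X$ and $\Vdash\dot x\in\dot X$. Therefore $\sO(\mathbb{B},\dot X)=\sO(\mathbb{B},\dot Y)$ whenever $\Vdash\dot X=\dot Y$ as classes, and this suffices.
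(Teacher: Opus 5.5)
Your proposal is correct and follows essentially the paper's route. The paper's entire proof is the observation $\Vdash \SN_{\sS}^\bullet = \SN_{\hat{\sS}}^\bullet$; you justify this forced equality via Theorem~\ref{thm:completion} ($V[G]_{\hat\sS}=V[G\cap\mathbb{P}]_\sS$), and you make explicit the two points the paper leaves implicit: $\sO(\mathbb{B},\dot X)$ depends only on the forced interpretation of $\dot X$, and $\B(\mathbb{B})$ is identified with $\mathbb{B}$ up to canonical isomorphism.
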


\begin{proof}
We simply have that $\Vdash \SN_{\sS}^\bullet = \SN_{\hat{\sS}}^\bullet$.
\end{proof}

The following is also immediate.

\begin{lemma}
$\sS \cong_\SN \sT$ iff $\hat{\sS}$ and $\hat{\sT}$ are isomorphic, meaning that up to a relabelling of the forcing notions the two systems are equal. 
\end{lemma}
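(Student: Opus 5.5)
We prove the two directions separately, with the completion $\hat{\sS} = \sO(\B(\mathbb{P}), \SN_\sS^\bullet)$ as the bridge. By Theorem~\ref{thm:completion} we have $\sS \cong_\SN \hat{\sS}$ and $\sT \cong_\SN \hat{\sT}$, and $\cong_\SN$ is an equivalence relation.

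\emph{Right to left.} Suppose $\hat{\sS}$ and $\hat{\sT}$ are isomorphic via a relabelling $\pi \colon \B(\mathbb{P}) \to \B(\mathbb{Q})$. Then $\pi$ carries names to names, and the systems coincide up to $\pi$. Hence $\pi``\SN_{\hat\sS} = \SN_{\hat\sT}$, and the maps $i = \pi$, $i^* = \pi^{-1}$ witness $\hat{\sS} \cong_\SN \hat{\sT}$. Composing with the equivalences from Theorem~\ref{thm:completion} gives $\sS \cong_\SN \sT$.

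\emph{Left to right.} Suppose $\sS \cong_\SN \sT$. By Theorem~\ref{thm:tenequi} in the case $\HC = \SN$, there is an isomorphism $\pi \colon \B(\mathbb{P}) \to \B(\mathbb{Q})$ with $\pi``\SN_{\sS'} = \SN_{\sT'}$. Here $\sS', \sT'$ are the Boolean-completed systems from Proposition~\ref{prop:cBaequi}, whose $\SN$-classes are those of $\sS, \sT$ viewed over the completions (up to forcing equivalence, which is harmless since $\SN$ is forcing stable). We now check that $\pi$ transports the completion of $\sS$ onto that of $\sT$:
\begin{enumerate}
\item $\Vdash_{\B(\mathbb{Q})} \pi(\SN_\sS^\bullet) = \SN_\sT^\bullet$. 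This holds because every $\pi(\dot x)$ with $\dot x \in \SN_\sS$ is forced equal to a member of $\SN_\sT$, and conversely.
\item Conjugation $\sigma \mapsto \pi\sigma\pi^{-1}$ maps $\{\sigma \in \Aut(\B(\mathbb{P})) : \Vdash \sigma(\SN_\sS^\bullet) = \SN_\sS^\bullet\}$ onto the corresponding group for $\sT$. This follows from (1) and the symmetry lemma.
\item Conjugation maps $\res(\dot x)$ to $\res(\pi(\dot x))$, so the generators of the filters correspond.
\end{enumerate}
Together these show $\pi$ is an isomorphism $\hat\sS \cong \hat\sT$.

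The main obstacle is step (1) together with the identification of $\SN_{\sS'}$ with $\SN_\sS$ viewed as $\B(\mathbb{P})$-names. One has to make sure that "isomorphic" is read modulo forcing-equivalent names, or else use the class name $\SN^\bullet$, which is closed under forcing equivalence. If needed, I would replace Theorem~\ref{thm:tenequi} by Corollary~\ref{thm:tenequilr}. Its model-level characterization $V[G\cap\mathbb{P}]_\sS = V[\pi``G\cap\mathbb{Q}]_\sT$ gives (1) directly, since $(\SN_\sS^\bullet)^G = V[G]_\sS$.
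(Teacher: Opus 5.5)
Your proposal is correct and is essentially the argument the paper leaves as ``immediate''. You combine $\sS\cong_\SN\hat\sS$ from Theorem~\ref{thm:completion} with the Boolean isomorphism $\pi$ given by Theorem~\ref{thm:tenequi}/Corollary~\ref{thm:tenequilr}; this yields $\Vdash\pi(\SN_\sS^\bullet)=\SN_\sT^\bullet$, so $\pi$ transports the canonical construction $\sO$ from one completion to the other. Your worry about reading ``isomorphic'' modulo forcing-equivalent names is unnecessary: the group and the generators of the filter of $\sO(\B(\mathbb{P}),\dot X)$ are defined through forced equalities, so conjugation by $\pi$ carries them over literally.
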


The completion has some additional desirable properties such as the following. 

\begin{lemma}\label{lem:maximalitycompletion}
If $\AC$ holds, then $\hat \sS$ has mixing, i.e., for any $p \in \hat{\mathbb{P}}$, any $\dot x_0, \dots, \dot x_n \in \HS$ and any formula $\varphi(x,\dot x_0, \dots, \dot x_n)$ in the language of set theory, if \[p \Vdash_{\hat\sS} \exists x \varphi(x,\dot x_0, \dots, \dot x_n),\] then there is $\dot x \in \HS$ such that \[p \Vdash_{\hat\sS} \varphi(\dot x,\dot x_0, \dots, \dot x_n).\]
\end{lemma}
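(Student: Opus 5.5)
Working in $V\models\AC$, write $\hat\sS=(\hat{\mathbb{P}},\hat\sG,\hat\sF)$ with $\hat{\mathbb{P}}=\B(\mathbb{P})$, and let $G$ be $\hat{\mathbb{P}}$-generic with $p\in G$. The plan is the usual mixing argument for forcing, carried out inside the completion. The point is that $\hat\sF$ contains the respect group of \emph{every} name in $\SN_\sS$. So a mixture of symmetric names along an antichain lies in $\SN_{\hat\sS}=\HR_{\hat\sS}$, even if it is not visibly symmetric. Lemma~\ref{lem:hshr} then turns it into a genuine $\HS$-name.

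\textbf{Step 1 (choose witnesses).} By the symmetric forcing theorem, every $q\le p$ has some $r\le q$ and some $\dot y\in\HS_{\hat\sS}$ with $r\Vdash_{\hat\sS}\varphi(\dot y,\dot x_0,\dots,\dot x_n)$. Using $\AC$ in $V$, fix a maximal antichain $A$ below $p$ and, for each $a\in A$, a name $\dot y_a\in\HS_{\hat\sS}$ with $a\Vdash_{\hat\sS}\varphi(\dot y_a,\dots)$.

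\textbf{Step 2 (mix).} Form the mixed name
\[\dot x=\{(b,\dot z): \exists a\in A\,(b\le a \wedge \exists c\,((c,\dot z)\in\dot y_a \wedge b\le c))\}.\]
Then $a\Vdash\dot x=\dot y_a$ for each $a\in A$, and $\dot x$ is a $\hat{\mathbb{P}}$-name. To check $\Vdash_{\hat{\mathbb{P}}}\dot x\in V[\dot G]_{\hat\sS}$, note that every generic meets $A$ (so $\dot x^G=\dot y_a^G$ for the unique $a\in A\cap G$) or meets $\neg p$ (so $\dot x^G=\emptyset$). Hence $\dot x\in\SN_{\hat\sS}$. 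Since $\hat\sS\cong_\SN\sS$ (Theorem~\ref{thm:completion}), $\SN_{\hat\sS}$ is what the completion was built from.

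\textbf{Step 3 (get an $\HS$-name).} By Theorem~\ref{thm:completion}, $\SN_{\hat\sS}=\HR_{\hat\sS}$, so $\dot x\in\HR_{\hat\sS}$. Lemma~\ref{lem:hshr} then gives $i(\dot x)\in\HS_{\hat\sS}$ with $\Vdash i(\dot x)=\dot x$. Finally, $p\Vdash_{\hat\sS}\varphi(i(\dot x),\dot x_0,\dots,\dot x_n)$: any $\hat\sS$-generic, equivalently (Theorem~\ref{thm:quotsummary}) a model produced by a full generic, containing $p$ contains some $a\in A$, and there $i(\dot x)$ and $\dot y_a$ evaluate the same.

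\textbf{Main obstacle.} The delicate point is membership of $\dot x$ in $\SN_{\hat\sS}$ and the identity $\SN_{\hat\sS}=\HR_{\hat\sS}$. I would re-verify the latter directly. Every $\pi\in\hat\sG$ satisfies $\Vdash\pi(\SN_\sS^\bullet)=\SN_\sS^\bullet$, so $\res_{\hat\sG}(\dot x)$ is by definition a generator of $\hat\sF$. For the hereditary part, I would replace $\dot x$ first by $\cl(\dot x)$ built from names in $\SN$, whose constituents are again forced into the model. In the last step, I would keep the forcing relation in the sense of full generics, to avoid the pitfall noted after Theorem~\ref{thm:forcingtheorem} about non-$\HS$ names.
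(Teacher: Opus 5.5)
Your proof is correct and follows the paper's route. The paper simply cites the maximality principle of forcing, the identity $\SN_{\hat\sS}=\HR_{\hat\sS}$, and Lemma~\ref{lem:hshr}; your Steps 1--3 are exactly these ingredients with the mixing done explicitly, and you correctly let the name evaluate to $\emptyset$ below $\neg p$, which is what puts $\dot x$ into $\SN_{\hat\sS}$ outright. The $\cl$ detour in your last paragraph is unnecessary: every constituent of your mixed name already lies in $\HS_{\hat\sS}$, so you only need $\dot x$ itself to be respected, and that holds because $\res_{\hat\sG}(\dot x)$ is one of the generators of $\hat\sF$.
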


\begin{proof}
    This follows immediately from the maximality principle of forcing, that $\SN_{\hat\sS} = \HR_{\hat\sS}$ and the fact that any $\HR$-name is forcing equivalent to an $\HS$-name (see Lemma~\ref{lem:hshr}).
\end{proof}

\section{Characterizing symmetric extensions}
\label{sec:characterising}

\begin{thm}\label{thm:HODsubsystem}
 Let $\sS = (\mathbb{P}, \sG, \sF)$ be a symmetric system and $G$ be $\mathbb{P}$-generic over $V$. Let $V \subseteq M \subseteq V[G]_\sS$. Then the following are equivalent: 
 
 \begin{enumerate}
  \item $M$ is of the form $\HOD_{V(x)}^{V[G]}$, for some $x \in V[G]_\sS$.
  \item $M = V[G]_{\sT}$ for some $\sT \lessdot \hat{\sS}$ where the forcing of $\sT$ is $\B(\mathbb{P})$.
 \end{enumerate}

 \begin{proof}
     The implication from (2) to (1) follows from Theorem~\ref{thm:V(P/S)}. In the other direction, let $\dot x \in \SN_\sS$ be such that $x = \dot x^G$. Consider a name $\dot X$ for $\trcl(\{ \dot x \})$ and note that $\dot X$ reflects to $\dot x$ (see Definition~\ref{def:reflect}). Thus $\dot X$ is self-reflecting. Letting $\sT = \sO(\B(\mathbb{P}), \dot X)$ it is immediate to check that $\sT \lessdot \hat{\sS}$. By Proposition~\ref{prop:nameclassHOD}, $V[G]_\sT = \HOD^{V[G]}_{V \cup \trcl(\{ x \})} = \HOD^{V[G]}_{V(x)}$. 
 \end{proof}

\end{thm}

Theorem~\ref{thm:HODsubsystem} is a vast generalization of the following result of Grigorieff. 

\begin{cor}[{\cite[\S8.1, Theorem 3]{Grigorieff1975}}]\label{lem:hodsaresymmetricext}
    Let $V[G]$ be a $\B(\mathbb{P})$-generic forcing extension of $V$. Then the models of the form $\HOD_{V(x)}^{V[G]}$ are exactly the symmetric extensions of $V$ of the form $V[G]_\sS$.
\end{cor}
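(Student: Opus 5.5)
The corollary has two inclusions, and I will derive both from Theorem~\ref{thm:HODsubsystem} together with Proposition~\ref{prop:nameclassHOD}, with $G$ a $\B(\mathbb{P})$-generic filter. In the statement, ``symmetric extensions of the form $V[G]_\sS$'' means systems $\sS$ whose forcing is $\B(\mathbb{P})$, or more generally a poset densely embedded in it so that $G$ restricts to a generic for it. Proposition~\ref{prop:cBaequi} and Theorem~\ref{thm:completion} let us pass to the case where the forcing is $\B(\mathbb{P})$ itself without changing the model.

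\emph{Symmetric extensions are of the form $\HOD_{V(x)}^{V[G]}$.} Let $\sS$ be a system on $\B(\mathbb{P})$. By Theorem~\ref{thm:V(P/S)}, $V[G]_\sS = V(\mathbb{P}/\sS)$, and the theorem also gives $V[G]_\sS = \HOD^{V[G]}_{V(\mathbb{P}/\sS)}$. Taking $x = \mathbb{P}/\sS \in V[G]_\sS$ gives the required form. The only point to check is that the second equality in Theorem~\ref{thm:V(P/S)} really covers an arbitrary $\sS$ on $\B(\mathbb{P})$. It does: $V[G]$ is a $\mathbb{P}/\sS$-extension of $V(\mathbb{P}/\sS)$ by Corollary~\ref{cor:forcingback}, and $\mathbb{P}/\sS$ is weakly homogeneous by Lemma~\ref{lem:homquotient}. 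By the usual homogeneity argument, every set in $V[G]$ that is ordinal definable from parameters in $V(\mathbb{P}/\sS)$ and is a subset of $V(\mathbb{P}/\sS)$ already lies in $V(\mathbb{P}/\sS)$. An $\in$-induction then gives the equality.

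\emph{Models $\HOD_{V(x)}^{V[G]}$ are symmetric extensions.} Let $x \in V[G]$ and choose a $\B(\mathbb{P})$-name $\dot x$ with $\dot x^G = x$. I will use the trivial system $\sS_{\mathrm{triv}} = (\B(\mathbb{P}), \{\id\}, \{\{\id\}\})$, for which $V[G]_{\sS_{\mathrm{triv}}} = V[G]$. The hypothesis $V \subseteq M \subseteq V[G]_{\sS_{\mathrm{triv}}}$ of Theorem~\ref{thm:HODsubsystem} holds for $M = \HOD^{V[G]}_{V(x)}$, so implication (1)$\Rightarrow$(2) yields a system $\sT$ on $\B(\mathbb{P})$ with $M = V[G]_\sT$.

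To avoid relying on that black box, the construction can also be done directly, mirroring the proof of Theorem~\ref{thm:HODsubsystem}. Let $\dot X$ be the canonical name for $\trcl(\{\dot x\})$. Then $\dot X$ reflects to $\dot x$, because any automorphism respecting $\dot x$ respects the canonical name for $\trcl(\{\dot x\})$, and conversely. Since $\Vdash \dot x \in \dot X$, $\dot X$ self-reflects. Set $\sT = \sO(\B(\mathbb{P}), \dot X)$. Proposition~\ref{prop:nameclassHOD} gives
\[V[G]_\sT = \HOD^{V[G]}_{V \cup \trcl(\{x\})}.\]
It remains to show $\HOD^{V[G]}_{V\cup\trcl(\{x\})} = \HOD^{V[G]}_{V(x)}$. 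The inclusion $\subseteq$ is immediate, since $\trcl(\{x\}) \subseteq V(x)$. For $\supseteq$, note that every element of $V(x)$ is definable from ordinals, elements of $V$, and $x$; with a predicate for $V$ this follows from the standard description of $V(x)$ as a union of a definable hierarchy over $V \cup \trcl(\{x\})$. Hence ordinal definability over $V(x)$ reduces to ordinal definability over $V \cup \trcl(\{x\})$.

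The main obstacle is the ground-model definability just invoked. In general the footnote warns that $V$ need not be definable, but here we work in $V[G]$, a set-generic extension, where $V$ is definable with a parameter. One could also sidestep this entirely: the class of parameters is $V \cup \trcl(\{x\})$, and $V(x)$ is built from these by a definable recursion in $V[G]$ once $V$ is available as a class. With that settled, everything else is direct citation of earlier results.
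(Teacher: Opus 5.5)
Your proposal is correct and follows the paper's proof. The paper likewise applies Theorem~\ref{thm:HODsubsystem} to the trivial system $(\mathbb{P},\{\id\},\{\{\id\}\})$, and for the other direction notes that every system on $\B(\mathbb{P})$ is a complete subsystem of that system's completion. You instead cite Theorem~\ref{thm:V(P/S)} directly, which is exactly how (2)$\Rightarrow$(1) of Theorem~\ref{thm:HODsubsystem} is proved.

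One caveat: your claim that $V$ is definable in $V[G]$ is only known in $\ZF$ under extra hypotheses. The paper's notation $\HOD^{V[G]}_{V(x)}$ already presupposes that $V$ is a class of $V[G]$, so your main argument does not depend on that aside.
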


\begin{proof}
   Simply use Theorem~\ref{thm:HODsubsystem} and the fact that $V[G]$ is a symmetric extension of $V$ via $(\mathbb{P}, \{ \id \}, \{\{ \id \}\})$. The completion of this system is $(\B(\mathbb{P}), \Aut(\B(\mathbb{P})), \sF)$, where $\sF$ is generated by the trivial group and every system using $\B(\mathbb{P})$ as a forcing notion is a complete subsystems of this one. 
\end{proof}

\begin{cor}
 Let $V[G]$ be a generic extension of $V$. Then there are at most set-many intermediate models of the form $\HOD_{V(x)}^{V[G]}$ between $V$ and $V[G]$.
\end{cor}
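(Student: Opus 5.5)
The plan is to map each intermediate model of the form $\HOD_{V(x)}^{V[G]}$, with $V\subseteq \HOD_{V(x)}^{V[G]}\subseteq V[G]$, to a symmetric system over $\B(\mathbb{P})$ that lies in $V$, and then to count such systems. By Corollary~\ref{lem:hodsaresymmetricext}, applied with $\mathbb{P}$ replaced by the Boolean completion, every such model equals $V[G]_\sT$ for some symmetric system $\sT$ whose forcing is $\B(\mathbb{P})$. Theorem~\ref{thm:HODsubsystem} refines this: we may take $\sT = \sO(\B(\mathbb{P}),\dot X)$ with $\dot X$ self-reflecting, and $\sT$ is then a complete subsystem of the completion of $(\mathbb{P},\{\id\},\{\{\id\}\})$.

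The key observation is that $\sT$ is determined, up to the model it produces, by two pieces of data. The first is its group $\sG \subseteq \Aut(\B(\mathbb{P}))$. The second is its filter $\sF$, which is a set of subgroups of $\Aut(\B(\mathbb{P}))$. Both are subsets of the sets $\Aut(\B(\mathbb{P}))$ and $\mathcal P(\Aut(\B(\mathbb{P})))$, which exist in $V$ and need no choice. Hence every relevant system is an element of
\[
\{\B(\mathbb{P})\}\times \mathcal P(\Aut(\B(\mathbb{P})))\times \mathcal P(\mathcal P(\Aut(\B(\mathbb{P})))),
\]
which is a set in $V$. The map $\sT \mapsto V[G]_\sT$ is then a class function from this set onto the collection of intermediate models of the required form. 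Each value is a definable class (for instance via the sets $\HS_{\alpha,\sT}^G$), so by Replacement, applied to the set-indexed family of parameters, there are at most set-many such models.

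I expect the main obstacle to be one of formalisation rather than mathematics. Collections of proper classes are not first-order objects, so "set-many" must be read as: there is a set $I$ of parameters and a single formula $\Phi(v,i)$ such that every model of the form $\HOD_{V(x)}^{V[G]}$ equals $\{v:\Phi(v,i)\}$ for some $i\in I$. Here I would take $I$ to be the product above, computed in $V$, and define $\Phi(v,\sT)$ as ``$\sT$ is a symmetric system on $\B(\mathbb{P})$ and $v=\dot v^{G}$ for some $\dot v\in\HS_\sT$''. This statement is uniform in $\sT$.

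A second point to check is that Corollary~\ref{lem:hodsaresymmetricext} applies to an arbitrary generic extension $V[G]$. If $G$ is $\mathbb{P}$-generic, then its upward closure is $\B(\mathbb{P})$-generic and yields the same extension, so this is harmless.
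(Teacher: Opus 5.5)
Your proposal is correct and matches the argument the paper intends, which it leaves implicit. By Corollary~\ref{lem:hodsaresymmetricext} (via Theorem~\ref{thm:HODsubsystem}), each such model is $V[G]_\sT$ for a system $\sT$ on $\B(\mathbb{P})$ whose group and filter lie in $\mathcal{P}(\Aut(\B(\mathbb{P})))$ and $\mathcal{P}(\mathcal{P}(\Aut(\B(\mathbb{P}))))$, and $V$ has only set-many such systems; your remarks on passing to $\B(\mathbb{P})$ and on reading ``set-many'' as indexing by a set of parameters fill in what the paper leaves unsaid.
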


\begin{prop}[{\cite[\S6.1, Theorem 1]{Grigorieff1975}}]\label{lem:V(x)toV[G]}
     Let $V[G]$ be a $\mathbb{P}$-generic forcing extension of $V$ and $x \in V[G]$. Then $V[G]$ is a forcing extension of $V(x)$.
\end{prop}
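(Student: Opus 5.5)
The plan is to realise $V(x)$ as a symmetric extension of $V$ contained in $V[G]$ and then apply Theorem~\ref{thm:quotsummary}(2): if $V(x)=V[G]_\sS$ for some symmetric system $\sS=(\mathbb{P}',\sG,\sF)$ with $G$ (or something interdefinable with it) $\mathbb{P}'$-generic, then $V[G]$ is a $\mathbb{P}'/\sS$-generic extension of $V(x)$.

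Since the Theorem~\ref{thm:HODsubsystem} machinery produces models of the form $\HOD^{V[G]}_{V(x)}$ rather than $V(x)$ itself, I will argue in two steps.

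\textbf{Step 1: symmetric extensions.} Work with $\B(\mathbb{P})$, so $V[G]=V[G']$ where $G'$ is the induced $\B(\mathbb{P})$-generic. Apply Corollary~\ref{lem:hodsaresymmetricext} to get a system $\sT$ on $\B(\mathbb{P})$ with $M:=\HOD^{V[G]}_{V(x)}=V[G']_\sT$. By Theorem~\ref{thm:quotsummary}(2), $V[G]$ is a generic extension of $M$ via $\B(\mathbb{P})/\sT$. Moreover $x\in M$ and $V(x)\subseteq M$.

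\textbf{Step 2: from $M$ down to $V(x)$.} It remains to show that $M$ is a forcing extension of $V(x)$; composing the two forcings then gives the result. The cleanest route is to show directly that $V(x)=V(\mathbb{Q})$ for a suitable forcing and invoke the lemma preceding Theorem~\ref{thm:V(P/S)}. Concretely:

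\begin{enumerate}
\item Let $\dot x$ be a name for $x$ and let $\sT'=\sO(\B(\mathbb{P}),\dot X)$ with $\dot X$ a name for $\trcl(\{\dot x\})$.
\item In $V(x)$ one can define (using $V$ as a class and $x$) the forcing $\mathbb{Q}$ of all pairs $(r,a)$ with $r\in\B(\mathbb{P})$ and $a$ a finite partial assignment of names $\dot y$ (appearing in $\trcl(\{\dot x\})$) to elements of $\trcl(\{x\})$, such that some $\pi\in\Aut(\B(\mathbb{P}))$ makes the assignment consistent with $r$. This mirrors $\mathbb{P}/\sS$ from Definition~\ref{def:quotforce}, with the respect-basis given by Proposition~\ref{prop:respectbasefromHOD}.
\item The canonical filter built from $G'$ and the true values $\dot y^{G'}$ is $\mathbb{Q}$-generic over $V(x)$. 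This is the analogue of Lemma~\ref{lem:canondense}.
\end{enumerate}

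The step I expect to be the main obstacle is item 3: proving genericity over $V(x)$ rather than over $V[G]_{\sT'}=M$. Dense sets in $V(x)$ are definable from $x$, parameters in $V$, and $\B(\mathbb{P})$, so they are ``hereditarily respected'' by automorphisms fixing finitely many $\dot y$ from $\trcl(\{\dot x\})$. I would follow the proof of Lemma~\ref{lem:canondense}: given such a dense set, move a condition by an automorphism $\pi$ into the canonical filter, using that $\pi$ preserves the defining formula.

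Having the filter $K$, we get $\dom K=G'$, so $V(x)[K]\supseteq V[G]$, and clearly $V(x)[K]\subseteq V[G]$. This finishes the proof, and avoids Step~1 altogether.
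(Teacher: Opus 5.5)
There is a genuine gap. It sits in item~2, not item~3.

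For your $\sT'=\sO(\B(\mathbb{P}),\dot X)$, Proposition~\ref{prop:nameclassHOD} (since $\dot X$ self-reflects) gives $V[G]_{\sT'}=\HOD^{V[G]}_{V(x)}=M$, and Theorem~\ref{thm:V(P/S)} gives $V[G]_{\sT'}=V(\B(\mathbb{P})/\sT')$. So the quotient you are mirroring lies in $V(x)$ only if $V(x)=M$; the very lemma you want to invoke would then give $V(x)=V(\mathbb{Q})=M$. The paper points out that this fails in general: a Cohen extension has a proper class of intermediate models $V(x)$ but only set-many of the form $\HOD^{V[G]}_{V(x)}$.

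Concretely, ``some $\pi$ makes the assignment consistent with $r$'' can only be made precise by referring to $G$. One needs $p\in G$ and names $\dot w_i$ with $\dot w_i^G=a(\dot y_i)$, $p\Vdash\pi(\dot y_i)=\dot w_i$ and $\pi^{-1}(p)\parallel r$. The resulting set has a $\sT'$-symmetric name, so it lies in $M$, but using $V$ as a class inside $V(x)$ does not recover it.

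No other forcing of this style can rescue the strategy. Such a forcing is weakly homogeneous (Lemma~\ref{lem:homquotient}). If a weakly homogeneous $\mathbb{Q}\in V(x)$ had a generic $K$ with $V(x)[K]=V[G]$, then for every $A\subseteq V(x)$ definable in $V[G]$ from parameters in $V(x)$, membership in $A$ would be decided by $\mathds 1_{\mathbb{Q}}$. Hence $A\in V(x)$, and so $\HOD^{V[G]}_{V(x)}=V(x)$, which again fails in general.

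The step you flagged as the obstacle is actually harmless: $V(x)\subseteq M$, and the canonical filter is already fully generic over $M$. Your Step~1 reduction, that $M$ is a forcing extension of $V(x)$, is just another instance of the statement being proved, and you drop it at the end anyway.

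The missing idea is to leave $V[G]$ first. The paper forces over $V[G]$ with $\Coll(\omega,\trcl(\{x\}))\in V(x)$, obtaining $H$ and a surjection $f\colon\omega\to\trcl(\{x\})$. Then $E=\{(n,m)\in\omega\times\omega:f(n)\in f(m)\}$ codes both $x$ and $H$, so $V(x)[H]=V[E]$ is generated by a set of ground-model elements inside the forcing extension $V[G][H]$ of $V$. The intermediate model theorem makes $V[G][H]$ a forcing extension of $V[E]=V(x)[H]$, hence of $V(x)$. Since $V[G]=V(x)[G]$ with $G\subseteq V(x)$, $V[G]$ is an intermediate model of this extension and therefore itself a forcing extension of $V(x)$.

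The forcing produced this way need not be homogeneous, which is exactly what the case $V(x)\neq\HOD^{V[G]}_{V(x)}$ demands. A homogeneous, quotient-type presentation appears only after further forcing, as with $\mathbb{Q}^{<\omega}$ in Theorem~\ref{thm:characterise}.
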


\begin{proof}
    Consider the forcing $\Coll(\omega, \trcl(\{ x \})) \in V(x)$, which consists of finite partial functions from $\omega$ to $\trcl(\{ x \})$ and let $H$ be generic over $V[G]$ for this. $\bigcup H$ is a generic surjection $f$ from $\omega$ to $\trcl(\{ x \})$. 
    
    The set $E = \{ (n,m) \in \omega \times \omega : f(n) \in f(m) \} \in V(x)[H]$ codes $x$ and the generic $H$ over $V$, so we have that $V(x)[H] = V[E]$. Since $E\subseteq V$, $V[E]$ is an intermediate forcing extension between $V$ and $V[G][H]$ (see, e.g., \cite[Corollary~15.42]{Jech2003}), and in particular there is a quotient forcing notion showing that $V[G][H]$ is a generic extension of $V[E]$. All in all, $V[G][H]$ is a forcing extension of $V(x)$. Again, since $G \subseteq V(x)$, $V[G] \subseteq V[G][H]$ is an intermediate forcing extension of $V(x)$.  
\end{proof}
\begin{thm}\label{thm:characterise}
Let $V \subseteq M$. Then the following are equivalent: 

\begin{enumerate}
    \item $M$ is a symmetric extension of $V$,
    \item $M = V(x)$, for some $x \in V[G]$, where $G$ is generic over $V$, 
    \item $M = \HOD_{V(x)}^{V[G]}$, for some $x \in V[G]$, where $G$ is generic over $V$.
\end{enumerate}
\end{thm}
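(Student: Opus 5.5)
The plan is to prove the cycle (1)$\Rightarrow$(2)$\Rightarrow$(3)$\Rightarrow$(1), with all the real work sitting in results already established.

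\emph{(1)$\Rightarrow$(2).} Suppose $M = V[G_0]_\sS$ for some symmetric system $\sS = (\mathbb{P},\sG,\sF)$ and some $\sS$-generic $G_0$. By Theorem~\ref{thm:quotsummary} we may replace $G_0$ by a $\mathbb{P}$-generic $G$ with $M = V[G]_\sS$. Theorem~\ref{thm:V(P/S)} then gives $M = V(\mathbb{P}/\sS)$, where $\mathbb{P}/\sS \in V[G]_\sS \subseteq V[G]$. So $x = \mathbb{P}/\sS$ witnesses (2).

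\emph{(2)$\Rightarrow$(3).} Let $M = V(x)$ with $x \in V[G]$ and $G$ generic over $V$. By Proposition~\ref{lem:V(x)toV[G]}, $V[G]$ is a forcing extension $V(x)[K]$ of $V(x)$ by some poset $\mathbb{Q} \in V(x)$. I do not expect this forcing to be homogeneous, so I would not try to prove $\HOD^{V[G]}_{V(x)} = V(x)$ directly. Instead I would change the generic extension and the parameter. Work over $V(x)$, which is a model of $\ZF$, and force further with $\mathbb{Q}' = \Coll(\omega, \trcl(\{\mathbb{Q}\}))$, or with a suitable product of $\mathbb{Q}$ with a collapse. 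The aim is a generic extension $V[G']$ of $V$ in which the whole forcing over $V(x)$ is weakly homogeneous and definable in $V(x)$. The passage from $V(x)$ to $V[G']$ must still be a set forcing extension of $V$; this holds because, as in the proof of Proposition~\ref{lem:V(x)toV[G]}, after collapsing $\trcl(\{x\})$ to $\omega$ the model becomes $V[E]$ for a set of ordinals $E$, so it is a set-generic extension of $V$.

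Homogeneity of a forcing defined in $V(x)$ then yields $\HOD^{V[G']}_{V(x)} = V(x)$ by the usual argument. Any set ordinal definable from parameters in $V(x)$ is decided by the trivial condition, hence lies in $V(x)$, and an $\in$-induction finishes. This gives (3) with $x$ and $G'$. I expect this step to be the main obstacle: arranging a homogeneous forcing from $V(x)$ up to a set-generic extension of $V$ without the Axiom of Choice in $V(x)$. My first attempt would be the L\'evy collapse $\Coll(\omega,\lambda)$ for $\lambda$ large enough to absorb $\mathbb{Q}$, using the standard absorption fact, which must be checked in $\ZF$ since $\mathbb{Q}$ is collapsed to be countable.

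\emph{(3)$\Rightarrow$(1).} Given $M = \HOD^{V[G]}_{V(x)}$ with $G$ being $\mathbb{P}$-generic, Corollary~\ref{lem:hodsaresymmetricext} states exactly that such models are symmetric extensions $V[G]_\sS$. Alternatively, Theorem~\ref{thm:HODsubsystem}, applied to the system $(\mathbb{P},\{\id\},\{\{\id\}\})$, whose symmetric extension is $V[G]$, gives $M = V[G]_\sT$ for some system $\sT$. Hence $M$ is a symmetric extension of $V$.
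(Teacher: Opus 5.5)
Your implications (1)$\Rightarrow$(2) and (3)$\Rightarrow$(1) are fine and match the paper: you use Theorem~\ref{thm:quotsummary} together with Theorem~\ref{thm:V(P/S)}, and Corollary~\ref{lem:hodsaresymmetricext}.

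The gap is in (2)$\Rightarrow$(3). You correctly see that you must pass to a larger extension that is homogeneous over $V(x)$, but you never construct one, and the route you suggest fails as stated.

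\begin{itemize}
\item The poset $\mathbb{Q}\in V(x)$ with $V[G]=V(x)[K]$ is in general not well-orderable in $V(x)$. For instance, if $V\models\ZFC$ and $V(x)\models\neg\AC$, then $K$ adds a well-ordering of some set of $V(x)$. A well-orderable forcing can never do that: from a name for such a well-ordering of length $\lambda$ one defines in the ground model an injection of the set into $\mathbb{Q}\times\lambda$.
\item For the same reason, in that situation $\Coll(\omega,\lambda)$ with $\lambda$ an ordinal can never make $\mathbb{Q}$ countable or absorb it.
\item Switching to $\Coll(\omega,\trcl(\{\mathbb{Q}\}))$ would need an absorption theorem $\mathbb{Q}\times\Coll(\omega,Y)\equiv\Coll(\omega,Y)$ for non-well-orderable $Y$ in $\ZF$. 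The standard proof builds a dense tree of maximal antichains by recursion along a well-order of the poset, and you give no choiceless substitute.
\end{itemize}

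So the step you yourself flag as the main obstacle remains open.

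The missing idea is that no collapse is needed. Force over $V[G]$ with the finite support product $\mathbb{Q}^{<\omega}$ of $\omega$ copies of $\mathbb{Q}$, and let $H$ be the generic.

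\begin{itemize}
\item By the product lemma, which holds in $\ZF$, $K\times H$ is $\mathbb{Q}\times\mathbb{Q}^{<\omega}$-generic over $V(x)$.
\item Reindexing coordinates gives $\mathbb{Q}\times\mathbb{Q}^{<\omega}\cong\mathbb{Q}^{<\omega}$, so $V[G][H]$ is a $\mathbb{Q}^{<\omega}$-generic extension of $V(x)$.
\item $\mathbb{Q}^{<\omega}$ is weakly homogeneous without any choice: given two conditions, a finitary permutation of coordinates moves the finite support of one off the support of the other.
\end{itemize}

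The usual argument then gives $\HOD^{V[G][H]}_{V(x)}=V(x)$. Moreover $V[G][H]$ is just the generic extension of $V$ by $\mathbb{P}*\dot{\mathbb{Q}}^{<\omega}$, so no coding through $V[E]$ is required. This is exactly the paper's argument; it yields (3), and (1) follows by Corollary~\ref{lem:hodsaresymmetricext}.
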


\begin{proof}
    The equivalence of (1) and (3) follows from Corollary~\ref{lem:hodsaresymmetricext}. By Theorem~\ref{thm:V(P/S)} and (1) of Theorem~\ref{thm:quotsummary}, every symmetric extension of $V$ is of the form $V(x)$ in some generic extension of $V$. On the other hand, suppose that $x \in V[G]$, for $G$ $\mathbb{P}$-generic over $V$, for some $\mathbb{P} \in V$. By Proposition~\ref{lem:V(x)toV[G]} $V[G]$ is a forcing extension of $V(x)$, say via a forcing notion $\mathbb{Q} \in V(x)$. Consider the finite support product $\mathbb{Q}^{<\omega}$ of $\omega$-many copies of $\mathbb{Q}$ and let $H$ be $\mathbb{Q}^{<\omega}$-generic over $V[G]$. Then $V[G][H]$ is also a $\mathbb{Q}^{<\omega}$-generic extension of $V(x)$. Using the homogeneity of $\mathbb{Q}^{<\omega}$ it is easy to verify that $\HOD_{V(x)}^{V[G][H]} = V(x)$. By Corollary~\ref{lem:hodsaresymmetricext}, $V(x)$ is a symmetric extension of $V$.
\end{proof}

Not all models $V(x) \subseteq V[G]$ can be obtained as a symmetric extension using the generic $G$. There are clearly only set-many such models but there can be a proper class of models $V(x)$ between $V$ and $V[G]$, as is the case with a Cohen extension of $V$ (see \cite{Karagila2018} and \cite{ShaniHayut}). One interesting corollary of this observation is that $V(x)$ is not usually closed under ordinal definability. Namely, it can be (and in some cases it is often the case) that $V(x)\neq\HOD_{V(x)}^{V[G]}$.

To present $V(x)$ as a symmetric extension we had to pass to a larger forcing notion $\mathbb{P}^+ = \mathbb{P} * \dot{\mathbb{Q}}^{<\omega}$. $\mathbb{P}^+$ itself can also be seen as just a threshold for this to happen, as $V(x)$ can still be written as $\HOD_{V(x)}$ in any further homogeneous extension. This suggests the following definition.

\begin{definition}
    Let $\sS = (\mathbb{P}, \sG, \sF)$ be a symmetric system and $\mathbb{P} \lessdot \mathbb{Q}$. Then we define the system $\sS^{\langle \mathbb{Q}\rangle}$ as the completion of $\sO(\B(\mathbb{Q}), \SN_{\sS}^\bullet)$, where $\mathbb{P}$-names are viewed as $\B(\mathbb{Q})$-names as usual. Moreover, we write $\sS^{\langle \alpha \rangle}$ for $\sS^{\langle \mathbb{Q}\rangle}$, where $\mathbb{Q} = \mathbb{P} \times \Coll(\omega, V_\alpha)$.
\end{definition}

$\sS^{\langle \mathbb{P}\rangle} = \sS^{\langle 0 \rangle}$ is simply the completion of $\sS$. Whenever $\mathbb{Q}$ is a homogeneous extension of $\mathbb{P}$ (meaning that the quotient forcing is forced to be weakly homogeneous), $V[G \cap \mathbb{P}]_\sS = V[G]_{\sS^{\langle \mathbb{Q}\rangle}}$, for any $\mathbb{Q}$-generic $G$. Recall that any complete extension $\mathbb{Q}$ of $\mathbb{P}$ completely embeds into $\B(\mathbb{P} \times \Coll(\omega, V_\alpha))$, for large enough $\alpha$, fixing $\mathbb{P}$. We may think of $\sS^{\langle \alpha\rangle}$ as higher order completions of $\sS$ and we will see that these suffice to characterize all intermediate symmetric extensions. 

\begin{thm}
    Let $V\subseteq M \subseteq N$, be symmetric extensions of $V$, where $N$ is obtained via $\sS$. Then there is $\alpha$, $\sT \lessdot \sS^{\langle \alpha\rangle}$ and $G$ generic over $V$ so that $M = V[G]_{\sT}$, $N = V[G]_{\sS^{\langle \alpha\rangle}}$.
\end{thm}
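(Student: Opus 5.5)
Fix $\sS = (\mathbb{P}, \sG, \sF)$ and a $\sS$-generic with $N = V[G_0]_\sS$. By Theorem~\ref{thm:quotsummary} we may assume $G_0$ is $\mathbb{P}$-generic over $V$, so that $N = V[G_0]_\sS = V(x_N)$ for $x_N = \mathbb{P}/\sS$ (Theorem~\ref{thm:V(P/S)}). Since $M$ is a symmetric extension of $V$, Theorem~\ref{thm:characterise} gives $M = V(y)$ for some $y$ in some generic extension of $V$. The first concern is that this extension need not be related to $V[G_0]$. However, $M \subseteq N \subseteq V[G_0]$, and $M = V(y)$ implies $y \in M \subseteq V[G_0]$. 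So we may take $y \in V[G_0]$ directly.

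By Proposition~\ref{lem:V(x)toV[G]}, $V[G_0]$ is a forcing extension of $V(y)$ via some $\mathbb{R} \in V(y)$. As in the proof of Theorem~\ref{thm:characterise}, we then force with a homogeneous product such as $\mathbb{R}^{<\omega}$ together with a collapse, so that $V(y)$ and $V(x_N)$ become $\HOD$ over themselves in the larger model. Concretely, we choose $\alpha$ large enough that $V_\alpha$ contains $\mathbb{P}$ and names for all the relevant quotient forcings, and let $H$ be $\Coll(\omega, V_\alpha)$-generic over $V[G_0]$. Set $G = G_0 \times H$, which is $\mathbb{P} \times \Coll(\omega, V_\alpha)$-generic. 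The key step is to check that
\[\HOD^{V[G]}_{V(y)} = V(y) \quad\text{and}\quad \HOD^{V[G]}_{V(x_N)} = V(x_N).\]
For this we use that $V[G]$ is a generic extension of $V(y)$ by a weakly homogeneous forcing. The collapse absorbs the quotient from $V(y)$ to $V[G_0]$, since $\Coll(\omega, V_\alpha)$ absorbs any forcing of size less than $|V_\alpha|$ that collapses it, and $\Coll(\omega,\lambda)$ is homogeneous. I expect this to be the main obstacle: the quotient lives in $V(y)$, where choice may fail, so the standard absorption argument for $\Coll$ must be carried out in $V(y)[\text{quotient generic}] = V[G_0]$, which does satisfy choice. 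We therefore argue there that $V[G_0][H]$ is a $\Coll(\omega, V_\alpha)$-extension of $V(y)[K]$ for the quotient generic $K$. The combined forcing over $V(y)$ is then equivalent to a collapse, hence homogeneous, provided $\alpha$ is chosen above the size of the quotient (which is bounded by $|V_\alpha^{V(y)}|$ or a similar quantity). The same argument applies to $N = V(x_N)$. For $N$ one can alternatively use the remark preceding the theorem: $\mathbb{P}\times\Coll(\omega,V_\alpha)$ is a homogeneous extension of $\mathbb{P}$, so $N = V[G]_{\sS^{\langle\alpha\rangle}}$.

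With these equalities in hand, we apply Theorem~\ref{thm:HODsubsystem} to the system $\sS^{\langle\alpha\rangle}$ and the $\mathbb{P}\times\Coll(\omega,V_\alpha)$-generic $G$. The hypothesis $V \subseteq M \subseteq V[G]_{\sS^{\langle\alpha\rangle}} = N$ holds, and $M = \HOD^{V[G]}_{V(y)}$ with $y \in N$. Theorem~\ref{thm:HODsubsystem} then yields $\sT \lessdot \widehat{\sS^{\langle\alpha\rangle}} = \sS^{\langle\alpha\rangle}$, using that $\sS^{\langle\alpha\rangle}$ is already a completion and hence its own completion. The forcing of $\sT$ is $\B(\mathbb{P}\times\Coll(\omega,V_\alpha))$, and $M = V[G]_\sT$.
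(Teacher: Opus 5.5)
\textbf{The gap is in your key step $\HOD^{V[G]}_{V(y)}=V(y)$.} The rest of your outline matches the paper's proof: $M=V(y)$ with $y\in V[G_0]$, collapsing $V_\alpha$ over $V[G_0]$, getting $N=V[G]_{\sS^{\langle\alpha\rangle}}$ from the homogeneity remark, and applying Theorem~\ref{thm:HODsubsystem} to the idempotent completion $\sS^{\langle\alpha\rangle}$. Your generic $G=G_0\times H$ is even the same one the paper uses. What fails is the justification of the $\HOD$ equality.

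The $\HOD$ argument needs a poset that lives in $V(y)$, is weakly homogeneous in $V(y)$, and whose generic produces $V[G]$. Your candidate is your quotient $\mathbb{R}$ (the paper's $\mathbb{Q}$) times $\Coll(\omega,V_\alpha)$, which you claim is ``equivalent to a collapse.''
\begin{itemize}
\item The absorption lemma behind that claim is a statement about Boolean completions in the model where the poset lives, namely $V(y)$.
\item Its standard proof uses choice there: well-ordering a dense set and re-indexing the successors in a dense tree.
\item $V(y)$ typically fails $\AC$ even when $V\models\ZFC$; for example, $M$ could be the basic Cohen model.
\item $\mathbb{R}$ need not be well-orderable in $V(y)$, nor comparable in size with $V_\alpha$. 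It may also be rigid, so $\mathbb{R}$ itself supplies no homogeneity.
\end{itemize}
Your proposed fix, doing the absorption in $V[G_0]=V(y)[K]$, does not help. That $V[G_0][H]$ is a $\Coll$-extension of $V(y)[K]$ is true by definition. Nothing proved inside $V[G_0]$ shows that the trivial condition of a poset in $V(y)$ decides statements with parameters from $V(y)$, and that is exactly what the $\HOD$ computation requires. Separately, $V[G_0]\models\AC$ only if $V$ does, which the theorem does not assume.

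\textbf{The missing idea} is the homogenizing factor $\dot{\mathbb{Q}}^{<\omega}$, which you mention and then drop. The paper proceeds as follows.
\begin{itemize}
\item Choose $\alpha$ so that, in $V$, $\mathbb{P}*\dot{\mathbb{Q}}^{<\omega}$ completely embeds into $\B(\mathbb{P}\times\Coll(\omega,V_\alpha))$ over $\mathbb{P}$.
\item The collapse generic over $V[G_0]$ then already yields some $H'$ that is $\mathbb{Q}^{<\omega}$-generic over $V[G_0]$. So $V[G_0][H']$ is a $\mathbb{Q}^{<\omega}$-extension of $M$.
\item $\mathbb{Q}^{<\omega}$ is weakly homogeneous in $M$ by permuting coordinates, which needs no choice in $M$.
\item The remaining passage from $V[G_0][H']$ to $V[G_0][G_1]$ is a quotient analyzed in $V[G_0][H']$. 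This is a forcing extension of $V$, where (with the choice available from $V$) the quotient is again a collapse of a ground-model set. Hence it is weakly homogeneous and definable from parameters in $V\subseteq M$.
\item Two-step homogeneity then gives $\HOD^{V[G_0][G_1]}_{V(y)}=V(y)$.
\end{itemize}
In short, the paper puts the choice-dependent absorption in $V$ and its forcing extensions, not inside the possibly choiceless intermediate model $M$. Your argument needs this detour to go through.
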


\begin{proof}
    Let $M = V(x)$, $\sS = (\mathbb{P}, \sG, \sF)$ and $N = V[G_0]_\sS$, where $G_0$ is $\mathbb{P}$-generic over $V$. Then $V[G_0]$ is a $\mathbb{Q}$-generic forcing extension of $M$ for some $\mathbb{Q} \in M$. There is a large enough $\alpha$ so that $\mathbb{P} * \dot{\mathbb{Q}}^{<\omega}$ completely embeds into $\B(\mathbb{P} \times \Coll(\omega, V_\alpha))$, where $\dot{\mathbb{Q}}$ is an $\sS$-name for $\mathbb{Q}$. Letting $G_1$ be $\Coll(\omega, V_\alpha)$-generic over $V[G_0]$ we then have that \[M = \HOD^{V[G_0,G_1]}_{V(x)} \text{ and }V[G_0]_\sS = V[G_0,G_1]_{\sS^{\langle\alpha \rangle}}.\]
    The rest follows from Theorem~\ref{thm:HODsubsystem}.
\end{proof}

The following is interesting to note and mimics the fact forcing notions $\mathbb{P}, \mathbb{Q}$ are weakly equivalent exactly when they have strongly equivalent lottery sums $\bigoplus_{i \in \alpha} \mathbb{P} \cong \bigoplus_{i \in \beta} \mathbb{Q}$. 

\begin{prop}
   Let $\sS$, $\sT$ be symmetric systems. The following are equivalent: 

   \begin{enumerate}
       \item $\sS$ and $\sT$ are weakly equivalent. 
       \item $\sS^{\langle \alpha \rangle} \cong_\SN \sT^{\langle \beta \rangle}$, for some $\alpha, \beta$.
       \item $\sS^{\langle \alpha \rangle} \cong_\SN \sT^{\langle \alpha \rangle}$, for some $\alpha$.
   \end{enumerate}
\end{prop}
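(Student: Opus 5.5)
The plan is to prove (3)$\Rightarrow$(2)$\Rightarrow$(1)$\Rightarrow$(3). The first implication is trivial.

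For (2)$\Rightarrow$(1), note that $\cong_\SN$ implies weak equivalence; this was shown above for any class containing $\HS$. It therefore suffices to see that $\sS$ is weakly equivalent to $\sS^{\langle\alpha\rangle}$ for every $\alpha$. Let $\mathbb{Q}=\mathbb{P}\times\Coll(\omega,V_\alpha)$. Since $\Coll(\omega,V_\alpha)$ is homogeneous, $\mathbb{Q}$ is a homogeneous extension of $\mathbb{P}$. By the remark following the definition of $\sS^{\langle\mathbb{Q}\rangle}$, for every $\mathbb{Q}$-generic $G$ we then have $V[G\cap\mathbb{P}]_\sS=V[G]_{\sS^{\langle\alpha\rangle}}$. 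Using Theorem~\ref{thm:quotsummary} we may restrict attention to full generics. Every $\mathbb{P}$-generic extends to a $\mathbb{Q}$-generic, and every $\mathbb{Q}$-generic restricts to a $\mathbb{P}$-generic, so the two systems produce exactly the same models. Chaining these equivalences gives $\sS\sim\sS^{\langle\alpha\rangle}\cong_\SN\sT^{\langle\beta\rangle}\sim\sT$.

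The main work is (1)$\Rightarrow$(3). Let $\sS=(\mathbb{P},\dots)$ and $\sT=(\mathbb{R},\dots)$, and choose $\alpha$ so large that $\mathbb{P},\mathbb{R}\in V_\alpha$ and $|V_\alpha|$ exceeds the sizes of $\B(\mathbb{P})$ and $\B(\mathbb{R})$. We aim to use Corollary~\ref{thm:tenequilr}. For this we need an isomorphism $\pi\colon\B(\mathbb{P}\times\Coll(\omega,V_\alpha))\to\B(\mathbb{R}\times\Coll(\omega,V_\alpha))$ such that generics correspond and the $\sS^{\langle\alpha\rangle}$- and $\sT^{\langle\alpha\rangle}$-extensions agree. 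By the standard absorption theorem for collapses, both algebras are isomorphic to $\B(\Coll(\omega,V_\alpha))$, but an arbitrary isomorphism will not respect the symmetric models.

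Instead I would work inside a generic extension. Let $G\times H$ be $\mathbb{P}\times\Coll(\omega,V_\alpha)$-generic and set $M=V[G]_\sS$. By weak equivalence and Theorem~\ref{thm:quotsummary}, $M$ is also $V[K]_\sT$ for some $\mathbb{R}$-generic $K$. By Corollary~\ref{cor:forcingback}, $K$ can be added over $M$ by $\mathbb{R}/\sT$, and this is a small forcing relative to $V_\alpha$. Because $\Coll(\omega,V_\alpha)$ absorbs any small forcing, we can find $K'$ with $V[G][H]=V[K'][H']$, with $V[K']_\sT=M$ and $H'$ collapse-generic. The statement ``$V[\dot G\times\dot H]=V[\dot K'\times\dot H']$ and $V[\dot G]_\sS=V[\dot K']_\sT$'' holds with Boolean value one. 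We then convert this into an isomorphism between the complete Boolean algebras by the usual argument, in the style of Lemma 25.5 of Jech, that two generics generating the same extension yield an isomorphism of algebras.

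Choosing $K'$ uniformly enough that it has a name is where I expect the difficulty to lie. A cleaner route, which I would try first, avoids isomorphisms altogether by proving the equivalent first-order characterization directly. Both $\sS^{\langle\alpha\rangle}$ and $\sT^{\langle\alpha\rangle}$ are completions of systems $\sO(\B(\mathbb{Q}),\SN^\bullet)$ over forcings equivalent to $\Coll(\omega,V_\alpha)$. Fix one identification of both algebras with $\B(\Coll(\omega,V_\alpha))$, arranged so that the class names $\SN_\sS^\bullet$ and $\SN_\sT^\bullet$ evaluate to the same class $M$, and hence to the same completion. The possible mismatch between a generic and its image under the identification can be corrected by an automorphism, using homogeneity of the collapse together with the cited Lemma 25.5. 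Making this correction compatible with all generics simultaneously, possibly via a lottery-sum/maximal antichain patching, is the main obstacle.
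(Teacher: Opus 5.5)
Your arguments for (3)$\Rightarrow$(2)$\Rightarrow$(1) are fine, and more explicit than the paper, which just calls them clear. The gap is in (1)$\Rightarrow$(3): you never construct the isomorphism $\B(\mathbb{P}\times\Coll(\omega,V_\alpha))\cong\B(\mathbb{R}\times\Coll(\omega,V_\alpha))$ carrying $\SN_\sS^\bullet$ to $\SN_\sT^\bullet$. In your first route, the claim that ``$V[\dot G\times\dot H]=V[\dot K'\times\dot H']$ and $V[\dot G]_\sS=V[\dot K']_\sT$'' has Boolean value one presupposes names $\dot K',\dot H'$. But $K'$ was only found inside one particular extension. Worse, the small forcing you want the collapse to absorb is not even fixed: $(\mathbb{R}/\sT)^K$ depends on which $K$ you picked, through the values $\dot y^K$ of respect-basis names, and $M=V[G]_\sS$ has no canonical such $K$. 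The Jech-style conversion of two generics into an isomorphism of algebras only applies once such uniform names exist. It also needs every condition of the target algebra to get positive value, which you do not address. Your second route likewise leaves its decisive step, an identification under which the two class names agree, as an acknowledged obstacle.

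The idea that closes the gap is to absorb all candidates at once. In $V[G]_\sS$, consider all forcings of the form $(\mathbb{R}/\sT)^K$ for $\mathbb{R}$-generic $K$ over $V$ with $V[K]_\sT=V[G]_\sS$. These have bounded rank, so their lottery sum has a single closed $\sS$-name $\dot{\mathbb{A}}$, and $\mathbb{P}*\dot{\mathbb{A}}$ carries one name $\dot K$ for such an $\mathbb{R}$-generic. By weak equivalence in the \emph{other} direction, every $\mathbb{R}$-generic over $V$ is realized as $\dot K$ in some extension. Hence $\llbracket\check r\in\dot K\rrbracket\neq 0$ for every $r$, and $r\mapsto\llbracket\check r\in\dot K\rrbracket$ is a complete embedding of $\mathbb{R}$. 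So the single algebra $\B(\mathbb{P}*\dot{\mathbb{A}})$ contains both $\mathbb{P}$ and $\mathbb{R}$ completely, is also of the form $\B(\mathbb{R}*\dot{\mathbb{B}})$, and forces $\SN_\sS^\bullet=\SN_\sT^\bullet$. Only at this point does absorption enter. For large $\alpha$ this common algebra embeds completely into $\B(\mathbb{P}\times\Coll(\omega,V_\alpha))$ over $\mathbb{P}$ and into $\B(\mathbb{R}\times\Coll(\omega,V_\alpha))$ over $\mathbb{R}$. This yields an isomorphism between the two collapse products that fixes the common algebra and hence respects the class names, giving $\sS^{\langle\alpha\rangle}\cong_\SN\sT^{\langle\alpha\rangle}$.
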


\begin{proof}
    (3) implies (2) implies (1) is clear. It suffices to show that (1) implies (3). Suppose that $\sS = (\mathbb{P}, \sG, \sF)$ and $\sT = (\mathbb{Q}, \sH, \sE)$ are weakly equivalent. Then for any $\mathbb{P}$-generic $G$ over $V$, there is a forcing notion $\mathbb{A} \in V[G]_\sS$ adding over $V[G]_\sS$ a $\mathbb{Q}$-generic $H$ over $V$ so that $V[G]_\sS = V[H]_\sT$ and $\mathbb{A} =  (\mathbb{Q} \dot{ / }\sT)^H$ (see Theorem~\ref{thm:quotsummary}). Let $\dot{\mathbb{A}}$ be the closed $\sS$-name for the lottery sum of all forcing notions with this property, of which there can only be set many as the rank of any $\mathbb{Q}/\sT$ is bounded. Then there is a $\mathbb{P} * \dot{\mathbb{A}}$-name $\dot H$ for the $\mathbb{Q}$-generic, inducing a complete subforcing $\mathbb{Q}' = \{ \llbracket \check q \in \dot H \rrbracket : q \in \mathbb{Q} \}$ of $\B(\mathbb{P} * \dot{\mathbb{A}})$. We then see that $\mathbb{Q}'$ is in fact isomorphic to $\mathbb{Q}$, as $\llbracket \check q \in \dot H \rrbracket \neq \mathds{O}$ for each (positive) $q \in \mathbb{Q}$. More precisely, $\dot H$ can be realized as any generic $H$ over $V$, as $V[H]_\sT = V[G]_\sS$ for some $G$, so $(\mathbb{Q} \dot{ / }\sT)^H \in V[G]_\sS$ appears as a forcing $\mathbb{A}$ as above. Thus, identifying $\mathbb{Q}$ and $\mathbb{Q}'$, we may view $\SN_\sT^\bullet$ as a $\B(\mathbb{P} * \dot{\mathbb{A}})$-class-name and we obtain that $\Vdash_{\B(\mathbb{P} * \dot{\mathbb{A}})} \SN_\sS^\bullet = \SN_\sT^\bullet$. Moreover, there is a $\mathbb{Q}$-name $\dot{\mathbb{B}}$ such that $\B(\mathbb{Q} *\dot{\mathbb{B}}) \cong \B(\mathbb{P} * \dot{\mathbb{A}})$, respecting the identification of $\mathbb{Q}$ and $\mathbb{Q}'$. It suffices to find a large enough $\alpha$ so that $\B(\mathbb{P} * \dot{\mathbb{A}})$ and $\B(\mathbb{Q} *\dot{\mathbb{B}})$ completely embed into $\B(\mathbb{P} \times \Coll(\omega, V_\alpha))$ and $\B(\mathbb{Q} \times \Coll(\omega, V_\alpha))$, respecting the copies of $\mathbb{P}$ and $\mathbb{Q}$. The rest follows easily.
\end{proof}

\section{Kinna--Wagner Principles}

We can treat $\AC$ as stating that every set can be mapped injective into an ordinal, or that every set is the surjective map of an ordinal. This lends itself to a hierarchy of principles, called Kinna--Wagner Principles. Kinna and Wagner \cite{KW} defined the first principle as a weak selection principle (that is, we choose a proper subset, rather than an element), and they proved their principle is equivalent to the statement ``Every set injects into the power set of an ordinal''. Their principle was studied extensively, as did weaker selection principles. Monro suggested the following weakening (for finite $\alpha$) in \cite{Monro:KW}.
\begin{definition}\label{def:KW}
    For an ordinal $\alpha$, $\KW_\alpha$ states that every set $x$ injects into $\mathcal{P}^\alpha(\Ord)$. The \emph{Kinna--Wagner Principle} $\KW$ states that there is some $\alpha$ such that $\KW_\alpha$ holds.
\end{definition}

It turns out that forcing is intimately related to a further weakening, which is the Dual Kinna--Wagner Principle.

\begin{definition}[Dual Kinna--Wagner]
    For an ordinal $\alpha$, $\KW^*_\alpha$ states that for every set $x$, there is a surjection from $\mathcal{P}^\alpha(\Ord)$ to $x$.
\end{definition}

What we mean of course is that there is a definable surjection from $\mathcal{P}^\alpha(\Ord)$ to $x$. But this is just the same as to say that there is some large enough ordinal $\eta$, so that there is a surjection from the set $\mathcal{P}^\alpha(\eta)$ to $x$.

These principles were studied extensively by the authors in \cite{KaragilaSchilhan2024}, where the following facts are proved.

\begin{prop}[Balcar--Vop\v{e}nka--Monro, {\cite[Theorem~3.5]{KaragilaSchilhan2024}}]\label{prop:bvm}
  Let $M$ and $N$ be models of $\ZF$ and $M\models\KW^*_\alpha$. If $\mathcal{P}^{\alpha+1}(\Ord)^M=\mathcal{P}^{\alpha+1}(\Ord)^N$, then $M=N$. \qed
\end{prop}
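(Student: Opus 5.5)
The natural strategy is an $\in$-induction on rank, showing that every $x\in N$ lies in $M$ (and symmetrically every $x\in M$ lies in $N$), with $\KW^*_\alpha$ in $M$ supplying codes for sets as elements of $\mathcal{P}^{\alpha+1}(\Ord)$. First I would note that $M$ and $N$ have the same ordinals, since $\Ord\subseteq\mathcal{P}^{\alpha+1}(\Ord)$ in the appropriate sense. Both are transitive models, and the hypothesis forces $\Ord^M=\Ord^N$. Moreover, for each ordinal $\eta$ we get $\mathcal{P}^{\beta}(\eta)^M=\mathcal{P}^{\beta}(\eta)^N$ for all $\beta\le\alpha+1$, by intersecting the hypothesis with the appropriate levels; these are definable from $\eta$ in both models.

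The main step is to show $M\subseteq N$ by induction on rank. Fix $x\in M$ with $x\subseteq N$; by the inductive hypothesis for smaller ranks we may take $x\subseteq M\cap N$. By $\KW^*_\alpha$ in $M$ there are an ordinal $\eta$ and a surjection $f\colon\mathcal{P}^\alpha(\eta)\to\trcl(\{x\})$ in $M$. Pull back membership to obtain
\[E=\{(a,b)\in\mathcal{P}^\alpha(\eta)^2 : f(a)\in f(b)\}\in M.\]
Coding pairs of elements of $\mathcal{P}^\alpha(\eta)$ in a uniform way, $E$ becomes (up to a harmless coding) a subset of $\mathcal{P}^\alpha(\eta')$ for some $\eta'$. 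Thus $E\in\mathcal{P}^{\alpha+1}(\Ord)^M=\mathcal{P}^{\alpha+1}(\Ord)^N$, so $E\in N$. The kernel relation $a\sim b\iff f(a)=f(b)$ is definable from $E$ by extensionality, since $\trcl(\{x\})$ is transitive. Also $\mathcal{P}^\alpha(\eta)^M=\mathcal{P}^\alpha(\eta)^N$. The relation $E/\!\sim$ on the quotient is well-founded and extensional, so its Mostowski collapse in $N$ is computed exactly as in $M$ and recovers $\trcl(\{x\})$. Hence $x\in N$. (In fact this argument does not even need the induction.)

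For $N\subseteq M$, we cannot use $\KW^*_\alpha$ in $N$, so a different approach is needed. I would argue by $\in$-minimal counterexample: take $y\in N\setminus M$ of minimal rank, so $y\subseteq M$, and indeed $y\subseteq V_\gamma^M$ for some $\gamma$ once we know $V_\gamma^M\subseteq N$. In $M$, fix a surjection $g\colon\mathcal{P}^\alpha(\eta)\to V_\gamma^M$; by the first part, $g\in N$. Then $g^{-1}[y]\subseteq\mathcal{P}^\alpha(\eta)$ lies in $N$, so it belongs to $\mathcal{P}^{\alpha+1}(\Ord)^N=\mathcal{P}^{\alpha+1}(\Ord)^M$, and therefore $y=g[g^{-1}[y]]\in M$, a contradiction.

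The main obstacle is the coding bookkeeping: making sure that $E$, and likewise pairs and the pulled-back sets, genuinely sit inside $\mathcal{P}^{\alpha+1}(\Ord)$ via a pairing that is absolute between $M$ and $N$. For limit or infinite $\alpha$ this needs an explicit canonical pairing on $\mathcal{P}^\alpha(\Ord)$ definable from ordinal pairing by recursion. I would handle this first, as a preliminary lemma, by defining the pairing recursively on $\beta\le\alpha$.
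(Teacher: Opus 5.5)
Your argument is correct. The paper gives no proof of its own here; it only cites \cite[Theorem~3.5]{KaragilaSchilhan2024}. Your route is the standard Balcar--Vop\v{e}nka argument that this citation generalizes:
\begin{itemize}
\item for $M\subseteq N$, code each $x\in M$ by the membership relation pulled back along a surjection $\mathcal{P}^\alpha(\eta)\to\trcl(\{x\})$, and decode it in $N$ by a collapse;
\item for $N\subseteq M$, take a minimal-rank counterexample $y$ and use $g^{-1}[y]\in\mathcal{P}^{\alpha+1}(\Ord)$.
\end{itemize}

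The coding issue you flag is handled by a lemma the paper states just after Proposition~\ref{prop:KWfacts}: a uniformly definable bijection between $\mathcal{P}^\alpha(\Ord)^{<\omega}$ and $\mathcal{P}^\alpha(\Ord)$ that is absolute between transitive models. Your level-by-level equality $\mathcal{P}^\beta(\eta)^M=\mathcal{P}^\beta(\eta)^N$ should be made precise as an induction on $\beta\le\alpha+1$. That induction uses that the levels $\mathcal{P}^\beta(\eta)$ are transitive, so $\mathcal{P}^{\beta}(\eta)\subseteq\mathcal{P}^{\alpha+1}(\eta)$.
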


\begin{prop}[{\cite{KaragilaSchilhan2024}}]\label{prop:KWfacts}
    For any ordinal $\alpha$, we have that
    \begin{enumerate}
        \item $\KW_\alpha\rightarrow\KW^*_{\alpha}\rightarrow\KW_{\alpha+1}$,
        \item if $\alpha$ is not a successor then $\KW_\alpha^*$ and $\KW_\alpha$ are equivalent,
        \item $\KW^*_\alpha$ is invariant under forcing,
        \item $\KWP$ and $\neg \KWP$ are invariant under symmetric extensions.
    \end{enumerate}
     For any ordinals $\alpha, \beta$, any $\mathbb{P}$-generic filter $G$ over $V$ and $x \in V[G]$, we have that
        \begin{enumerate}\setcounter{enumi}{4}
                \item if $V \models \KWP_\beta^*$ and $x \subseteq \mathcal{P}^\alpha(\Ord)$, then $V(x) \models \KWP^*_{\max(\alpha, \beta)}$,
                \item if $V[G] \models \KWP^*_\alpha$ and $\mathbb{P} \subseteq \mathcal{P}^\beta(\Ord)$, then $V \models \KWP_{\beta + \alpha}^*$. \qed
    \end{enumerate}
\end{prop}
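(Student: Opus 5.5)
Since the statement is quoted from \cite{KaragilaSchilhan2024}, I would reprove its six items in order, working throughout with cumulative iterated power sets, so that $\mathcal{P}^\beta(\eta)\subseteq\mathcal{P}^{\gamma}(\eta)$ for $\beta\leq\gamma$. I would also use the fact that canonical pairing and sequence codings keep elements of $\mathcal{P}^\beta(\Ord)$ inside $\mathcal{P}^{\beta}(\Ord)$ for infinite $\beta$.

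\textbf{Items (1) and (2).} These are pure combinatorics.
\begin{itemize}
\item $\KW_\alpha\to\KW^*_\alpha$: invert an injection $f\colon x\to\mathcal{P}^\alpha(\eta)$ on its range and send everything else to a fixed point of $x$.
\item $\KW^*_\alpha\to\KW_{\alpha+1}$: given a surjection $g\colon\mathcal{P}^\alpha(\eta)\to x$, the map $y\mapsto g^{-1}(\{y\})$ is an injection $x\to\mathcal{P}^{\alpha+1}(\eta)$.
\item For (2), with $\alpha$ limit, I would improve the last injection to land in $\mathcal{P}^\alpha(\Ord)$. Split a fiber $A\subseteq\mathcal{P}^\alpha(\eta)=\bigcup_{\beta<\alpha}\mathcal{P}^\beta(\eta)$ into the sequence $\langle A\cap\mathcal{P}^\beta(\eta):\beta<\alpha\rangle$. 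Each piece lies in $\mathcal{P}^{\beta+1}(\eta)\subseteq\mathcal{P}^\alpha(\eta)$, so coding the set of pairs $(\beta,A\cap\mathcal{P}^\beta(\eta))$ stays within $\mathcal{P}^\alpha(\Ord)$ up to the coding.
\item The case $\alpha=0$ is just $\AC$ on both sides.
\end{itemize}

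\textbf{Items (5) and (6).} These are the two core transfer lemmas; the remaining items follow from them.

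For (5), every element of $V(x)$ is obtained by a definable operation from a finite tuple from $\trcl(\{x\})$, elements of $V$ and ordinals. I would therefore build, inside $V(x)$, a surjection onto any given set from $\mathcal{P}^\beta(\eta)^V\times x^{<\omega}\times\eta$. Here I use $\KW^*_\beta$ in $V$ for the ground model parameters, and $x\subseteq\mathcal{P}^\alpha(\Ord)$ to code the tuples into $\mathcal{P}^{\max(\alpha,\beta)}(\Ord)$.

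For (6), fix $y\in V$ and a $\mathbb{P}$-name $\dot g$ for a surjection $\mathcal{P}^\alpha(\eta)\to y$. In $V$, replace names for elements of $\mathcal{P}^\alpha(\eta)$ by hereditarily nice names: sets of pairs (condition, nice name of lower level). By induction, and using $\mathbb{P}\subseteq\mathcal{P}^\beta(\Ord)$, these inject into $\mathcal{P}^{\beta+\alpha}(\Ord)$. Then the map $(p,\tau)\mapsto$ the unique $z\in y$ with $p\Vdash\dot g(\tau)=\check z$ (defined when such a $z$ exists) is a partial surjection onto $y$, which gives $\KWP^*_{\beta+\alpha}$.

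\textbf{Items (3) and (4).}
\begin{itemize}
\item Upward invariance in (3) is (5)-like: every element of $V[G]$ is $\dot x^G$. Names live in $V$ and are surjected on from $\mathcal{P}^\alpha(\eta)^V\subseteq\mathcal{P}^\alpha(\eta)^{V[G]}$, and evaluation is definable from $G$; attaching the $\omega$ or $\mathbb{P}$-sized parameter $G$ does not raise $\alpha$ once $\alpha\ge1$.
\item Downward invariance in (3) is where I expect the main obstacle, since (6) only yields $\beta+\alpha$. I would first try to absorb $\mathbb{P}$ into a bounded level by a Balcar--Vop\v{e}nka style argument (Proposition~\ref{prop:bvm}). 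If that fails, I would code conditions by the ordinal-rank data of their nice names so that the index shift disappears for infinite $\alpha$, and treat finite $\alpha$ separately.
\item For (4), Theorem~\ref{thm:characterise} writes a symmetric extension $M$ as $V(x)$ with $x\in V[G]$. If $V\models\KW^*_\alpha$, then (3) gives $V[G]\models\KW^*_\alpha$, so $x$ is the image of some subset of $\mathcal{P}^{\alpha+1}(\Ord)$. Then (5) gives $M\models\KWP$.
\item Conversely, if $M\models\KWP$, then Proposition~\ref{lem:V(x)toV[G]} makes $V[G]$ a forcing extension of $M$, so $V[G]\models\KWP$ by (3). Since $\mathbb{P}\in V$ has some rank, I would first show $\mathbb{P}$ sits in some $\mathcal{P}^\beta(\Ord)$ up to a bijection in $V$, possibly after passing to a collapse; this is the delicate point. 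Granted that, (6) gives $V\models\KWP$.
\end{itemize}
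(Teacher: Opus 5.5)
The paper states this proposition without proof (it is quoted from \cite{KaragilaSchilhan2024}), so your sketch can only be judged on its own terms. Items (1), (5), (6) and the upward direction of (3) are essentially fine. The argument for (2), however, fails. Splitting a fiber $A$ into $\langle A\cap\mathcal{P}^\beta(\eta):\beta<\alpha\rangle$ does not lower its complexity. If $A$ has elements from cofinally many levels below $\alpha$, this sequence is again just a subset of $\mathcal{P}^\alpha(\eta)$, i.e.\ an element of $\mathcal{P}^{\alpha+1}(\eta)$. Since $A\mapsto\langle A\cap\mathcal{P}^\beta(\eta)\rangle_{\beta<\alpha}$ is a bijection, coding all such sequences injectively into $\mathcal{P}^\alpha(\Ord)$ amounts to injecting $\mathcal{P}^{\alpha+1}(\eta)$ into $\mathcal{P}^\alpha(\Ord)$, which is an instance of what you are proving. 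What is missing is any use of the fact that the fibers $F_y=g^{-1}(\{y\})$ are pairwise disjoint; with it, one nonempty piece per fiber suffices. Let $\beta_y<\alpha$ be least with $F_y\cap\mathcal{P}^{\beta_y}(\eta)\neq\emptyset$. Then $y\mapsto F_y\cap\mathcal{P}^{\beta_y}(\eta)$ takes values in $\mathcal{P}^{\beta_y+1}(\eta)\subseteq\mathcal{P}^\alpha(\eta)$ because $\alpha$ is a limit, and it is injective because its values are nonempty subsets of disjoint sets.

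In (3) you single out downward invariance as the main obstacle, but that direction is false, so no version of your plan can work. $\KWP^*_0$ is $\AC$, which fails in the basic Cohen model $V[G]_{\sC}$ of Example~\ref{ex:Cohen} over a $\ZFC$ ground. It holds in $V[G]$, which is a forcing extension of $V[G]_{\sC}$ by Theorem~\ref{thm:quotsummary}(2). So (3) must be read as preservation upwards, and the only downward information is (6), with its shift $\beta+\alpha$.

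The ``delicate point'' in (4) is trivial: every set $y$ satisfies $y\subseteq V_{\rk(y)}=\mathcal{P}^{\rk(y)}(\emptyset)\subseteq\mathcal{P}^{\rk(y)}(\Ord)$. Hence (6) applies with $\beta=\rk(\mathbb{P})$. For the upward half of (4), you can apply (5) directly to $M=V(x)$ with $\alpha=\rk(x)$, without invoking (3). The detour you propose there does not work as stated. A surjection onto $x$ supplied by $\KWP^*_\alpha$ in $V[G]$ need not lie in $V(x)$, so a set coding $x$ through it can generate a model strictly larger than $M$. For example, if $x$ is the set of Cohen reals, the coding set generates a model of $\AC$. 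In that case (5) would be applied to the wrong model.

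Two smaller points. In (5) the parameters must range over $\trcl(x)^{<\omega}$ rather than $x^{<\omega}$; this is harmless, since $\mathcal{P}^\alpha(\eta)$ is transitive. Also, (5) and (6) need codings of pairs and finite tuples at finite levels too. These exist for every $\alpha$, as in the lemma stated right after the proposition, just not via Kuratowski pairs.
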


\begin{lemma}
    There is a uniformly definable bijection between $\mathcal{P}^\alpha(\ON)^{<\omega}$ and $\mathcal{P}^\alpha(\ON)$, for ordinals $\alpha$, that is absolute between transitive models.\qed
\end{lemma}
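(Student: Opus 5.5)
The plan is to build the bijection by recursion on $\alpha$, choosing at every stage a construction that is given by a $\Delta_0$-style definition from its inputs, so that absoluteness between transitive models comes for free.

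For the base case $\alpha=0$ we need a bijection between $\Ord^{<\omega}$ and $\Ord$. We take the canonical one: first pair ordinals with the Gödel pairing function $\Gamma\colon\Ord\times\Ord\to\Ord$, which is absolute for transitive models of (a weak fragment of) $\ZF$. Then code a finite sequence $\langle\xi_0,\dots,\xi_{n-1}\rangle$ by $n$ together with an iterated pairing of its entries. The empty sequence and the varying lengths are absorbed by one more application of $\Gamma$ with first coordinate $n$, and the resulting map $\Ord^{<\omega}\to\Ord$ is a bijection because $\Gamma$ is a bijection on each level.

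For the successor step, assume we have an absolute bijection $e_\alpha\colon\mathcal P^\alpha(\Ord)^{<\omega}\to\mathcal P^\alpha(\Ord)$. We also want an absolute injection of $\omega\times\mathcal P^\alpha(\Ord)$, or more usefully $\Ord\times\mathcal P^\alpha(\Ord)$, into $\mathcal P^\alpha(\Ord)$; we get it by restricting $e_\alpha$ to pairs, using $\Ord\subseteq\mathcal P^\alpha(\Ord)$ up to a canonical embedding. Given a finite tuple $(A_0,\dots,A_{n-1})$ of subsets of $\mathcal P^\alpha(\Ord)$, map it to
\[\{e_\alpha(\check i, a) : i<n,\ a\in A_i\}\cup\{\text{a marker for } n\}.\]
This gives an injection $\mathcal P^{\alpha+1}(\Ord)^{<\omega}\to\mathcal P^{\alpha+1}(\Ord)$. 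In the other direction, $A\mapsto\langle A\rangle$ is an injection. We then apply the Cantor--Schröder--Bernstein construction. Its proof defines the bijection explicitly by iterating the two injections $\omega$ times, which is absolute between transitive models since it only involves recursion on $\omega$ and the given maps.

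For limit $\alpha$, $\mathcal P^\alpha(\Ord)=\bigcup_{\beta<\alpha}\mathcal P^\beta(\Ord)$, where we fix the canonical convention making the levels increase. A finite tuple then lives at the least level $\beta$ containing all its entries, and $\beta$ is absolutely computed. Applying $e_\beta$ and tagging the result with $\beta$ via the successor-level pairing gives an injection. Combining this with the obvious reverse injection, Schröder--Bernstein again yields a bijection.

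The main obstacle is the bookkeeping needed to make the recursion itself uniform: the maps $e_\beta$ must cohere across levels, or at least be defined by a single recursion on $\beta$ inside any transitive model. That coherence is what makes the definition absolute. Once it is in place, each construction is defined by a recursion whose clauses are absolute, so the whole family $\langle e_\alpha\rangle$ is absolute by the standard absoluteness of definitions by transfinite recursion.
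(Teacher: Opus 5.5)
Your overall scheme is sound: induct on $\alpha$, build absolute injections in both directions at each stage, and take the explicit Cantor--Schr\"oder--Bernstein bijection, whose backward-chain test is a recursion on $\omega$ through absolute maps. The successor step works once the marker is taken to be, say, $e_\alpha(\langle n\rangle)$, which cannot equal any $e_\alpha(\langle i,a\rangle)$. The paper states the lemma without proof, so the only question is whether your sketch closes, and two steps do not close as written.

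\textbf{Base case.} The map $s\mapsto\Gamma(|s|,c(s))$, with $c(s)$ the iterated pairing of the entries, is injective but not onto $\Ord$. Its range contains no $\Gamma(\xi,\eta)$ with $\xi\geq\omega$, so ``bijection because $\Gamma$ is a bijection on each level'' is false. Apply Schr\"oder--Bernstein here too, or collapse a canonical set-like well-order of $\Ord^{<\omega}$.

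\textbf{Limit step.} This is the more important problem. At limit $\alpha$ you tag $e_\beta(s)$ with $\beta$ through the $e$'s themselves (``the successor-level pairing''). Injectivity across different $\beta$ then needs the $e_\beta$ to cohere. Schr\"oder--Bernstein gives no such coherence, since $e_{\beta+1}$ need not extend $e_\beta$, and you leave this ``main obstacle'' open. It is unnecessary. Tag with a level-independent pairing, e.g.\ the Kuratowski pair $(\beta,e_\beta(s))$. It lies in $\mathcal{P}^{\beta+2}(\Ord)\subseteq\mathcal{P}^\alpha(\Ord)$, because the hierarchy is cumulative (every ordinal is a set of ordinals) and $\alpha$ is a limit. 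Then $\beta$ is read off the code and injectivity is immediate.

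\textbf{Formalising the recursion.} The ``standard absoluteness of transfinite recursion'' does not apply verbatim. Each stage $e_\alpha$ is a proper class, so $\langle e_\alpha:\alpha\in\Ord\rangle$ is not defined by a $\ZF$ recursion on $\alpha$ as it stands. What rescues it is locality: $e_\alpha(s)$ and $e_\alpha^{-1}(x)$ depend on only set-many earlier values $e_\beta^{\pm1}(t)$ with $\beta<\alpha$, namely those used by the coding maps and along the backward chain. So define $e$ and $e^{-1}$ simultaneously by: ``$e_\alpha(s)=t$ iff some set-sized partial function on pairs $(\beta,u)$, closed under these dependencies and obeying the recursive clauses, sends $(\alpha,s)$ to $t$.'' Then:
\begin{enumerate}
\item uniqueness of such certificates follows by induction on $\beta$;
\item existence follows by induction on $\alpha$ using Replacement;
\item absoluteness for transitive $M\subseteq N$ holds because a certificate in $M$ is still one in $N$, and every clause is absolute (membership in $\mathcal{P}^\beta(\Ord)$ is itself defined by $\in$-recursion).
\end{enumerate}
You must carry $e_\alpha^{-1}$ along, because the range test in the Schr\"oder--Bernstein step inverts $e_\alpha$. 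Its absoluteness follows from that of $e_\alpha$, since $e_\alpha^M$ is onto $\mathcal{P}^\alpha(\Ord)^M$. With these repairs your argument proves the lemma.
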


\begin{thm}[{\cite[\S4]{KaragilaSchilhan2024}}]
    Let $V\models\ZF+\KWP$, $G$ be $\mathbb{P}$-generic over $V$ for some $\mathbb{P}\in V$ and $M\models\ZF$ such that $V\subseteq M\subseteq V[G]$. Then the following are equivalent:
    \begin{enumerate}
        \item $M\models\KW$.
        \item $M=V(x)$ for some $x\in M$.
        \item $M$ is a symmetric extension of $V$.
    \end{enumerate}
    Moreover, if $M\models\KWP_\alpha^*$, then $M=V(\mathcal{P}^{\alpha+1}(\mathbb{P}^{<\omega})^M)$.
\end{thm}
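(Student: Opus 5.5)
The plan is to prove the cycle (1)$\Rightarrow$(3)$\Rightarrow$(2)$\Rightarrow$(1), together with the ``moreover'' clause. The tools are the earlier characterization (Theorem~\ref{thm:characterise}), Proposition~\ref{prop:KWfacts}, and the Balcar--Vop\v{e}nka--Monro Proposition~\ref{prop:bvm}.

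For (3)$\Rightarrow$(2), Theorem~\ref{thm:characterise} already shows that every symmetric extension of $V$ has the form $V(x)$ for some $x$ in a generic extension. Here $x$ can be taken to be $\mathbb{P}/\sS$, by Theorem~\ref{thm:V(P/S)}, and this set lies in $M$. So $M=V(x)$ with $x\in M$.

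For (2)$\Rightarrow$(1), assume $V\models\KWP^*_\beta$ for some $\beta$; this is possible since $\KWP$ and $\KWP^*$ are interchangeable up to a shift by Proposition~\ref{prop:KWfacts}(1). Given $x\in M\subseteq V[G]$, I first replace $x$ by a subset of $\mathcal{P}^\alpha(\Ord)$. In $V[G]$, $\KWP$ holds by Proposition~\ref{prop:KWfacts}(4), since $V[G]$ is a symmetric extension of $V$. So $\trcl(\{x\})$ injects into some $\mathcal{P}^\alpha(\Ord)$ in $V[G]$. However, $V(x)$ may not contain such an injection, so I instead work with $\Coll(\omega,\trcl(\{x\}))$ as in Proposition~\ref{lem:V(x)toV[G]}. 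This route is shaky, so I prefer a cleaner one: apply (2)$\Rightarrow$(3) via Theorem~\ref{thm:characterise}, which says that $M=V(x)$ with $x$ in a generic extension is a symmetric extension, and then invoke Proposition~\ref{prop:KWfacts}(4) directly. Since $V\models\KWP$, this gives $M\models\KWP$. This reduces both (2)$\Rightarrow$(1) and (2)$\Rightarrow$(3) to earlier results.

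The main obstacle is (1)$\Rightarrow$(2), which I will prove together with the ``moreover'' clause. Suppose $M\models\KWP^*_\alpha$, and put $y=\mathcal{P}^{\alpha+1}(\mathbb{P}^{<\omega})^M\in M$. Then $N:=V(y)\subseteq M$, and $N\models\ZF$. I aim to apply Proposition~\ref{prop:bvm} with $N\subseteq M$, which requires showing $\mathcal{P}^{\alpha+1}(\Ord)^M=\mathcal{P}^{\alpha+1}(\Ord)^N$. Take $z\in\mathcal{P}^{\alpha+1}(\Ord)^M$. Since $z\in V[G]$, it has a $\mathbb{P}$-name in $V$ built from ordinals. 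The idea is to code $z$ hereditarily, in $\alpha+1$ power-set steps, by subsets of $\mathbb{P}^{<\omega}\times\eta$, using nice names at each level. Concretely, a subset $a\subseteq\eta$ in $V[G]$ is determined in $V$ by the nice name $\{(p,\xi):p\Vdash\check\xi\in\dot a\}$, via $a=\{\xi:\exists p\in G\,(p,\xi)\in\dot a\}$. The difficulty is that $G\notin M$. I expect to handle this by iterating: an element of $\mathcal{P}^{\beta+1}(\eta)^M$ is identified with the set of level-$\beta$ codes of its members. Each code is an element of $\mathcal{P}^{\beta}$ of a set in bijection with $\mathbb{P}^{<\omega}\times\eta$, using the absolute bijection lemma for $\mathcal{P}^\alpha(\Ord)^{<\omega}$. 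I would need to verify that the decoding map $y\mapsto\mathcal{P}^{\alpha+1}(\eta)^M$ is definable in $V(y)$ from parameters in $V$, with $\mathbb{P}$ used only as a ground-model set and not through $G$. This step is the main obstacle, and the first thing to try is to make the decoding depend on $G\cap\mathbb{P}^{<\omega}$ recorded inside $y$ as an element of $\mathcal{P}(\mathbb{P}^{<\omega})$. Once $\mathcal{P}^{\alpha+1}(\Ord)^M\subseteq N$ is established, the reverse inclusion is trivial. Proposition~\ref{prop:bvm} then gives $M=N=V(y)$, which proves both (2) and the moreover clause.
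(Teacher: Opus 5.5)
Your reductions of (3)$\Rightarrow$(2), (2)$\Rightarrow$(3) and (2)$\Rightarrow$(1) to Theorem~\ref{thm:V(P/S)}, Theorem~\ref{thm:characterise} and Proposition~\ref{prop:KWfacts}(4) are fine. The paper itself gives no proof of this theorem; it imports it from \cite[\S4]{KaragilaSchilhan2024}.

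The genuine gap is the direction (1)$\Rightarrow$(2) together with the ``moreover'' clause, which is the real content of the cited result. Your plan is reasonable: set $N=V(\mathcal{P}^{\alpha+1}(\mathbb{P}^{<\omega})^M)\subseteq M$ and apply Proposition~\ref{prop:bvm}. But the one step that needs proof, $\mathcal{P}^{\alpha+1}(\Ord)^M\subseteq N$, is left open, and the fix you suggest cannot work. Every element of $\mathcal{P}^{\alpha+1}(\mathbb{P}^{<\omega})^M$ lies in $M$, and if $G$ were in $M$ then $M=V[G]$. The same objection applies to any decoding that evaluates nice names. The map $\dot a\mapsto\dot a^G$ on nice names already computes $G$: look at the names $\{(p,\check 0)\}$. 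So for a genuinely intermediate $M$ the coding must be $G$-free.

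The missing idea is to code a set of $M$ by the conditions that do not contradict it, rather than by evaluating a name. Take $a\in\mathcal{P}(\eta)^M$ and a name $\dot a\in V$ with $\dot a^G=a$. Let $U$ be the set of $p\in\mathbb{P}$ for which there is no $\xi$ with either $p\Vdash\check\xi\in\dot a$ and $\xi\notin a$, or $p\Vdash\check\xi\notin\dot a$ and $\xi\in a$. Then:
\begin{enumerate}
\item $U\in M$ and $G\subseteq U$;
\item $a=\{\xi:\exists p\in U\,(p\Vdash\check\xi\in\dot a)\}$, which is computed in $V(U)$ without using $G$.
\end{enumerate}
This gives $\mathcal{P}(\Ord)^M\subseteq V(\mathcal{P}(\mathbb{P})^M)$, and hence, via Proposition~\ref{prop:bvm}, the case $\alpha=0$.

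For $\alpha>0$ you still have to propagate such codes uniformly through $\alpha+1$ power-set levels, and this is where the real difficulty lies. The set $U$ depends on choosing a name with $\dot a^G=a$. If you form the same set from a name with $\dot a^G\neq a$, it no longer determines $a$; it can even be empty, e.g.\ for $\dot a=\check b$ with $b\neq a$. Moreover, $M$ cannot tell which names evaluate correctly, so ``the set of codes of the members'' is not well defined at the next level. Your sketch does not address this, so as it stands (1)$\Rightarrow$(2) and the moreover clause remain unproved.
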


With our notion of a respect-basis, we can prove the following theorem.
\begin{thm}\label{thm:kw}
    Let $\sS$ be a symmetric system with a respect-basis consisting of names for elements of $\mathcal{P}^\alpha(\Ord)$ and $V \models \KW^*_\beta$. Then $\Vdash_\sS \KW^*_{\max(\alpha, \beta)}$.
\end{thm}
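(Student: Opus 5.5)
The plan is to use the last proposition of the section on the seed of a symmetric extension. It says that in $V[G]_\sS$ there is a map $F$, definable from the parameter $\mathbb{P}/\sS$, that surjects $R\times V$ onto $V[G]_\sS$, where $R=(\sR^\bullet)^G$. Since $\KW^*_\gamma$ only asks, for each set $x$, for a surjection from some $\mathcal{P}^\gamma(\eta)$ onto $x$, we fix $x\in V[G]_\sS$ and set $\gamma=\max(\alpha,\beta)$. The surjection will be assembled from three pieces: a set-sized part of $R$, a set-sized part of $V$, and the map $F$.

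First, by replacement in $V[G]_\sS$ there is an ordinal $\theta$ with $x\subseteq F``(R_\theta\times V_\theta)$, where $R_\theta=\{\dot r^G:\dot r\in\sR\cap V_\theta\}$. This works because $F$ is onto $V[G]_\sS$, and for each $y\in x$ we may pick the least rank of a preimage pair; no choice is needed since we only take a bounded piece of the domain. Moreover, $F$ restricted to this set is definable, so $x$ is the image of a subset of $R_\theta\times V_\theta$.

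Next we handle the two factors separately.
\begin{itemize}
\item By hypothesis every $\dot r\in\sR$ names an element of $\mathcal{P}^\alpha(\Ord)$. Since $\sR\cap V_\theta$ is a set, these names are all bounded, so $R_\theta\subseteq\mathcal{P}^\alpha(\eta_0)^{V[G]_\sS}$ for some ordinal $\eta_0$.
\item Since $V\models\KW^*_\beta$, in $V$ there is a surjection $g\colon\mathcal{P}^\beta(\eta_1)^V\to V_\theta$.
\end{itemize}

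The main obstacle is transferring $g$ into $V[G]_\sS$, because $\mathcal{P}^\beta(\eta_1)^V$ is not literally $\mathcal{P}^\beta(\eta_1)$ of the extension. There are two ways to handle this.
\begin{itemize}
\item Choose $\eta_1$ large and use that $\mathcal{P}^\beta(\eta_1)^V\subseteq\mathcal{P}^\beta(\eta_1)^{V[G]_\sS}$, since both are transitive-ish sets of sets of ordinals built hereditarily. Then extend $g$ by sending everything outside $\mathcal{P}^\beta(\eta_1)^V$ to a fixed element. The map $g$ lies in $V\subseteq V[G]_\sS$, and the set $\mathcal{P}^\beta(\eta_1)^V$ is an element of $V[G]_\sS$, so this extension is definable there.
\item Alternatively, one can invoke Proposition~\ref{prop:KWfacts}(5), viewing $V[G]_\sS$ as $V(R)$ by the last proposition of the previous section, with $R\subseteq\mathcal{P}^\alpha(\Ord)$. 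This directly gives $\KW^*_{\max(\alpha,\beta)}$ once $V[G]_\sS=V(R')$ for a suitable set $R'\subseteq\mathcal{P}^\alpha(\Ord)$ in a generic extension.
\end{itemize}
I would present the direct argument and mention the second route as a check.

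Finally, we combine the pieces. The map $\mathcal{P}^\alpha(\eta_0)\times\mathcal{P}^\beta(\eta_1)\to R_\theta\times V_\theta\to V[G]_\sS$ composed with $F$, sending pairs landing outside $x$ to a fixed element of $x$, surjects onto $x$ (for $x$ nonempty). Taking $\eta=\max(\eta_0,\eta_1)$, the sets $\mathcal{P}^\alpha(\eta)$ and $\mathcal{P}^\beta(\eta)$ embed definably into $\mathcal{P}^\gamma(\eta')$ for a suitable $\eta'$, by iterating singletons $\gamma-\alpha$ times, or absorbing ordinals at limit stages. The preceding lemma gives a uniform bijection between $\mathcal{P}^\gamma(\Ord)^{<\omega}$ and $\mathcal{P}^\gamma(\Ord)$, which handles the product. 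Composing everything yields a surjection from $\mathcal{P}^\gamma(\eta'')$ onto $x$ in $V[G]_\sS$. Since $G$ was arbitrary, $\Vdash_\sS\KW^*_{\max(\alpha,\beta)}$.
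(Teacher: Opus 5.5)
Your direct argument is sound in outline, and it follows a different route from the paper's. The paper's proof goes as follows:
\begin{itemize}
\item it uses $V[G]_\sS=V(\mathbb{P}/\sS)$ (Theorem~\ref{thm:V(P/S)}) and observes that $\mathbb{P}/\sS\subseteq\mathbb{P}\times\sR^{<\omega}\times\mathcal{P}^\alpha(\Ord)^{<\omega}$;
\item it codes the ground-model part $\mathbb{P}\times\sR^{<\omega}$ by a relation $E$ obtained from a $\KW^*_\beta$-surjection in $V$, so that $V[G]_\sS=V(E,x)$ with $(E,x)$ essentially a subset of $\mathcal{P}^{\max(\alpha,\beta)}(\Ord)$;
\item it then quotes item (5) of Proposition~\ref{prop:KWfacts}.
\end{itemize}
You instead take the surjection $F$ from $R\times V$ onto $V[G]_\sS$ given in the seed section, use the inclusion $\mathcal{P}^\beta(\eta)^V\subseteq\mathcal{P}^\beta(\eta)^{V[G]_\sS}$, and build the surjection onto $x$ by hand. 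Your route is more self-contained, since you are in effect reproving the needed instance of (5), and it exhibits the witness explicitly. The paper's route is shorter and needs no bounding step, because (5) absorbs it.

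That bounding step is not justified as you wrote it. You cannot minimise inside $V[G]_\sS$ over preimages lying in $R\times V$. The reason is that $V$ is not known to be definable there (see the footnote to that proposition), and $\dom F$ may contain pairs outside $R\times V$. Minimising over all of $\dom F$ only bounds the preimages in $V_\theta^{V[G]_\sS}$. Covering that set by some $\mathcal{P}^\beta(\eta)$ is exactly what you are trying to prove.

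Instead, find $\theta$ in $V$:
\begin{itemize}
\item Fix $\dot x\in\HS$ with $\dot x^G=x$.
\item For each $(p,\dot y)\in\dot x$, the set of $q$ admitting $\dot y'\in\HS$ and $\dot r\in\sR$ with $q\Vdash\dot y=\dot y'$ and $R_\sS(\dot r)\subseteq R_\sS(\dot y')$ is dense.
\item Choose least-rank witnesses for each such pair $(q,\dot y)$. By replacement in $V$, their ranks are bounded by some $\theta$.
\item For every $y\in x$, genericity of $G$ and the proof of that proposition give $y=F(\dot r^G,(\dot y',\dot r))$ with $(\dot y',\dot r)\in V_\theta^V$.
\end{itemize}
Now $V_\theta^V$ is just a parameter in $V[G]_\sS$.

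You should also replace $R_\theta$ by all of $R$. Automorphisms need not preserve set-theoretic rank, so $\sR\cap V_\theta$ need not be $\sG$-closed. By contrast, $R$ is in $V[G]_\sS$ and is bounded in some $\mathcal{P}^\alpha(\eta_0)$, since $\sR$ is a set.

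Finally, your second bullet is wrong as stated. Nothing gives $V[G]_\sS=V(R)$, and it fails in general. Take the trivial system $(\mathbb{P},\{\id\},\{\{\id\}\})$ with $\mathbb{P}$ Cohen forcing: $\{\check 0\}$ is a respect-basis, so $R=\{0\}$ and $V(R)=V\neq V[G]$. The set you feed into (5) must code $\mathbb{P}/\sS$, which carries the generic. That code lands in $\mathcal{P}^{\max(\alpha,\beta)}(\Ord)$ rather than $\mathcal{P}^\alpha(\Ord)$, and this is precisely the paper's proof.
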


\begin{proof}
    By Theorem~\ref{thm:V(P/S)}, a symmetric extension by $\sS$ is of the form $V(\mathbb{P}/ \sS)$. The elements of $\mathbb{P}/ \sS$ have the form $(r, \langle \dot x_i , x_i : i < n\rangle)$, where $r \in \mathbb{P}$, $\dot x_i \in \sR$ and $x_i \in \mathcal{P}^\alpha(\Ord)$, for each $i < n$ and $\sR \in V$ is a respect basis for $\sS$. Thus $\mathbb{P}/ \sS$ can be viewed as a subset of $\mathbb{P} \times \sR^{<\omega} \times \mathcal{P}^\alpha(\Ord)^{<\omega}$. 
    
    Since $X := \trcl( \{\mathbb{P} \times \sR^{<\omega} \}) \in V$, there is a surjection $ f \colon \mathcal{P}^\beta(\Ord) \to X$ in $V$. Then the set $E = \{ (x,y) \in f^{-1}(X) : f(x) \in f(y) \}$, codes $\mathbb{P} \times \sR^{<\omega}$ as its transitive collapse. Then there is a subset $x$ of $f^{-1}(X) \times \mathcal{P}^\alpha(\Ord)^{<\omega}$ coding $\mathbb{P}/\sS$ using $E$. More specifically, we have that $V(\mathbb{P}/ \sS) = V(E, x)$. Altogether $(E,x)$ may itself be viewed as a subset of $\mathcal{P}^{\max(\alpha, \beta)}(\Ord)$ and item (5) of Proposition~\ref{prop:KWfacts} finishes the proof.
\end{proof}

\begin{cor}
    $(\ZFC)$ Let $\sS = (\mathbb{P}, \sG, \sF)$ and $\vert \mathbb{P} \cup \sG \vert = \kappa$. Then $\Vdash_\sS \KW^*_{\kappa^+}$.
\end{cor}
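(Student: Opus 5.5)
The plan is to reduce the corollary to Theorem~\ref{thm:kw}: I will exhibit a respect-basis for $\sS$ whose members are all names for elements of $\mathcal{P}^{\kappa^+}(\Ord)$. The ground model satisfies $\ZFC$, hence $\KW^*_0$ (every set is a surjective image of an ordinal), so Theorem~\ref{thm:kw} will then give $\Vdash_\sS \KW^*_{\max(\kappa^+,0)} = \KW^*_{\kappa^+}$.

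For the respect-basis, $\ZFC$ gives $\DC_\kappa$, so Lemma~\ref{lem:computingrespect} applies and $\HS_{\kappa^+}$ is a respect-basis for $\sS$. It remains to see that every name in $\HS_{\kappa^+}$ is forced to denote an element of $\mathcal{P}^{\kappa^+}(\Ord)$, or can be replaced by such a name. The plan is an induction on $\gamma<\kappa^+$, showing that names in $\HS_\gamma$ are forced to have von Neumann rank below $\gamma$. Then $\dot x^G\in V_{\kappa^+}^{V[G]_\sS}$, and I would like to conclude that $V_{\kappa^+}\subseteq\mathcal{P}^{\kappa^+}(\Ord)$.

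This last inclusion is the main obstacle. Since $\mathcal{P}^{\alpha}(\Ord)$ is defined by iterating the power set, with unions taken at limit stages, an induction should give $V_\gamma\subseteq \mathcal{P}^{\gamma}(\Ord)$, using $\emptyset\in\Ord$ and the convention that $\mathcal{P}^{\gamma}(\Ord)$ contains all earlier levels. If the paper's convention does not make the levels cumulative, I would fall back on a different argument: code each set of rank $<\gamma$ injectively into $\mathcal{P}^{\gamma}(\Ord)$ by a uniform, absolute map, and replace each name $\dot x$ by the name of its code. The replacement name has the same symmetry group and the same respect-diagram, since the coding map is definable and is fixed by every automorphism. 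Closure of the resulting family under $\sG$ then follows from closure of $\HS_{\kappa^+}$.

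With this in hand, the resulting family is a respect-basis of names for elements of $\mathcal{P}^{\kappa^+}(\Ord)$, and Theorem~\ref{thm:kw} finishes the proof.
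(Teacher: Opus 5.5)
Your proposal is the paper's intended argument: $\ZFC$ gives $\DC_\kappa$ and $\KW^*_0$, so Lemma~\ref{lem:computingrespect} supplies the respect-basis and Theorem~\ref{thm:kw} with $\beta=0$ yields $\KW^*_{\kappa^+}$. The inclusion $V_\gamma\subseteq\mathcal{P}^\gamma(\Ord)$ holds automatically, because the hierarchy is cumulative ($\Ord\subseteq\mathcal{P}(\Ord)$, with unions at limits), so your fallback coding is unnecessary.

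There is one off-by-one slip to fix. A name in $\HS_\gamma$ denotes a set of rank $\le\gamma$, not $<\gamma$. So names in the level $\HS_{\kappa^+}$ itself need not denote elements of $\mathcal{P}^{\kappa^+}(\Ord)$; for example, $(V_{\kappa^+})\check{}$ does not. Take instead the respect-basis $\bigcup_{\gamma<\kappa^+}\HS_\gamma$. It is closed under $\sG$, and it is exactly what the proof of Lemma~\ref{lem:computingrespect} produces, since the collapsed name lies in $\HS_\alpha$ with $\alpha=N\cap\Ord<\kappa^+$.
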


\begin{quest}
    Is the above corollary optimal? Namely, if $\vert\mathbb{P}\cup\sG\vert = \kappa$, is there an ordinal $\alpha<\kappa^+$ such that $\Vdash_\sS\KW^*_\alpha$?
\end{quest}

\begin{quest}
    Consider $\Add(\omega, 1)$, i.e. Cohen forcing, and assume $\AC$. There are in principle $2^{\aleph_0}$ many automorphisms of this partial order, so any system utilising it forces at least $\KWP^*_{(2^{\aleph_0})^+}$. Is this optimal? Do we always get $\KWP^*_{\omega_1}$, or even $\KWP^*_{\alpha}$ for some $\alpha < \omega_1$?
\end{quest}

\begin{thm}
    Assume that $\Vdash_\sS \KW^*_\alpha$. Then the collection of $\sS$-names for elements of $\mathcal{P}^{\alpha+1}(\Ord)$ forms a respect basis for $\sS$. Moreover, there is $\sT$ weakly equivalent to $\sS$ so that the collection of $\sT$-names for elements of $\mathcal{P}^{\alpha}(\Ord)$ forms a respect basis for $\sT$. If $\AC$ holds we can assume that $\sT \cong_\SN \sS$. 
\end{thm}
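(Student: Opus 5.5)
Write $\sS=(\mathbb{P},\sG,\sF)$ and fix a $\mathbb{P}$-generic $G$ over $V$.

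\textbf{First part.} The plan is to use Proposition~\ref{prop:bvm}. In $V[G]_\sS$ the principle $\KW^*_\alpha$ holds. Any $\dot x\in\HS$ is the value of a surjection $f\colon \mathcal P^\alpha(\eta)\to \trcl(\{\dot x^G\})$ in $V[G]_\sS$. Coding $f$ together with the $\in$-relation on its range gives a set $A\subseteq \mathcal P^\alpha(\eta)\times\mathcal P^\alpha(\eta)$. Using the uniform pairing bijection on $\mathcal P^\alpha(\Ord)$, we may regard $A$ as an element of $\mathcal P^{\alpha+1}(\Ord)$, and $\dot x^G$ is definable from $A$ (for instance, as the image of the top element under the Mostowski collapse of $A$).

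Now fix a condition $p$. By the forcing theorem we can extend $p$ to some $q$ and find an $\HS$-name $\dot a$ for such an $A$ with $q\Vdash$ ``$\dot a\in\mathcal P^{\alpha+1}(\Ord)$ and $\dot x$ is decoded from $\dot a$''. Lemma~\ref{lem:thetrick} then lets us assume $\Vdash \dot a\in\mathcal P^{\alpha+1}(\Ord)$. Let $\dot x'$ be the closed name, obtained from Lemma~\ref{lem:definablename}, for ``the set decoded from $\dot a$''. This gives $q\Vdash \dot x=\dot x'$. Moreover, whenever $r\Vdash\pi(\dot a)=\dot a$ we get $r\Vdash\pi(\dot x')=\dot x'$, because decoding is a definable operation commuting with automorphisms. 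Hence $R_\sS(\dot a)\subseteq R_\sS(\dot x')$, so clause (1) of the definition of a respect-basis holds. The collection of names for elements of $\mathcal P^{\alpha+1}(\Ord)$ is closed under $\sG$, which gives clause (2). This part looks routine.

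\textbf{Second part.} To get down to names for elements of $\mathcal P^\alpha(\Ord)$, I would replace $\sS$ by a system whose forcing adds a generic enumeration that makes the extra layer unnecessary. Concretely, take $\sT=\sS*\dot\sC$, where $\dot\sC$ is an $\sS$-name for a symmetric system adding, with enough symmetry, a well-order-free ``collapse'' of $\mathcal P^\alpha(\eta)$-sized data. A better route may be via Proposition~\ref{prop:respectbasefromHOD}. By Theorem~\ref{thm:V(P/S)} and the proof of Theorem~\ref{thm:kw}, $V[G]_\sS=V(x)$ for a single set $x$. Using $\KW^*_\alpha$ in $V[G]_\sS$ and the codes above, choose $x\in\mathcal P^{\alpha+1}(\Ord)$; equivalently $x$ is a set of elements of $\mathcal P^\alpha(\Ord)$. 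Then, by Theorem~\ref{thm:characterise} and Theorem~\ref{thm:HODsubsystem}, present $V(x)$ as $V[H]_\sT$ via a system $\sT=\sO(\B(\mathbb{Q}),\dot X)$ with $\dot X$ a self-reflecting name for $\trcl(\{x\})\cap\mathcal P^\alpha(\Ord)$ plus $x$, chosen with $\Vdash V[\dot H]_\sT=\HOD_{V(\dot x)}$. Proposition~\ref{prop:respectbasefromHOD} then yields a respect-basis of tuples of names of elements appearing in $\dot x$. These are names for elements of $\mathcal P^\alpha(\Ord)$, and finite tuples of them are still coded in $\mathcal P^\alpha(\Ord)$ by the absolute bijection. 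Weak equivalence follows because both systems produce exactly the models $V(x)$, where $x$ ranges over the same generic sets, arranged by taking $\mathbb{Q}$ to be $\mathbb{P}$ times a large homogeneous collapse as in $\sS^{\langle\beta\rangle}$.

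\textbf{Main obstacle.} The difficulty is making this choice of $x$ and $\dot X$ uniform, so that the name is symmetric under all of $\sG$ and the equivalence holds generic-by-generic and not just for one $G$. I would try to handle this by taking $\dot X$ to be the name for all of $\mathcal P^{\alpha+1}(\Ord)\cap V[\dot G]_\sS$, which is $\sG$-invariant. Proposition~\ref{prop:bvm} shows this determines the model, and its elements' transitive closures meet $\mathcal P^\alpha(\Ord)$, so one can pass to the set $\dot X\cap\mathcal P^\alpha(\Ord)$ together with a single coding set.

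Under $\AC$, Lemma~\ref{lem:maximalitycompletion} (mixing) allows us to stay with the completion $\hat\sS$ instead of enlarging the forcing. The elements of $\dot X$ then yield genuine $\hat\sS$-names, the $\res$-groups of the relevant $\mathcal P^\alpha(\Ord)$-names generate the filter, and $\sT\cong_\SN\sS$ by Theorem~\ref{thm:completion}.
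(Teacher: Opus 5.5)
Your first part is correct, and it is a slightly more direct route than the paper's. Instead of using Proposition~\ref{prop:bvm} to get $M=V(\mathcal P^{\alpha+1}(\Ord)^M)$ and then rerunning the argument of Proposition~\ref{prop:respectbasefromHOD}, you decode each $\dot x$ from a single name $\dot a$ for an element of $\mathcal P^{\alpha+1}(\Ord)$. Because the decoding is parameter-free, $R_\sS(\dot a)\subseteq R_\sS(\dot x')$ follows at once.

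The second part has a genuine gap. The missing idea is to \emph{shrink the group}, and invariance under the original $\sG$ is not needed.
\begin{enumerate}
\item Fix one name $\dot x$ for a subset of $\mathcal P^\alpha(\Ord)$ with $\Vdash_{\hat\sS}\HS^\bullet_{\hat\sS}=V(\dot x)$, and restrict the group of $\hat\sS$ to $\sym_{\hat\sS}(\dot x)$.
\item This subgroup lies in the filter, so the new system $\sT$ is still $\cong_\SN\hat\sS\cong_\SN\sS$.
\item Now $\dot x$ is fixed by the whole group of $\sT$, so Proposition~\ref{prop:respectbasefromHOD} applies verbatim. No collapse is needed, since Theorem~\ref{thm:V(P/S)} already gives $\HOD^{V[G]}_{V(x)}=V[G]_\sS$.
\end{enumerate}
Your system $\sO(\B(\mathbb Q),\dot X)$ with $x$ among the elements of $\dot X$ was implicitly doing exactly this, because its group consists of automorphisms respecting $\dot x$. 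You then gave it up in order to obtain something $\sG$-invariant. Replacing $x$ by the invariant class $\mathcal P^{\alpha+1}(\Ord)^M$ only reproves the first part. Passing to ``$\dot X\cap\mathcal P^\alpha(\Ord)$ together with a single coding set'' brings back the non-invariant coding set.

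Your closing claim, that the $\res$-groups of names for elements of $\mathcal P^\alpha(\Ord)$ generate the relevant filter, is false. Take $\sS=(\mathbb C,\{\id\},\{\{\id\}\})$ with $\mathbb C$ Cohen forcing, and $\alpha=0$. The $\res$-groups of names for ordinals are pointwise stabilizers of maximal antichains of $\B(\mathbb C)$, and no finite intersection of them is trivial. The system $\sO(\B(\mathbb C),\dot Y)$, with $\dot Y$ the class of names for ordinals, has group $\Aut(\B(\mathbb C))$. By Proposition~\ref{prop:nameclassHOD} it yields $\HOD^{V[G]}_V=V$ rather than $V[G]$.

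You also do not treat the case without $\AC$. There mixing fails, and there need not be a single name $\dot x$ with $\Vdash V[\dot G]_\sS=V(\dot x)$. The paper handles this as follows:
\begin{enumerate}
\item Take a dense set of conditions $p$, each with its own $\dot x_p$ such that $p$ forces $V[\dot G]_\sS=V(\dot x_p)$.
\item Restrict $\hat\sS$ below $p$ and to $\sym(\dot x_p)$.
\item Form the lottery sum of these systems.
\end{enumerate}
This is exactly why only weak equivalence is claimed in general. Your enlargement to $\mathbb P\times\Coll(\omega,V_\beta)$ does nothing for this uniformity problem. Under $\AC$ it would even break $\cong_\SN$, since it changes the Boolean algebra.
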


\begin{proof}
 If $M$ is an $\sS$-extension of $V$, then $V \subseteq V(\mathcal{P}^{\alpha +1}(\Ord)^M) \subseteq M$ and by Proposition~\ref{prop:bvm}, in fact $V(\mathcal{P}^{\alpha +1}(\Ord)^M) = M$. Following essentially the argument of Proposition~\ref{prop:respectbasefromHOD}, we find that the names for elements of $\mathcal{P}^{\alpha +1}(\Ord)$ form a respect basis for $\sS$. According to \cite[Theorem 4.6]{KaragilaSchilhan2024}, $M = V(x)$ for some $x \subseteq \mathcal{P}(\Ord)$. Assuming $\AC$, there is some $\hat{\sS}$-name $\dot x$ for a subset of $\mathcal{P}^\alpha(\Ord)$, so that $\Vdash_{\hat{\sS}} \HS_{\hat{\sS}}^\bullet = V(\dot x)$ (see Lemma~\ref{lem:maximalitycompletion}). Then simply consider the system $\sT \cong_\SN \hat{\sS}$, where we restrict the group of $\hat{\sS}$ to $\sym_{\hat{\sS}}(\dot x)$ and follow Proposition~\ref{prop:respectbasefromHOD}. If $\AC$ does not hold, there is still a dense set of conditions $p$ with names $\dot x$ so that $p$ forces the above. We may obtain a system by restricting $\hat{\sS}$ to $p$ and $\sym_{\hat{\sS}}(\dot x)$. Letting $\sT$ be the lottery sum of all these systems we find that $\sT$ is weakly equivalent to $\sS$.\footnote{The lottery sum $\bigoplus_i \sS_i$ of $\sS_i = (\mathbb{P}_i, \sG_i, \sF_i)$ can be naturally defined on $\mathbb{P} = \bigoplus_i \mathbb{P}_i$ using $\sG = \prod_i \sG_i$ and $\sF$ generated by the $\sF_i$.} 
\end{proof}

\section{Genericity over \texorpdfstring{$\HOD$}{HOD}}

Recall the following well-known theorem of Vop\v{e}nka originating in \cite{HajekVopenka}. 

\begin{prop}[see, e.g., {\cite[Theorem 15.46]{Jech2003}}]\label{prop:genoverHOD}
 Let $x \in V$ be a set of ordinals. Then $x$ is generic over $\HOD^V$.
\end{prop}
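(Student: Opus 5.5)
The plan is to follow Vop\v{e}nka's classical argument: build inside $\HOD$ a complete Boolean algebra $\mathbb{B}$ from ordinal-definable sets, and show that $x$ determines a $\mathbb{B}$-generic filter over $\HOD$. Fix an ordinal $\eta$ with $x\subseteq\eta$. Consider the collection $\OD(\mathcal{P}(\eta))$ of nonempty ordinal definable subsets of $\mathcal{P}(\eta)$, ordered by inclusion. Using the canonical definable well-ordering of $\OD$ (via codes for defining formulas and ordinal parameters), there is a bijection between $\OD(\mathcal{P}(\eta))$ and an ordinal, and this bijection is itself ordinal definable. Transporting the structure ($\subseteq$, complements, unions of $\OD$ families) along it gives a poset $\mathbb{B}\in\HOD$. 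In $\HOD$, $\mathbb{B}$ is a complete Boolean algebra: if $Z\in\HOD$ is a family of (codes of) elements, then $\bigcup Z$ is again $\OD$, and this union is its supremum.

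The filter is $G_x=\{A\in\OD(\mathcal{P}(\eta)): x\in A\}$, viewed via the coding as a subset of $\mathbb{B}$. It is clearly a filter: it is upward closed, closed under finite intersections, and does not contain $\emptyset$. For genericity, let $D\in\HOD$ be a dense subset of $\mathbb{B}$. Then $\bigcup D$ is $\OD$, and I claim that $\bigcup D=\mathcal{P}(\eta)$. Otherwise the complement $C=\mathcal{P}(\eta)\setminus\bigcup D$ is a nonempty $\OD$ set, hence an element of $\mathbb{B}$. By density some $A\in D$ satisfies $A\subseteq C$, and since $A$ is nonempty this contradicts $A\subseteq\bigcup D$. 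So $x\in\bigcup D$, which means $x\in A$ for some $A\in D$, i.e.\ $G_x\cap D\neq\emptyset$.

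Next, recover $x$ from $G_x$. For each $\xi<\eta$ the set $A_\xi=\{y\subseteq\eta:\xi\in y\}$ is $\OD$, and $x=\{\xi<\eta: A_\xi\in G_x\}$. Hence $x\in\HOD[G_x]$. Conversely, $G_x$ is definable from $x$ and the coding, so $\HOD[x]=\HOD[G_x]$, which is a generic extension of $\HOD$.

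I expect the main obstacle to be the bookkeeping showing that $\mathbb{B}$, the coding, and the relevant operations really lie in $\HOD$ and are computed correctly there. In particular one must check that the density of $D$ in the sense of $\HOD$ translates into the $V$-statement used above. This works because $D\in\HOD$, so $\bigcup D$ is ordinal definable from $D$ together with the $\OD$ coding. It remains to invoke the standard fact that the class of $\OD$ sets admits an $\OD$ well-ordering, obtained by reflection so that definitions range over some $V_\alpha$. Everything else is routine.
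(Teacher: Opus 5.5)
Your argument is correct and is the standard Vop\v{e}nka-algebra proof that the paper cites from Jech. Its density step (if the $\OD$ complement of $\bigcup D$ were nonempty, density would fail) is the one the paper adapts in the proof of Theorem~\ref{thm:symoverHOD}. One caveat: your justification of $\HOD[x]=\HOD[G_x]$ does not work. The map sending codes to the actual $\OD$ subsets of $\mathcal{P}(\eta)$ is not in $\HOD[x]$, so defining $G_x$ from $x$ in $V$ only shows $G_x\in\HOD_{\{x\}}$ (in fact $\HOD[G_x]=\HOD_{\{x\}}$). This is harmless, since $x\in\HOD[G_x]$ is all the statement requires. If you want $\HOD[x]$ itself to be a generic extension of $\HOD$, appeal instead to the intermediate model theorem, e.g.\ via the complete subalgebra of $\mathbb{B}$ generated by the $A_\xi$.
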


Our final result is the natural generalisation of this to symmetric extensions and arbitrary sets $x$.

\begin{thm}\label{thm:symoverHOD}
    Let $x \in V$ be arbitrary and $M = \HOD^V$. Then $M(x)$ is a symmetric extension of $M$. In other words, $x$ is an element of a generic extension of $M$.
\end{thm}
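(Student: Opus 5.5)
The plan is to reduce the statement to a version of Vop\v{e}nka's theorem (Proposition~\ref{prop:genoverHOD}) for sets that are not sets of ordinals. Then Theorem~\ref{thm:characterise} (applied with $M$ in the role of the ground model $V$) does the rest. Indeed, by (2)$\Rightarrow$(1) of Theorem~\ref{thm:characterise}, it suffices to find a forcing notion $\mathbb{B}\in M$ and a $\mathbb{B}$-generic filter $K$ over $M$, living in some outer model, such that $x\in M[K]$. Then $M(x)$ is of the form $M(x)$ with $x\in M[K]$, hence a symmetric extension of $M$. So the task is to show that $x$ lies in a generic extension of $M$.

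\textbf{Step 1: reduce to a set of ordinals.} Fix $\alpha$ with $x\in V_\alpha$, so that $V_\alpha$ is ordinal definable. Let $H$ be $\Coll(\omega,V_\alpha)$-generic over $V$. In $V[H]$ there is a real $r\subseteq\omega$ coding $(V_\alpha,\in)$ together with the position of $x$, and $x$ is recovered from $r$ by transitive collapse in any transitive model containing $r$. It therefore suffices to show that $r$ is generic over $M$, that is, $r\in M[K]$ for some $\mathbb{B}\in M$ and some $K$ that is $\mathbb{B}$-generic over $M$.

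\textbf{Step 2: a Vop\v{e}nka algebra relative to $V$.} Fix in $V$ a $\Coll(\omega,V_\alpha)$-name $\dot r$ for the code. This name is definable from $x$, so it is not in general ordinal definable. To get around this, I would not use a single name. Instead I would consider the $\OD^V$ set $N$ of all pairs $(p,\dot s)$ such that $\dot s$ is the canonical name for ``the code of $(V_\alpha,\in,\check y)$'' for some $y\in V_\alpha$, and $p$ is a condition. Let $\mathbb{B}$ be the Vop\v{e}nka-type algebra of $\OD^V$ subsets of $\mathcal{P}(\omega)$, or more concretely of $\OD^V$ sets of ``possible codes''. This is isomorphic to a set in $M$ via an ordinal-definable enumeration, exactly as in the classical proof. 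Define the filter
\[K=\{A\in\mathbb{B}: r\in A^{V[H]}\},\]
where $A^{V[H]}$ is the reinterpretation of $A$ in $V[H]$. To make that reinterpretation legitimate, I would take $\mathbb{B}$ to consist of $\OD^V$ sets of $\Coll(\omega,V_\alpha)$-names for reals of rank below a fixed bound, evaluated through the forcing relation of $V$. That relation is definable in $V$ from ordinal parameters, since $\Coll(\omega,V_\alpha)$ is $\OD$. The point is that the decisive sets $\{p: p\Vdash \check n\in\dot s\}$, ranging over all $\dot s$ in the $\OD$ family, form an $\OD^V$ object.

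\textbf{Step 3: genericity and recovery.} Next I would show that $K$ is $\mathbb{B}$-generic over $M$. The argument is the usual one: if $D\in M$ is a maximal antichain of $\mathbb{B}$, then $\bigcup D$ is an $\OD^V$ set whose complement is an $\OD^V$ set disjoint from every element of $D$. That complement must therefore be empty, so $r$ belongs to some member of $D$. The homogeneity of $\Coll(\omega,V_\alpha)$ (Lemma~\ref{lem:homquotient}-style arguments) is used here to see that ``$r\in A$'' does not depend on which generic $H$ is used. Finally, $r=\{n:\{s: n\in s\}\in K\}$, which lies in $M[K]$, and hence $x\in M[K]$.

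\textbf{Main obstacle.} The hardest step is Step~2. The classical Vop\v{e}nka proof needs $\HOD^{V[H]}$, or equivalently that $V$ is definable in $V[H]$, and in $\ZF$ that definability is open (see the footnote after Proposition~\ref{prop:respectbasefromHOD}). My first attempt will therefore be to carry out the whole construction inside $V$ on $\OD^V$ sets of names, using the definability of the forcing relation of the $\OD$ homogeneous poset $\Coll(\omega,V_\alpha)$. The aim is to ensure that the antichain argument never refers to $V$ from inside $V[H]$. If that proves awkward, the fallback is to apply Proposition~\ref{prop:genoverHOD} inside $V[H]$, getting $r$ generic over $\HOD^{V[H]}\subseteq M$ via a weakly homogeneous $\OD$ forcing. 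I would then try to show that $M$ is itself a generic extension of $\HOD^{V[H]}$ inside $M[\text{generic}]$, and compose the two steps.
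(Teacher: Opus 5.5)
Your reduction is sound. Collapse $V_\alpha$, code $(V_\alpha,\in,x)$ by a real $r\in V[H]$, and apply (2)$\Rightarrow$(1) of Theorem~\ref{thm:characterise} over $M$; it then suffices to show that $r$ is generic over $M$.

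\textbf{The gap.} It is in Steps 2--3. You never pin down $\mathbb{B}$, and the versions you sketch cannot support both the filter property and the antichain argument.
\begin{itemize}
\item Suppose $\mathbb{B}$ consists of $\OD^V$ \emph{sets of names} $A$, read as $A^{V[H]}=\{\dot s^H:\dot s\in A\}$ and ordered by inclusion. This is the order your step ``the complement of $\bigcup D$ is disjoint from every member of $D$'' relies on. Then $K=\{A:r\in A^{V[H]}\}$ is not a filter. A real has many names, so two disjoint $\OD$ sets of names can each contain a name for $r$; for instance, split the nice names according to whether all conditions occurring in them have even size.
\item If you order by forced inclusion instead, the set-theoretic complement of $\bigcup D$ need not be incompatible with the members of $D$, because disjoint sets of names can have overlapping evaluations. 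So ``the complement must be empty'' tells you nothing about $r$.
\item The appeal to homogeneity is also misplaced. Both $r$ and the truth of $r\in\{s:n\in s\}$ depend on $H$, and the latter is exactly what you need for decoding.
\item Your fallback through $\HOD^{V[H]}$ is the route the paper flags as unclear, since $\HOD^{V[H]}$ may be strictly smaller than $M$.
\end{itemize}

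\textbf{How to repair it.} Take the elements of $\mathbb{B}$ to be $\OD^V$ names $\dot A$ of $\Coll(\omega,V_\alpha)$ for sets of reals, under the following conventions.
\begin{itemize}
\item Each $\dot A$ is in canonical form: $\dot A=\{(p,\dot s):\dot s$ a nice name for a subset of $\omega$ and $p\Vdash\dot s\in\dot A\}$.
\item Only those with $\not\Vdash\dot A=\emptyset$ are included.
\item The order is $\dot A\le\dot A'$ iff $\Vdash\dot A\subseteq\dot A'$.
\end{itemize}
With this choice everything goes through.
\begin{itemize}
\item Incompatibility means $\Vdash\dot A\cap\dot A'=\emptyset$, and meets are canonical names for intersections.
\item Everything is $\OD$, so $\mathbb{B}$ has a copy in $M$, and $K=\{\dot A:r\in\dot A^H\}$ is a filter.
\item Let $D\in M$ be a maximal antichain. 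The canonical name $\dot C$ for $\mathcal{P}(\omega)\setminus\bigcup_{\dot A\in D}\dot A$ is $\OD$ and forced disjoint from every member of $D$, so maximality gives $\Vdash\dot C=\emptyset$. Hence $r\in\dot A^H$ for some $\dot A\in D$. Since $M\models\mathsf{ZFC}$, meeting every maximal antichain suffices for genericity.
\item Let $\dot B_n$ be the $\OD$ name for $\{s\subseteq\omega:n\in s\}$. Then $r=\{n:\dot B_n\in K\}\in M[K]$.
\end{itemize}
No homogeneity is used anywhere.

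\textbf{Comparison with the paper.} Repaired this way, your proof is genuinely different from the paper's. The paper never collapses to a single real. It builds in $M$ a symmetric system on the forcing of pairs $(a,\varphi)$, which are $\OD$ types of finite tuples from $V_\alpha$, with finitary permutations of $\omega$. It then shows that the filter induced by \emph{any} enumeration of $V_\alpha$ with infinite repetitions is only \emph{symmetrically} generic over $M$. The argument uses the $\fix(e)$-invariance of symmetric dense sets to shift witnesses onto the enumeration, and then reads off $V_\alpha$ directly in $M[G]_\sS$. The paper's route needs no genericity of the enumeration over $V$ and exhibits the symmetric system explicitly. Your route needs $H$ to be $V$-generic, but it yields a \emph{fully} $M$-generic filter, for a different forcing. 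The paper notes that it does not see how to obtain such a filter directly for its own $\mathbb{P}$.
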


If $V$ satisfies $\AC$, then this is a trivial consequence of Proposition~\ref{prop:genoverHOD} and Theorem~\ref{thm:characterise}, as there is a set of ordinals in $V$ coding $x$. Otherwise, one would like to approach this by first passing to a generic extension $V[G]$ in which $x$ can be coded into a set of ordinals. But then $\HOD^{V[G]}$ might be strictly smaller than $\HOD^V$ and it is not clear how to proceed. In \cite[9.3 Theorem 1]{Grigorieff1975}, Grigorieff does exactly this, proving Theorem~\ref{thm:symoverHOD} in the special case that $V = M(a)$ for some set $a$. We provide a direct argument which is a modification of the usual proof of Proposition~\ref{prop:genoverHOD} above. Interestingly, the notion of symmetric genericity becomes quite relevant.

\begin{proof}
    Without loss of generality we can assume that $x = V_\alpha^{M(x)}$ for some ordinal $\alpha$. Then there is a large enough ordinal $\gamma$ so that every $\OD$ subset of $(V_\alpha)^{<\omega}$ is in fact $\OD$-definable over $(V_\gamma, \in)$. Consider the forcing $\mathbb{P}$ consisting of conditions $p = (a, \varphi)$, where $a \in [\omega]^{<\omega}$, $\varphi$ is a formula with exactly $\vert a \vert$-many free variables $v_n$, $n \in a$, and ordinals $< \gamma$ as the only parameters, and there is $\bar x \in (V_\alpha)^a$ such that $V_\gamma \models \varphi(\bar x)$.\footnote{When we write ``formula'' here, we of course talk about $V$'s formalisation of first order logic and not some meta-theoretic syntactic object.} $(b, \psi)$ extends $(a, \varphi)$ iff $a \subseteq b$ and \[V_\gamma \models \forall \bar v \in (V_\alpha)^b (\psi(\bar v) \implies \varphi(\bar v \restriction a)).\] Then note that $\mathbb{P} \in \HOD$. Let $\sG$ be the group of finitary permutations on $\omega$. A permutation $\pi \in \sG$ then induces the action $\pi(a,\varphi) = (b,\psi),$ where $b = \pi``a$ and $\psi$ is $\varphi$ with each variable $v_n$ substituted by $v_{\pi(n)}$. Finally, let $\sF$ be the filter generated by groups $\fix(e) = \{ \pi \in \sG : \forall n \in e (\pi(n) = n) \}$, for $e \in [\omega]^{<\omega}$.

Then $\sS = (\mathbb{P}, \sG, \sF)\in\HOD$ is a symmetric system. Let $\bar x = \langle x_n : n \in \omega \rangle$ be any enumeration of $V_\alpha$ where each element appears infinitely often, say in a forcing extension of $V$ where $V_\alpha$ is countable. 

We claim that the set $G = \{ (a, \varphi) \in \mathbb{P} : V_\gamma \models \varphi(\bar x \restriction a)\}$ is an $\sS$-generic filter over $M=\HOD$. Namely, let $D \in \HOD$ be an $\sS$-symmetrically dense subset of $\mathbb{P}$. More specifically, let us assume that $e \in [\omega]^{<\omega}$ is such that for each $\pi \in \fix(e)$, $\pi``D = D$. The set \[Y = \{ \bar y \in (V_\alpha)^{e} : \exists a\in[\omega]^{<\omega}, \bar z \in (V_\alpha)^a, (e \cup a, \varphi) \in D (V_\gamma \models \varphi(\bar y, \bar z) ) \}\] is $\OD$ and so is its complement $Y' = (V_\alpha)^{e} \setminus Y$. If $Y'$ were non-empty then by choice of $\gamma$, there is a condition $(e, \chi) \in \mathbb{P}$ so that that $Y' = \{ \bar y \in (V_\alpha)^{e} : V_\gamma \models \chi(\bar y) \}$. But then $D$ is not dense, as $(e, \chi)$ cannot be extended by any member of $D$. Thus $Y' = \emptyset$, $Y = (V_\alpha)^{e}$, and in particular, $\bar x \restriction e \in Y$. So let $a$, $\bar z$, $\varphi$ be so that $(e \cup a, \varphi) \in D$ and $V_\gamma \models \varphi(\bar x \restriction e, \bar z)$. Since $\bar x$ enumerates each element of $V_\alpha$ infinitely often, there is some $\pi \in \fix(e)$, so that $\bar x \restriction e \cup \{ (\pi(i), z_i) : i \in a\} = \bar x \restriction (e \cup \pi``a)$. It follows that $\pi(e\cup a, \varphi) \in G \cap D$.

Finally, let us show that $V_\alpha \in M[G]_\sS$. Towards this end, for each $n \in \omega$ define $\dot x_{n, 0} = \emptyset$ and by recursion on $\beta\leq \alpha$, \[ \dot x_{n, \beta} = \{ \langle (a, \varphi), \dot x_{m,\beta'}\rangle : \beta' < \beta, n,m\in a, V_\gamma \models \forall \bar v \in (V_\alpha)^a ( \varphi(\bar v) \rightarrow v_m \cap V_{\beta'} \in v_n)\}. \]
By induction, we note that $\pi(\dot x_{n,\beta}) = \dot x_{\pi(n), \beta}$ and thus $\fix(\{ n \}) \leq \sym(\dot x_{n,\beta})$ and $\dot x_{n, \beta} \in \HS$. Also, it is straightforward to check by induction that $\dot x_{n, \beta}^G = x_n \cap V_\beta$. To finish the proof, consider $\dot X = \{ \dot x_{n, \alpha} : n \in \omega \} \in \HS$ and obtain that $\dot X^G = V_\alpha$.
\end{proof}

According to our results on quotients, in the previous proof there will be some particular $\bar x$ so that $G$ is $\mathbb{P}$-generic over $M$, but it is not entirely clear how to obtain it directly. This makes for a very interesting application of symmetric genericity.

\bibliographystyle{amsplain}
\bibliography{ref}

\end{document}